\title{Ax-Schanuel condition in arbitrary characteristic}
\author[P. KOWALSKI]{Piotr Kowalski$^*$}
\thanks{2010 \textit{Mathematics Subject Classification}. Primary 11J91; Secondary 12H05.}
\thanks{\textit{Key words and phrases}. Ax-Schanuel conjecture, formal maps, transcendence.}
\thanks{$^*$ Supported by Narodowe Centrum Nauki grants no. NCN 2012/07/B/ST1/03513, 2015/19/B/ST1/01150 and 2015/19/B/ST1/01151; and by ANR Modig (ANR-09-BLAN-0047)}
\address{Instytut Matematyczny\\
Uniwersytet Wroc{\l}awski\\pl. Grunwaldzki 2/4\\50-384
Wroc{\l}aw\\
Poland}
 \email{pkowa@math.uni.wroc.pl} \urladdr{http://www.math.uni.wroc.pl/\textasciitilde pkowa/ }  \DeclareMathOperator{\closed}{closed}\DeclareMathOperator{\locus}{locus}
  \DeclareMathOperator{\id}{id}
 \DeclareMathOperator{\fr}{Fr} \DeclareMathOperator{\lie}{Lie}
\DeclareMathOperator{\ch}{char}  
\DeclareMathOperator{\td}{trdeg} 
 \DeclareMathOperator{\alg}{alg}
\DeclareMathOperator{\arc}{Arc} 
\DeclareMathOperator{\trd}{trdeg}
\DeclareMathOperator{\hs}{HS}\DeclareMathOperator{\inv}{inv}
\DeclareMathOperator{\li}{\underleftarrow{\lim}}\DeclareMathOperator{\et}{et}
\DeclareMathOperator{\spec}{Spec}
\DeclareMathOperator{\pic}{Pic}
\DeclareMathOperator{\andeg}{andeg}
\newtheorem{theorem}{Theorem}[section]
\newtheorem{prop}[theorem]{Proposition}
\newtheorem{lemma}[theorem]{Lemma}
\newtheorem{fact}[theorem]{Fact}
\theoremstyle{definition}
\newtheorem{definition}[theorem]{Definition}
\newtheorem{example}[theorem]{Example}
\newtheorem{remark}[theorem]{Remark}
\newtheorem{question}[theorem]{Question}
\newtheorem{conj}[theorem]{Conjecture}
\begin{document}
\maketitle
\newcommand{\Zz}{{\mathds{Z}}}
\newcommand{\Ff}{{\mathds{F}}}
\newcommand{\Cc}{{\mathds{C}}}
\newcommand{\Rr}{{\mathds{R}}}
\newcommand{\Nn}{{\mathds{N}}}
\newcommand{\Qq}{{\mathds{Q}}}
\newcommand{\Kk}{{\mathds{K}}}
\newcommand{\Pp}{{\mathds{P}}}
\newcommand{\ddd}{\mathrm{d}}
\newcommand{\Aa}{\mathds{A}}
\newcommand{\dlog}{\mathrm{ld}}
\newcommand{\ga}{\mathbb{G}_{\rm{a}}}
\newcommand{\gm}{\mathbb{G}_{\rm{m}}}


\tableofcontents

\begin{abstract}
We prove a positive characteristic version of Ax's theorem on the intersection of an algebraic subvariety and an analytic subgroup of an algebraic group \cite{ax72}. Our result is stated in a more general context of a formal map between an algebraic variety and an algebraic group. We derive transcendence results of Ax-Schanuel type.
\end{abstract}

\newpage

\section{Introduction}\label{intro}

This paper is an attempt to generalize the results of Ax \cite{ax72} on the dimension of the intersection of an algebraic subvariety and a formal subgroup of an algebraic group (as well as its Ax-Schanuel applications) to the case of arbitrary characteristic. Instead of subvarieties of an algebraic group, we prefer to consider the case of a formal map between an algebraic variety and a formal group.

For reader's convenience, we discuss first the main theorem from \cite{ax72}. Let $G$ be an algebraic group over the field of complex numbers $\Cc$. Let $\mathcal{A}$ be a complex analytic subgroup of $G(\Cc)$, $\mathcal{K}$ an analytic subvariety of an open subset of $G(\Cc)$ and $V$ the Zariski closure of $\mathcal{K}$. We assume that $\mathcal{K}\subseteq \mathcal{A}$. Ax's theorem \cite[Theorem 1]{ax72} says that in such a case  $\mathcal{A}$ and $V(\Cc)$ tend to be in a general position, unless there is an obvious obstacle. More precisely,  there is an analytic subgroup $\mathcal{B}\subseteq G(\Cc)$ containing $V(\Cc)$ and $\mathcal{A}$ such that
$$\dim(\mathcal{K})\leqslant \dim(\mathcal{A})+\dim(V)-\dim(\mathcal{B}).$$
Ax also states a formal version of \cite[Theorem 1]{ax72} in the case of characteristic $0$ \cite[Theorem 1F]{ax72}. This formal version makes perfect sense in the case of positive characteristic case as well, and such an arbitrary characteristic version of \cite[Theorem 1F]{ax72} is our desired generalization. We will state it in Section \ref{secstat}.

A continuous map between Hausdorff spaces which is constant on a dense set is constant everywhere. The same principle applies to an algebraic map between algebraic varieties and to the Zariski topology (which is not Hausdorff). However, if we mix categories there is no reason for this principle to hold, e.g. there are non-constant \emph{analytic} maps between \emph{algebraic} varieties which are constant on a Zariski dense subset.
The main theorem of this paper roughly says that the principle above can be saved for certain formal maps (resembling homomorphisms) between an algebraic variety and an algebraic group at the cost of replacing the range of the map with its quotient by a formal subgroup of the controlled dimension. We briefly formulate this theorem below. Not to overload this introductory part with technical details, the statement is not fully precise yet, we postpone the actual statement till Section \ref{secstatmine} (it is Theorem \ref{mainthmintro}), where we also discuss how it is related to the mentioned above generalization of  \cite[Theorem 1F]{ax72}.
\begin{theorem}\label{short}
Let $V$ be an algebraic variety, $\mathcal{K}$ a Zariski dense formal subvariety of $V$, $A$ a ``good'' commutative algebraic group and $\mathcal{F}:\widehat{V}\to \widehat{A}$ a ``good'' formal map. Assume $\mathcal{F}$ vanishes on $\mathcal{K}$. Then there is a formal subgroup $\mathcal{C}\leqslant \widehat{A}$ such that $\mathcal{F}(\widehat{V})\subseteq \mathcal{C}$ and
$$\dim(\mathcal{C})\leqslant \dim(V)-\dim(\mathcal{K}).$$
\end{theorem}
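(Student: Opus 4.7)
My plan is to take $\mathcal{C}$ to be the smallest closed formal subgroup of $\widehat{A}$ containing the image $\mathcal{F}(\widehat{V})$. In the commutative Noetherian setting any intersection of closed formal subgroups is again a closed formal subgroup, so such a minimal $\mathcal{C}$ exists and by construction contains $\mathcal{F}(\widehat{V})$. The entire substance of the theorem is then the dimension estimate $\dim \mathcal{C} \leq \dim V - \dim \mathcal{K}$.

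I would first bound $\dim \mathcal{F}(\widehat{V})$ itself. At every smooth point $p$ of $\mathcal{K}$, the hypothesis $\mathcal{F}|_{\mathcal{K}} = 0$ forces $T_p \mathcal{K} \subseteq \ker d_p \mathcal{F}$, so $\mathrm{rank}\, d_p \mathcal{F} \leq \dim V - \dim \mathcal{K}$. By lower semi-continuity, the generic rank $r$ of $d\mathcal{F}$ is attained on a Zariski open dense subset $U \subseteq V$; and since $\mathcal{K}$ is Zariski dense in $V$ it meets $U$, giving $r \leq \dim V - \dim \mathcal{K}$. Under the ``good'' hypothesis on $\mathcal{F}$, which should identify image dimension with generic differential rank, this yields $\dim \mathcal{F}(\widehat{V}) \leq \dim V - \dim \mathcal{K}$.

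The remaining step, where the main difficulty lies, is to show that passing to the minimal enclosing subgroup does not enlarge the dimension, i.e.\ $\dim \mathcal{C} = \dim \mathcal{F}(\widehat{V})$. In characteristic zero, commutativity reduces this to linear algebra: after translating so that the origin lies in the image, the Lie algebra of $\mathcal{C}$ is the linear span of tangent directions of $\mathcal{F}(\widehat{V})$ at $0$, and every linear subspace exponentiates to a formal subgroup of the same dimension. In positive characteristic the exponential map fails and the Lie algebra alone no longer determines the formal group, so the correct replacement must be some restricted or Hasse--Schmidt style enhancement of the tangent data at the identity. This is precisely where the ``good'' hypothesis on the commutative group $A$ must intervene: it should select those $A$ for which enhanced tangent subspaces correspond bijectively to formal subgroups of the expected dimension, reducing the subgroup construction back to an essentially linear problem and closing the inequality.
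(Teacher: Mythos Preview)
The central gap is in your final step. Your bound $\dim \mathcal{F}(\widehat{V}) \leq \dim V - \dim \mathcal{K}$ via the generic differential rank is essentially the paper's ``weak bound'' (Proposition~\ref{rightdep}): the pullbacks $\mathcal{F}^*(\omega_1),\ldots,\mathcal{F}^*(\omega_n)$ of a basis of $\Omega^{\inv}_A$ land in $\ker(\Omega_R\otimes_R L \to \Omega_L)$, an $L$-space of dimension $c:=\dim V - \dim \mathcal{K}$, so their $L$-span has dimension $\leq c$. But your next claim, that the minimal enclosing formal subgroup has the same dimension as the image, is false in general and is \emph{not} what the ``good'' hypotheses buy. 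Already the algebraic (hence special) map $t\mapsto (t,t^2)$ into $\widehat{\ga}^2$ has one-dimensional image but generates the full group. So bounding $\dim \mathcal{F}(\widehat{V})$ does not bound $\dim\mathcal{C}$.

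What the paper actually proves is a strictly stronger linear-algebra statement: the forms $\mathcal{F}^*(\omega_i)$ are dependent over $C$, with at most $c$ of them $C$-independent (Proposition~\ref{biglemma} in characteristic $p$, Proposition~\ref{char0dep}/\ref{bigspecial} in characteristic $0$). The descent from $L$-dependence to $C$-dependence is the entire content of the theorem and does not come from any dimension count. In characteristic $0$ it uses that invariant forms are \emph{closed}, together with Ax's Lie-derivative trick: differentiate the $L$-linear relation by the derivations on $\mathcal{S}$ whose constant field is $C'$. In characteristic $p$ it uses the $A$-limit presentation to push the relation through the tower $L\supset L^{p}\supset L^{p^2}\supset\cdots$ until it lives over $L^{p^\infty}=C$. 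Once one has genuine $C$-linear relations $\sum c_i\mathcal{F}^*(\omega_i)=0$, each gives (via the integrability of $A$) a formal homomorphism $\widehat{A}\to\widehat{H}$ killing $\mathcal{F}(\widehat{V})$; in characteristic $p$ this must be iterated through Frobenius factorizations, and $\mathcal{C}$ is the kernel of the resulting limit homomorphism. None of this is equivalent to ``minimal subgroup $=$ image''.

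A secondary issue: your language about ``smooth points $p$ of $\mathcal{K}$'' and ``$\mathcal{K}$ meets the open set $U$'' does not parse here. $\mathcal{K}$ is a formal subscheme supported at the single closed point $v$, not an algebraic subvariety with its own locus of smooth points. The correct replacement is to work at the generic point $\spec(L)$ of $\mathcal{K}$, which is exactly how the paper phrases the weak bound.
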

Both notions of ``good'' will be clarified in Section \ref{secstatmine}. In the remaining part of the introduction we will state Ax's theorem on formal intersections \cite[Theorem 1F]{ax72} and formulate its (conjectural) generalization to the case of arbitrary characteristic. We will also formulate our question about formal maps vanishing on Zariski dense formal subvarieties and clarify its connection to Ax's theorem.
\subsection{Ax's theorem on formal intersections}\label{secstat}

We recall the formal scheme version of Ax's main theorem from \cite{ax72} below. For background on formal schemes we refer the reader to \cite[Section II.9]{ha}. Ax uses in \cite[Section 2]{ax72} the notion of a \emph{formal variety} rather than a formal scheme. Since all our formal schemes are formal subschemes of the formalization of an algebraic variety along a closed point, these two approaches are basically the same.

\begin{theorem}[{\cite[Theorem 1F]{ax72}}]\label{ax1f}
Let $G$ be an algebraic group over a field $C$ of characteristic $0$, $\widehat{G}$ the formalization of $G$ at the
origin and $\mathcal{A}$ a formal subgroup of $\widehat{G}$. Let $\mathcal{K}$ be a formal subscheme of $\mathcal{A}$ and let $V$ be
the Zariski closure of $\mathcal{K}$ in $G$. Then there is a formal subgroup $\mathcal{B}$ of $\widehat{G}$ which contains $\mathcal{A}$ and $\widehat{V}$ such that
$$\dim(\mathcal{B}) \leqslant  \dim(V ) + \dim(\mathcal{A}) - \dim(\mathcal{K}).$$
\end{theorem}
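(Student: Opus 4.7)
The plan is to linearize the whole setup via the formal exponential $\exp:\widehat{\mathfrak{g}}\to\widehat{G}$, where $\mathfrak{g}=\lie(G)$. In characteristic zero $\exp$ is an isomorphism of pointed formal schemes, and through the formal Baker--Campbell--Hausdorff correspondence it induces a bijection between formal subgroups $\mathcal{H}\leqslant\widehat{G}$ and Lie subalgebras $\mathfrak{h}\subseteq\mathfrak{g}$, with $\dim(\mathcal{H})=\dim_C\mathfrak{h}$. I would transport $\mathcal{A}$ to its Lie subalgebra $\mathfrak{a}\subseteq\mathfrak{g}$ and regard $\widehat{V}$ and $\mathcal{K}$ as pointed formal subschemes of $\widehat{\mathfrak{g}}$ via $\log$.

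Next I would define $\mathfrak{b}\subseteq\mathfrak{g}$ to be the smallest Lie subalgebra containing $\mathfrak{a}$ and satisfying $\widehat{V}\subseteq\widehat{\mathfrak{b}}\subseteq\widehat{\mathfrak{g}}$; such a subalgebra exists because the intersection of any family of Lie subalgebras with this property again has it, using the elementary identity $\widehat{\mathfrak{h}_1\cap\mathfrak{h}_2}=\widehat{\mathfrak{h}_1}\cap\widehat{\mathfrak{h}_2}$ inside $\widehat{\mathfrak{g}}$. Let $\mathcal{B}\leqslant\widehat{G}$ be the formal subgroup corresponding to $\mathfrak{b}$; then $\mathcal{A}\subseteq\mathcal{B}$ and $\widehat{V}\subseteq\mathcal{B}$ by construction, and the theorem reduces to the dimension bound
$$\dim(\mathfrak{b})-\dim(\mathfrak{a})\;\leqslant\;\dim(V)-\dim(\mathcal{K}).$$

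To establish this I would consider the coset-space projection $q:\widehat{G}\to\widehat{G}/\mathcal{A}$, which exists as a pointed formal scheme in characteristic zero even without a normality hypothesis on $\mathcal{A}$, and analyse the restriction $q|_{\widehat{V}}$. The formal subvariety $q(\mathcal{B})\subseteq\widehat{G}/\mathcal{A}$ has dimension $\dim(\mathfrak{b})-\dim(\mathfrak{a})$ and, by the minimality of $\mathfrak{b}$, it is the smallest formal subvariety through the origin of the form $q(\mathcal{H})$ (with $\mathfrak{a}\subseteq\lie(\mathcal{H})$) that contains $q(\widehat{V})$. The geometric input I would use to bound its dimension is the logarithmic Gauss map $\gamma:\widehat{V}\to\gr(\mathfrak{g})$, $p\mapsto d\ell_{p^{-1}}(T_pV)$: since $\mathcal{K}\subseteq\mathcal{A}$ we have $\gamma(\mathcal{K})\subseteq\{\text{subspaces of }\mathfrak{a}\}$, and Zariski density of $\mathcal{K}$ in $V$ should propagate this alignment to a generic statement on the tangent spaces of $V$, which one then closes up under the Lie bracket to build $\mathfrak{b}$ from $\mathfrak{a}$ plus at most $\dim(V)-\dim(\mathcal{K})$ further independent directions modulo $\mathfrak{a}$.

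The main obstacle is exactly this final estimate. Upper semicontinuity of fibre dimension for $q|_{\widehat{V}}$ points the wrong way: the inclusion $\mathcal{K}\subseteq q^{-1}(0)\cap\widehat{V}$ gives only a \emph{lower} bound on the special fibre, whereas we need an \emph{upper} bound on $\dim q(\widehat{V})$. One must therefore genuinely exploit that $V$ is the \emph{Zariski} closure of $\mathcal{K}$, not an arbitrary overvariety, so that $\mathcal{K}$ rigidifies the generic behaviour of $V$, and couple this with a Lie-theoretic integrability argument to pass from pointwise tangent alignment to actual formal containment. A further subtlety is that $\mathcal{A}$ need not be normal, so $\mathfrak{a}$ need not be an ideal and $\mathfrak{g}/\mathfrak{a}$ carries no Lie-algebra structure; the Lie-bracket closure from Gauss-map data to $\mathfrak{b}$ must therefore be carried out inside $\mathfrak{g}$ itself while carefully tracking $\mathfrak{a}$-cosets.
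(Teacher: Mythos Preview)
Your linearization via the formal exponential is fine, and defining $\mathfrak{b}$ as the minimal Lie subalgebra with $\mathfrak{a}\subseteq\mathfrak{b}$ and $\widehat{V}\subseteq\widehat{\mathfrak{b}}$ is a reasonable reformulation of the goal. But the argument has a genuine gap---in fact two. First, the Gauss-map claim is false as stated: you assert that for $p\in\mathcal{K}$ the translated tangent space $d\ell_{p^{-1}}(T_pV)$ lies in $\mathfrak{a}$, but that would require $T_pV\subseteq T_p\mathcal{A}$, i.e.\ that $V$ be formally contained in $\mathcal{A}$ near $p$. All we know is $\mathcal{K}\subseteq\mathcal{A}$; when $\dim V>\dim\mathcal{K}$ (the only interesting case) $T_pV$ will have directions transverse to $\mathcal{A}$. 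Only $d\ell_{p^{-1}}(T_p\mathcal{K})\subseteq\mathfrak{a}$ is guaranteed, and that says nothing about $V$. Second, even granting a corrected tangent statement, you give no mechanism for the bound $\dim(\mathfrak{b})-\dim(\mathfrak{a})\leqslant\dim(V)-\dim(\mathcal{K})$; you yourself flag this as ``the main obstacle'' and offer only the hope that Zariski density plus Lie closure will do it. There is no general principle bounding the Lie envelope of a variety by the codimension of a dense formal subscheme, so at this point the proof simply stops.

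The paper does not prove Theorem~\ref{ax1f} itself (it is quoted from Ax), but the characteristic-$0$ mechanism it reproduces---Ax's original one, see Proposition~\ref{char0dep} and Theorem~\ref{shortchar0}---supplies exactly the missing quantitative step, and it is differential-forms-based rather than Lie-theoretic. One pulls back the invariant $1$-forms on $G$ that annihilate $\mathcal{A}$ along $\widehat{V}\hookrightarrow\widehat{G}$; these become \emph{closed} forms on $V$ that vanish along $\mathcal{K}$. The core estimate is that the $C$-span of closed forms on $K=\mathrm{Frac}(\mathcal{O}_{V,0})$ lying in the kernel of the restriction to $\mathcal{K}$ has dimension at most $\dim V-\dim\mathcal{K}$; this is proved by Ax's ``Lie derivative trick'' (a closed form annihilated by a full set of commuting derivations on $\mathcal{O}_{\mathcal{K}}$ satisfies constant-coefficient linear relations). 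From that bound one reads off the annihilator of the required $\mathcal{B}$ and hence $\dim\mathcal{B}$. That computation with closed invariant forms is the heart of the proof, and nothing in your Gauss-map sketch replaces it.
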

In \cite{ax72}, Ax uses Theorem \ref{ax1f} to reprove and generalize Schanuel type transcendence statements from \cite{ax71}. We recall one such statement below, which is a power series version of Schanuel's conjecture.
\begin{theorem}[{\cite[(SP)]{ax71}}]\label{axps}
Let $y_1,\ldots,y_n\in t\Cc\llbracket t\rrbracket$ be $\Qq$-linearly independent. Then
$$\td_{\Qq(t)}(y_1,\ldots,y_n,\exp(y_1),\ldots,\exp(y_n))\geqslant n.$$
\end{theorem}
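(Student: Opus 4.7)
My plan is to deduce Theorem \ref{axps} from Theorem \ref{ax1f} applied to the algebraic group $G = \ga^n \times \gm^n$ over $\Cc$, via the standard differential-algebraic reformulation.

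The setup goes as follows. Write $z_i = \exp(y_i) \in 1 + t\Cc\llbracket t \rrbracket$, and let $\mathcal{A} \subseteq \widehat{G}$ be the $n$-dimensional formal subgroup given by the graph of the formal exponential $\widehat{\exp}\colon \widehat{\ga}^n \to \widehat{\gm}^n$. The data $(y_1,\ldots,y_n)$ provides a formal arc $\gamma\colon t \mapsto (y_i, z_i)$ whose image $\mathcal{K}$ is a one-dimensional formal subscheme of $\mathcal{A}$ (one-dimensional because $(y_1,\ldots,y_n) \neq 0$, as follows from $\Qq$-linear independence). Let $V$ be the Zariski closure of $\mathcal{K}$ in $G$, so that $\dim V = \td_\Cc(y_i, z_i)$. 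Theorem \ref{ax1f} then produces a formal subgroup $\mathcal{B}$ of $\widehat{G}$ containing both $\mathcal{A}$ and $\widehat{V}$ with
$$\dim \mathcal{B} \;\leq\; \dim V + n - 1.$$

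The main step is to show that $\mathcal{B} = \widehat{G}$, so that $\dim \mathcal{B} = 2n$ and hence $\td_\Cc(y_i, z_i) \geq n+1$. Since $G$ is commutative, in characteristic zero the formal logarithm gives $\widehat{G} \cong \widehat{\ga}^{2n}$, under which formal subgroups correspond to linear subspaces of $\Cc^{2n}$; $\mathcal{A}$ becomes the diagonal $\Delta$ and $\mathcal{B}$ a subspace $W \supseteq \Delta$. If $W$ were proper, a nonzero functional vanishing on $W$ would yield, because it already kills $\Delta$, a formal identity of the shape $\sum a_i u_i = \sum a_i \log v_i$ on $\widehat{V}$. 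Since $V$ is algebraic and this relation is transcendental, consistency forces the $a_i$ to be $\Qq$-proportional and $\sum a_i u_i$ to vanish identically on $V$; evaluating on the arc then gives $\sum a_i y_i = 0$, a nontrivial $\Qq$-linear relation contradicting the hypothesis.

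Once $\td_\Cc(y_i, z_i) \geq n+1$ is secured, the passage to $\Qq(t)$ is routine. Since $t$ is transcendental over $\Cc$, additivity of transcendence degree gives
$$1 + \td_{\Cc(t)}(y_i, z_i) \;=\; \td_\Cc(t, y_i, z_i) \;\geq\; \td_\Cc(y_i, z_i) \;\geq\; n+1,$$
so $\td_{\Cc(t)}(y_i, z_i) \geq n$, and the inclusion $\Qq(t) \subseteq \Cc(t)$ yields $\td_{\Qq(t)}(y_i, z_i) \geq \td_{\Cc(t)}(y_i, z_i) \geq n$.

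The hard part is the step that $\mathcal{B} = \widehat{G}$: this is where the $\Qq$-linear independence hypothesis enters, and it requires converting the purely formal conclusion of Theorem \ref{ax1f} into a genuine algebraic relation. A cleaner packaging is to first derive from Theorem \ref{ax1f} the intermediate ``differential Ax-Schanuel'' corollary (as Ax does in \cite{ax72}) for a differential field with field of constants $C$ and the relation $D z_i = z_i D y_i$; applied to $\Cc((t))$ with $D = d/dt$ and $C = \Cc$, this gives $\td_\Cc(y_i, z_i) \geq n+1$ directly, once one observes that the absence of a constant term in each $y_i$ promotes $\Qq$-linear independence to $\Qq$-linear independence modulo $\Cc$.
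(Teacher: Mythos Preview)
Your setup is exactly the one the paper sketches (the four bullet points after Theorem~\ref{axps}), and your endgame reduction from $\td_\Cc \geq n+1$ to $\td_{\Qq(t)} \geq n$ is fine. The gap is precisely where you flag it: the assertion that a nontrivial linear relation $\sum a_i u_i = \sum a_i \log v_i$ on $\widehat{V}$ ``forces the $a_i$ to be $\Qq$-proportional and $\sum a_i u_i$ to vanish on $V$'' is not justified. Saying the relation is ``transcendental while $V$ is algebraic'' is a slogan, not an argument; nothing you have written explains why the coefficients must be rational rather than arbitrary complex numbers, and a $\Cc$-linear relation among the $y_i$ would not contradict the hypothesis.

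The paper closes this gap (in the general form, Theorem~\ref{schanuelax}) not by linearizing and chasing functionals but via Proposition~\ref{algform}: since $\widehat{V}\subseteq\mathcal{B}$, the \emph{algebraic} subgroup $H$ generated by $V$ satisfies $\widehat{H}\subseteq\mathcal{B}$. Because $\ga^n$ and $\gm^n$ are essentially different (Definition~\ref{defalgor}), $H=H_1\times H_2$ with $H_1\leq\ga^n$, $H_2\leq\gm^n$. Now algebraic subgroups of $\gm^n$ are cut out by \emph{integer} characters, and $(z_i)\in H_2$ would give an integer relation $\sum m_i y_i=0$; by $\Qq$-linear independence $H_2=\gm^n$, so $\widehat{\gm^n}\subseteq\mathcal{B}$, and together with $\mathcal{A}\subseteq\mathcal{B}$ this forces $\mathcal{B}=\widehat{G}$. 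This is where $\Qq$ genuinely enters, and it is the step your sketch is missing. Your proposed detour through the differential Ax--Schanuel statement is legitimate, but that statement is itself a nontrivial consequence of Theorem~\ref{ax1f} whose proof requires essentially the same ingredient (passing from formal containment to an algebraic subgroup relation), so invoking it does not actually shorten the argument.
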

Ax formulated the above theorem in \cite{ax71} for the field of complex numbers, but it is true for any field of characteristic $0$ replacing $\Cc$. As it is shown in \cite[Theorem 2]{ax72}, Theorem \ref{axps} follows from Theorem \ref{ax1f} by taking:
\begin{itemize}
\item $G$ as $\ga^n\times \gm^n$;

\item $\mathcal{A}$ as the graph of $\exp: \widehat{\ga^n}\to \widehat{\gm^n}$;

\item $V$ as the algebraic locus of $(x,\exp(x))$ (i.e. $V$ is the smallest algebraic subvariety of $G$ which is defined over $\Qq$ and which contains the point $(x,\exp(x))$);

\item $\mathcal{K}$ as the formal locus of $(x,\exp(x))$.
\end{itemize}
Ax generalizes Theorem \ref{axps} from the formal isomorphism $\exp: \widehat{\ga^n}\to \widehat{\gm^n}$ to the formal isomorphism of the form $\exp_A: \widehat{\ga^n}\to \widehat{A}$, where $A$ is an arbitrary semi-Abelian variety of dimension $n$ \cite[Theorem 3]{ax72}. It is easy then to generalize this argument further to a wider class of ``sufficiently non-algebraic'' formal isomorphisms $\mathcal{F}:\widehat{A}\to \widehat{B}$ where $A,B$ are commutative algebraic groups (see \cite{K5}).

One can state similar (to Theorem \ref{axps}) Ax-Schanuel inequalities in the case of positive characteristic. Actually, we have obtained in \cite{K6} an Ax-Schanuel statement for a ``sufficiently non-algebraic'' additive power series (so $A=B=\ga^n$) in the positive characteristic case. The aim of this line of  research is to find general geometric reasons for such Ax-Schanuel statements (with no restrictions for the characteristic of the base field). It is easy to see that \cite[Theorem 1.1]{K6} would follow from an arbitrary characteristic version of Theorem \ref{ax1f} (in the same way as in \cite{ax72}). We formulate it as a  question below.
\begin{question}\label{mainque}
Does Theorem \ref{ax1f} hold for a field $C$ of arbitrary characteristic?
\end{question}

\subsection{Some reductions}\label{secimp}
Since our main theorem is about formal maps from formalizations of algebraic varieties into \emph{commutative} algebraic groups, we will show that without loss of generality, one can assume that the ambient algebraic group $G$ (appearing in Theorem \ref{ax1f} and Question \ref{mainque}) is commutative. Ax's argument for such a reduction (Remark (5) in \cite[page 1196]{ax72}) can be generalized to the case of arbitrary characteristic using the following result.
\begin{prop}\label{redtocomm}
Suppose $G$ is an algebraic group (over an arbitrary field) and $\mathcal{A}$ is a formal subgroup of $\widehat{G}$. Then there is an algebraic subgroup $H\leqslant G$ such that
$$\widehat{H'}\subseteq \mathcal{A}\subseteq \widehat{H},$$
where $H'$ denotes the commutator subgroup of $H$.
\end{prop}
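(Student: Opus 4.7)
The plan is to take $H$ to be the Zariski closure of $\mathcal{A}$ in $G$; the engine is that $\mathcal{A}$ is Zariski dense in $H$, and this density combined with closedness of certain stabilizer-type loci will do all the work. The inclusion $\mathcal{A}\subseteq\widehat{H}$ is then immediate. To see that $H$ is an algebraic subgroup I argue via preimages: since $m(\mathcal{A}\times\mathcal{A})\subseteq\mathcal{A}\subseteq\widehat{H}$, we have $\mathcal{A}\times\mathcal{A}\subseteq m^{-1}(\widehat{H})=\widehat{m^{-1}(H)}$, and because Zariski closure commutes with products, taking closures in $G\times G$ yields $H\times H=\overline{\mathcal{A}\times\mathcal{A}}\subseteq m^{-1}(H)$, i.e.\ $m(H\times H)\subseteq H$; the same argument handles inversion.

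For $\widehat{H'}\subseteq\mathcal{A}$ I pass through commutativity of the formal quotient $\widehat{H}/\mathcal{A}$. First I show that $\mathcal{A}$ is normalized by all of $H$: at each formal order $n$, conjugation defines an algebraic morphism $H\to\aut(\widehat{H}_n)$; the stabilizer of the closed subscheme $\mathcal{A}_n\subseteq\widehat{H}_n$ is a closed subgroup of $\aut(\widehat{H}_n)$, so its preimage $N_n\subseteq H$ is a closed subscheme containing $\mathcal{A}$ (which normalizes itself, being a subgroup). Zariski density forces $H=\overline{\mathcal{A}}\subseteq N_n$, and passing to the inverse limit gives $\inn_h(\mathcal{A})=\mathcal{A}$ for every $h\in H$. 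The identical closed-stabilizer argument, applied to the induced action of $H$ on $\widehat{H}/\mathcal{A}$, shows that this action is trivial (the trivial-action locus is closed, contains $\mathcal{A}$, hence contains $H$). Therefore $\widehat{H}/\mathcal{A}$ is a commutative formal group, and every formal commutator $[h_1,h_2]$ with $h_i\in\widehat{H}$ lies in $\mathcal{A}$; writing $H'$ as the image of an iterated commutator word $w\colon H^{2n}\to H$ for suitable $n$ (Chevalley--Zassenhaus) and restricting to $\widehat{H}^{2n}$ yields $\widehat{H'}\subseteq\mathcal{A}$.

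The most delicate step I foresee is the last: the identification of $\widehat{H'}$ with the formal image $w(\widehat{H}^{2n})$, equivalently with the formal subgroup of $\widehat{H}$ generated by commutators. In characteristic zero this follows immediately from the formal submersion theorem, because the differential of $w$ at the origin surjects onto $\lie(H')$; in positive characteristic, however, $H'$ may fail to be smooth at $e$ and $w$ may fail to be formally submersive, so the matching of the two formal subgroups must be established scheme-theoretically. My preferred approach is a direct comparison of the completed Hopf-algebra ideals, using as leverage that both formal subgroups have the same Zariski closure $H'$ in $H$; this is the place where the characteristic-$p$ bookkeeping really has to be run carefully.
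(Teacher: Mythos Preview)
Your argument follows the same architecture as the paper's: take $H$ to be the Zariski closure of $\mathcal{A}$, run a density-plus-stabilizer argument at each finite level to obtain normality of $\mathcal{A}$ and then triviality of the induced conjugation action on the quotient, and finally pass to $\widehat{H'}\subseteq\mathcal{A}$. The paper packages the finite-level bookkeeping through the higher tangent spaces $U^{(m)}_G$ of Section~\ref{logdersec} together with Lemma~\ref{unip}(5), whereas you work directly with the truncations $(\widehat{H}/\mathcal{A})_n$ and their automorphism schemes; the two formulations are equivalent.

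You have correctly located the one genuinely delicate step, and it is worth noting that the paper's own proof dispatches the corresponding implication with a bare ``it is easy to see that $\widehat{G'}\leqslant\mathcal{A}$ if and only if for each $x\in G$ we have $\alpha_x=\id_{\widehat{G}}\ (\mathrm{mod}\ \mathcal{A})$'' --- so at this point your write-up is already at least as detailed as the paper's. Your proposed fix via an iterated commutator word $w_N\colon H^{2N}\to H$ does work, and the concern you raise largely dissolves with two observations. First, the scheme-theoretic images of the $w_N$ form an increasing chain of closed normal subgroup schemes of $H$ with commutative quotient, so by Noetherianity and the minimality of $H'$ this chain stabilizes exactly at $H'$; smoothness of $H'$ is not needed for this. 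Second, flatness of completion over a Noetherian local ring gives $\ker(\widehat{w_N}^{\,*})=\widehat{\ker(w_N^{\,*})}=\widehat{I_{H'}}$; since $\widehat{w_N}$ factors through $\mathcal{A}$ (the quotient $\widehat{H}/\mathcal{A}$ being commutative and $\mathcal{A}$ a subgroup), this yields $I_{\mathcal{A}}\subseteq\widehat{I_{H'}}$, i.e.\ $\widehat{H'}\subseteq\mathcal{A}$. No separate identification of the ``formal image'' with $\widehat{H'}$ is required beyond this kernel calculation. Equivalently, one may bypass the word $w_N$ altogether and appeal to Proposition~\ref{algform}: the same flatness argument applied to the single commutator map $H\times H\to H$ shows $\widehat{V}\subseteq\mathcal{A}$ for $V$ the Zariski closure of the set of commutators, and then Proposition~\ref{algform} upgrades this to $\widehat{H'}=\widehat{\langle V\rangle}\subseteq\mathcal{A}$.
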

We postpone the proof till Section \ref{logdersec}, where we deal with higher tangent spaces. We obtain the following reduction.
\begin{theorem}
A positive answer to Question \ref{mainque} in the case of a commutative algebraic group $G$ implies a positive answer to Question \ref{mainque} in general.
\end{theorem}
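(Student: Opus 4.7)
The plan is to apply Proposition~\ref{redtocomm} to reduce to working inside an algebraic subgroup $H\leqslant G$ with $\widehat{H'}\subseteq\mathcal{A}\subseteq\widehat{H}$, and then quotient by the commutator subgroup $H'$ to obtain a commutative target group $\bar{H}:=H/H'$ to which the hypothesis applies.

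First, I would invoke Proposition~\ref{redtocomm} to produce such an $H$. Since $\mathcal{K}\subseteq\widehat{H}$, its Zariski closure $V$ automatically lies in $H$, so we may replace $G$ by $H$. Let $\pi\colon H\to\bar{H}$ be the quotient onto the abelianization and $\widehat{\pi}\colon\widehat{H}\to\widehat{\bar{H}}$ its formalization, which is a smooth morphism of formal groups with kernel $\widehat{H'}$. Define
\[
\bar{\mathcal{A}}:=\widehat{\pi}(\mathcal{A}),\qquad \bar{\mathcal{K}}:=\widehat{\pi}(\mathcal{K}),
\]
and let $\bar{V}$ be the Zariski closure of $\pi(V)$ in $\bar{H}$; equivalently, $\bar{V}$ is the Zariski closure of $\bar{\mathcal{K}}$ in $\bar{H}$.

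Applying the commutative case of Question~\ref{mainque} to $(\bar{H},\bar{\mathcal{A}},\bar{\mathcal{K}})$ produces a formal subgroup $\bar{\mathcal{B}}\leqslant\widehat{\bar{H}}$ containing $\bar{\mathcal{A}}$ and $\widehat{\bar{V}}$ with
\[
\dim(\bar{\mathcal{B}})\leqslant\dim(\bar{V})+\dim(\bar{\mathcal{A}})-\dim(\bar{\mathcal{K}}).
\]
I would then set $\mathcal{B}:=\widehat{\pi}^{-1}(\bar{\mathcal{B}})$, which is a formal subgroup of $\widehat{H}\subseteq\widehat{G}$ containing $\widehat{H'}$. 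The identities $\widehat{\pi}(\mathcal{A})=\bar{\mathcal{A}}\subseteq\bar{\mathcal{B}}$ and $\widehat{\pi}(\widehat{V})=\widehat{\bar{V}}\subseteq\bar{\mathcal{B}}$ yield $\mathcal{A}\subseteq\mathcal{B}$ and $\widehat{V}\subseteq\mathcal{B}$. Because $\widehat{H'}$ is contained in both $\mathcal{A}$ and $\mathcal{B}$, we have $\dim(\mathcal{A})=\dim(\bar{\mathcal{A}})+\dim(H')$ and $\dim(\mathcal{B})=\dim(\bar{\mathcal{B}})+\dim(H')$, so the displayed inequality rewrites as
\[
\dim(\mathcal{B})\leqslant\dim(\bar{V})+\dim(\mathcal{A})-\dim(\bar{\mathcal{K}}).
\]

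The hard part will be the final step, which demands the dimension inequality
\[
\dim(\mathcal{K})-\dim(\bar{\mathcal{K}})\leqslant\dim(V)-\dim(\bar{V}).
\]
This compares the relative dimension of $\widehat{\pi}|_{\mathcal{K}}$ with that of the dominant algebraic morphism $\pi|_V\colon V\to\bar{V}$. My strategy would be to note that the generic fiber of $\widehat{\pi}|_{\mathcal{K}}$ is contained in a fiber of $\pi|_V$, and that the Zariski density of $\mathcal{K}$ in $V$ excludes the jumping-fiber pathology which can arise because $\pi|_V$ need not be flat (even though $\widehat{\pi}$ itself is a smooth group quotient). Concretely, working with the formal local rings, both sides can be expressed as differences of heights of prime ideals under the faithfully flat completion $\widehat{\pi}^{*}$, and the fact that $V$ is \emph{defined} as the Zariski closure of $\mathcal{K}$ (so that the algebraic ideal of $V$ is the contraction of the formal ideal of $\mathcal{K}$ to the polynomial ring) forces the required compatibility of these heights.
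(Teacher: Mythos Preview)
Your proposal is correct and follows essentially the same route as the paper: reduce to $G=H$ via Proposition~\ref{redtocomm}, push everything through $\pi:H\to H/H'$, apply the commutative case there, and pull the resulting $\bar{\mathcal{B}}$ back. The paper simply asserts the inequality $\dim(\mathcal{K})-\dim(\pi(\mathcal{K}))\leqslant\dim(V)-\dim(\pi(V))$ without further comment, whereas you correctly isolate it as the one nontrivial point and sketch a reasonable justification.
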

\begin{proof}
We can assume that $G=H$, where $H$ comes from Proposition \ref{redtocomm}. Let $\pi:G\to G/G'$ denote the quotient morphism. Since $G/G'$ is commutative, using our assumption we get a formal subgroup $\mathcal{B}_0\leqslant \widehat{G/G'}$ such that
$$\dim(\mathcal{B}_0) \leqslant  \dim(\pi(V)) + \dim(\pi(\mathcal{A})) - \dim(\pi(\mathcal{K})).$$
Since $\dim(\mathcal{A})=\dim(\pi(\mathcal{A}))+\dim(G')$ and
$$\dim(\mathcal{K})- \dim(\pi(\mathcal{K}))\leqslant \dim(V)- \dim(\pi(V)),$$
we can set $\mathcal{B}$ as $\pi^{-1}(\mathcal{B}_0)$.
\end{proof}

From now on, we assume that the algebraic group $G$ appearing in the statements of Theorem \ref{ax1f} and Question \ref{mainque} is commutative.
We formulate below a weaker version of Question \ref{mainque} which we find more convenient to work with and which still implies Ax-Schanuel statements (see Remark \ref{remweak}).
\begin{question}\label{mainquew}
Does the arbitrary characteristic version of Theorem \ref{ax1f} hold, if we assume that $\mathcal{K}$ is absolutely irreducible and $\widehat{G}/\mathcal{A}$ is isomorphic to a formalization of an algebraic group?
\end{question}
\begin{remark}\label{remweak}
\begin{enumerate}
\item A formal scheme $\mathcal{K}$ is absolutely irreducible, if the ring $\mathcal{O}_{\mathcal{K}}\otimes_CC^{\alg}$ is a domain. The assumption that $\mathcal{K}$ is absolutely irreducible does not affect our Ax-Schanuel applications (i.e. Theorem \ref{schanuelax} and Theorem \ref{endoax}), where $\mathcal{K}$ is the graph of a formal isomorphism between absolutely irreducible algebraic groups.

\item Since for the applications to Ax-Schanuel type problems (discussed in general at the beginning of Section \ref{applications}), we have $\widehat{G}/\mathcal{A}\cong \widehat{B}$ (where $\mathcal{A}$ is the graph of $\mathcal{F}$ and $\mathcal{F}:\widehat{A}\to \widehat{B}$ is as in the comments below Theorem \ref{axps}), a positive answer to Question \ref{mainquew} is enough for Ax-Schanuel type applications.

\item In the case of characteristic $0$, Question \ref{mainquew}  is easily logically equivalent to Question \ref{mainque}, since any commutative formal group is isomorphic to the formalization of a vector group (see e.g. \cite[Theorem 14.2.3]{Hazew}).
\end{enumerate}
\end{remark}

\subsection{Special formal maps}\label{secstatmine}
In this subsection, we formulate the question which we attempt to answer positively in this paper, and discuss its connections to the arbitrary characteristic generalizations of Ax's theorem discussed above. We fix the following.
\begin{itemize}
\item $V$ is an algebraic variety over a fixed field $C$ and $v\in V(C)$.

\item $\widehat{V}$ is the formalization of $V$ along $v$.

\item $\mathcal{W}$ is an absolutely irreducible, Zariski dense formal subscheme of $\widehat{V}$.

\item $A$ is a commutative algebraic group.

\item $\mathcal{F}:\widehat{V}\to \widehat{A}$ is a \emph{special} formal map which vanishes on $\mathcal{W}$.
\end{itemize}
\begin{question}\label{mainquem}
Is there a formal subgroup $\mathcal{C}\leqslant \widehat{A}$ such that $\mathcal{F}(\widehat{V})\subseteq \mathcal{C}$ and
$$\dim(\mathcal{C})\leqslant \dim(V)-\dim(\mathcal{W})?$$
\end{question}
The motivation for introducing special formal maps comes from the proof of Theorem 1 in \cite{ax72}. The crucial property of the analytic maps considered in this proof is that they take invariant holomorphic forms on an algebraic group to \emph{algebraic} (and not merely holomorphic) differential forms.

This observation is the defining property of special formal maps in the case of characteristic $0$. The role of holomorphic differential forms is played by the completed module of K\"{a}hler differentials over the completion of the base ring $R$. Since the module of K\"{a}hler differentials over $R$ embeds in this completed module (see Lemma \ref{lemma1}), the definition of a formal map in the characteristic $0$ case is quite straightforward (see Definition \ref{defspecial}).

The situation is more complicated in the case of positive characteristic, since we need all the higher differential forms and we do not want to develop the formalism of higher invariant forms in this paper. Therefore, we use an equivalent notion (Definition \ref{defspecialagain}) which can be phrased in an easier way and which serves our purposes quicker. Different equivalent formulations of the notion of a special formal map are discussed briefly in Remark \ref{specequiv}.

 Morally, ``special'' should be thought of as ``homomorphism-like'' (even when the domain is not a group!), since formal homomorphisms between formalizations of algebraic groups are special (Proposition \ref{homspecial}(2)). For example, in the case of characteristic $0$ and a commutative algebraic group $A$, the formal exponential map
 $$\exp_A:\lie(A)\to A$$
 is special. In the case of the positive characteristic $p$, we have interesting special formal endomorphisms of $\ga$ of the form $\sum c_iX^{pi}$, and special formal endomorphisms of $\gm$ of the form $X^{\gamma}$, where $\gamma$ is a $p$-adic integer (see Example \ref{ax2ex}).

It is easy to give examples of formal maps which are not special. In the case when $V=A=\ga$, a formal map corresponds to a power series $F\in C\llbracket X\rrbracket$. If $F$ is special, then $F$ takes invariant forms on $\ga$ into algebraic forms on $\ga$. In the completed module of differential forms over $C\llbracket X\rrbracket$ we have
$$F^*(\ddd X)=F'\ddd X.$$
Thus if the derivative of $F$ is a not a polynomial, then $F$ is not special (it is ``if and only if'' in the case of characteristic $0$). For an arbitrary commutative algebraic group $A$, we need to replace ``derivative of $F$'' with ``logarithmic derivative (with respect to $A$) of $F$'' (see Remark \ref{specequiv}(1)). Hence it is very easy to construct formal maps which are not special and actually ``most of'' formal maps are not special. However, since the formalizations of algebraic maps are special (Proposition \ref{homspecial}(1)) and formal homomorphisms are special (Proposition \ref{homspecial}(2)), the class of special maps is still large enough for the purposes of Ax-Schanuel type considerations.

The next result says (together with Remark \ref{remweak}) that considering Question \ref{mainquem} is enough for Ax-Schanuel type applications.
\begin{theorem}\label{minetoax}
A positive answer to Question \ref{mainquem} implies a positive answer to Question \ref{mainquew}.
\end{theorem}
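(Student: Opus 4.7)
The plan is to manufacture data fitting the hypotheses of Question \ref{mainquem} out of the data of Question \ref{mainquew}, apply the assumed positive answer, and then pull the resulting formal subgroup back along the quotient map to $\widehat{G}/\mathcal{A}$. So I would start with the setup of Question \ref{mainquew}: $G$ a commutative algebraic group, $\mathcal{A}\leqslant \widehat{G}$ a formal subgroup with $\widehat{G}/\mathcal{A}\cong \widehat{A}$ for some commutative algebraic group $A$, $\mathcal{K}\subseteq \mathcal{A}$ an absolutely irreducible formal subscheme, and $V$ the Zariski closure of $\mathcal{K}$ in $G$. Let $\pi\colon \widehat{G}\to \widehat{A}$ be the quotient morphism and define
\[
\mathcal{F}:=\pi|_{\widehat{V}}\colon \widehat{V}\longrightarrow \widehat{A},\qquad \mathcal{W}:=\mathcal{K}.
\]

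Next I would verify the hypotheses of Question \ref{mainquem} for this data. Absolute irreducibility of $\mathcal{W}$ is the hypothesis on $\mathcal{K}$ in Question \ref{mainquew}. Zariski density of $\mathcal{W}$ in $\widehat{V}$ follows from the fact that $V$ is the Zariski closure of $\mathcal{K}$ in $G$: if $\mathcal{K}$ were contained in the formalization $\widehat{W}$ of a proper subvariety $W\subsetneq V$, then $W$ would be a proper Zariski closed subset of $V$ containing $\mathcal{K}$, a contradiction. The vanishing of $\mathcal{F}$ on $\mathcal{W}$ is immediate since $\mathcal{K}\subseteq \mathcal{A}=\ker(\pi)$. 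The remaining, and in my view the most delicate, point is that $\mathcal{F}$ is special in the sense of Definitions \ref{defspecial} and \ref{defspecialagain}. The formal homomorphism $\pi$ is special by Proposition \ref{homspecial}(2), so what needs to be verified is that the restriction of a special formal map to a closed formal subscheme of its source remains special. This restriction stability is the main obstacle: it depends on the precise shape of the definition of ``special'' and is not a purely formal manipulation, since ``special'' encodes a Lie-theoretic compatibility that need not survive arbitrary restriction to non-group formal subschemes.

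Granting that, the positive answer to Question \ref{mainquem} yields a formal subgroup $\mathcal{C}\leqslant \widehat{A}$ with $\mathcal{F}(\widehat{V})\subseteq \mathcal{C}$ and $\dim(\mathcal{C})\leqslant \dim(V)-\dim(\mathcal{K})$. I would then set $\mathcal{B}:=\pi^{-1}(\mathcal{C})\leqslant \widehat{G}$, which is a formal subgroup containing $\mathcal{A}=\ker(\pi)$, and which contains $\widehat{V}$ since $\pi(\widehat{V})=\mathcal{F}(\widehat{V})\subseteq \mathcal{C}$. The short exact sequence
\[
0\longrightarrow \mathcal{A}\longrightarrow \mathcal{B}\longrightarrow \mathcal{C}\longrightarrow 0
\]
gives $\dim(\mathcal{B})=\dim(\mathcal{A})+\dim(\mathcal{C})\leqslant \dim(V)+\dim(\mathcal{A})-\dim(\mathcal{K})$, which is exactly the bound demanded by the conclusion of Question \ref{mainquew}, finishing the reduction.
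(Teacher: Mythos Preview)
Your approach is essentially identical to the paper's: define $\mathcal{F}$ as the composition of the inclusion $\widehat{V}\hookrightarrow\widehat{G}$ with the quotient $\widehat{G}\to\widehat{A}$, invoke the positive answer to Question~\ref{mainquem}, and pull $\mathcal{C}$ back to $\mathcal{B}=\pi^{-1}(\mathcal{C})$.

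Your one stated worry---that ``special'' might not survive restriction to $\widehat{V}$---is not actually an obstacle, and the paper dispatches it in a single clause. The point is that $\pi|_{\widehat{V}}$ is not best viewed as a restriction but as the composition of the formalization of the \emph{algebraic} inclusion $V\hookrightarrow G$ with the formal homomorphism $\pi$. Both factors are special by Proposition~\ref{homspecial}, parts (1) and (2) respectively, and special maps are closed under composition (stated in Example~\ref{exspecial}; in the general characteristic this is immediate from the characterization $\mathcal{F}^*(\hs^{\inv}_A)\subseteq \hs_R$ in Remark~\ref{specequiv}(3), since precomposition with an algebraic map sends $\hs_R$ into itself). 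So there is no delicate ``restriction stability'' issue to resolve.
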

\begin{proof}
Let $A$ be a commutative algebraic group such that $\widehat{G}/\mathcal{A}\cong \widehat{A}$. We define $\mathcal{F}$ as the composition of the inclusion morphism $\widehat{V}\to \widehat{G}$ with the map $\widehat{G}\to \widehat{A}$ given by the above isomorphism. Since $\mathcal{F}$ is special by Proposition \ref{homspecial}, we are in the set-up of Question \ref{mainquem}. By our assumption, there is a formal subgroup $\mathcal{C}\leqslant \widehat{A}$ such that $\mathcal{F}(\widehat{V})\subseteq \mathcal{C}$ and
$$\dim(\mathcal{C})\leqslant \dim(V)-\dim(\mathcal{W}).$$
We take $\mathcal{B}$ as the preimage of $\mathcal{C}$ in $\widehat{G}$.
\end{proof}
In this paper, we aim to answer Question \ref{mainquem} positively. We explain below to which extend we have succeeded. In the case of characteristic $0$, we give the full positive answer. In the case of positive characteristic, we are forced to put extra assumptions on $\mathcal{F}$ and $A$. The theorem below is a compilation of Theorem \ref{shortchar0} and Theorem \ref{mainthm}.
\begin{theorem}\label{mainthmintro}
Assume $V$ is an algebraic variety over a perfect field $C$, $\mathcal{W}$ is an absolutely irreducible Zariski dense formal subscheme of $\widehat{V}$, $A$ is a commutative algebraic group and $\mathcal{F}:\widehat{V}\to \widehat{A}$ is a special formal map which vanishes on $\mathcal{W}$.
\begin{enumerate}
\item If $\ch(C)=0$, then there is a formal subgroup $\mathcal{C}\leqslant \widehat{A}$ such that $\mathcal{F}(\widehat{V})\subseteq \mathcal{C}$ and
$$\dim(\mathcal{C})\leqslant \dim(V)-\dim(\mathcal{W}).$$
\item If $\ch(C)>0$ and
\begin{itemize}
\item $\mathcal{F}$ is an \emph{$A$-limit formal map} (see Definition \ref{defalimit}) and

\item $A$ is an \emph{integrable algebraic group} (see Definition \ref{defintegr});
\end{itemize}
then there is a formal subgroup $\mathcal{C}\leqslant \widehat{A}$ such that $\mathcal{F}(\widehat{V})\subseteq \mathcal{C}$ and
$$\dim(\mathcal{C})\leqslant \dim(V)-\dim(\mathcal{W}).$$
\end{enumerate}
\end{theorem}
\noindent
By Proposition \ref{alimisspe}, any $A$-limit formal map $\widehat{V}\to \widehat{A}$ is special and we expect the converse to be true. Theorem \ref{alimitthm} is a partial result into this direction, since for example this theorem implies that special maps are $A$-limits if $A$ is a unipotent group (see Remark \ref{unipalimit}(2)). Thus the extra assumption on $\mathcal{F}$ seems not to be very restrictive and we hope that it can be eliminated in future by using the methods from Section \ref{secspeciallimit}. But, unfortunately, the extra integrability assumption on $A$ is quite restrictive, and we do not see how to avoid it using the methods of this paper (such as differential forms on complete rings or Hasse-Schmidt derivations).

On the positive side, Theorem \ref{mainthmintro}(2) still gives some positive characteristic Ax-Schanuel applications, including the ones from our previous work \cite{K6}. The applications of Theorem \ref{mainthmintro} are discussed in detail in Section \ref{applications}. In general, Theorem \ref{mainthmintro} allows to conclude that for a ``sufficiently non-algebraic'' formal isomorphisms between algebraic groups $A$ and $B$ the Ax-Schanuel property holds, that is if $x$ is a rational point of $A$ in the ``domain of $\mathcal{F}$'' and $x$ is not contained in any proper algebraic subgroup of $A$, then the transcendence degree of the tuple $(x,\mathcal{F}(x))$ is large. In the positive characteristic case, it covers sufficiently non-algebraic endomorphisms of vector groups and algebraic tori (see Section \ref{applications}). The most interesting formal map which is still not covered by our analysis is a formal isomorphism between the multiplicative group and an ordinary elliptic curve (in the positive characteristic case).

This paper is organized as follows. In Section 2, we gather necessary notions and facts from commutative algebra which will be needed in the rest of the paper. In Section 3, we state and prove the main theorem. In Section 4, we discuss applications of the main theorem to Ax-Schanuel type problems. In Section 5, we prove that formal maps arising from a large class of formal homomorphisms are ``good'' (as in Theorem \ref{short}).

I would like to thank the referee for her/his comments which helped to improve the presentation of this paper.

\section{Differential forms and formal maps}\label{forms}
In this section, we collect the necessary notions and results concerning differential forms. Differential forms appear in this paper in several ways and the interplay between these different types of forms is crucial for the proof of the main result (Theorem \ref{mainthm}). To be more precise, we will consider: modules of K\"{a}hler forms, global forms on schemes, complete forms and invariant forms. A more detailed analysis including the case of higher forms of Vojta \cite{voj} will be given in \cite{K7}.
\\
The aim of this section is to prove the following results.
\begin{enumerate}
\item A weak bound on the dimension of the kernel (Proposition \ref{rightdep}).

\item Homomorphisms induced by $A$-limit formal maps on forms coincide with homomorphisms induced by algebraic maps (Proposition \ref{limitforms}).

\item A characterization of vanishing of an $A$-limit formal map (Proposition \ref{limitsobs}).

\item Being $A$-limit passes to a factorization through the Frobenius morphism (Proposition \ref{frobfactorbetter}).
\end{enumerate}
The items $(1)$, $(2)$ and $(3)$ will be used in the proof of a strong bound on the dimension of the kernel (Proposition \ref{biglemma}). This strong bound together with the inductive process provided by $(4)$ will enable us to prove the main result (Theorem \ref{mainthm}).

\subsection{Notation}\label{notation}
In this subsection we set the notation and conventions which will be used throughout the paper.

All the rings considered here are commutative and have a unit $1$ with $1\neq 0$. All ring homomorphisms preserve the unit. We fix a perfect field $C$ and we assume that $C$ has characteristic $p>0$, unless clearly stated otherwise. Let $R$ be a $C$-algebra, $I$ a proper ideal of $R$ and $m\in \Nn$. By $I^m$, we denote the $m$-th power of the ideal $I$. We denote by $\fr^m_R$ (or just by $\fr^m$ if $R$ is clear from the context) the $m$-th power of the Frobenius endomorphism on $R$, by $R^{p^{m}}$ the image of $R$ by $\fr^m$ and by $R^{p^{\infty}}$, the intersection of all $R^{p^{m}}$. Since $C$ is perfect, $R^{p^{m}}$ and $R^{p^{\infty}}$ are $C$-subalgebras of $R$.
We denote by $\fr^m(I)$, the ideal of $R^{p^m}$ which is the image of $I$ by the Frobenius epimorphism $\fr^m:R\to  R^{p^m}$.
\\
A subset $\{r_1,\ldots,r_t\}\subseteq R$ is \emph{$p$-independent} (resp. \emph{$p$-basis}), if the set
$$\{r_1^{n_1}\cdot \ldots \cdot r_t^{n_t}\ |\ n_1,\ldots,n_t\in \{0,\ldots,p-1\}\}$$
is $R^p$-linearly independent (resp. a basis over $R^p$).
\\
For a local ring $T$, $\mathfrak{m}_T$ denotes its maximal ideal.
\\
We say that a $C$-algebra homomorphism $F:R\to S$ is a \emph{$0$-map} if $F(R)=C$, i.e. if $F$ factors through the structure map $C\to S$.  Since all maps of $C$-algebras take $1$ to $1$, the constant $0$-map can not appear, so our terminology shall cause no confusion. If $R$ is a local $C$-algebra with the residue field coinciding with $C$, then the following are equivalent
\begin{itemize}
\item $F$ is a $0$-map;

\item $\mathfrak{m}_R=\ker(F)$;

\item $F$ is the composition of the residue map $R\to C$ with the structure map $C\to S$.
\end{itemize}
\noindent
We call such an $F$ as above \emph{the} $0$-map (it also coincides with the categorical $0$-map in this case).
\\
We denote the $R$-module of K\"{a}hler forms $\Omega_{R/C}$ by $\Omega_R$. Clearly, for a ring extension $C\subseteq T\subseteq R^p$ we have $\Omega_R\cong\Omega_{R/T}$ and we will use this identification freely. For any multiplicative subset $E\subseteq R$, we will also identify $\Omega_{R_E}$ with $\Omega_R\otimes_RR_E$.
\\
For a local ring $R$, $\widehat{R}$ always denotes the completion of $R$ with respect to its maximal ideal (even when $R$ is an algebra over another local ring). If $M$ is an $R$-module, then $\widehat{M}$ denotes the $\widehat{R}$-module which is the completion of $M$ with respect to $\mathfrak{m}_RM$. Finally, we denote by $\widehat{\Omega}_{R}$ the completion of $\Omega_{R}$.

\subsection{Local algebras}\label{localalgebras}
\noindent%
In this subsection we will clarify different issues regarding completions and the Frobenius map. Everything is folklore but for reader's convenience we collect the necessary facts here.

For a ring $S$, an ideal $I$ and an $S$-module $M$, by the $I$-adic topology on $M$, we mean the topology given by the filtration $(I^mM)_m$. Let us fix a Noetherian local $C$-algebra $R$. By the standard topology on $R$, we mean the $\mathfrak{m}_R$-adic topology. Let us fix $m\in \Nn$ and let $q=p^m$.
\begin{lemma}\label{easytopology}
The standard topology on $R$ coincides with the $\mathfrak{m}_{R^q}$-adic one.
\end{lemma}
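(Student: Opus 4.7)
The plan is to compare the two filtrations $(\mathfrak{m}_R^N)_N$ and $(\mathfrak{m}_{R^q}^N R)_N$ of ideals of $R$ and to show that each is cofinal inside the other, which immediately yields equality of the induced topologies.

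The first step is the bookkeeping identity $\mathfrak{m}_{R^q} = \mathfrak{m}_R \cap R^q$ (which also confirms that $R^q$ is local). For this, one checks that an element $y = z^q \in R^q$ is a unit in $R^q$ if and only if it is a unit in $R$: if $yw = 1$ in $R$, then $z \cdot z^{q-1}w = 1$ shows $z \in R^\times$, so $(z^{-1})^q \in R^q$ is an inverse to $y$ inside $R^q$, and the converse is immediate. Granted this identity, the inclusion $\mathfrak{m}_{R^q} \subseteq \mathfrak{m}_R$ gives the easy containment $\mathfrak{m}_{R^q}^N R \subseteq \mathfrak{m}_R^N$ for every $N$, so the standard topology is at least as fine as the $\mathfrak{m}_{R^q}$-adic one.

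The opposite containment is the real content and rests on a single pigeonhole observation. Since $R$ is Noetherian, pick a finite generating set $\mathfrak{m}_R = (x_1, \ldots, x_n)$. Every monomial $x_1^{a_1}\cdots x_n^{a_n}$ of total degree at least $qn$ must have some $a_i \geq q$, and so factors as $x_i^q \cdot r$ with $r \in R$. Because $x_i \in \mathfrak{m}_R$, the element $x_i^q$ belongs to $\mathfrak{m}_R \cap R^q = \mathfrak{m}_{R^q}$, so the monomial lies in $\mathfrak{m}_{R^q} R$. This yields
\[
\mathfrak{m}_R^{qn} \subseteq \mathfrak{m}_{R^q} R,
\]
and raising to the $N$-th power gives $\mathfrak{m}_R^{qnN} \subseteq \mathfrak{m}_{R^q}^N R$ for every $N$.

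Assembling the two inclusions sandwiches each filtration between arbitrarily large powers of the other, which is exactly what equality of topologies demands. I expect no genuine obstacle here: the pigeonhole bound for Frobenius powers of a finitely generated maximal ideal is the only substantive step, and the identification $\mathfrak{m}_{R^q} = \mathfrak{m}_R \cap R^q$ is just the remark that the Frobenius preserves invertibility; the lemma is essentially a warm-up for the completion-versus-Frobenius analysis to follow.
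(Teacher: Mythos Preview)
Your proof is correct and follows essentially the same idea as the paper's. The paper compresses your argument into one line by observing that $\sqrt{\mathfrak{m}_{R^q}R}=\mathfrak{m}_R$ (precisely because $x\in\mathfrak{m}_R$ implies $x^q\in\mathfrak{m}_{R^q}$) and then invoking the standard fact that in a Noetherian ring two ideals with the same radical define the same adic topology; your pigeonhole step is exactly the content of that standard fact made explicit.
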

\begin{proof}
Since the radical of $\mathfrak{m}_{R^{q}}R$ coincides with $\mathfrak{m}_{R}$ and $R$ is Noetherian, the two topologies are the same.
\end{proof}
\noindent
We note below a result regarding the structure of complete $C$-algebras. It follows e.g. from the proof of \cite[Theorem 29.4(iii)]{mat}.
\begin{prop}\label{completestr}
Assume that $R$ is a complete ring of Krull dimension $r$ and with the residue field coinciding with $C$. Then for any system $\{x_1,\ldots,x_r\}$ of local parameters of $R$ we have:
\begin{enumerate}
\item the elements $x_1,\ldots,x_r$ are analytically independent over $C$;

\item the extension of rings $C\llbracket x_1,\ldots,x_r\rrbracket\subseteq R$ is finite.
\end{enumerate}
\end{prop}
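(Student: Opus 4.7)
The plan is to build the natural $C$-algebra map $\varphi: C\llbracket X_1,\ldots,X_r\rrbracket \to R$ sending $X_i$ to $x_i$, and then prove (2) first (that $\varphi$ makes $R$ into a finite module) and deduce (1) (injectivity of $\varphi$) from (2) by a Krull dimension comparison.

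To construct $\varphi$, I first need a copy of $C$ sitting inside $R$. This is where Cohen's structure theorem comes in: since $R$ is complete, Noetherian, local, and its residue field equals $C$, and since $C$ is perfect, there is a coefficient field $k\subseteq R$ mapping isomorphically to $R/\mathfrak{m}_R=C$; one checks that the composition $C\to R\to R/\mathfrak{m}_R$ is the identity so that $k$ identifies with $C$ along the structure map. Now the $x_i$ lie in $\mathfrak{m}_R$, so the $C$-algebra map $C[X_1,\ldots,X_r]\to R$, $X_i\mapsto x_i$, is continuous for the $(X_1,\ldots,X_r)$-adic topology on the source and the $\mathfrak{m}_R$-adic topology on the target; since $R$ is complete, it extends uniquely to $\varphi$. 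Denote by $A\subseteq R$ its image, a complete local $C$-subalgebra of $R$ with maximal ideal $\mathfrak{m}_A=(x_1,\ldots,x_r)A$.

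For (2), because $\{x_1,\ldots,x_r\}$ is a system of parameters, $(x_1,\ldots,x_r)$ is $\mathfrak{m}_R$-primary, so $R/(x_1,\ldots,x_r)R$ is Artinian and in particular a finite-dimensional $C$-vector space. Pick lifts $y_1,\ldots,y_s\in R$ of a $C$-basis of this quotient. Set $M=Ay_1+\cdots+Ay_s\subseteq R$. By construction $M+\mathfrak{m}_AR=R$. Because $(x_1,\ldots,x_r)$ is $\mathfrak{m}_R$-primary, the $\mathfrak{m}_A$-adic topology on $R$ agrees with the standard topology (compare Lemma \ref{easytopology}), so $R$ is $\mathfrak{m}_A$-adically complete and separated. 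The topological Nakayama lemma then upgrades $M+\mathfrak{m}_AR=R$ to $M=R$, giving (2).

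For (1), a finite ring extension preserves Krull dimension, so $\dim A=\dim R=r$. But $A\cong C\llbracket X_1,\ldots,X_r\rrbracket/\ker\varphi$, and $C\llbracket X_1,\ldots,X_r\rrbracket$ is a regular local ring of dimension $r$, in particular a domain in which any nonzero element is a nonzerodivisor; by Krull's principal ideal theorem, any nonzero ideal cuts the dimension strictly, so $\ker\varphi\neq 0$ would force $\dim A\leq r-1$, a contradiction. Hence $\varphi$ is injective, which is exactly the analytic independence of $x_1,\ldots,x_r$ over $C$. The main delicate point is the finiteness step: one must verify that the topological Nakayama lemma applies, i.e. that $R$ is complete and separated for the $\mathfrak{m}_A$-adic topology, which rests on the $\mathfrak{m}_R$-primarity of $(x_1,\ldots,x_r)$; the other steps (coefficient field from Cohen and the dimension comparison) are then essentially automatic.
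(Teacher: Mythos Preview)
Your argument is correct and is precisely the standard proof that the paper is pointing to when it cites \cite[Theorem 29.4(iii)]{mat}: build the map from the power series ring, prove finiteness via the complete Nakayama lemma using that $(x_1,\ldots,x_r)R$ is $\mathfrak{m}_R$-primary, then deduce injectivity by comparing Krull dimensions. One small simplification: you do not need Cohen's theorem to obtain a coefficient field, since $R$ is already a $C$-algebra and your own observation that $C\to R\to R/\mathfrak{m}_R$ is the identity shows the structure map itself embeds $C$ as a coefficient field.
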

\noindent
We can prove now the remaining necessary properties of $R$ under some extra assumptions.
\begin{prop}\label{completefrob}
Assume that the residue field of $R$ coincides with $C$ and the Frobenius map is injective on $R$. We also assume that $R$ is complete or $R$ is a localization of a $C$-algebra of finite type. Then we have:
\begin{enumerate}
\item The ring $R$ is finite over $R^q$.

\item The standard topology on $R^q$ coincides with the topology induced from $R$.

\item The natural map $\Psi:\widehat{(R^q)}\to \widehat{R}$ is injective and its image coincides with $(\widehat{R})^q$.

\item $R\cap(\widehat{R})^q=R^q$.
\end{enumerate}
\end{prop}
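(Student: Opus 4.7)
The plan is to prove the four parts in order, using (1) and (2) to set up the finiteness and topological structure needed for the single exact-sequence argument that delivers (3) and (4). For (1), I would treat the two cases in parallel using perfectness of $C$. In the complete case, Proposition~\ref{completestr} gives a finite inclusion $C\llbracket x_1,\ldots,x_r\rrbracket\hookrightarrow R$; since $C=C^q$, this subring is free of rank $q^r$ over $C\llbracket x_1^q,\ldots,x_r^q\rrbracket\subseteq R^q$, so $R$ is finite over $R^q$. In the finite-type case $R = S_{\mathfrak{p}}$ with $S=C[s_1,\ldots,s_n]$, perfectness gives $S$ finite over $C[s_1^q,\ldots,s_n^q]\subseteq S^q$ via monomials of multi-degree $<q$. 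For a denominator $t\in S\setminus\mathfrak{p}$, the identity $1/t = t^{q-1}/t^q$ with $1/t^q=(1/t)^q\in R^q$ shows $R=R^q\cdot S$, which is therefore finite over $R^q$.

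For (2), I would first show $R^q$ is local with $\mathfrak{m}_{R^q} = \mathfrak{m}_R\cap R^q$: by (1) the extension $R^q\subseteq R$ is integral, so any element of $R^q$ that is a unit in $R$ is already a unit in $R^q$. Since $R$ is a finite $R^q$-module, Artin--Rees gives $\mathfrak{m}_{R^q}^n R\cap R^q\subseteq \mathfrak{m}_{R^q}^{n-k}$ for large $n$, while trivially $\mathfrak{m}_{R^q}^n\subseteq \mathfrak{m}_{R^q}^n R\cap R^q$. Combined with Lemma~\ref{easytopology}, which identifies the standard topology on $R$ with the $\mathfrak{m}_{R^q}R$-adic topology, this pins down the subspace topology on $R^q$ as its $\mathfrak{m}_{R^q}$-adic topology.

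For (3), completion is exact on finitely generated modules over a Noetherian local ring, so the short exact sequence of $R^q$-modules $0\to R^q\to R\to R/R^q\to 0$ (with $R/R^q$ finite over $R^q$ by (1)) yields an exact sequence $0\to \widehat{R^q}\to \widehat{R}\to \widehat{R/R^q}\to 0$, using Lemma~\ref{easytopology} to identify the $\mathfrak{m}_{R^q}R$-adic completion of $R$ with $\widehat{R}$. This already proves $\Psi$ injective. For the image, I would use that $\fr^m\colon R\to R^q$ is a ring isomorphism sending $\mathfrak{m}_R^n$ onto $\mathfrak{m}_{R^q}^n$ (the key point being $\mathfrak{m}_{R^q}=\fr^m(\mathfrak{m}_R)$, because any $q$-th power of a unit is a unit). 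Consequently $\fr^m$ induces a ring isomorphism $\widehat{\fr^m}\colon \widehat{R}\to\widehat{R^q}$, and $\Psi\circ\widehat{\fr^m}$ agrees with the continuous Frobenius on the dense subring $R\subseteq\widehat{R}$, hence equals the Frobenius on $\widehat{R}$. Therefore $\Psi(\widehat{R^q})=(\widehat{R})^q$.

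For (4), I would use (3) to rewrite $R\cap (\widehat{R})^q = R\cap\Psi(\widehat{R^q})$. By the exact sequence above, $\Psi(\widehat{R^q})$ is the kernel of $\widehat{R}\to\widehat{R/R^q}$, so $R\cap(\widehat{R})^q$ equals the kernel of $R\to R/R^q\to\widehat{R/R^q}$; the first map has kernel $R^q$, and the second is injective by Krull's intersection theorem applied to the finitely generated $R^q$-module $R/R^q$, giving the desired equality. I expect the image computation in (3) to be the most delicate step: it requires Frobenius to be a topological isomorphism $R\to R^q$, not merely continuous, which is what lets $\widehat{\fr^m}$ become an isomorphism of completions and pin down the image of $\Psi$.
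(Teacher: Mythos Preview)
Your proof is correct. Parts (1) and (2) match the paper's argument essentially verbatim. For (3) and (4) you take a slightly different route: the paper proves injectivity of $\Psi$ directly from (2) via \cite[Theorem 8.1]{mat} and identifies the image by the elementary observation that $(r_i)$ is Cauchy in $R$ if and only if $(r_i^q)$ is Cauchy in $R^q$, whereas you package both facts into the exactness of completion on the short exact sequence $0\to R^q\to R\to R/R^q\to 0$ together with the topological isomorphism $\fr^m\colon R\to R^q$. For (4) the paper invokes the isomorphism $R\otimes_{R^q}(\widehat{R})^q\to \widehat{R}$ (from \cite[Theorem 8.7]{mat}) and then implicitly uses faithful flatness of $R^q\to(\widehat{R})^q$ to recover $R^q$ as the contraction; your argument via the completed exact sequence and Krull's intersection theorem on $R/R^q$ is more self-contained and makes the logic explicit. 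Both approaches are standard and equivalent in strength; yours has the mild advantage of reusing a single exact sequence for both (3) and (4).
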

\begin{proof}
For $(1)$ note that if $T=C[t_1,\ldots,t_m]$, then $T=T^q[t_1,\ldots,t_m]$. If $R=T_S$, then
$$R=T^q_{S^q}[t_1,\ldots,t_m]=R^q[t_1,\ldots,t_m],$$
so $R$ is finite over $R^q$. In the complete case, Proposition \ref{completestr} implies that $R$ is finite over $R^{q}$.

For $(2)$, by the Artin-Rees lemma \cite[Theorem 8.6]{mat} and $(1)$, the topology on $R^{q}$ coincides with the subspace topology induced from the $\mathfrak{m}_{R^q}$-adic topology on $R$. By Lemma \ref{easytopology}, the latter topology on $R$ coincides with the standard topology.

For $(3)$, by $(2)$ and \cite[Theorem 8.1]{mat}, the map $\Psi$ is injective. Note also that for any sequence $(r_i)$ of elements of $R$, we have
$$\text{$(r_i)$ is a Cauchy sequence in $R\ \ \ \ \Longleftrightarrow\ \ \ \ $  $(r_i^q)$ is a Cauchy sequence in $R^q$.}$$
The left-to-right implication does not use any extra assumption on $R$ and implies that $(\widehat{R})^q$ is contained in the image of $\Psi$. The right-to-left implication uses the injectivity of the Frobenius map on $R$ and gives the reverse inclusion.

For the last part, it is enough to notice that the natural map
$$R\otimes_{R^q}(\widehat{R})^q\to \widehat{R}$$
is a ring isomorphism, which follows from
$(1)$, $(2)$ and \cite[Theorem 8.7]{mat}.
\end{proof}
\begin{definition}\label{r[m]}
If $R$ is a local ring, then by $R[m]$ we denote the quotient ring $R/(\fr^m_R(\mathfrak{m}_R)R)$.
\end{definition}

\begin{remark}\label{rnnotation}
If $I=(r_1,\ldots,r_n)$ is a finitely generated ideal, then the sequences $(I^m)_m$ and $(r_1^m,\ldots,r_n^m)_m$ give the same topologies. Hence for a local Noetherian ring $R$, we have $\widehat{R}\cong \li R[m]$. It will be more convenient for us to use the rings $R[m]$ rather than the rings $R/\mathfrak{m}_R^m$, which are usually used to define the completed ring $\widehat{R}$.
\end{remark}

\subsection{Differential forms and a dense formal subscheme}\label{dense}
\noindent%
The main result of this subsection is Proposition \ref{rightdep}. A much stronger result holds in the case of characteristic $0$ (Proposition \ref{char0dep}). In Section \ref{dependence}, we will prove a stronger version of Proposition \ref{rightdep} under extra assumptions.

First, we need two lemmas about differential forms over complete algebras. Both are folklore and the first one does not need our general assumption about the characteristic.
\begin{lemma}\label{lemma1}
Let $R$ be a local $C$-algebra. Then we have:
\begin{enumerate}

\item $\widehat{\Omega}_{R}\cong \widehat{\Omega}_{\widehat{R}}$.

\item If $R$ is a localization of an affine $C$-algebra, then $\widehat{\Omega}_{R}\cong \Omega_R\otimes_R\widehat{R}$.
\end{enumerate}
\end{lemma}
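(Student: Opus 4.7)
The plan is to prove (1) via a universal-property argument and (2) by reducing to the standard fact that completion of a finitely generated module over a Noetherian ring equals its tensor product with the completed ring.

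For (1), I would show that both $\widehat{\Omega}_{R}$ and $\widehat{\Omega}_{\widehat{R}}$ represent the functor of continuous $C$-derivations from $\widehat{R}$ (topologised $\mathfrak{m}_{\widehat{R}}$-adically) into complete $\widehat{R}$-modules, and then identify them through this common universal property. In one direction, the composition $R \to \widehat{R} \xrightarrow{d} \Omega_{\widehat{R}} \to \widehat{\Omega}_{\widehat{R}}$ is a $C$-derivation on $R$, so the universal property of $\Omega_{R}$ produces an $R$-linear map $\Omega_{R} \to \widehat{\Omega}_{\widehat{R}}$. By the Leibniz rule, the underlying derivation sends $\mathfrak{m}_{R}^{n}$ into $\mathfrak{m}_{\widehat{R}}^{n-1}\widehat{\Omega}_{\widehat{R}}$, so this map is continuous and extends to $\widehat{\Omega}_{R} \to \widehat{\Omega}_{\widehat{R}}$. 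In the other direction, the $C$-derivation $R \xrightarrow{d} \Omega_{R} \to \widehat{\Omega}_{R}$ lands in a complete module and is continuous by the same Leibniz argument, so it extends uniquely to a continuous $C$-derivation $\widehat{R} \to \widehat{\Omega}_{R}$; this in turn factors through $\Omega_{\widehat{R}}$ and, again by continuity, through $\widehat{\Omega}_{\widehat{R}}$. The two resulting maps are mutually inverse because they agree on the topologically dense set of generators $\{dr : r \in R\}$.

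For (2), I would use that a localization of an affine $C$-algebra is essentially of finite type over $C$, so $\Omega_{R}$ is a finitely generated $R$-module (by the usual computation $\Omega_{C[t_{1},\ldots,t_{n}]} \cong C[t_{1},\ldots,t_{n}]^{n}$ together with the exact sequences for quotients and localizations). Since such an $R$ is Noetherian, \cite[Theorem 8.7]{mat}, in the form that the $\mathfrak{m}_{R}$-adic completion of a finitely generated $R$-module $M$ satisfies $\widehat{M} \cong M \otimes_{R} \widehat{R}$, immediately gives the desired identification.

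The main obstacle lies in part (1): while the construction of the two maps is essentially formal, one has to check carefully that the extension of $R \xrightarrow{d} \widehat{\Omega}_{R}$ to $\widehat{R}$ really does give a $C$-derivation (the Leibniz rule must pass to limits) and that the resulting map is $\widehat{R}$-linear, not merely $R$-linear. Both follow by density of $R$ in $\widehat{R}$ and completeness of the target, but the bookkeeping around continuity and the filtration on $\widehat{\Omega}_{R}$ (which is coarser than the $\mathfrak{m}_{R}$-adic filtration on $\Omega_{R}$ by exactly one step, coming from the $d\mathfrak{m}_{R}^{n}$ contribution) is where the proof spends its effort.
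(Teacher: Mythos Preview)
Your argument for (2) is essentially identical to the paper's: $\Omega_R$ is finitely generated over the Noetherian ring $R$, so completion agrees with tensoring by $\widehat{R}$. (The paper cites Eisenbud's \cite[Theorem 7.2(a)]{comm} rather than \cite[Theorem 8.7]{mat}, but the content is the same.)

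For (1) your approach is correct but genuinely different from the paper's. The paper simply invokes the formula \cite[20.7.14.2]{EGAIVi}, which identifies both $\widehat{\Omega}_R$ and $\widehat{\Omega}_{\widehat{R}}$ with $\varprojlim_m \Omega_{R/\mathfrak{m}^m}$; since $R/\mathfrak{m}^m \cong \widehat{R}/\mathfrak{m}_{\widehat{R}}^m$, the isomorphism is immediate. Your route via continuous $C$-derivations and the universal property is more hands-on: it builds the two maps explicitly and checks they are mutual inverses on a dense set of generators. What your approach buys is self-containment (no appeal to EGA) and a clearer picture of why the isomorphism holds; what the paper's approach buys is brevity and the avoidance of exactly the bookkeeping you flag as the main obstacle---passing the Leibniz rule and $\widehat{R}$-linearity through limits. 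In fact the EGA formula is proved by essentially the argument you sketch, so the two proofs are morally the same, just packaged differently.
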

\begin{proof}
By the formula \cite[20.7.14.2]{EGAIVi}, both $\widehat{R}$-modules in $(1)$ are naturally isomorphic to $\li(\Omega_{R/\mathfrak{m}^m})$.
\\
Since $R$ is a localization of an affine $C$-algebra, $\Omega_R$ is a finitely generated $R$-module. By \cite[Theorem 7.2(a)]{comm}, we get the second part.
\end{proof}
The second lemma uses the characteristic assumption. We will comment on its characteristic zero version in Section \ref{char0}.
\begin{lemma}\label{lemma2}
Let $\mathcal{S}$ be a Noetherian complete local $C$-algebra with the residue field $C$ and Krull dimension $r$. Suppose that $\mathcal{S}$ is a domain and let $L$ be its fraction field. Then we have:
\begin{enumerate}
\item The $\mathcal{S}$-module $\Omega_{\mathcal{S}}$ is complete i.e. $\Omega_{\mathcal{S}}\cong \widehat{\Omega}_{\mathcal{S}}$.

\item $\dim_L(\Omega_L)=r$.
\end{enumerate}
\end{lemma}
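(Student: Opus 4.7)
The approach is to exploit perfectness of $C$ to replace $\Omega_{-/C}$ by $\Omega_{-/(-)^p}$, combined with the fact (proved by an explicit power-series expansion) that in characteristic $p$, $\mathcal{S}$ becomes module-finite over its Frobenius subring. Concretely, fix a system of local parameters $x_1, \ldots, x_r$ and set $T := C\llbracket x_1, \ldots, x_r\rrbracket$, so by Proposition \ref{completestr} the inclusion $T \subseteq \mathcal{S}$ is finite. Since $C = C^p$, we have $T^p = C\llbracket x_1^p, \ldots, x_r^p\rrbracket$; regrouping a formal power series $\sum a_\alpha x^\alpha$ according to $\alpha \bmod p$ and absorbing coefficients via $C^p = C$ shows $T$ is free over $T^p$ of rank $p^r$ with basis $\{x_1^{i_1}\cdots x_r^{i_r} : 0 \le i_j < p\}$. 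In particular $\mathcal{S}$ is finite over $T^p$, hence finite over $\mathcal{S}^p \supseteq T^p$.

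For (1), since $C \subseteq \mathcal{S}^p$, the identification noted in Section \ref{notation} gives $\Omega_{\mathcal{S}} = \Omega_{\mathcal{S}/\mathcal{S}^p}$, and because $\mathcal{S}$ is a finite $\mathcal{S}^p$-algebra this is a finitely generated $\mathcal{S}$-module. Every finitely generated module $M$ over a Noetherian complete local ring is already $\mathfrak{m}_{\mathcal{S}}$-adically complete (since $\widehat{M} \cong M \otimes_{\mathcal{S}} \widehat{\mathcal{S}} = M$ by \cite[Theorem 8.7]{mat}), which proves $\Omega_{\mathcal{S}} \cong \widehat{\Omega}_{\mathcal{S}}$.

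For (2), let $K := \mathrm{Frac}(T)$. Localizing the $T^p$-basis $\{x^\gamma\}$ of $T$ at $T^p \setminus 0$ gives $K^p = \mathrm{Frac}(T^p)$ and shows that the same family is a $K^p$-basis of $K$, so $[K : K^p] = p^r$. The Frobenius is a ring isomorphism $L \cong L^p$ restricting to $K \cong K^p$, which forces $[L^p : K^p] = [L : K]$; combining with $[L : K^p] = [L : L^p]\,[L^p : K^p] = [L : K]\,[K : K^p]$ yields $[L : L^p] = p^r$. Since $C \subseteq L^p$, we have $\Omega_L = \Omega_{L/L^p}$, and the standard result that a $p$-basis of $L$ over $L^p$ differentiates to an $L$-basis of $\Omega_{L/L^p}$ gives $\dim_L \Omega_L = r$. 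The only delicate point is matching $[L : L^p]$ to $[K : K^p]$ across the possibly inseparable extension $L/K$, handled by the Frobenius isomorphism trick above; the essential use of positive characteristic appears in the finiteness $[\mathcal{S} : \mathcal{S}^p] < \infty$, which fails in characteristic zero and explains the need for the separate treatment in Section \ref{char0}.
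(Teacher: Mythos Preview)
Your argument is correct and rests on the same skeleton as the paper's: use Proposition~\ref{completestr} to get $\mathcal{S}$ finite over $T=C\llbracket x_1,\ldots,x_r\rrbracket$, then exploit perfectness of $C$ to pass from $\Omega_{-/C}$ to $\Omega_{-/(-)^p}$. The execution differs in two places worth noting. For (1), the paper invokes \cite[Lemme 21.9.4]{EGAIVi} to identify $\widehat{\Omega}_{\mathcal{S}}$ directly with $\Omega_{\mathcal{S}/\mathcal{A}^p}$; you instead observe that $\mathcal{S}$ is module-finite over $\mathcal{S}^p$, so $\Omega_{\mathcal{S}}=\Omega_{\mathcal{S}/\mathcal{S}^p}$ is finitely generated and hence complete---this is more self-contained and avoids the EGA reference. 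For (2), the paper proceeds inductively, reducing via \cite[Theorem 25.3]{mat} to the case $L=K(a^{1/p})$ and tracking a $p$-basis through each simple extension; your Frobenius-isomorphism trick computing $[L:L^p]=[K:K^p]=p^r$ in one stroke is cleaner and sidesteps any case analysis on the (possibly mixed separable/inseparable) extension $K\subseteq L$. Both routes are valid; yours is slightly more elementary.
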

\begin{proof}
Let $x_1,\ldots,x_r$ be a local system of parameters of $\mathcal{S}$ and $\mathcal{A}:=C\llbracket x_1,\ldots,x_r\rrbracket$. By Proposition \ref{completestr},
the extension $\mathcal{A}\subseteq \mathcal{S}$ is finite and $\mathcal{A}$ is isomorphic as a $C$-algebra to the power series algebra in $r$ variables.
By \cite[Lemme 21.9.4]{EGAIVi}, we get that
$$\widehat{\Omega}_{\mathcal{S}}\cong \Omega_{\mathcal{S}/C\llbracket x_1^p,\ldots,x_r^p\rrbracket}.$$
Since $C$ is perfect, we have $C\llbracket x_1^p,\ldots,x_r^p\rrbracket=\mathcal{A}^p\subseteq \mathcal{S}^p$ and the first part follows.
\\
For the second part, let $K$ be the fraction field of $\mathcal{A}$. Since $K$ is the field of Laurent  power series in $r$ variables, we have $\dim_K\Omega_K=r$. It is enough to show that for any finite field extension $K\subseteq L$, we have $\dim_L\Omega_L=r$. By \cite[Theorem 25.3]{mat}, we can assume that $L=K(a^{1/p})$ for some $a\in K\setminus K^p$. There is a $p$-basis (= differential basis) of $K$ of the form $\{a_1,\ldots,a_r\}$, where $a=a_1$. Then $\{a_1^{1/p},a_2,\ldots,a_r\}$ is a $p$-basis of $L$, therefore $\dim_L(\Omega_L)=r$.
\end{proof}
\begin{remark}\label{rstar}
For a characteristic $0$ version of Lemma \ref{lemma2}, we need to work with the module of the completed forms, and we still get
$$\dim_L(\widehat{\Omega}_{\mathcal{S}}\otimes_{\mathcal{S}}L)=\dim({\mathcal{S}}).$$ 
Note that in this case, the module $\Omega_{\mathcal{S}}$ is huge and the dimension of $\Omega_{\mathcal{S}}\otimes_{\mathcal{S}}L$ is infinite.
\end{remark}
\noindent
We can prove now the main result of this subsection, which is the item $(2)$ from the beginning of this section.
\begin{prop}\label{rightdep}
Let $R$ be a local domain which is a localization of an affine algebra over $C$ and whose residue field coincides with $C$.
Let $P$ be a prime ideal of $\widehat{R}$, $\mathcal{S}=\widehat{R}/P$ and $L$ be the fraction field of $\mathcal{S}$. Then $\mathcal{S}$ is Noetherian, complete and we have
\begin{enumerate}
\item the natural map
 $\Omega_R\otimes_R\mathcal{S}\to \Omega_{\mathcal{S}}$ is onto;

\item if the map $R\to \mathcal{S}$ is injective, then
$$\dim_L\ker(\Omega_R\otimes_RL\to \Omega_{\mathcal{S}}\otimes_{\mathcal{S}}L)=\dim(R)-\dim(\mathcal{S}).$$
\end{enumerate}
\end{prop}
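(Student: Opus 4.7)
The plan is as follows. The completeness of $\mathcal{S}$ is standard, since $\widehat{R}$ is complete Noetherian local and $P$ is closed; the quotient topology on $\widehat{R}/P$ agrees with the $\mathfrak{m}_{\mathcal{S}}$-adic one, and the residue field stays $C$. The first preliminary I need is that $\Omega_{\mathcal{S}}$ is finitely generated over $\mathcal{S}$: by Proposition~\ref{completestr}, $\mathcal{S}$ is module-finite over $\mathcal{A}:=C\llbracket x_1,\ldots,x_r\rrbracket$ (where $r=\dim\mathcal{S}$) and hence also over $\mathcal{A}^p\subseteq\mathcal{S}^p$; by the convention in Section~\ref{notation} this gives $\Omega_{\mathcal{S}}=\Omega_{\mathcal{S}/\mathcal{A}^p}$, finitely generated over $\mathcal{S}$ because $\mathcal{S}$ is finite over $\mathcal{A}^p$.

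For (1), I will show by a density-plus-closedness argument that the image $M$ of $\Omega_R\otimes_R\mathcal{S}\to\Omega_{\mathcal{S}}$ equals $\Omega_{\mathcal{S}}$. Since $\Omega_R$ is finitely generated over $R$ (as $R$ is a localization of an affine $C$-algebra), $M$ is a finitely generated $\mathcal{S}$-submodule of the finitely generated $\mathcal{S}$-module $\Omega_{\mathcal{S}}$; by Krull's intersection theorem applied to $\Omega_{\mathcal{S}}/M$ it follows that $M$ is closed in the $\mathfrak{m}_{\mathcal{S}}$-adic topology. On the other hand, the image of $R$ in $\mathcal{S}$ is dense, because $R$ is dense in $\widehat{R}$ and the surjection $\widehat{R}\twoheadrightarrow\mathcal{S}$ is continuous. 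Leibniz gives $d(\mathfrak{m}_{\mathcal{S}}^n)\subseteq \mathfrak{m}_{\mathcal{S}}^{n-1}\Omega_{\mathcal{S}}$, so $d:\mathcal{S}\to\Omega_{\mathcal{S}}$ is continuous; therefore every $d\bar f$ with $\bar f\in\mathcal{S}$ is a limit of elements of $M$, hence lies in $\overline M=M$. Since $\{d\bar f:\bar f\in\mathcal{S}\}$ generates $\Omega_{\mathcal{S}}$ as an $\mathcal{S}$-module, $M=\Omega_{\mathcal{S}}$.

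For (2), I will tensor the surjection from (1) with $L$ over $\mathcal{S}$ to obtain a surjection $\Omega_R\otimes_R L\twoheadrightarrow\Omega_{\mathcal{S}}\otimes_{\mathcal{S}}L=\Omega_L$; the kernel then has $L$-dimension $\dim_L(\Omega_R\otimes_R L)-\dim_L\Omega_L$. Lemma~\ref{lemma2}(2) gives $\dim_L\Omega_L=\dim\mathcal{S}$. The injectivity of $R\to\mathcal{S}$ allows $R$ to embed into $L$ via its fraction field $K$, so $\Omega_R\otimes_R L\cong\Omega_K\otimes_K L$ has $L$-dimension $\dim_K\Omega_K=\trd_C K$ (a separating transcendence basis exists because $C$ is perfect and $K/C$ is finitely generated). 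Finally, the residue field condition forces $R$ to be the localization of an affine $C$-domain $A$ at a maximal ideal with residue field $C$, and the dimension formula for affine algebras over a field gives $\dim R=\trd_C K$; subtracting dimensions yields the desired formula.

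The step I expect to be most delicate is the surjectivity in (1): the map $R\to\mathcal{S}$ factors through $\widehat{R}$ and is typically far from surjective, so the ordinary second fundamental sequence alone only yields $\Omega_{\widehat R}\otimes_{\widehat R}\mathcal{S}\twoheadrightarrow\Omega_{\mathcal{S}}$. The density/closedness argument resolves this, but relies critically on $\Omega_{\mathcal{S}}$ being finitely generated over $\mathcal{S}$ --- a feature of positive characteristic together with the perfection of $C$, via the identification $\Omega_{\mathcal{S}}=\Omega_{\mathcal{S}/\mathcal{A}^p}$.
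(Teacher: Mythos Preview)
Your argument is correct, and for part~(2) it is essentially identical to the paper's: tensor the surjection with $L$, compute $\dim_L\Omega_L=\dim\mathcal{S}$ via Lemma~\ref{lemma2}(2), and compute $\dim_L(\Omega_R\otimes_RL)=\dim_K\Omega_K=\trd_CK=\dim R$ using the embedding $K\hookrightarrow L$ and the dimension formula for affine domains.

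For part~(1), however, you take a genuinely different route. The paper does not argue by density and closedness; instead it uses Lemmas~\ref{lemma1} and~\ref{lemma2}(1) to identify $\Omega_{\widehat R}\cong\widehat\Omega_{\widehat R}\cong\widehat\Omega_R\cong\Omega_R\otimes_R\widehat R$, and then observes that the surjection $\widehat R\twoheadrightarrow\mathcal S$ gives $\Omega_{\widehat R}\otimes_{\widehat R}\mathcal S\twoheadrightarrow\Omega_{\mathcal S}$, which after rewriting the left side is exactly $\Omega_R\otimes_R\mathcal S\twoheadrightarrow\Omega_{\mathcal S}$. Your approach avoids this chain of completion identifications entirely: you show the image $M$ is an $\mathcal S$-submodule that is closed (Krull intersection on the finitely generated quotient $\Omega_{\mathcal S}/M$) and contains the dense set $\{d\bar r:r\in R\}$ by continuity of $d$. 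Both approaches rest on the same positive-characteristic input, namely that $\Omega_{\mathcal S}=\Omega_{\mathcal S/\mathcal A^p}$ is finitely generated (equivalently complete); the paper packages this as Lemma~\ref{lemma2}(1), while you use it directly. Your argument is somewhat more self-contained for this proposition, whereas the paper's isomorphism $\Omega_{\widehat R}\cong\Omega_R\otimes_R\widehat R$ is a reusable structural fact that it also invokes elsewhere (e.g.\ in identifying $\Omega_R$ as a submodule of $\widehat\Omega_{\widehat R}$ in Definition~\ref{defspecial}).
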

\begin{proof}
The local ring  $\mathcal{S}$ is complete by \cite[Theorem 8.11]{mat} and Noetherian by \cite[Theorem 8.12]{mat}.

By Lemmas \ref{lemma1} and \ref{lemma2}, we have $\Omega_{\widehat{R}}\cong {\Omega}_R\otimes_R\widehat{R}$.
Since the map $\widehat{R}\to \mathcal{S}$ is onto, the induced map
$\Omega_{\widehat{R}}\otimes_{\widehat{R}}\mathcal{S}\to \Omega_{\mathcal{S}}$ is onto as well. By the associativity of the tensor product, the map $\Omega_R\otimes_R\mathcal{S}\to \Omega_{\mathcal{S}}$ is onto giving $(1)$.

We proceed to show the item $(2)$, so we assume that the map $R\to \mathcal{S}$ is injective.
Since the functor $\cdot\otimes_{\mathcal{S}}L$ is right-exact, the map
$$\Omega_R\otimes_RL\to \Omega_{\mathcal{S}}\otimes_{\mathcal{S}}L$$
is onto. By Lemma \ref{lemma2}(2), $\dim_L\Omega_L=\dim(\mathcal{S})$. Since, $\Omega_L\cong  \Omega_{\mathcal{S}}\otimes_{\mathcal{S}}L$, it is enough to show now that $\dim_L\Omega_R\otimes_RL=\dim(R)$.
\\
Let $K$ be the fraction field of $R$. Since the map $R\to \mathcal{S}$ is injective, $K$ embeds over $R$ into $L$. Since $R$ is a localization of an affine algebra over $C$, by \cite[Theorem 5.6]{mat} $\trd_CK=\dim(R)$, so by \cite[Theorem 26.10]{mat} we get that $\dim_K\Omega_K=\dim(R)$. It is enough to see now that $\Omega_R\otimes_RL\cong \Omega_K\otimes_KL$.
\end{proof}
\begin{remark}\label{char0onto}
Similarly we can show that if $\ch(C)=0$, then the map $\Omega_R\otimes_R\mathcal{S}\to \widehat{\Omega}_{\mathcal{S}}$ is onto and if
the map $R\to \mathcal{S}$ is injective, then
$$\dim_L\ker(\Omega_R\otimes_RL\to \widehat{\Omega}_{\mathcal{S}}\otimes_{\mathcal{S}}L)=\dim(R)-\dim(\mathcal{S}).$$
For the proof of the surjectivity part, we proceed as in the proof of Proposition \ref{rightdep} after noticing that the map $\widehat{\Omega}_{\widehat{R}}\otimes_{\widehat{R}}\mathcal{S}\to \widehat{\Omega}_{\mathcal{S}}$ is onto, since, by Lemma \ref{lemma1}, we have
$$\widehat{\Omega}_{\widehat{R}}\cong \Omega_R\otimes_R\widehat{R}.$$
For the dimension equality, we proceed again as in the proof of Proposition \ref{rightdep} using Remark \ref{rstar}. 
\end{remark}

\subsection{Differential forms and $p$-normal rings}\label{pnormal}
\noindent
To deal with rational points over $C$, we need to ``smoothen out'' our algebras a little bit. The notion of a normal ring is too strong for us, we will use a weaker version which is discussed in this subsection. We also prove here a result about the ``absolute constants'' of an absolutely irreducible formal scheme (Proposition \ref{lpinfty}).

Let $R$ be a $C$-algebra which is a domain, $L$ be its field of fractions, and $L^{\alg}$ a fixed algebraic closure of $L$. For each $m\in \Nn$, we denote by
\begin{itemize}
\item $R^{p^{-m}}$, the preimage of $R$ under the map $\fr^m:L^{\alg}\to L^{\alg}$;

\item $R^{p^{-\infty}}$, the union of all $R^{p^{-m}}$;

\item $R'$, the intersection of $R^{p^{-\infty}}$ and $L$.
\end{itemize}
\noindent
Clearly $R'$ is a $C$-algebra extension of $R$.
\begin{definition}
We say that $R$ is \emph{$p$-normal} if $R=R'$.
\end{definition}
\noindent
Obviously, normal rings are $p$-normal. We observe below that $p$-normality behaves like normality.
\begin{fact}\label{pnormallocal}
Let $R$ be as above. Then:
\begin{enumerate}
\item The ring $R'$ is $p$-normal.

\item If $R$ is $p$-normal and $S$ is a multiplicative subset of $R$, then $R_S$ is $p$-normal.

\item If $R$ is of finite type over $C$, then $R'$ is of finite type over $C$.
\end{enumerate}
\end{fact}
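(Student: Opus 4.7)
The plan is to handle the three parts in order, since each relies only on the definition of $R'$ together with (for part (3)) the classical finiteness of integral closure.

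For part (1), I would start by observing that $R$ and $R'$ share the fraction field $L$, because $R\subseteq R'\subseteq L$. Hence $(R')'=(R')^{p^{-\infty}}\cap L$, and $p$-normality of $R'$ reduces to the transitivity of the operation ``take a $p$-power root'': if $x\in L$ satisfies $x^{p^m}\in R'$, then by definition there is a further $n$ with $x^{p^{m+n}}=(x^{p^m})^{p^n}\in R$, so $x\in R^{p^{-\infty}}\cap L=R'$.

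Part (2) is a clearing-of-denominators argument. The localisation $R_S$ also has fraction field $L$, so I would take $x\in L$ with $x^{p^m}\in R_S$, write $x^{p^m}=r/s$ with $r\in R$, $s\in S$, and multiply through by $s^{p^m}$ to get
$$(xs)^{p^m}=x^{p^m}s^{p^m}=rs^{p^m-1}\in R.$$
The $p$-normality of $R$ then forces $xs\in R^{p^{-\infty}}\cap L=R'=R$, and dividing by $s$ places $x$ in $R_S$.

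For part (3), the key observation is that any $x\in R'$ is integral over $R$: it satisfies the monic polynomial $X^{p^m}-x^{p^m}$ with $x^{p^m}\in R$. Hence $R'$ sits inside the integral closure $\widetilde{R}$ of $R$ in $L$. Since $R$ is a finitely generated domain over the field $C$, the classical finiteness of integral closure yields that $\widetilde{R}$ is a finite $R$-module, and because $R$ is Noetherian, any $R$-submodule of $\widetilde{R}$ is finitely generated. In particular $R'$ is a finite $R$-module, hence a finitely generated $C$-algebra. The only non-formal ingredient in the whole fact is this appeal to the finiteness of integral closure for finitely generated domains over a field; the rest is pure unwinding of definitions, so this is where I expect any genuine difficulty (if any) to lie.
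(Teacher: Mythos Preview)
Your proof is correct and matches the paper's own argument essentially line for line: part (1) is declared obvious in the paper, part (2) is the same clearing-of-denominators computation $(s\alpha)^{p^m}=rs^{p^m-1}$, and part (3) is precisely the sandwich $R\subseteq R'\subseteq \bar{R}$ with the normalization $\bar{R}$ finite over $R$ and $R$ Noetherian. The only non-formal input you identified---finiteness of normalization for affine domains over a field---is exactly what the paper cites as well.
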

\begin{proof}
The first part is obvious. For the proof of the second part, let us take $\alpha\in (R_S)'$. Then there is $m\in \Nn$, $x\in R$ and $s\in S$ such that $\alpha^{p^m}=r/s$. Thus $(s\alpha)^{p^m}=rs^{p^m-1}$ and $s\alpha \in R'=R$, so $\alpha\in R_S$.
\\
For the last part, let $\bar{R}$ denote the normalization of $R$. By \cite[page 262]{mat}, $\bar{R}$ is finitely generated as an $R$-module. Since $R$ is Noetherian, $\bar{R}$ is Noetherian as an $R$-module, so $R'$ is finitely generated as an $R$-module as well. In particular, $R'$ is of finite type over $C$.
\end{proof}
\begin{example}
It is easy to see that if $p=2$ or $p=3$, then the ring $C[X^2,X^3]$ is not $p$-normal. Similar examples can be easily constructed for any prime $p$.
\end{example}
\noindent
The next result is the reason why we have introduced the notion of a $p$-normal ring.
\begin{lemma}\label{formppower}
Let $f:T\to R$ be a $C$-algebra homomorphism between domains, $K$ be the fraction field of $R$ and assume that $R$ is $p$-normal. Then the following are equivalent:
\begin{enumerate}

\item for any $\omega\in \Omega_T$, we have $f_*(\omega)=0$ in $\Omega_K$,

\item $f(T)\subseteq R^p$.

\end{enumerate}
\end{lemma}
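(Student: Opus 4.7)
\textbf{Plan for the proof of Lemma \ref{formppower}.}
The implication $(2) \Rightarrow (1)$ is the straightforward direction. If $f(T) \subseteq R^p$, then for each $t \in T$ I can write $f(t) = r_t^p$ for some $r_t \in R$. The induced map $f_* \colon \Omega_T \to \Omega_R$ sends the generator $dt$ to $d(f(t)) = d(r_t^p) = p\, r_t^{p-1}\, dr_t = 0$ since $\ch(C) = p$. Because $\Omega_T$ is generated as a $T$-module by the symbols $dt$ and $f_*$ is $T$-linear (with $T$ acting on $\Omega_R$ through $f$), the whole map $\Omega_T \to \Omega_R$ is zero, and hence so is its composition with the localization $\Omega_R \to \Omega_K$.

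For $(1) \Rightarrow (2)$, I first transport the assumption into the fraction field $K$. Given $t \in T$, condition $(1)$ says that $d(f(t)) = 0$ in $\Omega_K = \Omega_{K/C}$. The first key step is the classical identification
\[
\ker\bigl(d \colon K \to \Omega_{K/C}\bigr) \;=\; C \cdot K^p,
\]
which follows from the fact that any $p$-basis of $K$ over $C$ yields a $K$-basis of $\Omega_{K/C}$ via $d$. Since $C$ is perfect, every element of $C$ is already a $p$-th power inside $C \subseteq K$, so $C \cdot K^p = K^p$. Therefore $f(t) = \alpha^p$ for some $\alpha \in K$.

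The second and final step uses $p$-normality to pull $\alpha$ back into $R$. By construction $\alpha$ lies in $R^{p^{-1}} \cap K \subseteq R^{p^{-\infty}} \cap K = R'$, and since $R$ is assumed $p$-normal, $R' = R$, so $\alpha \in R$ and $f(t) = \alpha^p \in R^p$, as desired. Nothing in either direction is genuinely hard; the conceptual point of the lemma, and the only place where a non-trivial hypothesis is used, is precisely this last inclusion $R^{p^{-1}} \cap K \subseteq R$, which is exactly what $p$-normality is designed to provide. Without it the argument would only produce a $p$-th root of $f(t)$ in some purely inseparable extension of $R$ sitting inside $K$, and the conclusion $f(T) \subseteq R^p$ could genuinely fail, as the example $R = C[X^2,X^3]$ (for $p=2,3$) already hints at.
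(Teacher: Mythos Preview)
Your proof is correct and follows essentially the same route as the paper. The paper dispenses with $(2)\Rightarrow(1)$ in one line (``Only the implication $(1)\Rightarrow(2)$ needs a proof'') and then, for the nontrivial direction, argues exactly as you do: if $y=f(x)$ had $dy=0$ in $\Omega_K$ but $y\notin K^p$, then $\{y\}$ could be extended to a $p$-basis of $K$, contradicting $dy=0$; and then $p$-normality of $R$ turns $y\in K^p$ into $y\in R^p$.
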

\begin{proof}
Only the implication $(1)\Rightarrow (2)$ needs a proof. Take $x\in T$ and let $y=f(x)\in R$. By the assumption, $\ddd y=0$ in $\Omega_K$. Then $y\in K^p$, since otherwise $\{y\}$ could be extended to a $p$-basis of $K$. Since $R$ is $p$-normal, we get $y\in R^p$.
\end{proof}
\noindent
We will see now that taking the $p$-normalization does not affect the $C$-rational points, which could be the case for the usual normalization.
\begin{lemma}\label{pnormalization}
Let $R$ be an affine $C$-algebra which is a domain and $P$ be an ideal of $R$ such that $R/P=C$. Then there is an ideal $P'$ in $R'$ such that $R\cap P'=P$ and $R'/P'=C$.
\end{lemma}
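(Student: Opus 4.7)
The plan is to use the fact, established in the proof of Fact~\ref{pnormallocal}(3), that $R \subseteq R'$ is a finite (in particular integral) extension of rings, and then combine incomparability/going-up with the perfectness of $C$.

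First I would invoke going-up for the integral extension $R \subseteq R'$ to produce a prime ideal $P' \triangleleft R'$ with $P' \cap R = P$. Since $R/P = C$ is a field, the maximal ideal $P$ lies under a maximal ideal, so $P'$ itself is maximal, and thus $R'/P'$ is a field. The induced injection $C = R/P \hookrightarrow R'/P'$ makes $R'/P'$ into a field extension of $C$.

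The content of the lemma is then to check that this extension is trivial. Let $\alpha \in R'$ be arbitrary. By definition of $R'$ there exists $m \in \Nn$ with $\alpha^{p^m} \in R$. Reducing modulo $P'$, the element $\alpha^{p^m} + P'$ equals $\alpha^{p^m} + P \in R/P = C$. Since $C$ is perfect, there exists a unique $c \in C$ with $c^{p^m} = \alpha^{p^m} + P$. Then in the field $R'/P'$ we have
\[
(\alpha - c + P')^{p^m} = \alpha^{p^m} - c^{p^m} + P' = 0,
\]
which forces $\alpha - c \in P'$, i.e.\ $\alpha + P' = c \in C$. Hence $R'/P' = C$, as required.

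The argument is essentially routine; the only step one has to be slightly careful with is invoking the integrality of $R \subseteq R'$, but this is exactly what the proof of Fact~\ref{pnormallocal}(3) provides (via $R'$ being finitely generated as an $R$-module), so there is no real obstacle.
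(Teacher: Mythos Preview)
Your argument is correct and follows essentially the same route as the paper: lift $P$ along the integral extension $R\subseteq R'$ via lying-over, then use that every element of $R'$ has a $p$-power in $R$ together with perfectness of $C$ to conclude $R'/P'=C$. The paper phrases the second step abstractly (``the extension $C=R/P\subseteq R'/P'$ is purely inseparable, and $C$ is perfect''), whereas you unwind it elementwise, but the content is identical.
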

\begin{proof}
The extension $R\subseteq R'$ is integral, so by \cite[Theorem 9.3]{mat}, there is a maximal ideal $P'$ in $R'$ such that $P'\cap R=P$. By the definition of $R'$, the field extension
$$C=R/P\subseteq R'/P'$$
is purely inseparable. Since $C$ is perfect, $R'/P'=C$.
\end{proof}
We show below that for absolutely irreducible formal scheme $\mathcal{W}$, the ``absolute constants'' of the field of fractions of $\mathcal{O}_{\mathcal{W}}$ coincide with $C$. This property is used in the proof of the main theorem, see Remark \ref{absirrrem}.
\begin{prop}\label{lpinfty}
Assume that $R$ is a complete $C$-algebra of Krull dimension $r$ with the residue field coinciding with $C$ and such that $R\otimes_CC^{\alg}$ is a domain. Then $L^{p^{\infty}}=C$.
\end{prop}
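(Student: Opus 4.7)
The goal is to prove $L^{p^{\infty}}=C$. The inclusion $C\subseteq L^{p^{\infty}}$ is immediate since $C$ is perfect. For the reverse inclusion my plan has two parts: (a) show that every element of $L^{p^{\infty}}$ lies in some finite algebraic extension $C'\supseteq C$ sitting inside $L$; (b) use the geometric integrality hypothesis that $R\otimes_C C^{\alg}$ is a domain to conclude $C'=C$.

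For (a), I would pass to the normalization $R'$ of $R$ in $L$. Since $R$ is a complete local Noetherian ring it is excellent, hence Nagata, so $R'$ is module-finite over $R$. Because $R$ is Henselian (complete) and $R'$ is a domain, $R'$ is local and, being finite over the complete ring $R$, is itself complete; its residue field $C'$ is a finite extension of $C$, and by Cohen's structure theorem $C'$ lifts to a coefficient field inside $R'$. For $\alpha\in L^{p^{\infty}}$, at every height-$1$ prime $\mathfrak{p}$ of $R'$ the discrete valuation $v_{\mathfrak{p}}(\alpha)$ is divisible by every $p^m$, forcing $v_{\mathfrak{p}}(\alpha)=0$; by Krull's theorem for Noetherian normal domains $R'=\bigcap_{\mathfrak{p}}R'_{\mathfrak{p}}$, whence $\alpha\in R'$. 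Let $\bar{\alpha}\in C'$ be its residue. Since $C'$ is perfect, $\bar{\alpha}\in L^{p^{\infty}}$, and therefore $\alpha-\bar{\alpha}\in L^{p^{\infty}}\cap \mathfrak{m}_{R'}$. If $\alpha-\bar{\alpha}\neq 0$, Krull's Hauptidealsatz places it in some height-$1$ prime, again contradicting $v_{\mathfrak{p}}(\alpha-\bar{\alpha})=0$. Thus $\alpha=\bar{\alpha}\in C'$, so $L^{p^{\infty}}\subseteq C'$.

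For (b), assume some $\alpha\in C'\setminus C$ with minimal polynomial $P(T)\in C[T]$ of degree $d\geq 2$. Since $C$ is perfect, $P$ factors in $C^{\alg}[T]$ as $P(T)=\prod_{i=1}^{d}(T-\alpha_i)$. Writing $\alpha=a/b$ with $a,b\in R$, $b\neq 0$, the identity $b^d P(\alpha)=0$ gives $\prod_{i=1}^{d}(a-\alpha_i b)=0$ in $L\otimes_C C^{\alg}$. The map $R\otimes_C C^{\alg}\to L\otimes_C C^{\alg}$ is injective (flatness of $C^{\alg}$ over $C$), so the same identity holds in $R\otimes_C C^{\alg}$. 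By hypothesis this ring is a domain, so some factor $a-\alpha_j b$ vanishes there. Fixing a $C$-basis $\{e_k\}_k$ of $C^{\alg}$ with $e_1=1$, one has $R\otimes_C C^{\alg}=\bigoplus_k R e_k$; writing $\alpha_j=\sum_k c_{j,k}e_k$ with $c_{j,k}\in C$, the relation forces $c_{j,k}b=0$ for $k\geq 2$, and since $R$ is a domain and $b\neq 0$ we get $\alpha_j=c_{j,1}\in C$, contradicting irreducibility of $P$ of degree $d\geq 2$.

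The main obstacle is the careful setup of (a): I need the normalization $R'$ to be a complete local Noetherian domain with residue field algebraic over $C$, which relies on excellence of complete local rings (for finiteness of normalization), Cohen's structure theorem (to realize $C'$ as a subfield of $R'$), and Krull's Hauptidealsatz (to run the valuation argument on $\alpha-\bar{\alpha}$). Once (a) is in place, part (b) is a direct tensor-product calculation exploiting the geometric integrality hypothesis.
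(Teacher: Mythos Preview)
Your proof is correct and takes a genuinely different route from the paper's. Both arguments share the reduction in your part (b): the paper observes in one line that $L\otimes_C C^{\alg}$ is a domain (as a localization of $R\otimes_C C^{\alg}$), whence $C$ is relatively algebraically closed in $L$; you spell out the same fact via an explicit factorization argument. The substantive divergence is in showing that $L^{p^{\infty}}$ is algebraic over $C$. The paper uses Hasse--Schmidt derivations: it picks a finite extension $C((x_1,\ldots,x_r))\subseteq L$, reduces to the separable case (replacing $L$ by a suitable $L^{p^n}$), extends the standard higher derivations along this \'etale extension, and notes that $L^{p^{\infty}}$ lies in their constant field, which is algebraic over $C$. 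You instead pass to the normalization $R'$ and use valuation theory at height-$1$ primes: divisibility of $v_{\mathfrak{p}}(\alpha)$ by all $p^m$ forces $\alpha\in R'$, and the residue-subtraction trick together with Hauptidealsatz pins $\alpha$ down in the coefficient field $C'$. Your argument is more self-contained from the standpoint of pure commutative algebra (excellence, Cohen's theorem, Serre's description of normal domains as intersections of DVRs), and it sidesteps the separable-reduction step entirely. The paper's approach is shorter once one grants the Hasse--Schmidt formalism and is more in keeping with the differential-forms theme running through the paper.
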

\begin{proof}
Let $R'$ denote the domain $R\otimes_CC^{\alg}$. Then the ring $L\otimes_CC^{\alg}$ is a localization of $R'$, so it is a domain as well. Hence $C$ is relatively algebraically closed in $L$ and it is enough to show that the extension $C\subseteq L^{p^{\infty}}$ is algebraic.

By Proposition \ref{completestr}(2), there is a finite field extension $C((x_1,\ldots,x_r))\subseteq L$. Since for any integer $n$ we have $(L^{p^n})^{p^{\infty}}=L^{p^{\infty}}$, we can assume that the extension $C((x_1,\ldots,x_r))\subseteq L$ is separable algebraic. For the notion of a \emph{Hasse-Schmidt derivation}, the reader is referred to \cite[Section 27]{mat} (where they are called \emph{higher derivations}). Let $\partial$ be the $r$-tuple of the standard Hasse-Schmidt derivations on $C((x_1,\ldots,x_r))$. By \cite[Theorem 27.2]{mat} (since a separable algebraic field extension is \'{e}tale), there is a unique $r$-tuple $\partial'$ of Hasse-Schmidt derivations on $L$ extending $\partial$. Clearly, $C$ is the field of the absolute constants of $\partial$. Let $C'$ be the field of the absolute constants of $\partial'$. Since the field extension $C((x_1,\ldots,x_r))\subseteq L$ is algebraic, the field extension $C\subseteq C'$ is algebraic as well. We have $L^{p^{\infty}}\subseteq C'$ which finishes the proof.
\end{proof}

\subsection{Factoring through Frobenius}\label{sectionftf}
In this subsection we collect necessary facts about the Frobenius homomorphism. Let $T$ and $R$ be $C$-algebras and $\sigma:C\to C$ be an automorphism. We define
$$R^{\sigma}:=R\otimes_{\sigma}C.$$
We present $R$ as $C[X]/I$ when $X$ is a (possibly infinite) tuple of variables. Then the following $C$-algebras are isomorphic:
\begin{itemize}
\item $R^{\sigma}$,

\item $R$ with the algebra structure given by $\iota\circ \sigma^{-1}$, where $\iota:C\to R$ is the original $C$-algebra structure on $R$,

\item $C[X]/(\sigma(I))$.
\end{itemize}
The definition of $R^{\sigma}$ naturally extends to $C$-schemes and we have the isomorphisms above, where the last item is understood as follows. If $V=\spec(R)$ is an affine variety, then $\sigma(V)$ (image inside the affine space) coincides with $\spec(C[X]/(\sigma(I)))$.
\\
\\
Assume now that $\sigma$ extends to an endomorphism $\sigma_R:R\to R$ (usually not a $C$-algebra map!). Then we have a $C$-algebra map
$$\sigma_R:R^{\sigma}\to R.$$
If $\sigma_R$ is a monomorphism, then there is one more isomorphism of $C$-algebras
$$\sigma_R(R)\cong R^{\sigma}$$
and $\sigma_R$ corresponds to the inclusion $\sigma_R(R)\subseteq R$.

We will apply the considerations above to the map $\fr:C\to C$. Clearly it extends to $\fr_R:R\to R$. We notice below equivalent conditions which in the case of a $p$-normal $R$ are also equivalent to the ones in Lemma \ref{formppower}.
\begin{fact}\label{facfroring}
Let $f:T\to R$ be a $C$-algebra map. The following are equivalent:
\begin{enumerate}
\item $f(T)\subseteq R^p$.

\item There is a $C$-algebra map $f_{(1)}:T\to R^{\fr}$ making the following diagram commutative
\begin{equation*}
  \xymatrix{
T\ar[rr]^{f}\ar[rrd]_{f_{(1)}} & & R \\
 & & R^{\fr}  \ar[u]^{\fr} }
\end{equation*}

\item There is a $C$-algebra map $f^{(1)}:T^{\fr^{-1}}\to R$ making the following diagram commutative
\begin{equation*}
  \xymatrix{
T^{\fr^{-1}}\ar[rrd]^{f^{(1)}} & &  \\
T\ar[u]^{\fr}\ar[rr]^{f} & & R   }
\end{equation*}
\end{enumerate}
\end{fact}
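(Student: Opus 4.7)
The plan is to unpack the definitions of $R^{\fr}$ and $T^{\fr^{-1}}$ from the preceding discussion and reduce everything to formal bookkeeping. I would identify $R^{\fr}$ with $R$ as a ring equipped with the twisted $C$-algebra structure $\iota_R\circ \fr^{-1}$ (which is well-defined since $C$ is perfect), and analogously $T^{\fr^{-1}}$ with $T$ equipped with $\iota_T\circ \fr$. Under these identifications the maps $\fr:R^{\fr}\to R$ and $\fr:T\to T^{\fr^{-1}}$ appearing in the diagrams are just the absolute $p$-th power $r\mapsto r^p$ (resp.\ $t\mapsto t^p$), which is easily checked to be $C$-linear for the twisted structures, and the image of the first is exactly $R^p\subseteq R$.

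From this setup, the implications $(2)\Rightarrow(1)$ and $(3)\Rightarrow(1)$ fall out immediately: in $(2)$, commutativity of the diagram gives $f(T)=\fr(f_{(1)}(T))\subseteq \im(\fr)=R^p$; in $(3)$, for any $t\in T$ we have $f(t)=f^{(1)}(t^p)=f^{(1)}(t)^p\in R^p$ by multiplicativity of $f^{(1)}$.

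For the converse directions I would first observe that $(2)$ and $(3)$ specify the same underlying data. Translating the twisted $C$-linearity conditions via perfectness of $C$, both amount to giving a ring homomorphism $g:T\to R$ satisfying $g\circ \iota_T=\iota_R\circ \fr^{-1}$ together with the commutativity relation $g(t)^p=f(t)$ for every $t\in T$. (For $(2)$ this is immediate; for $(3)$ the condition $g(\iota_T(c^p))=\iota_R(c)$ becomes $g\circ \iota_T=\iota_R\circ \fr^{-1}$ after substituting $c^{1/p}$, and $g(t^p)=f(t)$ is $g(t)^p=f(t)$ since $g$ is a ring map.) Hence $(2)\Leftrightarrow(3)$ tautologically, and it remains to construct such a $g$ from $(1)$. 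Given $f(T)\subseteq R^p$, I would simply define $g(t)$ to be the $p$-th root of $f(t)$ in $R$; uniqueness of $p$-th roots then forces $g$ to be a ring homomorphism (e.g.\ $g(t_1+t_2)$ and $g(t_1)+g(t_2)$ have the same $p$-th power $f(t_1+t_2)$), and the twisted $C$-linearity is the observation that $\iota_R(c^{1/p})$ is the unique $p$-th root of $\iota_R(c)=f(\iota_T(c))$.

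The only place where care is required is the well-definedness of the $p$-th root in $R$, i.e.\ the injectivity of the absolute Frobenius $\fr_R:R\to R$. This is exactly the ``$\sigma_R$ is a monomorphism'' hypothesis highlighted in the paragraph preceding the statement, under which one has $\sigma_R(R)\cong R^{\sigma}$; it holds automatically for the reduced $C$-algebras arising in the applications of this fact. Once injectivity is in hand, every verification sketched above is routine and requires no further calculation.
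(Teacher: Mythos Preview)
The paper states this as a \emph{Fact} without proof, treating it as a routine observation following from the definitions of $R^{\fr}$ and $T^{\fr^{-1}}$ given just before. Your write-up correctly supplies the missing verification: the identification of $R^{\fr}$ (resp.\ $T^{\fr^{-1}}$) with $R$ (resp.\ $T$) under the twisted structure maps, the immediate implications $(2)\Rightarrow(1)$ and $(3)\Rightarrow(1)$, the observation that $(2)$ and $(3)$ encode the same datum $g$, and the construction of $g$ from $(1)$ via $p$-th roots are all correct and exactly the kind of unpacking the paper is implicitly invoking.

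Your flag about injectivity of $\fr_R$ is well taken and worth emphasizing. The implication $(1)\Rightarrow(2)$ genuinely fails for non-reduced $R$: for instance with $C=\mathbb{F}_2$, $T=\mathbb{F}_2[x]/(x^2)$, $R=\mathbb{F}_2[y]/(y^4)$ and $f(x)=y^2$, one has $f(T)\subseteq R^2$ but any ring map $g$ with $g(t)^2=f(t)$ would force $0=g(x^2)=g(x)^2=y^2$. The paper's phrasing leaves this hypothesis implicit in the surrounding discussion (the ``if $\sigma_R$ is a monomorphism'' clause), and as you note it is satisfied in every instance where the fact is used. So your proof is correct and in fact slightly more careful than the paper about the standing assumptions.
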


\begin{remark}\label{facfrosch}
Inverting the arrows in Fact \ref{facfroring}, we obtain that for any morphism of $C$-schemes $\varphi:V\to W$ the following are equivalent
\begin{enumerate}
\item There is a morphism $\varphi_{(1)}:V^{\fr}\to W$ making the following diagram commutative
\begin{equation*}
  \xymatrix{
V\ar[rr]^{\varphi}\ar[d]_{\fr} & & W \\
V^{\fr}\ar[rru]_{\varphi_{(1)}} & &  }
\end{equation*}

\item There is a morphism $\varphi^{(1)}:V\to W^{\fr^{-1}}$ making the following diagram commutative
\begin{equation*}
  \xymatrix{
 & & W^{\fr^{-1}} \ar[d]^{\fr}\\
V \ar[rru]^{\varphi^{(1)}}\ar[rr]^{\varphi} & & W   }
\end{equation*}
\end{enumerate}
\end{remark}


\subsection{Points and forms}\label{sectionpoints}
The aim of this subsection is to clarify the passage from local homomorphisms to rational points and its effect on differential forms. Throughout this subsection we fix:
\begin{itemize}
\item a $C$-algebra $R$;

\item an absolutely irreducible $C$-scheme $Y$;

\item $y\in Y(C)$;


\item $T:=\mathcal{O}_{Y,y}$;

\item $m\in \Nn$ and $q:=p^m$.
\end{itemize}
\noindent%
In the applications, $Y$ will be an irreducible algebraic group over $C$, so we will assume later that $Y$ is smooth.
\\
There is a natural morphism of $C$-schemes $\spec(T)\to Y$ such that the image of the closed point of $\spec(T)$ is the closed point of $Y$ underlying the rational point $y$ and the image of the generic point of $\spec(T)$ is the generic point of $Y$. By composing with $\spec(T)\to Y$, any $C$-algebra homomorphism $f:T\to R$ gives an $R$-rational point of $Y$ (i.e. a $C$-scheme morphism $\spec(R)\to Y$) which we denote by $f_Y\in Y(R)$. Similarly, any $C$-algebra homomorphism $\mathcal{F}:\widehat{T}\to \widehat{R}$ gives a point $\mathcal{F}_Y\in Y(\widehat{R})$ which is the composition of the following sequence of morphisms:
\begin{equation*}
  \xymatrix{\spec(\widehat{R}) \ar[r]^{\mathcal{F}}& \spec(\widehat{T})\ar[r]^{}& \spec(T) \ar[r]^{}& Y.}
\end{equation*}
\noindent
We will need the  observations below about such rational points.
\begin{lemma}\label{points}
Let $f:T\to R$ be a local $C$-algebra homomorphism and $\mathcal{F}:\widehat{T}\to \widehat{R}$ be a $C$-algebra homomorphism. We have the following.
\begin{enumerate}
\item The image of $f_Y$ in $Y(\widehat{R})$ coincides with $(\widehat{f})_Y$.

\item Suppose $R$ satisfies the assumptions in Proposition \ref{completefrob}. Then $\mathcal{F}_Y\in Y((\widehat{R})^{q})$ if and only if $\mathcal{F}(\widehat{T})\subseteq (\widehat{R})^{q}$.
\end{enumerate}
\end{lemma}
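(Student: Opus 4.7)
For part (1), the statement is essentially a functoriality diagram chase. The map $\widehat{f}$ is characterized by the commutativity of the square
\[
\xymatrix{
T \ar[r] \ar[d]_{f} & \widehat{T} \ar[d]^{\widehat{f}} \\
R \ar[r] & \widehat{R}
}
\]
where the horizontal maps are the canonical completion homomorphisms. Applying $\spec$ and gluing to the structural morphism $\spec(T) \to Y$, we obtain the same map $\spec(\widehat{R}) \to Y$ along both routes: via $\spec(R) \to \spec(T) \to Y$ (which is $f_Y$ precomposed with $\spec(\widehat{R}) \to \spec(R)$) and via $\spec(\widehat{T}) \to \spec(T) \to Y$ (which is $(\widehat{f})_Y$). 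Nothing beyond the universal property of completion is needed.

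For part (2), I would first reduce to an affine computation. Since $\mathcal{F}$ is a local homomorphism, the morphism $\mathcal{F}_Y$ sends the closed point of $\spec(\widehat{R})$ to $y$, so it factors (uniquely) through any affine open neighborhood of $y$. Pick $\spec(B) \subseteq Y$ open with $y \in \spec(B)$; then $T = B_{\mathfrak{m}_y}$, and on rings $\mathcal{F}_Y$ corresponds to the composite $B \hookrightarrow T \to \widehat{T} \xrightarrow{\mathcal{F}} \widehat{R}$. Observe further that, by Proposition \ref{completefrob} applied to $\widehat{R}$ (whose hypotheses are inherited from $R$), $(\widehat{R})^q$ carries the same topology as a subspace of $\widehat{R}$ as its own $\mathfrak{m}$-adic topology, in which it is complete; hence $(\widehat{R})^q \hookrightarrow \widehat{R}$ is a closed embedding of complete local $C$-algebras.

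The easy direction is immediate: if $\mathcal{F}(\widehat{T}) \subseteq (\widehat{R})^q$, then the composite $B \to \widehat{R}$ factors through the closed inclusion $(\widehat{R})^q \hookrightarrow \widehat{R}$, which means $\mathcal{F}_Y \in Y((\widehat{R})^q)$. For the converse, a factorization on schemes yields a factorization $B \to (\widehat{R})^q$. The key step is to lift this from $B$ to $\widehat{T}$. First, I extend to $T$: any $s \in B \setminus \mathfrak{m}_y$ maps to a unit of $\widehat{R}$, and if $s = r^q$ with $r \in \widehat{R}$, then $r$ is a unit and $s^{-1} = (r^{-1})^q \in (\widehat{R})^q$; so the image of $T = B_{\mathfrak{m}_y}$ still lies in $(\widehat{R})^q$. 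Then I extend to $\widehat{T}$ using the universal property: the resulting continuous homomorphism $T \to (\widehat{R})^q$ extends uniquely to $\widehat{T} \to (\widehat{R})^q$ by completeness of $(\widehat{R})^q$, and composing with the inclusion into $\widehat{R}$ agrees with $\mathcal{F}$ by uniqueness of continuous extension to the completion.

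The main technical point — and the only place the hypotheses of Proposition \ref{completefrob} are genuinely used — is the assertion that $(\widehat{R})^q$ is closed (equivalently, complete in the induced topology) inside $\widehat{R}$; without this, the continuous extension from $T$ to $\widehat{T}$ might land in the closure of $(\widehat{R})^q$ rather than in $(\widehat{R})^q$ itself. Everything else is bookkeeping with localization, completion, and the universal property of $\spec$ of a complete local ring.
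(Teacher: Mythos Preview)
Your proof is correct and follows essentially the same approach as the paper. The paper's argument for part (2) is slightly more streamlined: rather than choosing an affine neighborhood $\spec(B)$ and then showing by hand that the factorization extends to the localization $T=B_{\mathfrak{m}_y}$, the paper directly invokes the universal property of the local ring---a morphism $\spec((\widehat{R})^q)\to Y$ sending the closed point to $y$ automatically factors through $\spec(T)$---to get $\mathcal{F}(T)\subseteq(\widehat{R})^q$ in one step; from there both you and the paper use closedness of $(\widehat{R})^q$ (via Proposition~\ref{completefrob}(2)) and density of $T$ in $\widehat{T}$ to conclude.
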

\begin{proof}
The first part is an easy diagram chase.
\\
The right-to-left implication in the second part is obvious. For the remaining implication, assume that $\mathcal{F}_Y\in Y((\widehat{R})^{q})$. It means that the morphism  $\mathcal{F}_Y:\spec(\widehat{R})\to Y$ factors through a morphism  $\mathcal{F}':\spec((\widehat{R})^{q})\to Y$. Clearly,  $\mathcal{F}'$ maps the closed point of $\spec((\widehat{R})^{q})$ to $y$, so it factors through a morphism $\mathcal{F}':\spec((\widehat{R})^{q})\to \spec(T)$. It means that $\mathcal{F}(T)\subseteq (\widehat{R})^{q}$. Hence it remains to show that $(\widehat{R})^{q}$ is complete in $\widehat{R}$. It follows from Proposition \ref{completefrob}(2) (the induced topology on $(\widehat{R})^{q}$ coincides with the standard topology) and Proposition \ref{completefrob}(3) ($(\widehat{R})^{q}$ is complete with respect to the standard topology), since, by \cite[Theorem 8.12]{mat}, $\widehat{R}$ is Noetherian.
\end{proof}
\noindent
Abusing the notation a bit, we denote by $\Omega_Y$, the $\mathcal{O}_Y(Y)$-module of global sections of the sheaf of K\"{a}hler differential forms over $C$ on $Y$ (see \cite[Section II.8]{ha}). Clearly, $\Omega_R=\Omega_{\spec(R)}$. Any morphism of $C$-schemes $f:W\to Y$ functorially induces a homomorphism of  $\mathcal{O}_Y(Y)$-modules $f^*:\Omega_Y\to \Omega_W$.
\begin{lemma}\label{newvanish}
If $f:W\to Y$ factors through Frobenius as in Remark \ref{facfrosch}, then $f^*:\Omega_Y\to \Omega_W$ is the $0$-map.
\end{lemma}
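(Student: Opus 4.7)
The plan is to leverage the factorization of $f$ through Frobenius supplied by Remark \ref{facfrosch}, and then to reduce the lemma to the (essentially trivial) vanishing of the Frobenius pullback on K\"{a}hler forms. Concretely, write $f = f_{(1)} \circ \fr_W$, where $\fr_W \colon W \to W^{\fr}$ is the Frobenius morphism and $f_{(1)} \colon W^{\fr} \to Y$ is the induced morphism. By functoriality of the K\"{a}hler differentials, the pullback
$$f^* \colon \Omega_Y \longrightarrow \Omega_W$$
factors as $f^* = \fr_W^* \circ f_{(1)}^*$ through $\Omega_{W^{\fr}}$. Hence it is enough to prove that the Frobenius pullback $\fr_W^* \colon \Omega_{W^{\fr}} \to \Omega_W$ is identically zero.

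To verify this, I would pass to an affine open $\spec(R) \subseteq W$; there $\fr_W$ corresponds to the $C$-algebra homomorphism $R^{\fr} \to R$, $\ r \otimes c \mapsto r^p c$, as described in Section \ref{sectionftf}. Since $\Omega_{R^{\fr}}$ is generated as an $R^{\fr}$-module by the elements $\ddd(r \otimes 1)$ for $r \in R$, it suffices to check that each such generator is killed. This is the one-line computation
$$\fr_W^*\bigl(\ddd(r \otimes 1)\bigr) \;=\; \ddd(r^p) \;=\; p\, r^{p-1}\, \ddd r \;=\; 0$$
inside $\Omega_R$, which holds precisely because $\ch(C) = p$. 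Globalising by gluing over an affine cover of $W$ yields the vanishing of $\fr_W^*$, and hence the lemma.

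There is no serious obstacle in this argument; the only points meriting attention are the correct interpretation of the twisted ring $R^{\fr}$ (so that $\fr_W$ is genuinely a $C$-algebra map, not merely a ring map) and the $C$-linearity of the differential, both of which are immediate from the set-up of Section \ref{sectionftf}. The positive characteristic assumption on $C$ is of course what makes the killing of $p\, r^{p-1}\, \ddd r$ free.
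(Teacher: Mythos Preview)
Your argument is correct and is exactly the standard verification one would expect. The paper itself states this lemma without proof, presumably regarding it as folklore; your write-up supplies precisely the obvious details (factor through $\fr_W$, use functoriality of K\"ahler differentials, and kill generators via $\ddd(r^p)=p\,r^{p-1}\ddd r=0$).
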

\noindent
Taking $W=\spec(T)$, we have a natural homomorphism $\Omega_Y\to \Omega_{T}$. We collect the necessary properties of this map.
\begin{prop}\label{localforms}
Assume that $Y$ is irreducible and smooth (so for each $t\in Y$, the local ring $\mathcal{O}_{Y,t}$ is regular). Then we have:
\begin{enumerate}
\item The map $\Omega_Y\to \Omega_{T}$ is one-to-one.

\item The map $\Omega_{T}\to \Omega_{\widehat{T}}$ is  one-to-one.
\end{enumerate}
\end{prop}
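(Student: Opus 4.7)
The plan is to treat parts (1) and (2) separately, both leaning on smoothness and the preparatory lemmas on K\"{a}hler differentials proved above.

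For (1), I would exploit the fact that smoothness makes $\Omega_{Y/C}$ a locally free sheaf on the integral scheme $Y$ (it is integral since it is irreducible and all stalks are regular, hence domains). The reduction is to pick an affine open $U=\spec(A)$ containing $y$, so that $\Omega_U=\Omega_A$ is a projective---hence torsion-free---module over the domain $A$, and $\Omega_T\cong\Omega_A\otimes_A T$ (the compatibility of K\"{a}hler forms with localization that is used freely in Section \ref{notation}). Then the map $\Omega_A\to\Omega_A\otimes_A T$ is injective by torsion-freeness of $\Omega_A$ over the domain $A$, and the restriction $\Omega_Y\to\Omega_U$ is injective because $Y$ is irreducible: on any affine trivializing open $V$ of $\Omega_{Y/C}$ a section becomes a tuple in $\mathcal{O}_Y(V)$ which embeds into the function field $K(Y)$, while irreducibility forces $V\cap U\neq\emptyset$, so a section vanishing on $U$ must vanish on every such $V$ and hence globally. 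Composing the two injections gives (1).

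For (2), the plan is to identify $\Omega_{\widehat{T}}$ with $\Omega_T\otimes_T\widehat{T}$ via the two preparatory lemmas. Since $T$ is a localization of a finitely generated $C$-algebra, Lemma \ref{lemma1}(2) provides $\widehat{\Omega}_T\cong \Omega_T\otimes_T\widehat{T}$. On the other hand, smoothness of $Y$ makes $T$ regular, and completion preserves regularity, so $\widehat{T}$ is a regular (in particular a domain) Noetherian complete local $C$-algebra with residue field $C$; consequently Lemma \ref{lemma2}(1) applies to $\widehat{T}$ and gives $\Omega_{\widehat{T}}\cong\widehat{\Omega}_{\widehat{T}}$. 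Together with Lemma \ref{lemma1}(1) this produces a canonical chain
$$\Omega_{\widehat{T}}\ \cong\ \widehat{\Omega}_{\widehat{T}}\ \cong\ \widehat{\Omega}_T\ \cong\ \Omega_T\otimes_T\widehat{T}.$$
Under this chain the functorial arrow $\Omega_T\to\Omega_{\widehat{T}}$ is to be identified with the completion map $\Omega_T\to\widehat{\Omega}_T$, and the latter is injective by Krull's intersection theorem applied to the finitely generated $T$-module $\Omega_T$ (equivalently, by flatness of $T\to\widehat{T}$ combined with $\bigcap_m \mathfrak{m}_T^m\Omega_T=0$).

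The main obstacle I expect is not the injectivity itself but verifying that the naturally given map $\Omega_T\to\Omega_{\widehat{T}}$ really coincides, up to the identifications of Lemmas \ref{lemma1} and \ref{lemma2}, with the canonical completion map of $\Omega_T$. Once this compatibility is pinned down, part (2) collapses to Krull's intersection theorem, and the rest of the argument is standard commutative algebra requiring no further input beyond what is already assembled in Section \ref{localalgebras}.
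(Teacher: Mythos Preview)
Your proposal is correct and follows essentially the same route as the paper: for (1) you use local freeness of $\Omega_{Y/C}$ on an integral scheme to get injectivity into the stalk, and for (2) you combine Lemma~\ref{lemma1}(1), Lemma~\ref{lemma2}(1) (applied to the regular, hence integral, complete ring $\widehat{T}$) to reduce to the injectivity of $\Omega_T\to\widehat{\Omega}_T$, which is Krull's intersection theorem. The paper's proof is just a terse version of yours; your extra invocation of Lemma~\ref{lemma1}(2) and your explicit concern about compatibility of maps are harmless elaborations rather than genuine departures.
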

\begin{proof} By \cite[Theorem II.8.8]{ha}, the sheaf of K\"{a}hler differential forms over $C$ on $Y$ is locally free. The injectivity in $(1)$ follows.
\\
For $(2)$, by Lemma \ref{lemma1}(1) and Lemma \ref{lemma2}(1) it is enough to show that the map $\Omega_{T}\to \widehat{\Omega}_{T}$ is one-to-one. By \cite[Theorem 8.9]{mat}, the kernel of this map is trivial.
\end{proof}
\noindent
We can regard now $\Omega_Y$ as a $\mathcal{O}_Y(Y)$-submodule of $\Omega_{T}$ and
 $\Omega_{T}$ as a $T$-submodule of $\Omega_{\widehat{T}}$. Having this identifications in mind, by an easy diagram chase (left to the reader), we obtain the following.
\begin{lemma}\label{coincide}
Assume that $Y$ is irreducible and smooth. Let $f:T\to R$ and $\mathcal{F}:\widehat{T}\to \widehat{R}$ be local $C$-algebra homomorphisms. Then for any $\omega\in \Omega_Y$ we have:
\begin{enumerate}
\item $\mathcal{F}_*(\omega)=\mathcal{F}_Y^*(\omega)$.

\item $\widehat{f}_*(\omega)=f_*(\omega)=f_Y^*(\omega)=\widehat{f}^*_Y(\omega)$.
\end{enumerate}
\end{lemma}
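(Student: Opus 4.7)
The plan is to reduce the entire lemma to pure functoriality of K\"{a}hler differential forms. The key observation is that for any $C$-algebra map $g\colon S_1\to S_2$, the pushforward $g_*$ on K\"{a}hler forms is literally the same operation as pullback along the induced scheme morphism $\spec(S_2)\to \spec(S_1)$. Combined with the fact that pullback respects composition of morphisms, everything follows by unpacking the definitions of $\mathcal{F}_Y$ and $f_Y$ given in Subsection \ref{sectionpoints}.

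For part $(1)$, the morphism $\mathcal{F}_Y\colon\spec(\widehat{R})\to Y$ is by definition the composition
\begin{equation*}
\spec(\widehat{R})\xrightarrow{\mathcal{F}}\spec(\widehat{T})\longrightarrow \spec(T)\longrightarrow Y.
\end{equation*}
The two rightmost morphisms are precisely the ones whose pullbacks on K\"{a}hler forms are the canonical maps $\Omega_Y\to \Omega_T$ and $\Omega_T\to \Omega_{\widehat{T}}$ (injective by Proposition \ref{localforms}), under which we silently identify $\omega$ with its images. Since pullback along a composition is the composition of pullbacks, and the pullback along $\spec(\mathcal{F})$ is exactly $\mathcal{F}_*$, we obtain $\mathcal{F}_Y^*(\omega)=\mathcal{F}_*(\omega)$.

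For part $(2)$, the same factorization argument applied to $f_Y\colon\spec(R)\to \spec(T)\to Y$ gives $f_Y^*(\omega)=f_*(\omega)$, and applying $(1)$ with $\mathcal{F}:=\widehat{f}$ gives $\widehat{f}_Y^*(\omega)=\widehat{f}_*(\omega)$. It remains to bridge the completed and the uncompleted pictures. By Lemma \ref{points}(1), $\widehat{f}_Y$ is the post-composition of $f_Y$ with the canonical $\spec(\widehat{R})\to \spec(R)$, so contravariant functoriality of pullback identifies $\widehat{f}_Y^*(\omega)$ with $f_Y^*(\omega)$ under the natural map $\Omega_R\to \Omega_{\widehat{R}}$; equivalently, $\widehat{f}$ restricts to $f$ on $T\subseteq \widehat{T}$, so $\widehat{f}_*(\omega)=f_*(\omega)$ under the same identification. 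Chaining these four equalities proves $(2)$.

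The only real obstacle is the bookkeeping of identifications: the four quantities in $(2)$ a priori live in different K\"{a}hler modules, and one must systematically transport them across the canonical homomorphisms $\Omega_Y\to \Omega_T\to \Omega_{\widehat{T}}$ and $\Omega_R\to \Omega_{\widehat{R}}$. Proposition \ref{localforms} guarantees that the transports out of $\Omega_Y$ are unambiguous, and Lemma \ref{lemma1} handles the completion on the target side. Once these identifications are fixed, the proof reduces to a commutative diagram of ring maps built from the definitions of $f_Y$, $\mathcal{F}_Y$, and the completion, which is exactly the diagram chase the authors leave to the reader.
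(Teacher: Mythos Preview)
Your proof is correct and is precisely the ``easy diagram chase (left to the reader)'' that the paper explicitly invokes in lieu of a written proof: unwinding the composite definitions of $\mathcal{F}_Y$ and $f_Y$, applying functoriality of pullback on K\"{a}hler forms, and using Proposition \ref{localforms} together with Lemma \ref{points}(1) to keep track of the identifications. There is nothing to add.
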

\noindent
Let us now specialize the set-up to the following case:
\begin{itemize}
\item $Y=A$ is a commutative connected algebraic group over $C$ of dimension $n$;

\item $y=0\in A(C)$;

\item $T:=\mathcal{O}_{A,0}$.
\end{itemize}
\noindent
There is a $C$-subspace $\Omega^{\inv}_A$ of $\Omega_A$ consisting of \emph{invariant} (see \cite{Ros}) forms. We will need the following well-known results.
\begin{prop}\label{invforms}
We identify $\Omega_A$ with a subspace of $\Omega_T$ using Proposition \ref{localforms}. Then we have:
\begin{enumerate}
\item $\dim_C\Omega^{\inv}_A=n$,

\item $T\Omega^{\inv}_A=\Omega_T$,


\item for any $\omega\in \Omega^{\inv}_A$ and morphisms of $C$-schemes $f,g:V\to A$
$$(f+g)^*(\omega)=f^*(\omega)+g^*(\omega).$$
\end{enumerate}
\end{prop}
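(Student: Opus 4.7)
The plan is to handle the three parts in order, using the standard trivialization of $\Omega_A$ by left-translation-invariant forms on a (commutative) algebraic group.

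For part (1), I would set up the bijection between invariant global forms and cotangent vectors at the origin. For any $a\in A(C)$ the translation $\tau_a:A\to A$ is an automorphism of $C$-schemes, so it induces an automorphism $\tau_a^*$ of $\Omega_A$; invariance of $\omega$ means $\tau_a^*\omega=\omega$ for all such $a$ (and this extends to all $a\in A(R)$ functorially). The evaluation map $\Omega^{\inv}_A\to \Omega_A\otimes_{\mathcal{O}_A}C=\mathfrak{m}_0/\mathfrak{m}_0^2$ (reduction at the origin $0$) is injective because an invariant form which vanishes at $0$ vanishes at every $C$-point, hence, by locally freeness, is zero. It is surjective because any cotangent vector at $0$ can be translated to every point of $A$, giving a global section that by construction is invariant. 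Hence $\dim_C\Omega^{\inv}_A=\dim_C T_0^*A=n$.

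For part (2), one shows that the natural $\mathcal{O}_A$-linear map
\[
\alpha:\mathcal{O}_A\otimes_C\Omega^{\inv}_A\longrightarrow \Omega_A
\]
is an isomorphism of locally free sheaves of rank $n$. By part (1) both sides have the same rank; by Nakayama it suffices to check surjectivity on fibres, and at $0$ this is exactly the fact that every element of $\mathfrak{m}_0/\mathfrak{m}_0^2$ lifts to an invariant form. For an arbitrary closed point $a\in A$, one transports the statement from $0$ via $\tau_a^*$, which takes invariant forms to invariant forms. Localising $\alpha$ at the origin and extracting global generators gives $T\cdot\Omega^{\inv}_A=\Omega_T$.

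For part (3), which is the main obstacle, the key identity to establish is that for $\omega\in\Omega^{\inv}_A$ and the multiplication $\mu:A\times A\to A$ with projections $p_1,p_2$, one has
\[
\mu^*(\omega)=p_1^*(\omega)+p_2^*(\omega)
\]
in $\Omega_{A\times A}$. To see this, note that by part (2) we can write $\mu^*\omega = \eta_1 + \eta_2$ where $\eta_i$ lives in the $p_i$-pulled-back summand of $\Omega_{A\times A}=p_1^*\Omega_A\oplus p_2^*\Omega_A$ (valid after trivialising by invariant forms on each factor). Restricting $\mu$ to $A\times\{0\}$ is $\id_A$ and to $\{0\}\times A$ is $\id_A$ (using commutativity and that $0$ is the identity), which forces $\eta_1=p_1^*\omega$ and $\eta_2=p_2^*\omega$; commutativity of $A$ is what makes both restrictions equal to identity morphisms and guarantees the decomposition is symmetric. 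Once this is in hand, for any morphisms $f,g:V\to A$ we have $f+g=\mu\circ(f,g)$, so
\[
(f+g)^*(\omega)=(f,g)^*\mu^*(\omega)=(f,g)^*\bigl(p_1^*\omega+p_2^*\omega\bigr)=f^*(\omega)+g^*(\omega),
\]
as required. The main technical point is the clean decomposition of $\mu^*\omega$; everything else is diagram chasing once parts (1) and (2) are in place.
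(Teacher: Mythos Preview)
The paper does not prove this proposition at all: it simply cites Rosenlicht for (1) and (3) and Hartshorne's Theorem II.8.8 for (2). Your proposal therefore goes well beyond the paper by supplying an actual argument, and parts (1) and (2) are the standard ones and are fine.

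There is, however, a genuine gap in your argument for (3). As written, you never use the invariance of $\omega$: you decompose $\mu^*\omega=\eta_1+\eta_2$ with $\eta_i\in\Gamma(A\times A,p_i^*\Omega_A)$ and then restrict only to the slices $A\times\{0\}$ and $\{0\}\times A$. Restriction along $i_1:a\mapsto(a,0)$ gives $i_1^*\eta_1=\omega$, but $i_1^*$ is not injective on $\Gamma(A\times A,p_1^*\Omega_A)$ (after trivialising by invariant forms this is just the restriction map $\mathcal{O}(A\times A)\to\mathcal{O}(A)$), so this does not force $\eta_1=p_1^*\omega$. Indeed, your argument as stated would apply to any $\omega\in\Omega_A$, and the conclusion is false for non-invariant forms.

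The fix is immediate: restrict to $A\times\{b\}$ for \emph{every} closed point $b$. Then $\mu\circ i_b=\tau_b$, and invariance gives $i_b^*\mu^*\omega=\tau_b^*\omega=\omega$, while $i_b^*\eta_2=0$ as before. Writing $\eta_1=\sum_j f_j\,p_1^*\omega_j$ in a basis of invariant forms, this forces $f_j(\,\cdot\,,b)$ to be the constant $c_j$ (where $\omega=\sum c_j\omega_j$) for all $b$, hence $\eta_1=p_1^*\omega$; the companion identity $\eta_2=p_2^*\omega$ then follows by commutativity. With this correction your direct proof of (3) goes through.
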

\begin{proof}
For $(1)$ and $(3)$, see \cite{Ros} and for $(2)$, see \cite[Theorem 8.8]{ha}.
\end{proof}

\subsection{Complete Hopf algebras}\label{hopf}
In this subsection, we collect the necessary facts about complete Hopf algebras. We consider the category of complete local $C$-algebras with residue fields coinciding with $C$. Let $\mathcal{R},\mathcal{S}$ be such local $C$-algebras. The coproduct in this category is the completed tensor product
$$\mathcal{R}\widehat{\otimes}\mathcal{S}:=\widehat{\mathcal{R}\otimes \mathcal{S}},$$
where the $C$-algebra $\mathcal{R}\otimes \mathcal{S}$ is completed with respect to the ideal generated by $\mathfrak{m}_\mathcal{R}\otimes \mathcal{S} + \mathcal{R}\otimes \mathfrak{m}_\mathcal{S}$.
\\
A \emph{complete Hopf algebra} is a quadruple $(H,\Delta,S,\varepsilon)$ such that $\mathcal{H}$ is a complete local $C$-algebra with the residue field $C$ and
$$\Delta:\mathcal{H}\to \mathcal{H}\widehat{\otimes}\mathcal{H},\ \ S:\mathcal{H}\to \mathcal{H},\ \ \varepsilon:\mathcal{H}\to C$$
are $C$-algebra homomorphisms such that the diagrams analogous to the diagrams from \cite[page 8]{Water} commute. For example the ``complete coassociativity'' is expressed by the following commutative diagram:
\begin{equation*}
  \xymatrix{
\mathcal{H}\widehat{\otimes}\mathcal{H}\widehat{\otimes}\mathcal{H} & & \mathcal{H}\widehat{\otimes}\mathcal{H}  \ar[ll]_{\ \ \ \ \ \ \Delta \widehat{\otimes}\id}\\
\mathcal{H}\widehat{\otimes}\mathcal{H} \ar[u]_{\id\widehat{\otimes} \Delta}& & \mathcal{H} \ar[ll]^{\Delta} \ar[u]^{\Delta}, }
\end{equation*}
i.e. $\otimes$ from the definition of a Hopf algebra is replaced with $\widehat{\otimes}$. The category of complete Hopf algebras is antiequivalent to the category of \emph{formal group schemes} over $C$, see e.g. \cite[p. 493]{Hazew}. The notions of a commutative complete Hopf algebra and a commutative formal group are clear. We will need a result about quotients of complete Hopf algebras. Recall our general assumption $\ch(C)=p>0$, the notation from Remark \ref{rnnotation} and rings of the form $\mathcal{A}[m]$ which were introduced in Definition \ref{r[m]}.
\begin{prop}[Lemma 1.1 in \cite{manin}]\label{frker}
Let $\mathcal{A}$ be a complete Hopf algebra and $m\in \Nn$. Then $\mathcal{A}[m]$ has a (complete) Hopf algebra structure such that the map $\mathcal{A}\to \mathcal{A}[m]$ is a complete Hopf algebra homomorphism.
\end{prop}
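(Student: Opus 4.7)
The plan is to show that each structure map $\varepsilon$, $S$, $\Delta$ of $\mathcal{A}$ descends along the projection $\pi:\mathcal{A}\to \mathcal{A}[m]$; the Hopf axioms on $\mathcal{A}[m]$ then follow by naturality. Write $J:=\fr^m_{\mathcal{A}}(\mathfrak{m}_{\mathcal{A}})\mathcal{A}=\ker(\pi)$.

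First I would dispose of the easy cases. Since $J\subseteq \mathfrak{m}_{\mathcal{A}}=\ker(\varepsilon)$, the counit factors uniquely through $\mathcal{A}[m]$. For the antipode, the Hopf identity $\varepsilon\circ S=\varepsilon$ gives $S(\mathfrak{m}_{\mathcal{A}})\subseteq \mathfrak{m}_{\mathcal{A}}$; as $S$ is a ring (anti-)homomorphism in characteristic $p$, it commutes with the Frobenius power $x\mapsto x^{p^m}$, hence $S(J)\subseteq J$ and $S$ descends to $\bar S:\mathcal{A}[m]\to \mathcal{A}[m]$.

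The main step is the comultiplication. Let $I:=\ker(\pi\widehat{\otimes}\pi)$; this is a closed ideal of $\mathcal{A}\widehat{\otimes}\mathcal{A}$ containing $J\widehat{\otimes}\mathcal{A}+\mathcal{A}\widehat{\otimes}J$. The counit axiom gives $\Delta(\mathfrak{m}_{\mathcal{A}})\subseteq \mathfrak{m}_{\mathcal{A}\widehat{\otimes}\mathcal{A}}$. The crux is the following freshman's dream claim:
$$z\in \mathfrak{m}_{\mathcal{A}\widehat{\otimes}\mathcal{A}}\ \Longrightarrow\ z^{p^m}\in I.$$
Any such $z$ is a limit of finite sums $z_n=u_n+v_n$ with $u_n\in \mathfrak{m}_{\mathcal{A}}\otimes \mathcal{A}$ and $v_n\in \mathcal{A}\otimes \mathfrak{m}_{\mathcal{A}}$. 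Iterated use of $(x+y)^{p^m}=x^{p^m}+y^{p^m}$ in characteristic $p$ yields $z_n^{p^m}$ as a sum of terms $a^{p^m}\otimes b^{p^m}$ and $c^{p^m}\otimes d^{p^m}$ with $a,d\in \mathfrak{m}_{\mathcal{A}}$; each such term lies in $J\widehat{\otimes}\mathcal{A}+\mathcal{A}\widehat{\otimes}J\subseteq I$. Continuity of $\fr^m$ (it sends $\mathfrak{m}^k$ into $\mathfrak{m}^{kp^m}$) together with closedness of $I$ gives $z^{p^m}=\lim z_n^{p^m}\in I$. Applying this to $z=\Delta(a)$ for $a\in \mathfrak{m}_{\mathcal{A}}$ and using $\Delta(a^{p^m}b)=\Delta(a)^{p^m}\Delta(b)$ yields $\Delta(J)\subseteq I$, so $\Delta$ factors through $\bar\Delta:\mathcal{A}[m]\to \mathcal{A}[m]\widehat{\otimes}\mathcal{A}[m]$.

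The Hopf axioms for $(\mathcal{A}[m],\bar\Delta,\bar S,\bar\varepsilon)$ are then obtained by applying $\pi$ (or the appropriate completed tensor power of $\pi$) to the axioms for $\mathcal{A}$, and $\pi$ is by construction a complete Hopf algebra map. The main obstacle in the whole argument is the freshman's dream computation, which hinges on approximating elements of the completed tensor product by finite sums whose factors actually live in $\mathfrak{m}_{\mathcal{A}}$; once that is set up, every remaining verification is formal. That $\mathcal{A}[m]$ is still a complete local Noetherian $C$-algebra with residue field $C$ was noted in Remark \ref{rnnotation}.
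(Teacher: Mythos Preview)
The paper does not give its own proof of this proposition; it simply records the statement and cites Manin's Lemma 1.1. Your sketch supplies exactly the standard argument one would expect behind that citation: show that the Frobenius-power ideal $J=\fr^m(\mathfrak{m}_{\mathcal{A}})\mathcal{A}$ is a Hopf ideal by using the freshman's dream in characteristic $p$ (together with continuity and closedness of the relevant ideals in the completed tensor product), and then transport the axioms along the quotient map. The argument is correct.

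Two minor remarks. First, your appeal to Remark \ref{rnnotation} at the end is slightly off: that remark only defines $R[m]$ and records $\widehat{R}\cong\varprojlim R[m]$ in the Noetherian case; it does not assert that $\mathcal{A}[m]$ is complete local with residue field $C$. That fact is nevertheless immediate (the quotient of a complete local ring by a closed ideal contained in the maximal ideal is again complete local with the same residue field), so there is no real gap. Second, in the line about the antipode you write ``ring (anti-)homomorphism''; since all rings in the paper are commutative, $S$ is an honest $C$-algebra endomorphism, which is what makes $S(a^{p^m})=S(a)^{p^m}$ automatic.
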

\noindent%
If $G$ is a group scheme over $C$ and $e\in G(C)$ is the identity, then $\widehat{\mathcal{O}_{G,e}}$ is naturally a complete Hopf algebra (commutative, if $G$ is commutative), see \cite[Section 2.2]{manin}. We denote the corresponding formal group scheme by $\widehat{G}$.

If $\mathcal{H}$ is a complete Hopf algebra over $C$ and $\mathcal{S}$ is a complete $C$-algebra, then the set of all local $C$-algebra maps from $\mathcal{H}$ to $\mathcal{S}$ has a natural structure of a commutative group and the $0$-map (as defined in Section \ref{notation}) is the identity of this group.

Let $V,v$ be as above and $A$ be a commutative algebraic group over $C$. We will need the following formal version of Proposition \ref{invforms}(3) which follows from the identifications of Proposition \ref{localforms} and Lemma \ref{coincide}.
\begin{prop}\label{foradd}
For any formal maps $\mathcal{F}_1,\mathcal{F}_2:\widehat{V}\to \widehat{A}$ and $\omega\in \Omega^{\inv}_A$ we have
$$(\mathcal{F}_1+\mathcal{F}_2)^*(\omega)=(\mathcal{F}_1)^*(\omega)+(\mathcal{F}_1)^*(\omega).$$
\end{prop}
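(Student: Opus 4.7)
The statement is essentially the formal upgrade of Proposition \ref{invforms}(3) from morphisms of $C$-schemes to formal maps, so the strategy is to reduce to Proposition \ref{invforms}(3) through the identifications supplied by Proposition \ref{localforms} and Lemma \ref{coincide}. Concretely, set $R := \mathcal{O}_{V,v}$ and $T := \mathcal{O}_{A,0}$, so that each formal map $\mathcal{F}_i$ is a local $C$-algebra homomorphism $\widehat{T} \to \widehat{R}$ and, as recalled in Section \ref{sectionpoints}, it determines an $\widehat{R}$-rational point $(\mathcal{F}_i)_A \in A(\widehat{R})$, i.e.\ a morphism of $C$-schemes $\spec(\widehat{R}) \to A$.

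The first step is to rewrite the pullbacks on the left and right of the desired identity in terms of these rational points. Using Proposition \ref{localforms} we regard $\omega \in \Omega^{\inv}_A \subseteq \Omega_T \subseteq \Omega_{\widehat{T}}$, and then Lemma \ref{coincide}(1) gives the identifications $\mathcal{F}_i^*(\omega) = ((\mathcal{F}_i)_A)^*(\omega)$ for $i = 1,2$, and analogously $(\mathcal{F}_1 + \mathcal{F}_2)^*(\omega) = ((\mathcal{F}_1 + \mathcal{F}_2)_A)^*(\omega)$. So everything takes place inside $\Omega_{\widehat{R}}$ and it suffices to work with morphisms of $C$-schemes from $\spec(\widehat{R})$ to $A$.

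The second (and genuinely necessary) step is to verify that the group law on formal maps $\widehat{V} \to \widehat{A}$ defined via the complete Hopf algebra structure on $\widehat{T}$ matches the group law on $A(\widehat{R})$ induced by the algebraic group structure on $A$; that is,
\[
(\mathcal{F}_1 + \mathcal{F}_2)_A \;=\; (\mathcal{F}_1)_A + (\mathcal{F}_2)_A \quad\text{in } A(\widehat{R}).
\]
This is essentially tautological: the comultiplication $\Delta$ on $\widehat{T}$ from Section \ref{hopf} is by construction the pullback of the multiplication $\mu_A : A \times A \to A$, and the convolution of two ring maps $\mathcal{F}_1, \mathcal{F}_2 : \widehat{T} \to \widehat{R}$ via $\Delta$ and the multiplication of $\widehat{R}$ corresponds, after passing to $\spec(-)$, to $\mu_A \circ ((\mathcal{F}_1)_A, (\mathcal{F}_2)_A)$. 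A short diagram chase makes this precise.

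Granted steps one and two, the conclusion is immediate by applying Proposition \ref{invforms}(3) with the scheme $W = \spec(\widehat{R})$ and the morphisms $f = (\mathcal{F}_1)_A$, $g = (\mathcal{F}_2)_A$:
\[
(\mathcal{F}_1 + \mathcal{F}_2)^*(\omega) = ((\mathcal{F}_1)_A + (\mathcal{F}_2)_A)^*(\omega) = ((\mathcal{F}_1)_A)^*(\omega) + ((\mathcal{F}_2)_A)^*(\omega) = \mathcal{F}_1^*(\omega) + \mathcal{F}_2^*(\omega).
\]
There is no real obstacle here; the only point requiring care is the compatibility of the two group structures in step two, and this is handled by unwinding the definition of the complete Hopf algebra structure on $\widehat{T}$ as the formal completion of $\mu_A^*$ at the origin.
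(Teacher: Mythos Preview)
Your proof is correct and follows precisely the route the paper indicates: reduce to Proposition \ref{invforms}(3) via the identifications of Proposition \ref{localforms} and Lemma \ref{coincide}(1), together with the compatibility $(\mathcal{F}_1+\mathcal{F}_2)_A=(\mathcal{F}_1)_A+(\mathcal{F}_2)_A$. The paper merely sketches this in one line; your step two is exactly the content of Lemma \ref{newadd}, which the paper states separately a few lines later and also leaves as an easy diagram chase.
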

\noindent
We finish with an easy observation regarding the formalization of algebraic group and points as in Section \ref{sectionpoints}. Its proof is an easy diagram chase.
\begin{lemma}\label{newadd}
Let $A$ be a group scheme over $C$ and $T=\mathcal{O}_{A,e}$. Let $\mathcal{F},\mathcal{G}:\widehat{T}\to \widehat{R}$ be $C$-algebra homomorphisms. Then
$$(\mathcal{F}+\mathcal{G})_A=\mathcal{F}_A+\mathcal{G}_A.$$
\end{lemma}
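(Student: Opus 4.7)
The proof is essentially a diagram chase unfolding what the two sides of the equation mean, but let me lay out the steps explicitly.

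My plan is to trace everything back to the definition of the complete Hopf algebra structure on $\widehat{T}$. Recall that the comultiplication $\Delta:\widehat{T}\to \widehat{T}\widehat{\otimes}\widehat{T}$ is, by construction, obtained as follows: the group multiplication morphism $\mu:A\times A\to A$ sends $(e,e)$ to $e$ and therefore induces a local $C$-algebra homomorphism on stalks $\mathcal{O}_{A,e}\to \mathcal{O}_{A\times A,(e,e)}$; completing and using the natural isomorphism $\widehat{\mathcal{O}_{A\times A,(e,e)}}\cong \widehat{T}\widehat{\otimes}\widehat{T}$ (this is precisely the universal property of $\widehat{\otimes}$ as the coproduct in complete local $C$-algebras with residue field $C$) yields $\Delta$. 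Consequently, by definition of the group operation $+$ on the set of local $C$-algebra maps $\widehat{T}\to \widehat{R}$, the map $\mathcal{F}+\mathcal{G}$ is the composition
\begin{equation*}
\widehat{T}\xrightarrow{\ \Delta\ } \widehat{T}\widehat{\otimes}\widehat{T}\xrightarrow{\ \mathcal{F}\widehat{\otimes}\mathcal{G}\ }\widehat{R},
\end{equation*}
where $\mathcal{F}\widehat{\otimes}\mathcal{G}$ denotes the unique local $C$-algebra map out of the completed tensor product whose restrictions to the two factors are $\mathcal{F}$ and $\mathcal{G}$.

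Next I would compute $\mathcal{F}_A+\mathcal{G}_A\in A(\widehat{R})$ directly using the definition of the group structure on $A(\widehat{R})$: it is the composition of morphisms of $C$-schemes
\begin{equation*}
\spec(\widehat{R})\xrightarrow{(\mathcal{F}_A,\mathcal{G}_A)} A\times A \xrightarrow{\ \mu\ } A.
\end{equation*}
Both $\mathcal{F}_A$ and $\mathcal{G}_A$ send the closed point of $\spec(\widehat{R})$ to $e\in A(C)$, so the pair $(\mathcal{F}_A,\mathcal{G}_A)$ sends the closed point to $(e,e)\in (A\times A)(C)$, and hence factors through $\spec(\widehat{\mathcal{O}_{A\times A,(e,e)}})$. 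Thus, after identifying $\widehat{\mathcal{O}_{A\times A,(e,e)}}$ with $\widehat{T}\widehat{\otimes}\widehat{T}$, the morphism $(\mathcal{F}_A,\mathcal{G}_A)$ corresponds, on the level of local rings, exactly to $\mathcal{F}\widehat{\otimes}\mathcal{G}$ (by the universal property of the coproduct).

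Finally, combining these observations, the composition $\mu\circ(\mathcal{F}_A,\mathcal{G}_A)$ corresponds on completed local rings to $(\mathcal{F}\widehat{\otimes}\mathcal{G})\circ \Delta$, which is precisely $\mathcal{F}+\mathcal{G}$. Passing back from the local ring map to the induced $\widehat{R}$-point via the procedure described in Section \ref{sectionpoints}, we conclude $(\mathcal{F}+\mathcal{G})_A=\mathcal{F}_A+\mathcal{G}_A$, as claimed. There is no real obstacle here: the only point requiring any care is the identification $\widehat{\mathcal{O}_{A\times A,(e,e)}}\cong \widehat{T}\widehat{\otimes}\widehat{T}$ and the fact that, under this identification, the completion of $\mu^*$ is $\Delta$, which is precisely how the complete Hopf algebra structure on $\widehat{T}$ was defined in Section \ref{hopf}.
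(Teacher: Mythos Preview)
Your proposal is correct and is precisely the ``easy diagram chase'' the paper alludes to without writing out; the paper gives no explicit proof beyond that phrase, and your argument supplies exactly the expected details.
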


\subsection{Formal $A$-limit maps}\label{alimit}
In this subsection we will define the ``good'' formal maps from the introduction and prove their (good) properties. We fix the following:
\begin{itemize}
\item A local $C$-algebra $R$ such that $R$ is reduced, Noetherian and the residue field of $R$ coincides with $C$. We also assume that $R$ is complete or a localization of a $C$-algebra of finite type.

\item A local $C$-algebra $T$.

\item A commutative complete Hopf algebra $\mathcal{H}$ over $C$.

\item A complete local $C$-algebra $\mathcal{S}$.
\end{itemize}
\noindent
We will not always use all the properties imposed on $R$, but for simplicity of the presentation we make the assumptions as above.
In the next definition, we will use the notation from Remark \ref{rnnotation}.
\begin{definition}
Let $(f_m:T\to R)_m$ be a sequence of local $C$-algebra homomorphisms. We say that such a sequence is \emph{strongly Cauchy} if $(f_m[m]:T[m]\to R[m])_m$ is a morphism between inverse systems of rings.
\end{definition}
\begin{remark}
Let us fix a sequence $(f_m)_m$ as above.
\begin{enumerate}
\item
The following are equivalent:
\begin{enumerate}
\item $(f_m)_m$ is strongly Cauchy;

\item for each $m\in \Nn$ we have
$$f_m[m+1]=f_{m+1}[m+1]:T[m+1]\to R[m+1];$$

\item for each $m\in \Nn$ and $t\in T$ we have
$$f_m(t)-f_{m+1}(t)\in \fr^m_R(\mathfrak{m}_R)R.$$
\end{enumerate}
\noindent
Since $\fr^m_R(\mathfrak{m}_R)R\subseteq \mathfrak{m}_R^{p^m}$, a strongly Cauchy sequence is uniformly Cauchy i.e. for each $t\in T$, the sequence $(f_m(t))_m$ is Cauchy (uniformly in $t$). Hence for a strongly Cauchy sequence we obtain
$$\li(f_m)=\li(f_m[m]):\widehat{T}\to \widehat{S}.$$

\item Since for each $m$ we have $f_m[m]=\widehat{f_m}[m]$, the sequence $(f_m:R\to S)_m$ is strongly Cauchy if and only if the sequence $(\widehat{f_m}:\widehat{R}\to \widehat{S})_m$ is strongly Cauchy.
\end{enumerate}
\end{remark}
\noindent
We will need some properties of strongly Cauchy sequences.
\begin{lemma}\label{limitpower}
Let $(f_m:T\to R)$ be a strongly Cauchy sequence and assume that for almost all $m$, we have $f_m(T)\subseteq R^{p^k}$. Then $\li(f_m)(\widehat{T})\subseteq (\widehat{R})^{p^k}$.
\end{lemma}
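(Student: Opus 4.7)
The plan is to exhibit $(\widehat{R})^{p^k}$ as a closed subset of $\widehat{R}$ containing the image of each $\widehat{f_m}$ for large $m$, and then conclude by continuity of the limit.

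First I would upgrade the hypothesis $f_m(T) \subseteq R^{p^k}$ to the statement $\widehat{f_m}(\widehat{T}) \subseteq (\widehat{R})^{p^k}$. For $m \geq m_0$, the local $C$-algebra map $f_m$ factors as $T \to R^{p^k} \hookrightarrow R$. Proposition \ref{completefrob}(2) (applied to $R$, which is reduced hence has injective Frobenius, and which satisfies the finite-type-or-complete hypothesis) says the standard topology on $R^{p^k}$ agrees with the subspace topology from $R$, so the factored map is continuous for the $\mathfrak{m}_{R^{p^k}}$-adic topology and extends to a continuous map $\widehat{T} \to \widehat{R^{p^k}}$. By Proposition \ref{completefrob}(3), the natural map $\Psi : \widehat{R^{p^k}} \to \widehat{R}$ is injective with image $(\widehat{R})^{p^k}$; composing, I obtain $\widehat{f_m}(\widehat{T}) \subseteq (\widehat{R})^{p^k}$ for every $m \geq m_0$.

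Next I would verify that $(\widehat{R})^{p^k}$ is closed in $\widehat{R}$. By Proposition \ref{completefrob}(2) the subspace topology on $R^{p^k}$ inside $R$ is its standard topology, hence the closure of $R^{p^k}$ inside $\widehat{R}$ is canonically its completion $\widehat{R^{p^k}}$, which by Proposition \ref{completefrob}(3) is exactly $(\widehat{R})^{p^k}$. In particular $(\widehat{R})^{p^k}$ is complete in the subspace topology, hence closed in $\widehat{R}$.

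Finally I would run the limit argument. Fix $t \in \widehat{T}$. The strongly Cauchy condition gives, for every $m$, the congruence $\widehat{f_m}(t) \equiv \mathcal{F}(t) \pmod{\fr^m_{\widehat{R}}(\mathfrak{m}_{\widehat{R}})\widehat{R}}$, because on $T[m+1]$ the maps $f_m[m+1]$ stabilize to the reduction of $\mathcal{F}$ modulo $\fr^m(\mathfrak{m}_R)R$, and this remains true on $\widehat{T}[m+1]$. Since $\fr^m(\mathfrak{m}_{\widehat{R}})\widehat{R} \subseteq \mathfrak{m}_{\widehat{R}}^{p^m}$, we conclude that $\widehat{f_m}(t) \to \mathcal{F}(t)$ in $\widehat{R}$. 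Each term for $m \geq m_0$ lies in the closed subset $(\widehat{R})^{p^k}$, so the limit does as well, yielding $\mathcal{F}(t) \in (\widehat{R})^{p^k}$ as required.

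The only genuinely nontrivial step is the topological closedness of $(\widehat{R})^{p^k}$, and that is handled cleanly by Proposition \ref{completefrob}; the rest is bookkeeping with completions and the strongly Cauchy condition.
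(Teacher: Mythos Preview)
Your proof is correct and follows the same approach as the paper: the essential point is that $(\widehat{R})^{p^k}$ is closed in $\widehat{R}$ (via Proposition~\ref{completefrob}(2)), after which pointwise convergence finishes the argument. The paper's proof is a one-liner to this effect, noting in addition that only pointwise Cauchy is needed; your version simply unpacks the same idea in more detail, including the explicit upgrade to $\widehat{f_m}$.
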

\begin{proof}
By Proposition \ref{completefrob}(2) and \ref{completefrob}(3) (as in the proof of Lemma \ref{points}), the subring $(\widehat{R})^{p^k}$ is complete in $\widehat{R}$. Hence the result holds even for pointwise Cauchy sequences of maps.
\end{proof}
\noindent
We notice below that the limits preserve homomorphisms.
\begin{lemma}\label{limithom}
Assume $\mathcal{S}$ is a complete Hopf algebra and $(f_m:\mathcal{H}\to \mathcal{S})_m$ is a strongly Cauchy sequence of complete Hopf algebra homomorphisms. Then $\li(f_m)_m$ is a complete Hopf algebra homomorphism.
\end{lemma}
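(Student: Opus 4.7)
The plan is to verify that $f := \li(f_m) : \mathcal{H} \to \mathcal{S}$ — already available as a local $C$-algebra homomorphism by the preceding remark (applied with $\mathcal{H}$ in place of $T$ and $\mathcal{S}$ in place of $R$, noting $\widehat{\mathcal{H}}=\mathcal{H}$) — respects the remaining Hopf algebra structure maps: the counit, the antipode, and the comultiplication.

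The counit and antipode are the easy cases. Each $f_m$ satisfies $\varepsilon_{\mathcal{S}} \circ f_m = \varepsilon_{\mathcal{H}}$, and $\varepsilon_{\mathcal{S}}:\mathcal{S}\to C$ is a local $C$-algebra map, hence continuous for the discrete topology on $C$; so I can pass $\li$ inside the composition to obtain $\varepsilon_{\mathcal{S}} \circ f = \varepsilon_{\mathcal{H}}$. An identical argument, applied to the identity $S_{\mathcal{S}} \circ f_m = f_m \circ S_{\mathcal{H}}$ and using continuity of both antipodes (which are local $C$-algebra endomorphisms of Noetherian complete local rings), delivers $S_{\mathcal{S}} \circ f = f \circ S_{\mathcal{H}}$.

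The hard part will be the comultiplication identity $\Delta_{\mathcal{S}} \circ f = (f \widehat{\otimes} f) \circ \Delta_{\mathcal{H}}$, because the right-hand side cannot be analyzed coordinate by coordinate — the completed tensor product is involved. My plan is to first establish an auxiliary claim: the sequence $(f_m \widehat{\otimes} f_m)_m$ of local $C$-algebra maps $\mathcal{H}\widehat{\otimes}\mathcal{H} \to \mathcal{S}\widehat{\otimes}\mathcal{S}$ is itself strongly Cauchy, with limit $f \widehat{\otimes} f$. On an elementary tensor $a\otimes b$ one has
$$(f_m \widehat{\otimes} f_m)(a\otimes b) - (f_{m+1}\widehat{\otimes} f_{m+1})(a\otimes b) = (f_m(a)-f_{m+1}(a))\otimes f_m(b) + f_{m+1}(a)\otimes (f_m(b)-f_{m+1}(b)),$$
and each summand lies in $\fr^m(\mathfrak{m}_{\mathcal{S}\widehat{\otimes}\mathcal{S}})(\mathcal{S}\widehat{\otimes}\mathcal{S})$, since the strongly Cauchy hypothesis for $(f_m)$ puts the relevant $p^m$-th-power generators of $\fr^m(\mathfrak{m}_\mathcal{S})\mathcal{S}$ into that ideal after tensoring with $1$. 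Density of finite tensors and continuity of each $f_m\widehat{\otimes} f_m$ then propagate this estimate to all of $\mathcal{H}\widehat{\otimes}\mathcal{H}$, establishing the claim.

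With this auxiliary fact in hand, I apply $\li_m$ to the Hopf identity $\Delta_{\mathcal{S}} \circ f_m = (f_m \widehat{\otimes} f_m) \circ \Delta_{\mathcal{H}}$: continuity of $\Delta_{\mathcal{S}}$ handles the left-hand limit, while pointwise convergence $f_m \widehat{\otimes} f_m \to f \widehat{\otimes} f$ evaluated at each $\Delta_{\mathcal{H}}(h)$ handles the right-hand limit. This yields $\Delta_{\mathcal{S}} \circ f = (f \widehat{\otimes} f) \circ \Delta_{\mathcal{H}}$ and completes the verification that $f$ is a complete Hopf algebra homomorphism.
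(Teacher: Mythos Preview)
Your proof is correct, but it takes a different and more laborious route than the paper's. The paper dispatches the lemma in one line: by Proposition~\ref{frker}, each quotient $\mathcal{H}[m]$ and $\mathcal{S}[m]$ carries a complete Hopf algebra structure compatible with the projections, so each $f_m[m]:\mathcal{H}[m]\to\mathcal{S}[m]$ is a Hopf algebra homomorphism; since $\li(f_m)=\li(f_m[m])$ is an inverse limit of Hopf algebra homomorphisms along an inverse system of Hopf algebras, it is itself a Hopf algebra homomorphism. In other words, the paper works at the level of the truncated system $(\cdot)[m]$ where everything is finite and the Hopf identities pass automatically to the limit, whereas you verify each structure map (counit, antipode, comultiplication) separately via continuity and an explicit strongly-Cauchy estimate for $(f_m\widehat{\otimes}f_m)_m$. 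Your approach is self-contained and does not invoke Proposition~\ref{frker}, at the cost of the auxiliary tensor computation; the paper's approach is shorter precisely because that proposition has already packaged the compatibility of the Hopf structure with the filtration by $\fr^m(\mathfrak{m})$.
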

\begin{proof}
It is enough to notice that each $f_m:\mathcal{H}[m]\to \mathcal{S}[m]$ is a (complete) Hopf algebra homomorphism (see Proposition \ref{frker}).
\end{proof}
\noindent
In the next two lemmas, we will denote by $+$ the group operation on the set of all $C$-algebra homomorphisms from a complete Hopf algebra over $C$ to a complete $C$-algebra.
\begin{lemma}\label{limitadd}
Let $(\mathcal{F}_m)$ and $(\mathcal{T}_m)$ be strongly Cauchy sequences of maps from $\mathcal{H}$ to $\mathcal{S}$. Then we have
$$\li(\mathcal{F}_m+\mathcal{T}_m)=\li(\mathcal{F}_m)+\li(\mathcal{T}_m).$$
\end{lemma}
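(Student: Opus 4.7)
The plan is to reduce the statement to a level-by-level verification on the quotients $\mathcal{H}[m]$ and $\mathcal{S}[m]$, using that both $+$ and the operation $[m]$ are compatible with the Hopf structure. Recall that for $C$-algebra homomorphisms $f,g\colon \mathcal{H}\to \mathcal{S}$, the sum is given by
\[
f+g \;=\; \mu_{\mathcal{S}}\circ (f\,\widehat{\otimes}\,g)\circ \Delta_{\mathcal{H}},
\]
where $\mu_{\mathcal{S}}$ denotes the (completed) multiplication on $\mathcal{S}$ and $\Delta_{\mathcal{H}}$ the coproduct. The key observation is that this formula is natural in $\mathcal{H}$ and $\mathcal{S}$, so it descends to the reductions.

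First I would verify the compatibility of $+$ with the $[m]$-reduction: for any two $C$-algebra maps $f,g\colon \mathcal{H}\to \mathcal{S}$,
\[
(f+g)[m] \;=\; f[m]+g[m]\colon \mathcal{H}[m]\to \mathcal{S}[m].
\]
This is where Proposition~\ref{frker} enters: it provides $\mathcal{H}[m]$ with a complete Hopf algebra structure whose coproduct is the reduction $\Delta_{\mathcal{H}[m]}$ of $\Delta_{\mathcal{H}}$. Since $\mathcal{S}[m]$ is Artinian with maximal ideal of nilpotency controlled by $\fr^m(\mathfrak{m}_{\mathcal{S}})$, the completed tensor product on $\mathcal{S}[m]$ coincides with the ordinary tensor product, and the multiplication on $\mathcal{S}[m]$ is the reduction of $\mu_{\mathcal{S}}$. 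The displayed formula then follows by applying $[m]$ to the defining expression of $f+g$.

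Next I would apply this to the sequences at hand. Combining the compatibility with the strongly Cauchy hypothesis on $(\mathcal{F}_m)$ and $(\mathcal{T}_m)$, we get
\[
(\mathcal{F}_m+\mathcal{T}_m)[m+1] = \mathcal{F}_m[m+1]+\mathcal{T}_m[m+1]
= \mathcal{F}_{m+1}[m+1]+\mathcal{T}_{m+1}[m+1] = (\mathcal{F}_{m+1}+\mathcal{T}_{m+1})[m+1],
\]
so $(\mathcal{F}_m+\mathcal{T}_m)_m$ is itself strongly Cauchy. Setting $\mathcal{F}:=\li(\mathcal{F}_m)$ and $\mathcal{T}:=\li(\mathcal{T}_m)$, the compatibility of $\li$ with $[n]$-reduction gives $\mathcal{F}[n]=\mathcal{F}_n[n]$ and $\mathcal{T}[n]=\mathcal{T}_n[n]$ for every $n$, so
\[
\li(\mathcal{F}_m+\mathcal{T}_m)[n] = (\mathcal{F}_n+\mathcal{T}_n)[n] = \mathcal{F}_n[n]+\mathcal{T}_n[n]
= \mathcal{F}[n]+\mathcal{T}[n] = (\mathcal{F}+\mathcal{T})[n].
\]
Since two continuous maps $\mathcal{H}\to \mathcal{S}$ coinciding modulo every $\fr^n(\mathfrak{m}_{\mathcal{S}})$ are equal, this yields $\li(\mathcal{F}_m+\mathcal{T}_m)=\mathcal{F}+\mathcal{T}$, as required.

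The only nontrivial point is the first step, establishing that $+$ is natural with respect to the $[m]$-quotients; this is essentially bookkeeping given Proposition~\ref{frker}, but it is the content that makes the limit commute with addition. Once that is in hand the rest of the argument is formal.
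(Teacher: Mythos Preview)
Your proof is correct and follows essentially the same approach as the paper: reduce modulo $[m]$, use Proposition~\ref{frker} to get $(f+g)[m]=f[m]+g[m]$, and pass to the inverse limit. The paper's argument is more terse (it omits the explicit verification that $(\mathcal{F}_m+\mathcal{T}_m)_m$ is strongly Cauchy and the unpacking of why $+$ commutes with $[m]$), but the content is the same.
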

\begin{proof}
For each $m\in \Nn$ we have (using Proposition \ref{frker} for the second equality):
\begin{IEEEeqnarray*}{rCl}
\li(\mathcal{F}_m+\mathcal{G}_m)[m] & = & (\mathcal{F}_m+\mathcal{G}_m)[m]  \\
& = & \mathcal{F}_m[m]+\mathcal{G}_m[m] \\
& = & \li(\mathcal{F}_m)[m]+\li(\mathcal{G}_m)[m] \\
& = &  (\li(\mathcal{F}_m)+\li(\mathcal{G}_m))[m].
\end{IEEEeqnarray*}
By passing to the inverse limit we get the result.
\end{proof}
\begin{lemma}\label{frfactor}
Let $\mathcal{F},\mathcal{T}: \mathcal{H}\to \mathcal{S}$ be local $C$-homomorphisms. Assume that for some $m\geqslant 1$, $(\mathcal{F}-\mathcal{T})(\mathcal{H})\subseteq \mathcal{S}^{p^{m}}$. Then we have $$\mathcal{F}[m]=\mathcal{T}[m]:\mathcal{H}[m]\to \mathcal{S}[m].$$
\end{lemma}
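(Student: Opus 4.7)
The plan is to show that the induced map $(\mathcal{F}-\mathcal{T})[m]:\mathcal{H}[m]\to \mathcal{S}[m]$ is the $0$-map in the sense of Section \ref{notation}, and then to derive $\mathcal{F}[m]=\mathcal{T}[m]$ via the naturality identity $(\mathcal{F}-\mathcal{T})[m]=\mathcal{F}[m]-\mathcal{T}[m]$ in the convolution group of local $C$-algebra maps $\mathcal{H}[m]\to \mathcal{S}[m]$. That this identity is legitimate uses Proposition \ref{frker}: the quotient $\mathcal{H}\to \mathcal{H}[m]$ is a homomorphism of complete Hopf algebras, $\mathcal{S}\to \mathcal{S}[m]$ is a $C$-algebra map, and $\mathcal{F}[m],\mathcal{T}[m]$ are both well defined because locality of $\mathcal{F},\mathcal{T}$ gives $\mathcal{F}(\fr^m(\mathfrak{m}_\mathcal{H})\mathcal{H})\subseteq \fr^m(\mathfrak{m}_\mathcal{S})\mathcal{S}$ (and likewise for $\mathcal{T}$).

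The main computation would then run as follows. Since local $C$-algebra maps form a subgroup under convolution (they correspond to origin-preserving morphisms of formal schemes, and both the $0$-map and compositions of origin-preserving morphisms are origin-preserving), $\mathcal{F}-\mathcal{T}$ is itself local. Hence $(\mathcal{F}-\mathcal{T})(\mathfrak{m}_\mathcal{H})\subseteq \mathfrak{m}_\mathcal{S}$, and combining this with the hypothesis $(\mathcal{F}-\mathcal{T})(\mathcal{H})\subseteq \mathcal{S}^{p^m}$ yields
$$(\mathcal{F}-\mathcal{T})(\mathfrak{m}_\mathcal{H})\subseteq \mathfrak{m}_\mathcal{S}\cap \mathcal{S}^{p^m}.$$
Any element of this intersection has the form $a=s^{p^m}$ with $s^{p^m}\in \mathfrak{m}_\mathcal{S}$; since $\mathfrak{m}_\mathcal{S}$ is prime (the residue field $C$ is a field), $s\in \mathfrak{m}_\mathcal{S}$, whence $a\in \fr^m(\mathfrak{m}_\mathcal{S})\subseteq \fr^m(\mathfrak{m}_\mathcal{S})\mathcal{S}=\ker(\mathcal{S}\to \mathcal{S}[m])$. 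It follows that $(\mathcal{F}-\mathcal{T})[m]$ kills $\mathfrak{m}_{\mathcal{H}[m]}$, which by the characterization in Section \ref{notation} means precisely that $(\mathcal{F}-\mathcal{T})[m]$ is the $0$-map. Combined with the naturality identity above, we conclude $\mathcal{F}[m]=\mathcal{T}[m]$.

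The only subtle point to verify carefully is the naturality equation $(\mathcal{F}-\mathcal{T})[m]=\mathcal{F}[m]-\mathcal{T}[m]$: one must check that the completed tensor product used to define convolution on $\mathcal{H}[m]$ interacts correctly with the $[m]$-quotient and agrees with the Hopf structure provided by Proposition \ref{frker}. Once this bookkeeping is unwound, no genuine obstacle remains, and the rest of the argument is the short ideal-theoretic computation described above.
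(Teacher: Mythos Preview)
Your proof is correct and follows essentially the same route as the paper: show that $(\mathcal{F}-\mathcal{T})(\mathfrak{m}_{\mathcal{H}})\subseteq \mathfrak{m}_{\mathcal{S}}\cap \mathcal{S}^{p^m}\subseteq \fr^m(\mathfrak{m}_{\mathcal{S}})\mathcal{S}$, conclude that $(\mathcal{F}-\mathcal{T})[m]$ is the $0$-map, and then invoke Proposition~\ref{frker} to get $(\mathcal{F}-\mathcal{T})[m]=\mathcal{F}[m]-\mathcal{T}[m]$. Your write-up is in fact slightly more careful than the paper's at two points: you explain why $\mathcal{F}-\mathcal{T}$ is local (so that $(\mathcal{F}-\mathcal{T})[m]$ is defined), and your ideal computation lands directly in $\fr^m(\mathfrak{m}_{\mathcal{S}})$, which is exactly the kernel of $\mathcal{S}\to\mathcal{S}[m]$, whereas the paper phrases the inclusion as $\mathfrak{m}_{\mathcal{S}}\cap \mathcal{S}^{p^m}\subseteq \mathfrak{m}_{\mathcal{S}}^{m}$.
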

\begin{proof}
Since the ideal $\mathfrak{m}_{\mathcal{S}}$ is radical (and $p^m\geqslant m$), we have $\mathfrak{m}_{\mathcal{S}}\cap \mathcal{S}^{p^{m}}\subseteq \mathfrak{m}_{\mathcal{S}}^{m}$. Therefore
$$(\mathcal{F}-\mathcal{T})[m](\mathcal{H}[m])=C.$$
It means that $(\mathcal{F}-\mathcal{T})[m]$ is the $0$-map. By Proposition \ref{frker}, we have
$$0=(\mathcal{F}-\mathcal{T})[m]=\mathcal{F}[m]-\mathcal{T}[m],$$
hence $\mathcal{F}[m]=\mathcal{T}[m]$.
\end{proof}
Let us fix now a commutative algebraic group $A$ over $C$ and we set $T:=\mathcal{O}_{A,0}$.
We also fix a $C$-scheme $V$, $v\in V(C)$ and denote by $\widehat{V}$ the formalization of $V$ along $v$. The next definition is modeled on the case of additive power series, which can be considered as limits (in a certain strong sense) of additive polynomials, see \cite[Def. 2.2]{K6}.
\begin{definition}
\begin{enumerate}
\item A sequence of local $C$-algebra homomorphisms $(\varphi_m:\mathcal{H}\to \mathcal{S})_{m\in \Nn}$ is called \emph{$\mathcal{H}$-compatible}, if for each $m$, we have
$$(\varphi_{m+1}-\varphi_{m})(\mathcal{H})\in \mathcal{S}^{p^{m+1}},$$
where ``$-$'' comes from the complete Hopf algebra structure.

\item A sequence of local $C$-algebra homomorphisms $(f_m:T\to R)_{m\in \Nn}$ is called \emph{$A$-compatible}, if the sequence $(\widehat{f_m}:\widehat{T}\to \widehat{R})_{m\in \Nn}$ is $\widehat{T}$-compatible.

\end{enumerate}
\end{definition}
\begin{remark}\label{acomprem}
For a sequence of local $C$-algebra homomorphisms $(f_m:T\to R)_{m\in \Nn}$ the following are equivalent:
\begin{enumerate}
\item The sequence $(f_m:T\to R)_{m\in \Nn}$ is $A$-compatible.

\item For each $m$, we have $(f_{m+1})_A-(f_{m})_A\in A(R^{p^{m+1}})$.

\item For each $m$, the morphism $(f_{m+1})_A-(f_{m})_A:\spec(R)\to A$ factors through $\fr^{m+1}_A:A^{\fr^{-m-1}}\to A$ (similarly as in Remark \ref{facfroring}).
\end{enumerate}
\noindent
Using Remark \ref{acomprem}(3), we can extend the definition of an $A$-compatible sequence to any sequence of $C$-scheme morphisms $(f_m:V\to A)_{m}$.
\end{remark}
\begin{lemma}\label{compconv}
An $A$-compatible sequence $(f_m:T\to R)_m$ is strongly Cauchy.
\end{lemma}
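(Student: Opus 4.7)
The plan is to reduce the statement to a direct application of Lemma \ref{frfactor}, via the equivalent characterizations of ``strongly Cauchy'' spelled out in the remark preceding Lemma \ref{limitpower}. Specifically, I will verify the characterization that for each $m$,
\[
f_m[m+1] = f_{m+1}[m+1] \colon T[m+1] \to R[m+1].
\]
Since that same remark records the identity $f_m[m] = \widehat{f_m}[m]$ for all $m$, and notes that strongly Cauchy for $(f_m)$ is equivalent to strongly Cauchy for $(\widehat{f_m})$, it suffices to establish the corresponding equality for the completed maps $\widehat{f_m} \colon \widehat{T} \to \widehat{R}$.

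By definition, $A$-compatibility of $(f_m)$ means that the sequence $(\widehat{f_m})$ is $\widehat{T}$-compatible, so for every $m \geqslant 0$ we have
\[
(\widehat{f_{m+1}} - \widehat{f_m})(\widehat{T}) \subseteq (\widehat{R})^{p^{m+1}},
\]
where the difference is taken in the group of local $C$-algebra maps from the commutative complete Hopf algebra $\widehat{T} = \widehat{\mathcal{O}_{A,0}}$ to $\widehat{R}$. Applying Lemma \ref{frfactor} with $\mathcal{H} = \widehat{T}$, $\mathcal{S} = \widehat{R}$, $\mathcal{F} = \widehat{f_{m+1}}$, $\mathcal{T} = \widehat{f_m}$, and with the parameter ``$m+1$'' (which is $\geqslant 1$, as required) immediately gives
\[
\widehat{f_{m+1}}[m+1] = \widehat{f_m}[m+1] \colon \widehat{T}[m+1] \to \widehat{R}[m+1].
\]
This is exactly the strongly Cauchy condition for $(\widehat{f_m})$, and hence for $(f_m)$ by the observation above.

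There is no substantive obstacle; the entire content of the lemma is the alignment of definitions and the use of Lemma \ref{frfactor}. The only point worth double-checking is that Lemma \ref{frfactor} applies in our setting, which it does because $\widehat{T}$ is a commutative complete Hopf algebra (the formalization of the commutative algebraic group $A$ at the identity) and $\widehat{R}$ is a complete local $C$-algebra with residue field $C$, so the group structure on $\mathrm{Hom}(\widehat{T},\widehat{R})$ used to form the difference $\widehat{f_{m+1}} - \widehat{f_m}$ is available.
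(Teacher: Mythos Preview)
Your proof is correct and follows essentially the same route as the paper: both invoke the definition of $A$-compatibility to obtain $(\widehat{f_{m+1}}-\widehat{f_m})(\widehat{T})\subseteq \widehat{R}^{p^{m+1}}$, apply Lemma \ref{frfactor} with parameter $m+1$, and use the identification $f_m[m+1]=\widehat{f_m}[m+1]$ to conclude the strongly Cauchy condition. The only cosmetic difference is that the paper writes the chain of equalities $f_m[m+1]=\widehat{f_m}[m+1]=\widehat{f_{m+1}}[m+1]=f_{m+1}[m+1]$ directly, whereas you phrase it as ``strongly Cauchy for $(\widehat{f_m})$ implies strongly Cauchy for $(f_m)$''.
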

\begin{proof}
Let us fix $m\in \Nn$.
By the definition of an $A$-compatible sequence we have
$$(\widehat{f_{m+1}}-\widehat{f_{m}})(\widehat{T})\subseteq  \widehat{R}^{p^{m+1}}.$$
By Lemma \ref{frfactor}, we have
$$f_m[m+1]=\widehat{f_m}[m+1]=\widehat{f_{m+1}}[m+1]=f_{m+1}[m+1],$$
so the sequence is strongly Cauchy.
\end{proof}
\noindent
\begin{example}
The notion of an $A$-compatible sequence is much stronger than the notion of a strongly Cauchy sequence. For example consider $V=\Aa^1$ and $A=\ga$. Let $(f_m:V\to A)_m$ be a sequence of morphisms. We can understand each $f_m$ as an element of $C[X]$. Then we have:
\begin{itemize}
\item the sequence $(f_m)_m$ is strongly Cauchy if and only if for each $m$, $X^{p^m}$ divides $f_{m+1}-f_m$;

\item the sequence $(f_m)_m$ is $\ga$-compatible if and only if for each $m$, $f_{m+1}-f_m\in C[X^{p^m}]$.
\end{itemize}
\end{example}
\noindent
Below we define a certain class of formal maps, which we find most convenient to work with.
\begin{definition}\label{defalimit}
Let $\mathcal{F}:\widehat{V}\to \widehat{A}$ be a formal map. Then $\mathcal{F}$ is an \emph{$A$-limit}, if  there is an $A$-compatible sequence $f_m:T\to R$ such that $\mathcal{F}$ corresponds to $\li(f_{m})$.
\end{definition}

\begin{remark}\label{defalimit2}
If for any $m\in \Nn$ there is $\phi_m:V\to A$ such that $\mathcal{F}-\widehat{\phi_m}:\widehat{V}\to \widehat{A}$ factors through $\fr^{m+1}_{\widehat{V}}$ as in (the formal version of) Remark \ref{facfrosch}, then $\mathcal{F}$ is an $A$-limit map.
\end{remark}
\begin{example}\label{alimitexample}
\begin{enumerate}

\item For $A=\mathbb{G}_{\rm{a}}$, an $A$-limit is of the form
$$\psi_0+\psi_1^p+\ldots+\psi_m^{p^m}+\ldots$$
where $(\psi_m:V\to \ga)_m$ is a sequence of morphisms.

\item For $A=\mathbb{G}_{\rm{m}}$ an $A$-limit is of the form
$$\psi_0\cdot\psi_1^p\cdot\ldots\cdot\psi_m^{p^m}\cdot \ldots$$
where $(\psi_m:V\to \gm)_m$ is a sequence of morphisms.


\end{enumerate}
\end{example}
\noindent
Recall from Proposition \ref{localforms} that we can consider $\Omega^{\inv}_{A}$ as a $C$-subspace of $\Omega_{\widehat{T}}$. The following result is item $(2)$ from the beginning of this section.
\begin{prop}\label{limitforms}
Let $(f_m:T\to R)_m$ be an $A$-compatible sequence and $\mathcal{F}=\li (f_m)$. Then for any $k\in \Nn$, $\mathcal{F}_*$ coincides with $(f_k)_*$ on $\Omega^{\inv}_{A}$.
\end{prop}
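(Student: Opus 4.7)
Fix $k \in \Nn$ and $\omega \in \Omega^{\inv}_A \subseteq \Omega_T \subseteq \Omega_{\widehat{T}}$ (using the identifications from Proposition \ref{localforms}). My aim is to show that the difference map $\mathcal{F} - \widehat{f_k}: \widehat{T} \to \widehat{R}$ (computed in the group of local $C$-algebra homomorphisms coming from the complete Hopf algebra structure on $\widehat{T}$) lands inside $\widehat{R}^{p^{k+1}}$; the conclusion will then follow by Hopf-linearity of pullback on invariant forms (Proposition \ref{foradd}) combined with the vanishing of differential forms along Frobenius factorizations (Lemma \ref{newvanish}, in its completed version).

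The first step is to analyze the telescoping sum
\[
\widehat{f_m} - \widehat{f_k} = \sum_{j=k}^{m-1}\left(\widehat{f_{j+1}} - \widehat{f_j}\right)
\]
in the group of local $C$-algebra maps $\widehat{T} \to \widehat{R}$. By the definition of $A$-compatibility, each summand has image in $\widehat{R}^{p^{j+1}} \subseteq \widehat{R}^{p^{k+1}}$. Because the group law on the set of $C$-algebra maps into $\widehat{R}$ is $G_1 + G_2 = \mu_{\widehat{R}} \circ (G_1 \widehat{\otimes} G_2) \circ \Delta_{\widehat{T}}$ and $\widehat{R}^{p^{k+1}}$ is a subring of $\widehat{R}$ (Frobenius is a ring homomorphism), the image of the sum again lies in $\widehat{R}^{p^{k+1}}$. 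Hence $(\widehat{f_m} - \widehat{f_k})(\widehat{T}) \subseteq \widehat{R}^{p^{k+1}}$ for every $m \geq k$.

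Next, I pass to the limit. The sequence $(\widehat{f_m})_m$ is strongly Cauchy by Lemma \ref{compconv}, and the constant sequence $(\widehat{f_k})_m$ is trivially strongly Cauchy, so Lemma \ref{limitadd} gives
\[
\mathcal{F} - \widehat{f_k} = \li_m(\widehat{f_m}) - \widehat{f_k} = \li_m\bigl(\widehat{f_m} - \widehat{f_k}\bigr).
\]
Applying Lemma \ref{limitpower} to this strongly Cauchy sequence of maps whose images lie in $\widehat{R}^{p^{k+1}}$, I conclude that $(\mathcal{F} - \widehat{f_k})(\widehat{T}) \subseteq \widehat{R}^{p^{k+1}}$ as well, i.e.\ the map $\mathcal{F} - \widehat{f_k}$ factors through the Frobenius $\fr^{k+1}$ in the sense of Fact \ref{facfroring}.

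For the final step, by Proposition \ref{foradd} applied to the decomposition $\mathcal{F} = \widehat{f_k} + (\mathcal{F} - \widehat{f_k})$,
\[
\mathcal{F}_*(\omega) = (\widehat{f_k})_*(\omega) + (\mathcal{F} - \widehat{f_k})_*(\omega).
\]
The second summand vanishes: since $\mathcal{F} - \widehat{f_k}$ factors through Frobenius, its action on any K\"{a}hler form is zero (for $\omega = a\,db$, one has $(\mathcal{F} - \widehat{f_k})_*(\omega) = (\mathcal{F} - \widehat{f_k})(a)\, d\bigl(y^{p^{k+1}}\bigr) = 0$ in characteristic $p$), which is the formal-algebra avatar of Lemma \ref{newvanish}. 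Finally, Lemma \ref{coincide}(2) identifies $(\widehat{f_k})_*(\omega) = (f_k)_*(\omega)$, giving the claim.

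The main conceptual point—where one has to be careful—is the interaction between the group structure on maps (coming from the complete Hopf algebra $\widehat{T}$), the Frobenius filtration $\widehat{R} \supseteq \widehat{R}^{p} \supseteq \widehat{R}^{p^2} \supseteq \cdots$, and the inverse limit: the crucial fact that each $\widehat{R}^{p^j}$ is a \emph{subring} is what lets subtraction of maps preserve the Frobenius layer, and it is what makes the reduction to a Frobenius-factoring map (where invariant forms pull back to zero) work cleanly. The rest is routine diagram chasing using the lemmas already collected.
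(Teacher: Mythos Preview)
Your proof is correct and follows essentially the same route as the paper's: show that $(\mathcal{F}-\widehat{f_k})(\widehat{T})$ lands in a Frobenius power of $\widehat{R}$ via Lemma~\ref{limitadd} and Lemma~\ref{limitpower}, then use additivity of pullback on invariant forms to conclude. The only cosmetic differences are that you obtain the sharper (but unnecessary) bound $\widehat{R}^{p^{k+1}}$ rather than $\widehat{R}^{p}$, and you invoke Proposition~\ref{foradd} directly where the paper detours through the rational-point interpretation (Lemma~\ref{coincide}, Lemma~\ref{points}(2), Proposition~\ref{invforms}(3)); neither affects the substance.
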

\begin{proof}
Let us fix $k\in \Nn$. By Lemma \ref{limitadd}, we have
$$\li_m(\widehat{f_m}-\widehat{f}_k)=\li_m(\widehat{f_m})-\li_m(\widehat{f}_k)=\mathcal{F}-\widehat{f}_k.$$
As in the proof of Lemma \ref{compconv}, for each $m\in \Nn$ we have
$$(\widehat{f_m}-\widehat{f}_k)(\widehat{T})\subseteq \widehat{R}^p.$$
By Lemma \ref{limitpower}, we get
$$(\mathcal{F}-\widehat{f}_k)(\widehat{T})\subseteq \widehat{R}^p.$$
Therefore $(\mathcal{F}-\widehat{f}_k)_*=0$. By Lemma \ref{coincide} and Lemma \ref{points}(2), we get
$$0=(\mathcal{F}-\widehat{f}_k)_*=(\mathcal{F}-\widehat{f}_k)_A^*=(\mathcal{F}_A-(\widehat{f}_k)_A)^*.$$
Take $\omega\in \Omega^{\inv}_A$. By Proposition \ref{invforms}(2) and Lemma \ref{coincide} again, we get
$$0=(\mathcal{F}_A-(\widehat{f}_k)_A)^*(\omega)=\mathcal{F}_A^*(\omega)-(\widehat{f}_k)_A^*(\omega)
=\mathcal{F}_*(\omega)-(\widehat{f}_k)_*(\omega)=\mathcal{F}_*(\omega)-(f_k)_*(\omega).$$
Hence we get $\mathcal{F}_*(\omega)=(f_k)_*(\omega)$.
\end{proof}
\begin{remark}
By Proposition \ref{limitforms} and Proposition \ref{invforms}(2), we can consider $\mathcal{F}_*$ as a map from $\Omega_T$ to $\Omega_R$. This is the crucial property of \emph{special maps}, see Definition \ref{defspecial}.
\end{remark}
\noindent
We prove below the crucial condition about vanishing of an $A$-limit map.
\begin{prop}\label{limitsobs}
Let $(f_m:T\to R)_m$ be an $A$-compatible sequence and $\mathcal{F}=\li (f_m)$. Then the map $\mathcal{F}$ is the $0$-map if and only if for all $k\in \Nn$ we have $f_k(T)\subseteq R^{p^{k+1}}$.
\end{prop}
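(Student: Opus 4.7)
The plan is to prove both directions by combining the telescoping structure of an $A$-compatible sequence with one key observation: the subring $\widehat{R}^{p^{k+1}}\subseteq \widehat{R}$ is preserved by the Hopf algebra group operation on local $C$-algebra maps $\widehat{T}\to \widehat{R}$. Indeed, if $f,g:\widehat{T}\to \widehat{R}$ both have image in $\widehat{R}^{p^{k+1}}$, then so does $f+g=\mu_{\widehat{R}}\circ (f\widehat{\otimes} g)\circ \Delta_{\widehat{T}}$, since multiplication sends $\widehat{R}^{p^{k+1}}\otimes \widehat{R}^{p^{k+1}}$ into $\widehat{R}^{p^{k+1}}$; and $-f=f\circ S_{\widehat{T}}$ has the same image as $f$, because the antipode is a $C$-algebra isomorphism of $\widehat{T}$.

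For the forward direction, fix $k$ and telescope in the Hopf algebra group:
$$\widehat{f_M}-\widehat{f_k}=\sum_{m=k}^{M-1}\bigl(\widehat{f_{m+1}}-\widehat{f_m}\bigr)\qquad(M>k).$$
By $A$-compatibility, each summand has image in $\widehat{R}^{p^{m+1}}\subseteq \widehat{R}^{p^{k+1}}$, so by the preceding observation $(\widehat{f_M}-\widehat{f_k})(\widehat{T})\subseteq \widehat{R}^{p^{k+1}}$ for every $M>k$. The sequence $(\widehat{f_M}-\widehat{f_k})_M$ is strongly Cauchy (its consecutive differences are $\widehat{f_{M+1}}-\widehat{f_M}$, which factor through Frobenius at level $M+1$ by Lemma \ref{frfactor}), so combining Lemma \ref{limitadd} (applied with the constant sequence $-\widehat{f_k}$) and Lemma \ref{limitpower} yields $(\mathcal{F}-\widehat{f_k})(\widehat{T})\subseteq \widehat{R}^{p^{k+1}}$. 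If $\mathcal{F}$ is the $0$-map, then $\mathcal{F}-\widehat{f_k}=-\widehat{f_k}$ has image equal to that of $\widehat{f_k}$, so $\widehat{f_k}(\widehat{T})\subseteq \widehat{R}^{p^{k+1}}$. Restricting to $R$ and using that $R$ is reduced to invoke Proposition \ref{completefrob}(4), we conclude $f_k(T)\subseteq R\cap \widehat{R}^{p^{k+1}}=R^{p^{k+1}}$.

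For the backward direction, assume $f_k(T)\subseteq R^{p^{k+1}}$ for every $k$. Fixing $j$, for all $m\geqslant j$ we have $f_m(T)\subseteq R^{p^{m+1}}\subseteq R^{p^{j+1}}$, so Lemma \ref{limitpower} yields $\mathcal{F}(\widehat{T})\subseteq \widehat{R}^{p^{j+1}}$, and hence $\mathcal{F}(\widehat{T})\subseteq \bigcap_j \widehat{R}^{p^{j+1}}$. For any $x\in \mathfrak{m}_{\widehat{T}}$, locality gives $\mathcal{F}(x)\in \mathfrak{m}_{\widehat{R}}\cap \widehat{R}^{p^{j+1}}$. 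Writing $\mathcal{F}(x)=y^{p^{j+1}}$ and using that $\mathfrak{m}_{\widehat{R}}$ is prime (the residue ring being the field $C$) forces $y\in \mathfrak{m}_{\widehat{R}}$, whence $\mathcal{F}(x)\in \mathfrak{m}_{\widehat{R}}^{p^{j+1}}$. Krull's intersection theorem in the Noetherian local $\widehat{R}$ then gives $\mathcal{F}(\mathfrak{m}_{\widehat{T}})\subseteq \bigcap_j \mathfrak{m}_{\widehat{R}}^{p^{j+1}}=0$, so $\mathcal{F}$ is the $0$-map.

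The principal subtlety, once the set-up is unwound, is the careful handling of the Hopf algebra ``$-$'' (as distinct from ring-theoretic subtraction); everything else is a clean assembly of Lemmas \ref{limitpower}, \ref{limitadd}, \ref{frfactor}, and Proposition \ref{completefrob}(4).
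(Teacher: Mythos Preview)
Your proof is correct and follows essentially the same approach as the paper's: for the forward direction you telescope $\widehat{f_M}-\widehat{f_k}$ using $A$-compatibility, pass to the limit via Lemmas \ref{limitadd} and \ref{limitpower}, and finish with Proposition \ref{completefrob}(4), while for the backward direction you use Lemma \ref{limitpower} and Krull's intersection theorem. You are simply more explicit than the paper about why the Hopf-algebra group law preserves the subring $\widehat{R}^{p^{k+1}}$ and why $\mathfrak{m}_{\widehat{R}}\cap \widehat{R}^{p^{j+1}}\subseteq \mathfrak{m}_{\widehat{R}}^{p^{j+1}}$, both of which the paper leaves implicit.
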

\begin{proof}
For the proof of the right-to-left implication, by Lemma \ref{limitpower} for each $k\in \Nn$, we have $\mathcal{F}(\widehat{T})\subseteq \widehat{R}^{p^{k+1}}$. By Krull's intersection theorem \cite[Theorem 8.10]{mat}, we get
$$\mathcal{F}(\mathfrak{m}_{\widehat{T}})\subseteq \bigcap_{k=1}^{\infty}(\mathfrak{m}_{\widehat{R}})^{p^k}=\{0\},$$
so $\mathcal{F}$ is the $0$-map.
\\
Assume now that $\mathcal{F}$ is the $0$-map and take $k\in \Nn$. As in the proof of Proposition \ref{limitforms} we have: $$\widehat{f}_k=\widehat{f}_k-\mathcal{F}=\li(\widehat{f}_k-\widehat{f_{k+m}})_m.$$
By the definition of an $A$-compatible sequence, for each $m$ we have $(\widehat{f}_k-\widehat{f_{k+m}})(T)\subseteq R^{p^{k+1}}$, so by Lemma \ref{limitpower}, we have $\widehat{f}_k(\widehat{T})\subseteq \widehat{R}^{p^{k+1}}$. Finally by Proposition \ref{completefrob}(4), we get $f_k(T)\subseteq R^{p^{k+1}}$.
\end{proof}
\begin{remark}\label{limitremark}
The right-to-left implication in Proposition \ref{limitsobs} holds even for sequences of maps which are point-wise Cauchy. However, the left-to-right implication holds only for $A$-compatible sequences and this implication is crucial in the proof of Proposition \ref{biglemma}.
\end{remark}

\begin{lemma}\label{addalimit}
Let $\mathcal{F},\mathcal{G}:\widehat{V}\to \widehat{A}$ be $A$-limit maps. Then the map $\mathcal{F}+\mathcal{G}:\widehat{V}\to \widehat{A}$ is an $A$-limit map.
\end{lemma}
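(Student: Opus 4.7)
The plan is to exhibit $\mathcal{F}+\mathcal{G}$ as the limit of a concrete $A$-compatible sequence built from the given data. Write $\mathcal{F}=\li\widehat{f_m}$ and $\mathcal{G}=\li\widehat{g_m}$ for $A$-compatible sequences $(f_m:T\to R)_m$ and $(g_m:T\to R)_m$. The natural candidate is the sequence $h_m:T\to R$ of local $C$-algebra homomorphisms corresponding (via the isomorphism $T=\mathcal{O}_{A,0}$) to the points $(f_m)_A+(g_m)_A\in A(R)$. These points are indeed ``local'' in the sense that they map the closed point of $\spec(R)$ to $0\in A(C)$, because both $(f_m)_A$ and $(g_m)_A$ do, so they factor uniquely through $\spec(\mathcal{O}_{A,0})$ and give rise to the local $C$-algebra maps $h_m$.

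First I would verify that $\widehat{h_m}=\widehat{f_m}+\widehat{g_m}$ as maps $\widehat{T}\to\widehat{R}$. By construction $(h_m)_A=(f_m)_A+(g_m)_A$; passing to $A(\widehat{R})$ via Lemma \ref{points}(1) and comparing with Lemma \ref{newadd}, both sides correspond to the same point of $A(\widehat{R})$, so they are equal as $C$-algebra homomorphisms.

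Next I would verify that $(h_m)_m$ is $A$-compatible. Using Remark \ref{acomprem}(3), the $A$-compatibility of $(f_m)$ and $(g_m)$ means that for each $m$ the morphisms $(f_{m+1})_A-(f_m)_A$ and $(g_{m+1})_A-(g_m)_A$ factor as $\fr^{m+1}_A\circ\alpha_m$ and $\fr^{m+1}_A\circ\beta_m$ respectively. Since $\fr^{m+1}_A$ is a group scheme homomorphism, one computes
\begin{equation*}
(h_{m+1})_A-(h_m)_A=\bigl((f_{m+1})_A-(f_m)_A\bigr)+\bigl((g_{m+1})_A-(g_m)_A\bigr)=\fr^{m+1}_A\circ(\alpha_m+\beta_m),
\end{equation*}
which again by Remark \ref{acomprem}(3) says exactly that $(h_m)_m$ is $A$-compatible.

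Finally, since both $(f_m)$ and $(g_m)$ are strongly Cauchy by Lemma \ref{compconv}, Lemma \ref{limitadd} applied to the sequences $(\widehat{f_m})$ and $(\widehat{g_m})$ gives
\begin{equation*}
\li\widehat{h_m}=\li(\widehat{f_m}+\widehat{g_m})=\li\widehat{f_m}+\li\widehat{g_m}=\mathcal{F}+\mathcal{G},
\end{equation*}
so $\mathcal{F}+\mathcal{G}$ is an $A$-limit with witness $(h_m)$. The one point that requires a little care -- and is the place where the structure really enters -- is the compatibility of the group operation on $C$-algebra maps with Frobenius factorizations; everything else is bookkeeping using Lemmas \ref{points}, \ref{newadd}, \ref{limitadd} and Remark \ref{acomprem}.
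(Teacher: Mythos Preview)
Your proof is correct and follows the same approach as the paper, which simply says ``It is enough to use Lemma~\ref{limitadd}.'' Your version is a careful expansion of this one-liner: you explicitly construct the witnessing sequence $h_m$ (making sure the sum of local points is again local), verify $A$-compatibility via Remark~\ref{acomprem}, and then invoke Lemma~\ref{limitadd}---all steps the paper leaves implicit.
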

\begin{proof}
It is enough to use Lemma \ref{limitadd}.
\end{proof}
\noindent
Below is the last main result in this section (recall the identifications in Proposition \ref{invforms}).
\begin{prop}\label{frobfactorbetter}
Assume that $R=\mathcal{O}_{V,v}$ is $p$-normal. Let $\mathcal{F}:\widehat{V}\to \widehat{A}$ be an $A$-limit map such that the following composition
\begin{equation*}
  \xymatrix{
\Omega^{\inv}_A \ar[rr]^{\mathcal{F}_*}& & \Omega_R \ar[rr]^{} & &  \Omega_K}
\end{equation*}
is the $0$-map. Then there is an $A^{\fr^{-1}}$-limit map $\mathcal{F}^{(1)}:\widehat{V}\to \widehat{A}^{\fr^{-1}}$ making the following diagram commutative
\begin{equation*}
  \xymatrix{
 & & \widehat{A}^{\fr^{-1}} \ar[d]^{\fr}\\
\widehat{V} \ar[rru]^{\mathcal{F}^{(1)}}\ar[rr]^{\mathcal{F}} & & \widehat{A}.  }
\end{equation*}
\end{prop}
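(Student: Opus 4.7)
The plan is to first produce $\mathcal{F}^{(1)}$ as the factorization of $\mathcal{F}$ through Frobenius on the target, and then to verify that $\mathcal{F}^{(1)}$ is an $A^{\fr^{-1}}$-limit by applying Remark \ref{defalimit2} to an appropriate shift of the factorizations of the approximating polynomials.

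Fix an $A$-compatible sequence $(f_m:T\to R)_m$ with $\mathcal{F}=\li(\widehat{f_m})$. By Proposition \ref{limitforms}, for each $k$ we have $\mathcal{F}_*=(f_k)_*$ on $\Omega^{\inv}_A$, so the vanishing hypothesis forces $(f_k)_*(\Omega^{\inv}_A)=0$ in $\Omega_K$ for every $k$. Multiplying by elements of $T$ and using $T\Omega^{\inv}_A=\Omega_T$ from Proposition \ref{invforms}(2), this upgrades to vanishing of the whole composition $\Omega_T\to\Omega_R\to \Omega_K$ induced by $f_k$. Since $R$ is $p$-normal, Lemma \ref{formppower} gives $f_k(T)\subseteq R^p$ for every $k$, and then Lemma \ref{limitpower} yields $\mathcal{F}(\widehat{T})\subseteq \widehat{R}^p$. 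The formal analogue of Fact \ref{facfroring} then produces the sought $\mathcal{F}^{(1)}:\widehat{V}\to \widehat{A}^{\fr^{-1}}$ with $\fr\circ \mathcal{F}^{(1)}=\mathcal{F}$, and likewise each $f_k$ factors as $f_k=f_k^{(1)}\circ \fr$ with $f_k^{(1)}:T^{\fr^{-1}}\to R$ uniquely determined (any two $p$-th roots in characteristic $p$ differ by a $p$-nilpotent element, so uniqueness follows from reducedness of $\widehat{R}$, which is ensured by the excellence of $R$).

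To verify that $\mathcal{F}^{(1)}$ is an $A^{\fr^{-1}}$-limit, I use the criterion of Remark \ref{defalimit2}: for each $m$ I set $\phi_m^{(1)}:=f_{m+1}^{(1)}:V\to A^{\fr^{-1}}$ and show that $\Phi:=\mathcal{F}^{(1)}-\widehat{\phi_m^{(1)}}$ factors through $\fr^{m+1}$ on $\widehat{A}^{\fr^{-1}}$. The $A$-compatibility of $(f_k)_k$ gives that each $\widehat{f_{j+1}}-\widehat{f_j}$ for $j\geqslant m+1$ has image in $\widehat{R}^{p^{j+1}}\subseteq \widehat{R}^{p^{m+2}}$; since $\widehat{R}^{p^{m+2}}$ is a subring, closed in $\widehat{R}$ by Proposition \ref{completefrob}(2), iterated convolution together with Lemma \ref{limitpower} shows that $\mathcal{F}-\widehat{f_{m+1}}$ itself has image in $\widehat{R}^{p^{m+2}}$, i.e.\ it factors through $\fr^{m+2}$ on $\widehat{A}$. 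Since $\fr$ is a formal group homomorphism, $\fr\circ \Phi=\mathcal{F}-\widehat{f_{m+1}}$; on the ring level this reads $\Phi^*(s)^p\in \widehat{R}^{p^{m+2}}$ for every $s\in \widehat{T^{\fr^{-1}}}$, and the reducedness of $\widehat{R}$ forces $\Phi^*(s)\in \widehat{R}^{p^{m+1}}$, which is exactly the factorization of $\Phi$ through $\fr^{m+1}$ on $\widehat{A}^{\fr^{-1}}$. Remark \ref{defalimit2} then concludes the proof.

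The main subtlety---and what I expect to be the only genuinely delicate point---is the one-level loss incurred in cancelling a single Frobenius at the target: extracting $p$-th roots in a reduced complete ring takes ``image in $\widehat{R}^{p^{m+2}}$'' down only to ``image in $\widehat{R}^{p^{m+1}}$''. This is precisely why the shifted approximation $\phi_m^{(1)}:=f_{m+1}^{(1)}$ is forced on us rather than the naive choice $f_m^{(1)}$, and why the full $A$-compatibility of $(f_k)_k$---and not merely its strong Cauchy-ness---is exactly the hypothesis that the argument needs.
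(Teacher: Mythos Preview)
Your proof is correct and follows essentially the same route as the paper's: first push each $f_k$ into $R^p$ via Proposition \ref{limitforms}, Proposition \ref{invforms}(2), Lemma \ref{formppower}, and then factor both $\mathcal{F}$ and the $f_k$ through Frobenius. The paper simply asserts that the sequence $(f_k^{(1)})_k$ witnesses that $\mathcal{F}^{(1)}$ is an $A^{\fr^{-1}}$-limit; you make the same verification explicit via Remark \ref{defalimit2}, and in doing so you correctly identify the one-level index loss (from $\widehat{R}^{p^{m+2}}$ to $\widehat{R}^{p^{m+1}}$) that forces the shift $\phi_m^{(1)}:=f_{m+1}^{(1)}$, a point the paper's terse final sentence leaves implicit.
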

\begin{proof}
Let $(f_m:T\to R)_m$ be an $A$-compatible sequence such that $\mathcal{F}=\li (f_m)$ (see Remark \ref{defalimit2}(1)). Let us fix $m\in \Nn$. By Proposition \ref{limitforms}, we have $$\mathcal{F}_*=(f_k)_*:\Omega^{\inv}_A\to \Omega_R.$$
Let $f_m':T\to K$ denote the composition of $f_k$ with the inclusion $R\to K$. By Proposition \ref{invforms}(2), the map $(f'_k)_*:\Omega_T\to \Omega_K$ is the 0-map. Since $R$ is $p$-normal, by Lemma \ref{formppower}, we get $f_k(T)\subseteq R^p$. By Lemma \ref{limitpower} (for $k=1$), we have
$\mathcal{F}(\widehat{T})\subseteq \widehat{R}^p$. Clearly, $\mathcal{F}$, considered as a map from $\widehat{T}$ to $\widehat{R}^p$, is an $A$-limit which is witnessed by the sequence $(f_k:T\to R^p)_k$.
\\
By Fact \ref{facfroring}, $f_k$ factors through $f_k^{(1)}:T^{\fr^{-1}}\to R$ and similarly $\mathcal{F}$ factors through $\mathcal{F}^{(1)}:\widehat{T}^{\fr^{-1}}\to \widehat{R}$. Then the sequence $(f_k^{(1)})_k$ witnesses that $\mathcal{F}^{(1)}$ is an $A^{\fr^{-1}}$-limit map.
\end{proof}

\noindent
The last result in this section is an easy diagram chase and we skip its proof.
\begin{lemma}\label{limitcomposing}
Assume $(\mathcal{F}_m:\mathcal{H}\to \mathcal{R})_m$ is a compatible sequence converging to $\mathcal{F}$, $\iota:\mathcal{A}\to \mathcal{H}$ is a complete Hopf algebra morphism and $\pi:\mathcal{R}\to \mathcal{S}$ is a local $C$-homomorphism between complete $C$-algebras. Then the sequence $\pi\circ \mathcal{F}_m\circ \iota:\mathcal{A}\to \mathcal{S}$ is compatible and converges to $\pi\circ \mathcal{F}\circ \iota$.
\end{lemma}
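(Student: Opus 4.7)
The plan is to split the statement into its two assertions: that the composed sequence is compatible, and that its limit is the composition of the limit. Both follow from the fact that the group operation on local $C$-algebra maps out of a complete Hopf algebra is natural in both the source (pre-composition with Hopf algebra maps) and the target (post-composition with $C$-algebra maps), together with continuity of $\pi$ and $\iota$ in the maximal ideal topology.

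First, I would unpack the group operation on $\homcont(\mathcal{H}, \mathcal{R})$ via the Hopf algebra structure. For $F, G: \mathcal{H} \to \mathcal{R}$, one has $F+G = m_{\mathcal{R}} \circ (F \widehat{\otimes} G) \circ \Delta_{\mathcal{H}}$, where $m_\mathcal{R}$ is the (completed) multiplication. Since $\pi$ is a $C$-algebra homomorphism it commutes with multiplication, giving $\pi \circ (F+G) = (\pi \circ F) + (\pi \circ G)$; and since $\iota$ is a complete Hopf algebra homomorphism it intertwines comultiplications, giving $(F+G)\circ \iota = (F \circ \iota) + (G \circ \iota)$. Combining, we get
\[
(\pi \circ \mathcal{F}_{m+1} \circ \iota) - (\pi \circ \mathcal{F}_m \circ \iota) \;=\; \pi \circ (\mathcal{F}_{m+1} - \mathcal{F}_m) \circ \iota.
\]

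Next, I would use this identity to deduce compatibility. By hypothesis $(\mathcal{F}_{m+1} - \mathcal{F}_m)(\mathcal{H}) \subseteq \mathcal{R}^{p^{m+1}}$, so restricting to $\iota(\mathcal{A})$ and applying $\pi$ (which, being a ring homomorphism, sends $p^{m+1}$-th powers to $p^{m+1}$-th powers) yields
\[
\bigl((\pi \circ \mathcal{F}_{m+1} \circ \iota) - (\pi \circ \mathcal{F}_m \circ \iota)\bigr)(\mathcal{A}) \subseteq \mathcal{S}^{p^{m+1}},
\]
which is exactly the compatibility condition for the composed sequence.

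Finally, for convergence: by Lemma \ref{compconv} the original sequence is strongly Cauchy, and the composed sequence (being compatible by the previous step) is likewise strongly Cauchy, so both limits exist. To identify them, I would observe that $\pi$ and $\iota$ are continuous for the standard topologies (being local $C$-homomorphisms, they send $\mathfrak{m}^n$ into $\mathfrak{m}^n$), so that composition with them commutes with the inverse limit taken levelwise modulo $\fr^m(\mathfrak{m})$. Concretely, for each $m$ one has $(\pi \circ \mathcal{F}_m \circ \iota)[m] = \pi[m] \circ \mathcal{F}_m[m] \circ \iota[m] = \pi[m] \circ \mathcal{F}[m] \circ \iota[m] = (\pi \circ \mathcal{F} \circ \iota)[m]$, and passing to the inverse limit gives $\lim(\pi \circ \mathcal{F}_m \circ \iota) = \pi \circ \mathcal{F} \circ \iota$. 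No step here is a serious obstacle; the only subtlety worth highlighting is verifying that pre/post-composition is compatible with the Hopf algebra group structure, which is what forces $\iota$ to be a Hopf (not just $C$-algebra) homomorphism.
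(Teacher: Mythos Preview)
Your proof is correct and is precisely the ``easy diagram chase'' the paper alludes to but omits. The paper gives no proof of this lemma beyond that remark, so your write-up simply fills in the details: naturality of the convolution group law under pre-composition with Hopf algebra maps and post-composition with $C$-algebra maps, preservation of $p^{m+1}$-th powers under $\pi$, and identification of the limit via the levelwise quotients $[m]$.
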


\section{The main theorem}\label{secpositive}
\noindent
In this section we prove the main theorem of this paper (Theorem \ref{mainthm}). First we set-up the algebraic data, then prove a strong bound on the dimension of a certain kernel (Proposition \ref{biglemma}) which refines the weak bound (Proposition \ref{rightdep}).
\subsection{Set-up}\label{setup}
In this subsection we fix the notation for the entire Section \ref{secpositive}. All the geometric objects are defined over $C$. We consider an algebraic variety $V$, a commutative algebraic group $A$, a formal subscheme $\mathcal{W}\subseteq \widehat{V}$ Zariski dense in $V$ and an $A$-limit formal map $\mathcal{F}:\widehat{V}\to \widehat{A}$ vanishing on $\mathcal{W}$. We fix below all the necessary algebraic data. It is a rather long list, so we divide in into three parts: a part ``related to $V$'', a part ``related to $A$'' and a part ``related to $\mathcal{W}$''.
\\
Related to $V$, we fix:
\begin{itemize}

\item a reduced, absolutely irreducible $C$-scheme $V$ of finite type;

\item $v\in V(C)$;

\item $R=\mathcal{O}_{V,v}$ (note that $R$ satisfies the assumptions from Section \ref{alimit});

\item  $\widehat{V}$, the formalization of $V$ at $v$ (i.e. the object dual to $\widehat{R}$);

\item $K$, the fraction field of $R$.
\end{itemize}
Related to $A$, we fix:
\begin{itemize}

\item a connected commutative algebraic group $A$ of dimension $n$;

\item $T=\mathcal{O}_{A,0}$;

\item  $\widehat{A}$, the formalization of $A$ at $0$.
\end{itemize}
Related to $\mathcal{W}$, we fix:
\begin{itemize}
\item A prime ideal $P$ in $\widehat{R}$ such that the map $R\to \widehat{R}/P$ is one-to-one;

\item An absolutely irreducible formal subscheme $\mathcal{W}\subseteq \widehat{V}$, which corresponds to $\mathcal{S}:=\widehat{R}/P$;

\item $L$, the fraction field of $\mathcal{S}$.

\end{itemize}
\begin{remark}\label{absirrrem}
By Proposition \ref{lpinfty}, $C$ is the ``field of constants'' of $\mathcal{W}$, i.e $L^{p^{\infty}}=C$, which is necessary for the proof of the main theorem. Note that the condition $L^{p^{\infty}}=C$ implies that $C^{\alg}\cap L=C$, so in particular $C^{\alg}\cap K=C$.
\end{remark}
Related to $\mathcal{F}$ we fix a local $C$-algebra map $\widehat{T}\to \widehat{R}$ (which we also denote by $\mathcal{F}$) which is the limit of an $A$-compatible sequence (see Definition \ref{defalimit}) such that the composition of  $\mathcal{F}$ with the map $\widehat{R}\to \mathcal{S}$ is the $0$-map.

Our main result (Theorem \ref{mainthm}) says that the reason of the above vanishing is that the image of $\mathcal{F}$ is contained in a formal subgroup of $\widehat{A}$ whose dimension is bounded by the codimension of $\mathcal{W}$ in $\widehat{V}$. Unfortunately, we will need to put further restrictions on $A$ to prove Theorem \ref{mainthm}.

\subsection{Linear dependence of forms}\label{dependence}
The main and only result of this subsection is a generalization of \cite[Prop. 2.5]{K6} from the case of a vector group to the case of an arbitrary commutative algebraic group. It is also a desired improvement of
Proposition \ref{rightdep}, which we are able to show only under the $A$-limit assumption. We use the notation from Section \ref{setup}.

Since $\mathcal{F}$ denotes either a ring homomorphism or a morphism of formal schemes, we write $\mathcal{F}_*$ when it acts covariantly on forms and $\mathcal{F}^*$ when it acts contravariantly on forms. Using Proposition \ref{localforms}, we consider $\mathcal{F}_*$ as a map from $\Omega_T$ to $\Omega_R$ and sometimes as a map from $\Omega^{\inv}_A$ to $\Omega_R$.
\begin{prop}\label{biglemma}
Let $\mathcal{F}^*_K:\Omega_A\to \Omega_K$ denote the composition of $\mathcal{F}^*:\Omega_A\to \Omega_R$ with the map $\Omega_R\to \Omega_K$. Then we have
$$\dim_C\mathcal{F}^*_K({\Omega}^{\inv}_{A})\leqslant \dim(V)-\dim(\mathcal{W}).$$
\end{prop}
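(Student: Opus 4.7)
The approach splits into two parts: a kernel-containment step that produces an $L$-dimensional bound, and a conversion step that upgrades this to a $C$-dimensional bound. The first part uses only functoriality of K\"ahler differentials together with Proposition \ref{rightdep}, while the second leans on the $A$-limit structure of $\mathcal{F}$, the Frobenius factorization of Proposition \ref{frobfactorbetter}, and absolute irreducibility of $\mathcal{W}$ through Proposition \ref{lpinfty}.

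For the kernel containment: by Proposition \ref{limitforms} the restriction of $\mathcal{F}_*$ to $\Omega^{\inv}_A$ factors through $\Omega_R\subseteq\Omega_{\widehat R}$ (not merely $\Omega_{\widehat R}$). Since $\pi\circ\mathcal{F}:\widehat T\to\mathcal{S}$ is the $0$-map (the vanishing of $\mathcal{F}$ on $\mathcal{W}$), functoriality gives
\[
 \mathcal{F}_*(\Omega^{\inv}_A)\subseteq\ker\!\bigl(\Omega_R\longrightarrow\Omega_\mathcal{S}\bigr).
\]
Base changing to $L$ over $R$ and identifying $\Omega_\mathcal{S}\otimes_\mathcal{S} L\cong\Omega_L$, Proposition \ref{rightdep}(2) yields
\[
 \dim_L\bigl(L\cdot\mathcal{F}^*_K(\Omega^{\inv}_A)\bigr)\leqslant\dim_L\ker\!\bigl(\Omega_R\otimes_R L\to\Omega_L\bigr)=\dim(V)-\dim(\mathcal{W})
\]
inside $\Omega_R\otimes_R L=\Omega_K\otimes_K L$.

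For the conversion step, it suffices to show that any $C$-linearly independent family $\mathcal{F}^*_K(\omega_1),\dots,\mathcal{F}^*_K(\omega_r)$ in $\Omega_K$ remains $L$-linearly independent in $\Omega_K\otimes_K L$; combined with the previous bound this gives the conclusion. Supposing for contradiction a nontrivial minimal $L$-relation $\sum_{i=1}^r\lambda_i\mathcal{F}^*_K(\omega_i)=0$, I use the minimality to pass to a quotient $A\twoheadrightarrow A''$ on whose space of invariant forms the induced composition with $\mathcal{F}$ is annihilated in $\Omega_K$. The induced formal map $\widehat V\to\widehat{A''}$ is still an $A''$-limit map (by Lemma \ref{limitcomposing}), so Proposition \ref{frobfactorbetter} provides a Frobenius factorization through $\widehat{(A'')^{\fr^{-1}}}$. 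Iterating this descent deposits the coefficients $\lambda_i$ into $L^{p^m}$ for every $m\in\Nn$. By absolute irreducibility of $\mathcal{W}$ (Proposition \ref{lpinfty}, see also Remark \ref{absirrrem}), $L^{p^\infty}=C$, so in fact $\lambda_i\in C$, and the $C$-linear independence of the $\mathcal{F}^*_K(\omega_i)$ forces all $\lambda_i=0$, a contradiction.

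\emph{The main obstacle} is precisely this conversion step. Proposition \ref{frobfactorbetter} is a statement about the \emph{entire} pullback $\Omega^{\inv}_{A''}\to\Omega_K$ vanishing, not about a single linear combination, so the $L$-dependence cannot be fed into it directly; the quotient construction is what rescues the argument, and the delicate point is to carry the $A$-limit property through each quotient and each Frobenius descent (using Lemma \ref{limitcomposing} and Proposition \ref{frobfactorbetter}) so that the iteration can be run to completion.
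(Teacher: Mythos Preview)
Your kernel-containment step is fine: by Proposition \ref{limitforms} the image $\mathcal{F}_*(\Omega^{\inv}_A)$ lands in $\Omega_R$, and functoriality together with the vanishing of $\pi\circ\mathcal{F}$ places it in $\ker(\Omega_R\to\Omega_{\mathcal{S}})$; after tensoring with $L$, Proposition \ref{rightdep}(2) indeed gives the $L$-bound $\dim_L\bigl(L\cdot\mathcal{F}^*_K(\Omega^{\inv}_A)\bigr)\leqslant c$.

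The gap is the conversion step. You assert that a minimal $L$-relation on $\mathcal{F}^*_K(\omega_1),\dots,\mathcal{F}^*_K(\omega_r)$ can be promoted to an $L^{p^m}$-relation for every $m$ via Proposition \ref{frobfactorbetter}, but you never specify what the quotient $A''$ is or how factoring the composed map $\widehat{V}\to\widehat{A''}$ through Frobenius constrains the $\lambda_i$. Proposition \ref{frobfactorbetter} applies only when the \emph{entire} pullback $\Omega^{\inv}_{A''}\to\Omega_K$ vanishes. The natural choice of $A''$---one whose invariant forms are exactly $\ker(\mathcal{F}^*_K|_{\Omega^{\inv}_A})$---is orthogonal to the forms $\omega_i$ appearing in your relation (their pullbacks are nonzero by hypothesis), so the resulting Frobenius factorization of $\widehat{V}\to\widehat{A''}$ says nothing about them. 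I see no mechanism by which iterating this descent could force the coefficients of an $L$-relation among the $\mathcal{F}^*_K(\omega_i)$ into $L^p$. (There is also the side issue that Proposition \ref{frobfactorbetter} needs $R$ to be $p$-normal, which is arranged only later, in the proof of Theorem \ref{mainthm}.)

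The paper's proof avoids this trap by never attempting to upgrade a single $L$-relation. It invokes Proposition \ref{limitsobs}, which you do not cite: since $\mathcal{F}_{\mathcal{S}}=0$, each approximant $f_m$ already factors through $R_m:=\mathcal{S}^{p^m}\cap R$. One then works in $\Omega_{R_m}\otimes_{R_m}L^{p^m}$ rather than in $\Omega_R\otimes_R L$, and shows directly that the pullbacks of any $c+1$ invariant forms lie in $\ker(\beta_m:\Omega_{R_m}\otimes L^{p^m}\to\Omega_{L^{p^m}})$, which has $L^{p^m}$-dimension exactly $c$. Thus the forms are $L^{p^m}$-dependent for \emph{every} $m$ from the outset, and $L^{p^\infty}=C$ finishes. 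The crucial input is Proposition \ref{limitsobs}: it is what lets the $A$-limit structure interact with the formal subscheme $\mathcal{W}$ at every Frobenius level simultaneously. Your route uses Proposition \ref{rightdep} once, at level $m=0$, and the lost information cannot be recovered via Proposition \ref{frobfactorbetter}.
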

\begin{proof}
Let $(f_m:T\to R)_m$ be an $A$-compatible sequence such that $\mathcal{F}=\li(f_m)$. We will identify $R$ with a subring of $\mathcal{S}$. Let us fix $m\in \Nn$ and let $f_{m,\mathcal{S}}:T\to \mathcal{S}$ be the composition of $f_m$ with the inclusion $R\subseteq \mathcal{S}$. By Lemma \ref{limitcomposing}, $(f_{m,\mathcal{S}})_m$ is a compatible sequence converging to the composition of $\mathcal{F}$ with the map $\widehat{R}\to \mathcal{S}$. By our assumptions, this last composition is the $0$-map. By Proposition \ref{limitsobs}, we have
$$f_{m,\mathcal{S}}(T)\subseteq {\mathcal{S}}^{p^{m+1}}.$$
Let $R_m$ denote $\mathcal{S}^{p^m}\cap R$. Then $f_m:T\to R$ factors through $f_m':T\to R_m$. Let $\gamma_m:\Omega_{R_m}\to \Omega_R$ denote the map induced by the inclusion $R_m\subseteq R$.
For any $\omega\in \Omega^{\inv}_A$, by Proposition \ref{limitforms} we have:
$${\mathcal{F}}_*(\omega)=(f_m)_*(\omega)=\gamma_m((f_m')_*(\omega)).$$
Let $\iota:R_{m+1}\to R_m$ denote the inclusion map. Since $(f_m)_m$ is a compatible sequence, we have $(f_{m+1})_A-(f_{m})_A\in A(R^{p^{m+1}})$.
We have a tower of finite extensions of rings
$$R^{p^{m+1}}\subseteq R_m^p\subseteq R_m\subseteq R.$$
Hence the corresponding morphism of affine schemes are epimorphisms (in the category of schemes) and we have
$$(\iota\circ f'_{m+1})_A-(f'_{m})_A\in A(R^p_m).$$
By Lemma \ref{newvanish} we get
$$\Omega_{R_m}\ni ((\iota\circ f'_{m+1})_A-(f'_{m})_A)^*(\omega)=0.$$
By Lemma \ref{coincide}(2) and Proposition \ref{invforms}(3), we get:
$$(f'_m)_*(\omega)=(f'_m)_A^*(\omega)=(\iota\circ f'_{m+1})_A^*(\omega)=\delta_m((f'_{m+1})_*(\omega)),$$
where $\delta_m:\Omega_{R_{m+1}}\to \Omega_{R_{m}}$ is the map induced by $\iota$.
\\
The following commutative diagram illustrates the situation:
\begin{equation*}
  \xymatrix{
 & & \Omega_R \ar[rr]^{ } & & \Omega_\mathcal{S} \\
\Omega^{\inv}_A\ar[urr]^{\mathcal{F}_*} \ar[rr]^{(f_m')_*} \ar[rrd]_{(f_{m+1}')_*}& & \Omega_{R_m} \ar[u]_{\gamma_m} \ar[rr]^{\beta_{m}}  & & \Omega_{\mathcal{S}^{p^m}}\ar[u]^{}\\
& & \Omega_{R_{m+1}} \ar[rr]^{\beta_{m+1}} \ar[u]_{\delta_m}& & \Omega_{\mathcal{S}^{p^{m+1}}}.\ar[u]^{0}}
\end{equation*}
Chasing this diagram, we get
$$(f_m')_*(\omega)\in  \ker(\beta_m).$$
Clearly, $\Omega_K$ embeds over $K$ into $\Omega_K\otimes_KL\cong \Omega_R\otimes_RL$, thus we will work inside $\Omega_R\otimes_RL$ from now on.
We have the following commutative diagram of $L^{p^m}$-linear maps (to ease the notation we do not put the tensor product symbol on the level of homomorphisms):
\begin{equation*}
 \xymatrix{\Omega_{R}\otimes_{R}L\ar[rr]^{\alpha}  & & \Omega_{L}\\
\Omega_{R_m}\otimes_{R_m}L^{p^m}\ar[rr]^{\beta_m} \ar[u]^{\gamma_m} & & \Omega_{L^{p^m}}\ar[u]^{}\\
\Omega_{R^{p^m}}\otimes_{R^{p^m}}L^{p^m}.\ar[u]^{} \ar[urr]_{\alpha_m}& &}
\end{equation*}
By Proposition \ref{rightdep}(1), $\alpha$ is onto. Applying the $m$-th power of the Frobenius map, we get that $\alpha_m$ is onto. Hence $\beta_m$ is onto as well. Let $c:=
\dim(R)-\dim(\mathcal{S})$. By Lemma \ref{lemma2}(2), we have $\dim_{L}\Omega_{L}=\dim(\mathcal{S})$. Applying the $m$-th power of the Frobenius map again, we see that
$$\dim_{L^{p^m}}\Omega_{L^{p^m}}=\dim_{L}\Omega_{L}.$$
Therefore, we obtain
\begin{equation}
\dim_{L^{p^m}}\Omega_{L^{p^m}}=\dim(\mathcal{S}).\tag{$*$}
\end{equation}
Let $K_m$ be the fraction field of $R_m$. We have a tower of fields $K^{p^m}\subseteq K_m\subseteq K$ such that
$$\td_CK^{p^m}=\td_CK=\dim(R),$$ so we get
\begin{equation}
\dim_{L^{p^m}}\Omega_{R_m}\otimes_{R_m}L^{p^m}=\dim(R).\tag{$**$}
\end{equation}
By $(*)$ and $(**)$, we finally obtain $\dim_{L^{p^m}}\ker(\beta_m)=c$ (since $\beta_m$ is onto).
\\
Therefore for any $m\in \Nn$ and any $c+1$ forms from $\Omega_A^{\inv}$, their images by $(f'_m)_*$ are dependent over $L^{p^m}$ in $\Omega_{R_m}\otimes_{R_m}L^{p^m}$. Since ${\mathcal{F}}^*=\gamma_m\circ (f'_m)_*$, we also get that the images of these forms by the map ${\mathcal{F}}^*$ are dependent over $L^{p^m}$ in $\Omega_{K}\otimes_{K}L=\Omega_{R}\otimes_{R}L$. Since $L^{p^{\infty}}=C$, these images are also dependent over $C$.
\end{proof}
\begin{remark}
In the characteristic $0$ case a much stronger result holds (Proposition \ref{char0dep}), where there is no algebraic group around and instead of invariant forms closed forms are considered.
\end{remark}

\subsection{Main theorem}\label{mainsection}
We can prove now our main theorem under some extra assumptions on the algebraic group $A$. As it is discussed in Section \ref{char0}, this assumptions are not restrictive at all in the case of characteristic 0, but are quite restrictive in the case of positive characteristic. Still the result below generalizes \cite[Prop. 3.1]{K6} from the case of $A=\ga^n$ to the case of $A=H^n$ for any 1-dimensional algebraic group $H$ defined over $\Ff_p$.
\begin{definition}\label{defintegr}
We call an algebraic group $A$ (over $C$) \emph{integrable}, if there is a one-dimensional algebraic group $H$ such that we have the following (in the case of $\ch(C)=0$, we drop the item $(3)$ below).
\begin{enumerate}
\item $\widehat{A}\cong \widehat{H^n}$.

\item The map $\mathrm{End}(\widehat{H})\to \mathrm{End}_C(\Omega^{\inv}_H)$($=C$) is onto.

\item $H$ is \emph{$\Ff_p$-isotrivial} i.e. $H\cong H^{\fr}$.
\end{enumerate}
\end{definition}
\begin{remark}
To make the item $(2)$ in the definition above meaningful, one needs to notice that a formal group homomorphism induces a map on invariant forms. It is well-known and follows e.g. from \cite[Corollary IV.4.3]{Si}. In a more general case of higher invariant forms, it follows from Proposition \ref{homspecial}.
\end{remark}

\begin{example}
The following algebraic groups are integrable:
\begin{itemize}
\item $\ga^n$ over any $C$ (so any commutative algebraic group, if $\ch(C)=0$);

\item Any $H^n$ for $C=\Ff_p$ and a one-dimensional algebraic group $H$.
\end{itemize}
\end{example}
\noindent
We can state and prove now our main theorem. Recall that we are still in the set-up from Section \ref{setup}, i.e. $\mathcal{F}:\widehat{V}\to \widehat{A}$ is a formal $A$-limit map vanishing on a Zariski dense formal subscheme $\mathcal{W}\subseteq \widehat{V}$. We will use in the proof the notation from Section \ref{setup}.
\begin{theorem}\label{mainthm}
Assume that $A$ is integrable. Then there is a formal subgroup $\mathcal{C}\leqslant \widehat{A}$ such that $\mathcal{F}(\widehat{V}) \subseteq \mathcal{C}$ and
$$\dim(\mathcal{C})\leqslant \dim(V)-\dim(\mathcal{W}).$$
\end{theorem}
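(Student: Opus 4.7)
The plan is to combine the strong kernel bound of Proposition~\ref{biglemma} with an iterated use of the Frobenius-factoring from Proposition~\ref{frobfactorbetter}, exploiting the integrability of $A$ to translate between linear-algebraic data on invariant forms and formal homomorphisms. By Proposition~\ref{biglemma}, the subspace $N := \ker(\mathcal{F}^*_K|_{\Omega^{\inv}_A})$ of $\Omega^{\inv}_A$ has $C$-dimension $n - e$, where $e := \dim_C \mathcal{F}^*_K(\Omega^{\inv}_A) \leqslant d$ and $d := \dim(V) - \dim(\mathcal{W})$.

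Using $\widehat{A}\cong \widehat{H^n}$ (condition~(1) of integrability) and the surjection $\en(\widehat{H}) \twoheadrightarrow \en_C(\Omega^{\inv}_H) = C$ (condition~(2)), I would realize each $\alpha \in N$ as $\psi_\alpha^*(\omega)$ for a formal homomorphism $\psi_\alpha: \widehat{A}\to \widehat{H}$, where $\omega$ is a generator of $\Omega^{\inv}_H$. Concretely, expanding $\alpha = \sum c_i \omega_i$ in the basis $\omega_i := \pi_i^*(\omega)$ coming from the projections $\pi_i: \widehat{H^n}\to \widehat{H}$, take $\psi_\alpha := \sum_i \phi_i \circ \pi_i$ with $\phi_i \in \en(\widehat{H})$ lifting $c_i$, so that $\psi_\alpha^*(\omega) = \alpha$. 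Choosing a basis $\alpha_1, \ldots, \alpha_{n-e}$ of $N$, I would assemble the $\psi_{\alpha_j}$ into a single $\Psi: \widehat{A}\to \widehat{H^{n-e}}$ and set $\mathcal{C} := \ker \Psi$, a formal subgroup of $\widehat{A}$ of dimension $e \leqslant d$. The desired inclusion $\mathcal{F}(\widehat{V}) \subseteq \mathcal{C}$ is then equivalent to $\Psi \circ \mathcal{F}$ being the $0$-map.

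To verify this vanishing, observe that by Lemma~\ref{limitcomposing} the composition $\Psi \circ \mathcal{F}$ is an $H^{n-e}$-limit vanishing on $\mathcal{W}$, and by construction $(\Psi \circ \mathcal{F})^*(\Omega^{\inv}_{H^{n-e}}) = \mathcal{F}^*(N) = 0$ in $\Omega_K$. Proposition~\ref{frobfactorbetter} then produces a factorization $\Psi \circ \mathcal{F} = \fr \circ g_1$; condition~(3) of integrability identifies $\widehat{(H^{n-e})^{\fr^{-1}}}$ with $\widehat{H^{n-e}}$, and because $\mathcal{S}$ is a domain, the vanishing of $\fr \circ g_1$ on $\mathcal{W}$ descends to $g_1$ (if $g_1^\#(a)^p$ maps to $0$ in $\mathcal{S}$ then so does $g_1^\#(a)$). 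Iterating this factoring indefinitely and applying Krull's intersection theorem in $\widehat{R}$ would then force $\Psi \circ \mathcal{F}$ to be the $0$-map.

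The main obstacle I foresee is sustaining the iteration: after one Frobenius factoring, the pullback of invariant forms by $g_1$ is not automatically zero in $\Omega_K$, since $\fr^*$ already kills all invariant forms, so the defining condition on $\Psi$ transmits no new information to $g_1$. I expect to interlock Proposition~\ref{biglemma} with the iteration, reapplying the strong codimension bound to the evolving $H^{n-e}$-limit at each stage (with the $\Ff_p$-isotriviality of $H$ keeping the target fixed after identification) so that the kernel on invariant forms remains large enough to refactor through Frobenius at every step. This interplay between the strong dimension bound and the Frobenius inductive mechanism is exactly what the combined integrability hypotheses on $A$ are designed to supply, and the fact that $L^{p^\infty} = C$ (Proposition~\ref{lpinfty}) should ensure no spurious constants appear along the way.
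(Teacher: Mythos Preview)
You have correctly identified the two main engines of the argument --- Proposition~\ref{biglemma} and the Frobenius factoring of Proposition~\ref{frobfactorbetter} --- and you have also correctly identified the obstacle. Unfortunately your proposed fix does not close the gap, and there is a second omission.

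\textbf{The iteration does not run with a fixed $\Psi$.} You build $\Psi$ once, from the initial kernel $N=\ker(\mathcal{F}^*_K|_{\Omega^{\inv}_A})$, and then try to show $\Psi\circ\mathcal{F}=0$ by repeatedly peeling off Frobenius. After one step you obtain $g_1$ with $\fr\circ g_1=\Psi\circ\mathcal{F}$. Reapplying Proposition~\ref{biglemma} to $g_1$ only tells you that $\dim_C (g_1)^*_K(\Omega^{\inv}_{H^{n-e}})\leqslant d$; it does \emph{not} make this image zero, and Proposition~\ref{frobfactorbetter} requires vanishing on the whole of $\Omega^{\inv}$ before you can factor again. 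So the recursion stalls after one step. The paper does not fix $\Psi$ in advance: at each stage it reapplies Proposition~\ref{biglemma} to the current map, uses integrability to subtract a \emph{new} linear combination of the ``independent'' coordinate maps so that the remaining coordinates have zero invariant-form pullback, and only then factors those coordinates through Frobenius. The independence rank may grow ($r_0\leqslant r_1\leqslant\cdots$), but it is bounded by $c=\dim V-\dim\mathcal{W}$ and hence stabilizes; after stabilization the corrections at stage $t$ carry a $\fr^t$ and assemble into a compatible sequence $(\Psi_t)_t$ whose limit $\Psi$ is the formal homomorphism you want. In short, $\Psi$ is discovered as a limit built from \emph{all} the stages, not read off from the first one.

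\textbf{The $p$-normalization is missing.} Proposition~\ref{frobfactorbetter} has the hypothesis that $R=\mathcal{O}_{V,v}$ be $p$-normal; nothing in the set-up guarantees this. The paper devotes a separate Claim to replacing $(V,v,\mathcal{W},\mathcal{F})$ by a $p$-normalization $(V',v',\mathcal{W}',\mathcal{F}')$ with the same codimension, after which Proposition~\ref{frobfactorbetter} becomes available. Without this reduction your very first Frobenius factoring is not justified.
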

\begin{proof}
Recall the notion of a $p$-normal domain and the notation $R'$ from Section \ref{pnormal}.
\\
\\
{\bf Claim} ($p$-normalization) We can assume that $R=R'$.
\begin{proof}[Proof of the Claim]
Without loss of generality $V$ is an affine variety over $C$. Let $O$ be the coordinate ring of $V$ and $M$ be the maximal ideal of $O$ corresponding to $v\in V(C)$. By Lemma \ref{pnormalization}, there is a maximal ideal $M'$ in $O'$ such that $M'\cap O=M$ and $O'/M'=C$. Therefore $M'$ corresponds to $v'\in V'(C)$ which is mapped to $v$ by the morphism $V'\to V$ corresponding to the ring extension $R\subseteq R'$. By Fact \ref{pnormallocal}(3), the ring $R'$ still satisfies the assumptions from Section \ref{alimit}.
\\
Let $R^{(1)}=O'_{M'}$ be the local ring of $V'$ at $v'$. (We are tempted to denote this local ring by $R'$, however it may be slightly bigger than the $p$-normalization of $R$.) By Fact \ref{pnormallocal}, $(R^{(1)})'=R^{(1)}$. We will show that we can replace $V$ with $V'$ and $v$ with $v'$. Let $\mathfrak{m}$ denote the maximal ideal of $R$ and $\mathfrak{m}'$ denote the maximal ideal of $R^{(1)}$. The extension $R\subseteq R^{(1)}$ is still integral and $\sqrt{\mathfrak{m}R^{(1)}}=\mathfrak{m}'$. Since $R^{(1)}$ is Noetherian, there is $N\in \Nn$ such that
$$(\mathfrak{m}')^N\subseteq \mathfrak{m}R^{(1)}\subseteq\mathfrak{m}'.$$
Therefore we have
$$\widehat{R^{(1)}}\cong \widehat{(R,\mathfrak{m}R^{(1)})}\cong \widehat{R}\otimes_RR^{(1)}.$$
Since $\widehat{R}$ is flat over $R$ \cite[Theorem 8.8]{mat}, the natural map $\widehat{R}\to \widehat{R^{(1)}}$ is an embedding, which we will regard as an inclusion. Since the extension $R\subseteq R'$ is integral, the extension $\widehat{R}\subseteq \widehat{R^{(1)}}$ is integral as well. Let $P$ be the kernel of the map $\widehat{R}\to \mathcal{S}$. Since $P$ is a prime ideal and the extension $\widehat{R}\subseteq \widehat{R^{(1)}}$ is integral, by \cite[Theorem 9.3]{mat} there is a prime ideal $P'$ in $\widehat{R^{(1)}}$ such that $P'\cap \widehat{R}=P$. Let $\mathcal{S}'=\widehat{R^{(1)}}/P'$ and $\mathcal{W}'$ be the corresponding formal subscheme of $\widehat{V'}$. Denote by $\varphi$ and $\varphi'$ the appropriate compositions of vertical arrows in the commutative diagram below.
\begin{equation*}
  \xymatrix{
R \ar[r]^{\subseteq}\ar[d]^{\subseteq} &  R^{(1)} \ar[d]^{\subseteq}\\
\widehat{R} \ar[r]^{\subseteq}\ar[d]^{} &  \widehat{R^{(1)}} \ar[d]^{}\\
\mathcal{S} \ar[r]^{\subseteq} &  \mathcal{S}'.}
\end{equation*}
Since $\mathcal{W}$ is Zariski dense in $V$, $\ker(\varphi)=0$. Therefore $\ker(\varphi')\cap R=0$. Since, the extension $R\subseteq R^{(1)}$ is integral, $\ker(\varphi')=0$, so $\mathcal{W}'$ is Zariski dense in $V'$. It is easy to see (since the extension $\widehat{R}\subseteq \widehat{R^{(1)}}$ is integral, $\mathcal{S}=\widehat{R}/P$ and $\mathcal{S}'=\widehat{R^{(1)}}/P'$) that the extension $\mathcal{S}\subseteq \mathcal{S}'$ is integral as well, thus $\dim(\mathcal{S})=\dim(\mathcal{S}')$ (see \cite[Exercise 9.2]{mat}).
\\
Let $\mathcal{F}'$ denote the composition of $\mathcal{F}$ with the morphism $\widehat{V'}\to \widehat{V}$. Since the map $\widehat{R}\to \widehat{R^{(1)}}$ is an embedding, any formal subgroup $\mathcal{C}$ of $\widehat{A}$ working for $v',V',\mathcal{W}',\mathcal{F}'$ also works for $v,V,\mathcal{W},\mathcal{F}$, hence the claim is proved.
\end{proof}
\noindent
By the Claim, we can assume that $R$ is $p$-normal. Let us take $H$ from Definition \ref{defintegr} (we have assumed that $A$ is integrable). For any formal map $\mathcal{T}:\widehat{V}\to \widehat{A}$, let $\mathcal{T}_i:\widehat{V}\to \widehat{H}$ be the composition of $\mathcal{T}$ with the $i$-th coordinate morphism $ \widehat{A}\to  \widehat{H}$. Let $\omega\in \Omega_H^{\inv}\setminus \{0\}$ and $T_H:=\mathcal{O}_{H,0}$. Let $c=\dim(R)-\dim(\mathcal{S})$. We can assume that $c<n$. By Proposition \ref{biglemma}, without loss of generality there is $0\leqslant r_0\leqslant c$ such that $\mathcal{F}^*_1(\omega),\ldots,\mathcal{F}^*_{r_0}(\omega)$ are linearly independent over $C$ in $\Omega_K$, and for each $r_0<l\leqslant n$, there are $c_{l,1},\ldots,c_{l,r_0}\in C$ such that inside $\Omega_K$ we have
$$\mathcal{F}^*_l(\omega)=\sum_{i=1}^{r_0}c_{l,i}\mathcal{F}^*_i(\omega).$$
For each $c_{l,i}$ as above, by the integrability of $H$, there is $\gamma_{l,i}:\widehat{H}\to \widehat{H}$ such that $\gamma_{l,i}^*=c_{l,i}$. We define
$$(\mathcal{F}_l)_{(1)}=\mathcal{F}_l-\sum_{i=1}^{r_0}\gamma_{l,i}\circ \mathcal{F}_i.$$
By Proposition \ref{foradd}, we get
\begin{IEEEeqnarray*}{rCl}
(\mathcal{F}_l)_{(1)}^*(\omega) & = & \mathcal{F}_l^*(\omega)-\sum_{i=1}^{r_0}(\gamma_{l,i}\circ \mathcal{F}_i)^*(\omega)  \\
& = & \mathcal{F}^*_l(\omega)-\sum_{i=1}^{r_0}c_{l,i}\mathcal{F}^*_i(\omega) \\
& = & 0.
\end{IEEEeqnarray*}
By Proposition \ref{invforms}(2), $(\mathcal{F}_l)_{(1)}^*:\Omega_{T_H}\to \Omega_K$ is the constant $0$-map. Since $R$ is $p$-normal (see Claim), we can use Proposition \ref{frobfactorbetter} and the fact that $H$ is $\Ff_p$-isotrivial to obtain an $H$-limit map $\mathcal{F}_l^{(1)}:\widehat{V}\to \widehat{H}$ such that the following diagram is commutative
\begin{equation*}
  \xymatrix{
 &  & & \widehat{H}\ar[d]^{\fr}\\
\widehat{V} \ar[rrr]^{(\mathcal{F}_l)_{(1)}} \ar[rrru]^{(\mathcal{F}_l)^{(1)}}&  & &\widehat{H}  .}
\end{equation*}
For $1\leqslant l\leqslant r_0$, let us define $\mathcal{F}_l^{(1)}$ as $\mathcal{F}_l$ and we define an $A$-limit formal map
$$\mathcal{F}^{(1)}:\widehat{V}\to \widehat{A},\ \ \ \mathcal{F}^{(1)}=(\mathcal{F}_{1}^{(1)},\ldots,\mathcal{F}_n^{(1)}).$$
There are algebraic endomorphisms $\phi,\iota:A\to A$ such that:
\begin{itemize}
\item $\phi$ is an automorphism;

\item $\iota$ is either the Frobenius map or the identity map on each coordinate;

\item $\widehat{\iota} \circ \mathcal{F}^{(1)}=\widehat{\phi}\circ \mathcal{F}$.
\end{itemize}
\noindent
Hence $\mathcal{F}^{(1)}$ vanishes on $\mathcal{W}$. Moreover, obtaining a ``right $\mathcal{C}$ for $\mathcal{F}^{(1)}$'' gives a ``right $\mathcal{C}$ for $\mathcal{F}$''. Thus we can replace $\mathcal{F}$ with $\mathcal{F}^{(1)}$. (Note that for $r_0=0$, we just get $\fr\circ \mathcal{F}^{(1)}=\mathcal{F}$.)
\\
Applying Proposition \ref{biglemma} to $\mathcal{F}^{(1)}$ we get (again without loss of generality) that there is $r_0\leqslant r_1\leqslant c$ ($r_0\leqslant r_1$, since the maps $\mathcal{F}_1,\ldots,\mathcal{F}_{r_0}$ have not changed) such that $(\mathcal{F}^{(1)}_1)^*(\omega),\ldots,(\mathcal{F}^{(1)}_{r_1})^*(\omega)$ are linearly independent over $C$ in $\Omega_K$, and for each $r_1<l\leqslant n$, there are $d_{l,1},\ldots,d_{l,r_0}\in C$ such that inside $\Omega_K$ we have
$$(\mathcal{F}_l^{(1)})^*(\omega)=\sum_{i=1}^{r_0}c_{l,i}(\mathcal{F}_i^{(1)})^*(\omega).$$
As before, there are $\delta_{l,i}$, endomorphisms of $H$ such that if we define
$$(\mathcal{F}_l)_{(2)}=\mathcal{F}-\sum_{i=1}^{r_0}\widehat{\delta_{l,i}}\circ \mathcal{F}_i^{(1)},$$
then there is a formal $H$-limit map $\mathcal{F}_l^{(2)}:\widehat{V}\to \widehat{H}$ such that the following diagram is commutative:
\begin{equation*}
  \xymatrix{
 &  & & \widehat{H} \ar[d]^{\fr}\\
\widehat{V} \ar[rrr]_{(\mathcal{F}_l)_{(2)}} \ar[rrru]^{(\mathcal{F}_l)^{(2)}}&  & &\widehat{H}  .}
\end{equation*}
Continuing as above, we get a sequence $0\leqslant r_0\leqslant r_1\leqslant r_2\leqslant \ldots \leqslant c$. Let $m\in \Nn$ be such that for each $j\geqslant m$ we have $r_m=r_j=:r$. We can replace $\mathcal{F}$ with $\mathcal{F}^{(r)}$ and assume that $r=r_0$.
\\
\\
If we continue the construction above, for each $t\in \Nn$ and $r<l\leqslant n$, we get an endomorphism $\gamma_{l,i,t}$ of $\widehat{H}$ such that there is a formal map $\mathcal{F}_l^{(t+1)}:\widehat{V}\to \widehat{H}$ making the following diagram commutative.
\begin{equation*}
  \xymatrix{
 &    & &  & \widehat{H}\ar[d]^{\fr}\\
\widehat{V} \ar[rrrru]^{\mathcal{F}_l^{(t+1)}} \ar[rrrr]_{\mathcal{F}_l^{(t)}-\mathcal{R}_l^{(t)}} &   & &  & \widehat{H} ,}
\end{equation*}
where $\mathcal{R}_l^{(t)}=\sum_{i=1}^{r}\gamma_{l,i,t}\circ \mathcal{F}_i^{(t)}$ and $\mathcal{F}_l^{(0)}=\mathcal{F}_l$.
\\
Therefore for any $t$ and $l$ as above, the following diagram is commutative:
\begin{equation*}
  \xymatrix{
&    & &   &  & \widehat{H} \ar[d]^{\fr^{t+1}}\\
\widehat{V} \ar[rrrrru]^{\mathcal{F}_l^{(t+1)}}\ar[rrrrr]_{\mathcal{F}_l-\mathcal{R}^{(0)}_l-\fr\circ\mathcal{R}_l^{(1)}-\ldots-\fr^t\circ \mathcal{R}_l^{(t)}} &   & &   &  & \widehat{H} .}
\end{equation*}
\noindent
For any $t$ as above let us define the following formal homomorphism:
$$\varphi_t:\widehat{A}\to \widehat{H}^{n-r},\ \ \ (\pi_{r+1}-\sum_{i=1}^{r}{\gamma_{r+1,i,t}}\circ \pi_i,\ldots,\pi_{n}-\sum_{i=1}^{r}{\gamma_{n,i,t}}\circ \pi_i),$$
where each $\pi_i:\widehat{A}\to \widehat{H}$ is the appropriate projection morphism.
\\
We finally define:
$$\Psi_t:\widehat{A}\to \widehat{H}^{n-r},\ \ \Psi_t=\varphi_0-\fr\circ \varphi_1-\ldots-\fr^t\circ \varphi_t.$$
Then $(\widehat{\Psi_t}:\widehat{A}\to \widehat{H^{n-r}})_t$ is a compatible sequence of formal group maps. Let ${\Psi}:=\li(\Psi_t)_t$. By Lemma \ref{limithom}, ${\Psi}$ is a formal group map as well. By Lemma \ref{limitcomposing}, we have
$${\Psi}\circ \mathcal{F}=\li(\Psi_t\circ \mathcal{F})_t.$$
By the construction, for each $t\in \Nn$, the formal map $\Psi_t\circ \mathcal{F}:\widehat{V}\to \widehat{H}^{n-r}$ factors through $\fr^{t+1}:\widehat{H}^{n-r}\to \widehat{H}^{n-r}$. By Proposition \ref{limitsobs}, we get that ${\Psi}\circ \mathcal{F}=0$. Hence if we take $\mathcal{C}$ as $\ker({\Psi})$ (for the existence of kernels in the category of commutative formal groups, see \cite[Proposition 1.3]{manin}), then $\mathcal{F}(\widehat{V})\subseteq \mathcal{C}$.
\\
It remains to check the codimension condition. Let $\alpha:\widehat{H}^{n-r}\to \widehat{H}^{n}$ be the inclusion map on the last $n-r$ coordinates. Since for each $t$, the map $\Psi_t\circ \alpha$ is the identity map, the map $\Psi\circ \alpha$ is the identity map as well. In particular, $\Psi$ is an epimorphism and we get
$$\dim(\mathcal{C})=n-(n-r)\leqslant c,$$
so the result follows.
\end{proof}

\subsection{The case of characteristic $0$}\label{char0}
\noindent%
In this subsection we drop our assumption on the characteristic of $C$. We keep the set-up from Section \ref{setup} with the following two changes:
\begin{itemize}
\item the $A$-limit assumption on $\mathcal{F}$ is dropped;


\item we denote by $C'$ the relative algebraic closure of $C$ in $K$ (note that the assumption $L^{p^{\infty}}=C$ in Section \ref{setup} implies $C'=C$).

\end{itemize}
\noindent
We will prove a very strong characteristic $0$ improvement of the weak bound on kernel (Proposition \ref{rightdep}(2)). It will easily imply (see Proposition \ref{bigspecial}) a characteristic $0$ version of the strong bound on kernel (Proposition \ref{biglemma}). The idea of the proof comes from Ax's proof of \cite[Theorem 1]{ax72}. Let $\Omega^{\closed}_K$ denote the $C$-subspace of $\Omega_K$ consisting of closed differential forms. Below, we identify $\Omega_K$ with a subspace of $\Omega_R\otimes_RL=\Omega_K\otimes_KL$.
\begin{prop}\label{char0dep}
If $\ch(C)=0$, then we have:
$$\dim_{C'}(\Omega^{\closed}_K\cap \ker(\Omega_R\otimes_RL\to \widehat{\Omega}_{\mathcal{S}}\otimes_{\mathcal{S}}L))\leqslant \dim(R)-\dim(\mathcal{S}).$$
\end{prop}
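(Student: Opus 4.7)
The plan is to adapt Ax's original argument from the proof of \cite[Theorem 1]{ax72} (equivalently \cite[Lemma 1]{ax71}) to the Kähler-form setting at hand.

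First, by Remark \ref{char0onto}, the kernel $N:=\ker(\Omega_R\otimes_RL\to \widehat{\Omega}_{\mathcal{S}}\otimes_{\mathcal{S}}L)$ has $L$-dimension exactly $c:=\dim(R)-\dim(\mathcal{S})$. So any family of more than $c$ elements of $N$ must be $L$-linearly dependent. Consequently, the entire proposition reduces to the following Ax-type lemma in characteristic $0$: \emph{any finite family $\omega_1,\ldots,\omega_k\in \Omega^{\closed}_K$ which becomes $L$-linearly dependent inside $\Omega_R\otimes_RL=\Omega_K\otimes_KL$ is already $C'$-linearly dependent inside $\Omega_K$}. Granting the lemma, a $C'$-independent family of closed forms inside $N$ is automatically $L$-independent, and the $L$-dimension bound from the first step forces its size to be at most $c$.

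For the lemma, I would proceed by induction on the length of a minimal nontrivial $L$-relation $\sum_{i=1}^s f_i\omega_i=0$, normalized so that $f_s=1$. Applying the exterior derivative and using $d\omega_i=0$ produces the $2$-form identity $\sum_{i=1}^{s-1}df_i\wedge \omega_i=0$. Minimality of $s$ ensures that $\omega_1,\ldots,\omega_{s-1}$ remain $L$-linearly independent in $\Omega_K\otimes_KL$, and so the Cartan-type vanishing above forces each $df_i$ to lie in the $L$-span of the $\omega_j$'s: $df_i=\sum_j a_{ij}\omega_j$ with $a_{ij}\in L$. Differentiating once more, applying $d$ to both sides, and iterating the same Cartan computation shows $da_{ij}=0$ in $\Omega_L$, hence $a_{ij}$ are algebraic over $C$, i.e.\ in $C'$ (using $\ch(C)=0$ so that $d$ has no ``parasitic'' kernel, together with the fact that $C'$ is relatively algebraically closed in $L$ under our set-up).

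With the $a_{ij}\in C'$, each $f_i$ differs from a fixed $C'$-linear combination of ``formal primitives'' of the $\omega_j$ by an absolute constant. Substituting back into the original relation $\sum f_i\omega_i=0$ and collecting the absolute-constant contributions gives a genuine $C'$-relation on $\omega_1,\ldots,\omega_s$, completing the induction and hence the lemma. The main obstacle is the last part of this induction step: rigorously showing that the descent from $L$ down to $C'$ goes through, i.e.\ that the $a_{ij}$ must in fact be absolute constants. This is exactly Ax's key algebraic observation and uses crucially both the closedness of the $\omega_i$'s and the characteristic-zero hypothesis (to freely iterate $d$ without losing information). Once this descent is in place, the rest of the proof is bookkeeping.
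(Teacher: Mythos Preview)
Your reduction via Remark~\ref{char0onto} is correct: it suffices to show that closed forms $\omega_1,\ldots,\omega_k\in N\cap\Omega_K^{\closed}$ which are $L$-dependent in $\Omega_K\otimes_KL$ are already $C'$-dependent. However, the lemma you state is strictly stronger than this---you drop the condition $\omega_i\in N$---and in that generality it is simply false. Already in $K=C(x)$ the closed forms $dx$ and $x\,dx$ are $K$-dependent (hence $L$-dependent for any $L\supseteq K$) but $C$-independent. So the lemma cannot be proved as stated.

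Correspondingly, your proof of the lemma does not close. From $\sum_j da_{ij}\wedge\omega_j=0$ the Cartan lemma only gives $da_{ij}\in\sum_j L\omega_j$, not $da_{ij}=0$; iterating produces an infinite tower $da_{ij}=\sum_kb_{ijk}\omega_k$, $db_{ijk}=\cdots$ that never terminates. Even if you reached $a_{ij}\in C'$, the ``collect the constant contributions'' step fails: in the counterexample above one has $f_1=-x$, $a_{11}=-1\in C$, yet the constant parts give the trivial relation. The missing ingredient is precisely the kernel condition. The paper exploits it as follows: $\mathcal{S}$ is finite over a power-series ring, so $L$ carries derivations $\partial_1,\ldots,\partial_s$ with common constant field $C'$, and each $\partial_k^*$ factors through $\widehat{\Omega}_{\mathcal{S}}\otimes_{\mathcal{S}}L$; hence $\omega_j\in N$ forces $\partial_k^*(\xi_j)=0$ for the image $\xi_j\in\Omega_L$. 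Cartan's formula $\mathcal{L}_{\partial_k}=d\circ\partial_k^*+\partial_k^*\circ d$ then gives $\mathcal{L}_{\partial_k}\xi_j=0$ (using \emph{both} closedness and the kernel condition), and applying $\mathcal{L}_{\partial_k}$ to a minimal relation $\sum f_i\xi_i=0$ with $f_s=1$ yields $\sum\partial_k(f_i)\xi_i=0$, whence $\partial_k(f_i)=0$ by minimality. This is Ax's Lie-derivative trick, and it is exactly what your exterior-derivative argument is missing.
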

\begin{proof}
For any derivation $\partial:\mathcal{S}\to \mathcal{S}$ let $\partial_L$ denote its extension to $L$. Since $\mathcal{S}$ is complete, $\partial^*:\Omega_{\mathcal{S}}\to \mathcal{S}$ factors through $\widehat{\partial^*}:\widehat{\Omega}_{\mathcal{S}}\to \mathcal{S}$ and we have a commutative diagram:
\begin{equation*}
  \xymatrix{
   & &\widehat{\Omega}_{\mathcal{S}}\otimes_\mathcal{S}L \ar[rrd]^{\widehat{\partial^*}\otimes \id_L} & & \\
\Omega_R\otimes_RL\ \ar[rr]^{\gamma} \ar[rru]^{\beta}& &  \Omega_{L}=\Omega_{\mathcal{S}}\otimes_\mathcal{S}L \ar[u]^{} \ar[rr]^{\ \ \ \partial^*_L}& &L\ .}
\end{equation*}
\noindent%
After identifying $\Omega_R\otimes_RL$ with $\Omega_K\otimes_KL$, we have $\gamma:\Omega_K\otimes_KL\to \Omega_L$. Since $\ch(C)=0$, $\gamma$ is an embedding.
\\
Let $r:=\dim(R), s:=\dim(\mathcal{S}), n:=r-s+1$ and
$$\omega_1,\ldots,\omega_n\in \Omega^{\closed}_K\cap \ker(\Omega_R\otimes_RL\to \widehat{\Omega}_{\mathcal{S}}\otimes_{\mathcal{S}}L).$$
By Remark \ref{char0onto}, $\dim_L(\ker(\beta))=r-s$ and $\omega_1,\ldots,\omega_n$ are $L$-dependent. Let $\xi_i:=\gamma(\omega_i)$. Since $\gamma$ is a $C'$-linear embedding, it is enough to show that $\xi_1,\ldots,\xi_n$ are $C'$-dependent.
\\
By Proposition \ref{completestr}, $\mathcal{S}$ is a finite extension of the ring of power series in $s$ variables.
Hence, there are derivations $\partial_1,\ldots,\partial_s$ on $L$ (extending the standard partial derivations on the field of Laurent series) such that their common constant field coincides with $C'$.
By the diagram above, we have $\partial_i^*(\xi_j)=0$ for each $i,j$. Since each $\xi_j$ is closed, we can use the Lie derivative trick as in \cite[page 1198]{ax72}, to conclude that $\xi_1,\ldots,\xi_n$ are $C'$-dependent. 

We briefly recall the Lie derivative argument below. For any $C$-derivation $\partial$ on $L$, one defines the \emph{Lie derivative} $L_{\partial}$ on the module $\Omega_L$ by the following formula:
$$L_{\partial}:\Omega_L\to \Omega_L,\ \ \ L_{\partial}(a\ddd b):=\partial(a)\ddd b + a\ddd(\partial(b)).$$
If $\omega\in \Omega_L$ is a closed form such that $\partial^*(\omega)=0$, then $L_{\partial}(\omega)=0$ (see the lines 4 and 5 on page 1199 of \cite{ax72}). In particular, we get $L_{\partial_i}(\xi_j)=0$ for each $i,j$. Therefore, if we apply the Lie derivatives $L_{\partial_1},\ldots,L_{\partial_s}$ to a (minimal) $L$-linear combination of $\xi_1,\ldots,\xi_n$ witnessing their $L$-dependence, then we get that $\xi_1,\ldots,\xi_n$ are linearly dependent over the common constant field of $\partial_1,\ldots,\partial_s$, which is exactly $C'$.
\end{proof}
\begin{remark}\label{remque}
It is not clear whether the result above holds for $C$ of positive characteristic. The proof breaks at ``Since $\ch(C)=0$, $\gamma$ is an embedding.''. We have replaced Proposition \ref{char0dep} with Proposition \ref{biglemma} to handle the positive characteristic case.
\end{remark}
\noindent
We define below a class of formal maps for which we can state in the characteristic $0$ case, a very general version of Theorem \ref{mainthm}. This version also includes Ax's theorem (\cite[Theorem 1F]{ax72}) which will be discussed in Section \ref{applications}. This definition makes sense in arbitrary characteristic, however in the case of positive characteristic it needs to be corrected to include higher differential forms (see Section \ref{secspecial}).
\begin{definition}\label{defspecial}
We consider $\Omega_R$ as an $R$-submodule of $\widehat{\Omega}_R\cong \widehat{\Omega}_{\widehat{R}}$ (see Lemma \ref{lemma1}(1)). A formal map $\mathcal{F}:\widehat{V}\to A$ is \emph{special}, if
$$\mathcal{F}^*(\Omega^{\inv}_A)\subseteq \Omega_R.$$
\end{definition}
\begin{prop}\label{bigspecial}
Assume that $\mathcal{F}$ is special and let $\mathcal{F}^*_K$ denote the composition of $\mathcal{F}^*$ with the map $\Omega_R\to \Omega_K$. We have
$$\dim_{C'}\mathcal{F}^*_K({\Omega}^{\inv}_{A})\leqslant \dim(V)-\dim(\mathcal{W}).$$
\end{prop}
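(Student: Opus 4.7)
The plan is to reduce this directly to Proposition \ref{char0dep} by exhibiting $\mathcal{F}^*_K(\Omega^{\inv}_A)$ as a $C$-subspace of $\Omega^{\closed}_K \cap \ker(\Omega_R\otimes_RL\to \widehat{\Omega}_{\mathcal{S}}\otimes_{\mathcal{S}}L)$. Two short verifications are needed, after which the bound is immediate.

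First I would check that the image lands in $\Omega^{\closed}_K$. Since $A$ is commutative, every invariant $1$-form is closed (this is the Maurer--Cartan identity specialised to an abelian Lie algebra), and the exterior derivative commutes with any pullback. Combined with the specialness hypothesis $\mathcal{F}^*(\Omega^{\inv}_A)\subseteq \Omega_R$, this shows that $\mathcal{F}^*(\omega)$ is closed in $\Omega_R$ for each $\omega\in \Omega^{\inv}_A$, and therefore its image $\mathcal{F}^*_K(\omega)$ in $\Omega_K$ is closed as well.

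Second, I would check that the image lies in the relevant kernel. Since $\mathcal{F}$ vanishes on $\mathcal{W}$, the composition $\widehat{T}\to \widehat{R}\to \mathcal{S}$ (where the first map is $\mathcal{F}$) is the $0$-map in the sense of Section \ref{notation}, hence factors through the structural map $C\to \mathcal{S}$. Consequently the induced homomorphism on complete K\"ahler forms $\widehat{\Omega}_{\widehat{T}}\to \widehat{\Omega}_{\mathcal{S}}$ is identically zero. Using the identifications of Proposition \ref{localforms} and Lemma \ref{lemma1}(1), together with specialness of $\mathcal{F}$, this means precisely that $\mathcal{F}^*(\Omega^{\inv}_A)\subseteq \Omega_R$ maps to zero in $\widehat{\Omega}_{\mathcal{S}}$, and the same persists after tensoring with $L$ on both sides.

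Putting the two observations together, $\mathcal{F}^*_K(\Omega^{\inv}_A)$ is contained in $\Omega^{\closed}_K \cap \ker(\Omega_R\otimes_RL\to \widehat{\Omega}_{\mathcal{S}}\otimes_{\mathcal{S}}L)$, which is a $C'$-subspace of $\Omega_K\otimes_K L$ (so the $C'$-span of the image lies there too). Proposition \ref{char0dep} bounds its $C'$-dimension by $\dim(R)-\dim(\mathcal{S})$, and this equals $\dim(V)-\dim(\mathcal{W})$ in our set-up. I do not anticipate any substantive obstacle: the hard inequality is precisely Proposition \ref{char0dep}, and the only care required here is bookkeeping among the various differential modules ($\Omega_R$, $\widehat{\Omega}_R$, $\Omega_{\widehat{R}}$ and their analogues for $\mathcal{S}$) so that specialness couples the two halves of the argument at the right place.
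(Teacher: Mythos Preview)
Your proposal is correct and follows essentially the same route as the paper: show that $\mathcal{F}^*_K(\Omega^{\inv}_A)$ lands in $\Omega^{\closed}_K\cap \ker(\Omega_R\otimes_RL\to \widehat{\Omega}_{\mathcal{S}}\otimes_{\mathcal{S}}L)$ (closedness from invariance on a commutative group, kernel membership from the vanishing of $\mathcal{F}$ on $\mathcal{W}$), then invoke Proposition~\ref{char0dep}. The paper's proof is terser but does exactly this, picking $n=\dim(R)-\dim(\mathcal{S})+1$ forms and concluding $C'$-dependence.
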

\begin{proof}
Let $r,s,n$ be as in the proof of Proposition \ref{char0dep} and take $\eta_1,\ldots,\eta_n\in \Omega^{\inv}_A$. For each $i$, let $\omega_i:=\mathcal{F}^*_K(\eta_i)$. Since $\eta_1,\ldots,\eta_n$ are closed forms, we get by our assumptions that
$$\omega_1,\ldots,\omega_n\in \Omega^{\closed}_K\cap \ker(\Omega_R\otimes_RL\to \widehat{\Omega}_{\mathcal{S}}\otimes_{\mathcal{S}}L).$$
By Proposition \ref{char0dep}, $\omega_1,\ldots,\omega_n$ are $C'$-dependent.
\end{proof}
\begin{example}\label{exspecial}
The composition of special formal maps is special. The following classes of formal maps are special.
\begin{enumerate}
\item Formalizations of algebraic maps (clear).

\item Formal group homomorphisms (see \cite[Corollary IV.4.3]{Si}).

\end{enumerate}
\end{example}
\begin{theorem}\label{shortchar0}
Assume that $C$ has characteristic $0$ and that $\mathcal{F}$ is special. Then there is a formal subgroup $\mathcal{C}\leqslant \widehat{A}$  such that $\mathcal{F}(\widehat{V})\subseteq \mathcal{C}$ and
$$\dim(\mathcal{C})\leqslant \dim(V)-\dim(\mathcal{W}).$$
\end{theorem}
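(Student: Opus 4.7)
The plan is to build $\mathcal{C}$ directly from the kernel of $\mathcal{F}^{*}$ acting on invariant forms, taking advantage of the fact that in characteristic $0$ every commutative formal group is linearizable by the exponential map. Concretely, set $c := \dim(V)-\dim(\mathcal{W})$ and $n := \dim A$. Proposition \ref{bigspecial}, combined with the equality $C'=C$ produced by the absolute irreducibility of $\mathcal{W}$ (see Remark \ref{absirrrem}), yields $\dim_{C}\mathcal{F}^{*}_{K}(\Omega^{\inv}_{A})\leqslant c$, so the subspace $N:=\ker(\mathcal{F}^{*}_{K}\colon \Omega^{\inv}_{A}\to \Omega_{K})$ satisfies $\dim_{C}N\geqslant n-c$.

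Next I linearize using the formal exponential $\exp\colon \widehat{\lie(A)}\xrightarrow{\sim}\widehat{A}$, which allows me to identify $\widehat{A}$ with $\widehat{\ga}^{n}$. Under this identification, formal subgroups of $\widehat{A}$ correspond bijectively to $C$-linear subspaces of $\lie(A)\cong C^{n}$, invariant forms correspond to linear functionals, and $\mathcal{F}$ is encoded by a tuple $(g_{1},\ldots,g_{n})\in \mathfrak{m}_{\widehat{R}}^{n}$ with $\mathcal{F}^{*}(dx_{i})=dg_{i}$. I then take $\mathcal{C}$ to be the formal subgroup corresponding to the annihilator $N^{\perp}\subseteq \lie(A)$; by construction $\dim \mathcal{C}=n-\dim_{C}N\leqslant c$, which gives the desired codimension bound.

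Verifying $\mathcal{F}(\widehat{V})\subseteq \mathcal{C}$ amounts to showing that for every $(a_{1},\ldots,a_{n})\in N$ the formal function $h:=\sum_{i}a_{i}g_{i}\in \widehat{R}$ vanishes identically. The hypotheses immediately give $h\in P\subseteq \mathfrak{m}_{\widehat{R}}$ (from $\mathcal{F}$ vanishing on $\mathcal{W}$), $dh = 0$ in $\Omega_{K}$ (from the choice of $N$), and $dh\in \Omega_{R}$ inside $\widehat{\Omega}_{\widehat{R}}$ (from specialness). The decisive step, and the place where I expect the characteristic $0$ hypothesis to be used essentially, is to upgrade the vanishing of $dh$ in $\Omega_{K}$ to the vanishing of $h$ in $\widehat{R}$.

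To complete this last step I will use the structure theory of complete local $C$-algebras. Excellence of $R$ makes $\widehat{R}$ reduced, so after passing to a minimal prime $Q\subseteq P$ with $Q\cap R=0$ (which exists because $R\to \mathcal{S}$ is injective) I may assume $\widehat{R}$ is a complete local domain. By Proposition \ref{completestr} $\widehat{R}$ is finite, and in characteristic $0$ generically separable, over some $C\llbracket x_{1},\ldots, x_{r}\rrbracket$, so the standard partials extend to continuous $C$-derivations $\partial_{1},\ldots,\partial_{r}$ of $\widehat{R}$. Pairing each $\partial_{i}$ with the element $dh\in \Omega_{R}$ (which is $K$-torsion, hence killed by some $s\in R\setminus\{0\}$) gives $s\,\partial_{i}h=0$ in $\widehat{R}$, and since $\widehat{R}$ is now a domain and $s$ is nonzero there, I conclude $\partial_{i}h=0$ for every $i$. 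Thus $h$ lies in the ring of common constants of the $\partial_{i}$, which is an algebraic extension of $C$ inside the Henselian ring $\widehat{R}$; Hensel's lemma together with $h(v)=0$ then forces $h=0$, completing the construction of $\mathcal{C}$.
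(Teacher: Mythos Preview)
Your argument is correct and follows the same route as the paper: linearize $\widehat{A}\cong\widehat{\ga}^{n}$, invoke Proposition~\ref{bigspecial} to bound $\dim_{C}\mathcal{F}^{*}_{K}(\Omega^{\inv}_{A})$, and then show that each linear combination $h=\sum_i a_i g_i$ with $(a_i)\in N$ vanishes in $\widehat{R}$. The paper is much terser at this last step (it writes only ``since $C'$ is relatively algebraically closed in $K$, we get that each $f_l$ is the $0$-map'') and works throughout over $C'$, descending by Galois at the end, whereas you use $C'=C$ from the start; that is legitimate because $V$ is assumed absolutely irreducible in Section~\ref{setup}, but your justification via Remark~\ref{absirrrem} is off---that remark argues through $L^{p^{\infty}}=C$, which is vacuous in characteristic $0$. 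The correct reason is simply the absolute irreducibility of $V$ (not of $\mathcal{W}$).

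One small gap: to conclude $h=0$ in $\widehat{R}$ you must have $h\equiv 0$ modulo \emph{every} minimal prime of $\widehat{R}$ (since $\widehat{R}$ is only reduced), not just one $Q\subseteq P$. Fortunately your argument never actually uses $Q\subseteq P$ or $h\in P$: the only inputs are $h\in\mathfrak{m}_{\widehat{R}}$, that $dh\in\Omega_R$ is $K$-torsion, and that $Q\cap R=0$. The last holds for every minimal prime of $\widehat{R}$ by going-down for the faithfully flat extension $R\subseteq\widehat{R}$, so the fix is simply to run your derivation argument once for each minimal prime.
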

\begin{proof}
The proof is similar to the first step of the proof of Theorem \ref{mainthm} (and to the proof of \cite[Theorem 1]{ax72}), so we will be brief. Since $\ch(C)=0$, we get $\widehat{A}\cong \widehat{\ga}^n$ and we can assume $A=\ga^n$. For $i\in \{1,\ldots,n\}$, let $\mathcal{F}_i:\widehat{V}\to \widehat{\ga}$ be the composition of $\mathcal{F}$ with the appropriate projection.
\\
As in the proof of Theorem \ref{mainthm} (using now Proposition \ref{bigspecial}), we get
$$0\leqslant r_0\leqslant \dim(V)-\dim(\mathcal{W})<n$$
and $c_{l,i}\in C'$ such that for each $l\in \{r+1,\ldots,n\}$ the formal map
$$f_l:=\mathcal{F}\times_CC'-\sum_{i=1}^{r_0}c_{l,i}(\mathcal{F}_i\times_CC'):\widehat{V}\times_CC'\to \widehat{A}\times_CC'$$
induces the $0$-map from $\Omega^{\inv}_{\ga}$ to $\Omega_K$ (note that $c_{l,i}=\gamma_{l,i}$ for $H=\ga$). Since $C'$ is relatively algebraically closed in $K$, we get that each $f_l$ is the $0$-map. Thus $\mathcal{F}(\widehat{V})\subseteq \widehat{D}$, where $D$ is an algebraic subgroup of $\ga^n$ of dimension smaller than $\dim(V)-\dim(\mathcal{W})$ and which is defined over $C'$. We can take $\mathcal{C}$ as the formalization of $\bigcap_{\sigma}\sigma(D)$, where $\sigma$ runs over the absolute Galois group of $C$.
\end{proof}
\begin{example}\label{nonspecial}
We will give an example of a formal map which does not satisfy the conclusion of Theorem \ref{shortchar0}. Let $\mathcal{E}:\Cc\to \Cc$ be an analytic function whose graph is Zariski dense in $\Cc^2$ and such that $\mathcal{E}(0)=0$. Let us define
$$\mathcal{F}:\Cc^2\to \Cc^2,\ \ \mathcal{F}(x,y)=(y-\mathcal{E}(x),\mathcal{E}(y-\mathcal{E}(x))).$$
We denote also by $\mathcal{E}$ the corresponding formal map $\widehat{\Aa^1}\to \widehat{\Aa^1}$. Let us take $V=\Aa^2, A=\ga^2$ and consider
$\mathcal{F}$ as a formal map $\widehat{V}\to \widehat{A}$. We take $\mathcal{W}$ as the graph of $\mathcal{E}$. Then $\dim(V)-\dim(\mathcal{W})=1$, but the image of $\mathcal{F}$ (which is the graph of $\mathcal{E}$ as well) is not contained in any proper formal subgroup of $\widehat{A}$ (since all the formal subgroups of $\ga^2$ are also algebraic being just linear subspaces).
\\
This example can be easily modified to work in the positive characteristic case: we just need an extra assumption (automatically satisfied above) that $\mathcal{E}$ is non-additive.
\end{example}
\begin{remark}\label{arbitrary}
\begin{enumerate}
\item It is easy to formulate and prove a complex-analytic version of Theorem \ref{shortchar0}.

\item Proposition \ref{biglemma} is still not enough to tackle Question \ref{mainquem} without imposing extra assumptions on $A$. One may need a proper notion of a compatible sequences of higher invariant forms to prove a positive characteristic version of Remark \ref{formsgps}(1). Unfortunately, all our attempts to define such a notion were just leading into a system of forms coming from a formal endomorphism (see Remark \ref{formsgps}(3)).
\end{enumerate}
\end{remark}

\section{Applications to Ax-Schanuel inequalities}\label{applications}
In this section we will apply Theorems \ref{mainthm} and \ref{shortchar0} to get a version of Ax's theorem (\cite[Theorem 1]{ax72}) as well as some Ax-Schanuel type transcendental statements. In this section $C$ is a perfect field of an arbitrary characteristic.

We start with chronological remarks about the circle of topics around Schanuel's Conjecture. The reader is referred to Pila's notes \cite{Pilanotes} for a comprehensive survey.
\begin{itemize}
\item In 1960's Schanuel stated his famous conjecture \cite[(S)]{ax71} (see \cite[page 30-31]{lang1966}) as well as its function field (or formal) \cite[(SP)]{ax71} and differential \cite[(SD)]{ax71} versions.

\item The last two conjectures were proved by Ax \cite[Theorem 3]{ax71}. We will use the phrase ``Ax-Schanuel'' referring to results of the type \cite[(SP)]{ax71} and \cite[(SD)]{ax71}.

\item Shortly after, Ax generalized \cite[Theorem 1]{ax71} (a multivariable version of \cite[(SP)]{ax71}) from the case of the exponential map on an algebraic torus to the case of the exponential map on a semi-abelian variety \cite[Theorem 3]{ax72}. This generalization follows from the result \cite[Theorem 1F]{ax72} about intersections of algebraic subvarieties and formal subgroups of an algebraic group.

\item Bertrand  \cite{ber} extended \cite[Theorem 3]{ax72} to commutative algebraic groups not having vector quotients (e.g. a maximal non-split vectorial extensions of a semi-abelian variety).

\item Kirby \cite{ki} generalized \cite[(SD)]{ax71} to arbitrary semi-abelian varieties. This generalization was not included in \cite{ax72}, however it is closely related.

\item The differential Schanuel conjecture \cite[(SD)]{ax71} is generalized further to ``very non-algebraic formal maps'' in \cite[Theorem 5.5]{K5}. This generalization includes a differential version of Bertrand's result and a differential Ax-Schanuel type result about raising to a non-algebraic power on an algebraic torus.

\item The first (to my knowledge) positive characteristic version of \cite[(SP)]{ax71} is \cite[Theorem 1.1]{K6}, where the exponential map is replaced with a non-algebraic additive power series.

\item There is a variety of Ax-Schanuel type results for additive maps coming from \emph{Drinfeld modules}, see \cite{Brow}. However (to my knowledge), such results never include a version of the full Ax-Schanuel statement.

\item The original Schanuel conjecture \cite[(S)]{ax71} remains wide open. For example it is still unknown whether $\pi + e$ is irrational.

\end{itemize}
\noindent
We discuss now different applications of the Ax-Schanuel type. Schanuel's Conjecture \cite[(S)]{ax71} regards the exponential map on an algebraic torus and more generally the exponential map on a semi-abelian variety. Such maps do not exists in the positive characteristic case, but the most natural replacement (as in \cite{K5}) is a formal isomorphism which is ``very non-algebraic''.
In practise, it is a formal isomorphism between two ``very different'' algebraic groups or a formal endomorphism ``far'' from algebraic endomorphisms.
As our transcendental statements will concern images of rational points of algebraic groups under formal maps, we need to specify to what kind of rational points formal maps may be applied.

Let us fix $C$-schemes $V$ and $W$, and rational points (for simplicity with the same names) $0\in V(C),0\in W(C)$. By $\widehat{V}$ and $\widehat{W}$ we mean the formalizations at these fixed $C$-rational points. For a morphism of $C$-schemes $x:W\to V$, let $\locus_C(x)$ denote the Zariski closure of the image of $x$ and let
$$\trd_C(x):=\dim \locus_C(x).$$
If $W$ is the spectrum of a field, then the definition above corresponds to the classical notions of the algebraic locus and the transcendental degree of a rational point. If $W,V$ are affine and $x$ corresponds to the morphism of $C$-algebras $f$, then $\locus_C(x)$ coincides with the closed subscheme of $V$ given by $\ker(f)$.
\\
For a local $C$-algebra $U$, we denote by $V(U)_*$ the set of morphisms $x:\spec(U)\to V$ which take the closed point of $\spec(U)$ to $0$. It is easy to see that $V(U)_*$ corresponds exactly to the set of local $C$-algebra homomorphisms $\mathcal{O}_{V,0}\to U$. Clearly, for any $x\in V(U)_*$, we have $0\in \locus_C(x)$.
\\
Assume now that $\mathcal{R}$ is a complete $C$-algebra. Then $V(\mathcal{R})_*$ corresponds to the set of local $C$-algebra homomorphisms $\widehat{\mathcal{O}}_{V,0}\to \mathcal{R}$ being completions of local $C$-algebra homomorphisms $\mathcal{O}_{V,0}\to \mathcal{R}$.
\begin{remark}\label{pointexample}
Any formal map $\mathcal{F}:\widehat{V}\to \widehat{W}$ naturally induces a map
$$\mathcal{F}_{\mathcal{R}}: V(\mathcal{R})_*\to  W(\mathcal{R})_*.$$
\end{remark}
\begin{proof}
Let us take $x\in V(\mathcal{R})_*$ and consider the following diagram
\begin{equation*}
  \xymatrix{ & \spec(\mathcal{R})  \ar[rd]^{\mathcal{F}_{\mathcal{R}}(x)}  \ar[ld]^{x} \ar[ldd]^{\bar{x}}&  \\
 V & & W \\
\spec(\widehat{\mathcal{O}}_{V,0}) \ar[rr]^{\mathcal{F}} \ar[u]^{}& &\spec(\widehat{\mathcal{O}}_{W,0}) \ar[u]^{},}
\end{equation*}
where $\mathcal{F}_{\mathcal{R}}(x)$ is the obvious composition map.
\end{proof}
\begin{example}
Let us take $C=\Qq$, $\mathcal{R}=\Qq \llbracket X_1,\ldots,X_r\rrbracket$, $x\in \ga(\mathcal{R})_*$ and
$$\mathcal{F}=\exp:\widehat{\ga}\to \widehat{\gm}.$$
Then $x$ corresponds to a power series $f$ without the constant term and $\mathcal{F}(x)$ corresponds to
$$\exp(f)=1+f+\frac{f^2}{2!}+\frac{f^3}{3!}+\ldots,$$
which makes sense, since $f$ has no constant term. Thus the map
$$\mathcal{F}_{\mathcal{R}}:\ga(\mathcal{R})_*\to \gm(\mathcal{R})_*$$
is the same as the exponential map evaluated at the maximal ideal of $\mathcal{R}$.
\end{example}
\noindent
We will need the notion of the formal locus of a point.
\begin{definition}
For $x\in V(\mathcal{R})_*$ we define.
\begin{enumerate}
\item The formal locus of $x$ over $C$ as the formal subscheme of $\widehat{V}$ corresponding to the image of the map $\widehat{\mathcal{O}}_{V,0}\to \mathcal{R}$.

\item The number $\andeg(x)$ denoting the dimension of the formal locus of $x$ over $C$.
\end{enumerate}
\end{definition}
\begin{remark}\label{starpoint}
\begin{enumerate}
\item It is easy to see that for $x\in V(\mathcal{R})_*$, the formal locus of $x$ is contained in (the formalization at $0$ of) the algebraic locus of $x$. Hence we have
    $$\andeg_C(x)\leqslant \trd_C(x).$$

\item If $\mathcal{R}$ is a power series algebra and $V$ is an affine space, then $\andeg(x)$ coincides with the rank of the Jacobian of $x$ in the case of $\ch(C)=0$.
\end{enumerate}
\end{remark}
\noindent%
The result below is a generalization (from the case of a vector group to an arbitrary algebraic group) of \cite[Proposition 4.4]{K6}. The proof is an easy adaptation of the proof from \cite{K6}, so we skip it.
\begin{prop}\label{algform}
Assume $G$ is an algebraic group, $V$ a subvariety containing the identity element of $G$ and $\mathcal{H}$ a formal subgroup of $\widehat{G}$. If $\widehat{V}\subseteq \mathcal{H}$, then $\widehat{H}\subseteq \mathcal{H}$, where $H$ is the algebraic subgroup of $G$ generated by $V$.
\end{prop}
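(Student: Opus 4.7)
The plan is to combine Chevalley's classical theorem on generation of algebraic subgroups with the fact that $\mathcal{H}$, being a formal subgroup of $\widehat{G}$, is stable under the formal group operations inherited from $G$. Since $\widehat{V}$ depends only on the Zariski closure of $V$ at $e$ (and this closure still generates the same algebraic subgroup $H$), I may assume $V$ is a closed irreducible subvariety of $G$ containing the identity $e$. By Chevalley's theorem on generation of connected algebraic groups, there exist $n\geqslant 1$ and signs $\epsilon_1,\ldots,\epsilon_n\in\{+1,-1\}$ such that $H=V^{\epsilon_1}\cdot V^{\epsilon_2}\cdots V^{\epsilon_n}$ as subsets of $G$; in particular the multiplication morphism
$$\mu:V^{\epsilon_1}\times\cdots\times V^{\epsilon_n}\longrightarrow G,\qquad (v_1,\ldots,v_n)\longmapsto v_1^{\epsilon_1}\cdots v_n^{\epsilon_n},$$
is surjective onto $H$, and $H$ is automatically closed, connected and smooth.

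Next, I pass to formalizations. The algebraic inverse $\iota:G\to G$ formalizes to the formal antipode on $\widehat{G}$, which by the formal-subgroup hypothesis restricts to an involution of $\mathcal{H}$; hence $\widehat{V^{\epsilon}}\subseteq \mathcal{H}$ for each sign $\epsilon\in\{+1,-1\}$. Formalizing $\mu$ at the base point $(e,\ldots,e)\mapsto e$ gives a morphism of formal schemes
$$\widehat{\mu}:\widehat{V^{\epsilon_1}}\widehat{\otimes}\cdots\widehat{\otimes}\widehat{V^{\epsilon_n}}\longrightarrow \widehat{G},$$
and since $\mathcal{H}$ is closed under the formal multiplication $\widehat{G}\widehat{\otimes}\widehat{G}\to\widehat{G}$, the (formal) image of $\widehat{\mu}$ in $\widehat{G}$ is contained in $\mathcal{H}$.

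It remains to identify this formal image with $\widehat{H}$. Set $R=\widehat{\mathcal{O}}_{G,e}$. The kernel of $\widehat{\mu}^{*}:R\to\widehat{\mathcal{O}}_{V^n,(e,\ldots,e)}$ automatically contains $I(H)_e\cdot R$, where $I(H)_e\subseteq\mathcal{O}_{G,e}$ denotes the stalk at $e$ of the ideal sheaf of $H$; this follows from the factorization $\mu^{*}:\mathcal{O}_{G,e}\twoheadrightarrow\mathcal{O}_{H,e}\hookrightarrow\mathcal{O}_{V^n,(e,\ldots,e)}$ and the flatness of completion. The reverse inclusion---equivalently, the injectivity of the induced map of complete local rings $\widehat{\mathcal{O}}_{H,e}\hookrightarrow \widehat{\mathcal{O}}_{V^n,(e,\ldots,e)}$---is the heart of the matter and is where I expect the main difficulty to lie, since completion does not preserve injectivity for arbitrary dominant morphisms of varieties. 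One establishes it by exploiting that $H$ is a smooth connected algebraic group (so $\widehat{\mathcal{O}}_{H,e}$ is a formal power series ring over $C$, in particular a domain) and by refining the choice of $n$ and of the signs $\epsilon_i$ so that $\mu$ is generically smooth onto $H$; this is essentially the content of \cite[Proposition 4.4]{K6} in the vector-group case, which adapts to the present setting with straightforward modifications. Granting this, the image of $\widehat{\mu}$ is precisely $\widehat{H}$, and the desired inclusion $\widehat{H}\subseteq \mathcal{H}$ follows at once.
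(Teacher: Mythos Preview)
The paper gives no proof of this proposition at all: it simply says ``The proof is an easy adaptation of the proof from \cite{K6}, so we skip it.'' Your proposal is therefore strictly more detailed than what the paper offers, and it is aligned with the paper's (implicit) approach in that you also ultimately defer to \cite[Proposition 4.4]{K6} for the crux of the argument. The overall structure you give --- Chevalley's description of $H$ as a finite product $V^{\epsilon_1}\cdots V^{\epsilon_n}$, stability of $\mathcal{H}$ under the formal group operations, and identification of the formal image of $\widehat{\mu}$ with $\widehat{H}$ --- is the natural one and is presumably what the adaptation from \cite{K6} looks like.

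One caution on the phrasing of the key step: you propose to arrange that $\mu$ be \emph{generically smooth} onto $H$. In positive characteristic this is not automatic for a dominant morphism (Frobenius being the standard obstruction), and it is not clear that enlarging $n$ or adjusting the signs alone forces generic smoothness. What one actually needs is the injectivity of $\widehat{\mathcal{O}}_{H,e}\to \widehat{\mathcal{O}}_{V^n,(e,\ldots,e)}$, and the argument for this typically uses a translation trick exploiting the group structure of $H$ (if $\mu$ is flat or smooth at \emph{some} point of a fibre over $h\in H$, one composes with further factors of $V$ to move that point to one lying over $e$) rather than generic smoothness per se. Since you explicitly defer this step to \cite{K6}, this is not a gap in your write-up, but the heuristic you give for how it goes is slightly misleading in characteristic $p$.
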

\begin{remark}\label{formsgps}
\begin{enumerate}
\item In the case of characteristic $0$, Proposition \ref{algform}, maybe restated in the following form: if $\omega\in \Omega^{\inv}_G$ and $\omega|_V=0$, then $\omega|_H=0$ (see \cite[Propositon 2.5]{K5}).

\item The above restatement does not hold in the positive characteristic case, e.g. one can take $C=\Ff_3$, $\omega=\ddd X\in \Omega^{\inv}_{\ga^2}$ and $V$ given by $X^2-Y^3$.

\item One could imagine proving a weaker version of Theorem \ref{mainthm} for a ``right'' (i.e. satisfying item $(1)$ above) system of higher invariant forms in place of a formal subgroup. Unfortunately (as already mentioned in Remark \ref{arbitrary}) all the notions of ``right'' higher invariant forms we could come up with reduced to collections of higher forms coming from a formal subscheme.
\end{enumerate}
\end{remark}

\noindent%
We set now the notation for the remainder of this section. We fix the following.
\begin{itemize}
\item A complete $C$-algebra $\mathcal{R}$ with the residue field $C$ such that $\mathcal{R}$ is linearly disjoint from $C^{\alg}$ over $C$ and in the case of characteristic $p$ such that $L^{p^{\infty}}=C$, where $L$ is the fraction field of $\mathcal{R}$ (e.g. $\mathcal{R}$ may be the power series algebra).

\item Commutative algebraic groups $A,B$ of dimension $n$ over $C$.

\item A formal isomorphism $\mathcal{E}:\widehat{A}\to \widehat{B}$ defined over $C$.
\end{itemize}
\noindent
Moreover we assume that
\begin{itemize}
\item $\widehat{B}\cong \widehat{H}^n$, where $H$ is an integrable $1$-dimensional algebraic group (see Definition \ref{defintegr} and note that this assumption is restrictive only in the positive characteristic case);

\item if $\ch(C)>0$, then $\mathcal{E}$ is a $B$-limit map.
\end{itemize}
\noindent
There will be a common independence condition implying transcendence (as linear dependence over $\Qq$ in the statement of Schanuel's Conjecture). We define this notion below.
\begin{definition}\label{subinddef}
 Take $x\in A(\mathcal{R})$ and assume that for any proper algebraic subgroup $A_0<A$ defined over $C$, we have $x\notin A_0(\mathcal{R})$. Then we call $x$ \emph{subgroup independent}.
\end{definition}
\begin{remark}
In \cite{Pila11}, an element satisfying a similar condition as above is called \emph{geodesically independent}.
\end{remark}

\begin{example}
\begin{enumerate}
\item For $A=\ga^n$, the subgroup independence coincides with the $C$-linear independence if $\ch(C)=0$, and with the $C[\fr]$-linear independence if $\ch(C)>0$.

\item For $A=\gm^n$, the subgroup independence coincides with the $\Zz$-linear independence (in the commutative group $\gm^n(\mathcal{R})$).
\end{enumerate}
\end{example}

\subsection{Ax-Schanuel type I}\label{secaxsch}
In this part we deal with the case of  formal isomorphism between two ``very different'' algebraic groups. We recall a definition from \cite{K5}.
\begin{definition}\label{defalgor}
We say that $A$ and $B$ are \emph{essentially different}, if any connected algebraic subgroup of $A\times B$ is of the form $A_0\times B_0$, where $A_0$ is an algebraic subgroup of $A$ and $B_0$ is an algebraic subgroup of $B$.
\end{definition}
\begin{remark}
It is easy to see that $A$ and $B$ are essentially different if and only if they do not have infinite algebraic isomorphic \emph{sections}, where an algebraic section of an algebraic group is a quotient of its algebraic subgroup by a Zariski closed normal subgroup.
\end{remark}

\begin{example}\label{exao}
The following $A,B$ are essentially different. In the case of $\ch(C)=0$, there is always a formal isomorphism $\mathcal{E}:\widehat{A}\to \widehat{B}$.
\begin{enumerate}
\item $A=\ga^n$ and $B$ is a semi-abelian variety. No formal isomorphism exists for $\ch(C)>0$.

\item $A=\gm^n$ and $B$ is a vectorial extension of an abelian variety. In some cases there is a formal isomorphism for $\ch(C)>0$, for example if $B$ is a Cartesian power of an ordinary elliptic curve.
\end{enumerate}
\end{example}
\noindent
The theorem below is a mild generalization of \cite[Theorem 3]{ax72}, where $\mathcal{E}$ is the formal inverse of the exponential map on a semi-abelian variety in characteristic $0$.
\begin{theorem}\label{schanuelax}
Assume that $A,B$ are essentially different. Then for any subgroup independent $x\in A(\mathcal{R})_*$ we have
$$\trd_C(x,\mathcal{E}_{\mathcal{R}}(x))\geqslant n+\andeg_C(x).$$
\end{theorem}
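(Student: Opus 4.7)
The plan is to reduce to Theorem \ref{mainthm} (respectively Theorem \ref{shortchar0} in characteristic zero), following the scheme by which Ax deduced \cite[Theorem 3]{ax72} from \cite[Theorem 1F]{ax72}. Put $G=A\times B$ and let $\mathcal{A}\subseteq\widehat{G}$ be the graph of $\mathcal{E}$, a formal subgroup of dimension $n$. Set $V:=\locus_C((x,\mathcal{E}_{\mathcal{R}}(x)))\subseteq G$ and let $\mathcal{W}\subseteq\widehat{V}$ be the formal locus of $(x,\mathcal{E}_{\mathcal{R}}(x))$; then $\dim V=\trd_C(x,\mathcal{E}_{\mathcal{R}}(x))$ and, because $\mathcal{E}$ is an isomorphism, $\dim\mathcal{W}=\andeg_C(x,\mathcal{E}_{\mathcal{R}}(x))=\andeg_C(x)$. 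By construction $\mathcal{W}\subseteq\mathcal{A}\cap\widehat{V}$ and $\mathcal{W}$ is Zariski dense in $V$; the linear disjointness of $\mathcal{R}$ from $C^{\alg}$ over $C$ gives absolute irreducibility of $\mathcal{W}$, so we are exactly in the setup of Section \ref{setup}.

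Next I would consider the formal map
\[
\mathcal{F}\colon\widehat{V}\hookrightarrow\widehat{G}\twoheadrightarrow\widehat{G}/\mathcal{A}\cong\widehat{B},\qquad (a,b)\mapsto b-\mathcal{E}(a),
\]
which vanishes on $\mathcal{W}$ since $\mathcal{W}\subseteq\mathcal{A}$. In positive characteristic one needs $\mathcal{F}$ to be a $B$-limit map: if a $B$-compatible sequence $(\phi_m\colon A\to B)_m$ witnesses that $\mathcal{E}$ is $B$-limit, then the algebraic morphisms $v\mapsto\pi_B(v)-\phi_m(\pi_A(v))$ form a $B$-compatible sequence converging to $\mathcal{F}$. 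Applying Theorem \ref{mainthm} (or Theorem \ref{shortchar0}) produces a formal subgroup $\mathcal{C}\leqslant\widehat{B}$ with $\mathcal{F}(\widehat{V})\subseteq\mathcal{C}$ and $\dim\mathcal{C}\leqslant\dim V-\dim\mathcal{W}$. Pulling $\mathcal{C}$ back along $\widehat{G}\twoheadrightarrow\widehat{B}$ yields a formal subgroup $\mathcal{B}\leqslant\widehat{G}$ of dimension $n+\dim\mathcal{C}$ that contains both $\mathcal{A}$ and $\widehat{V}$. By Proposition \ref{algform}, $\widehat{H}\subseteq\mathcal{B}$, where $H$ is the algebraic subgroup of $G$ generated by $V$; since $V$ is connected and contains $0$, $V\subseteq H^0$, and the essential-differentness hypothesis gives $H^0=A_0\times B_0$. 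As $x\in A_0(\mathcal{R})$, subgroup independence of $x$ forces $A_0=A$.

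The final step is a dimension count. Because $\mathcal{E}$ is a formal isomorphism, the intersection $\widehat{H^0}\cap\mathcal{A}=\{(a,\mathcal{E}(a)):\mathcal{E}(a)\in\widehat{B_0}\}$ has dimension $\dim B_0$; combining with the additivity formula
\[
\dim(\widehat{H^0}+\mathcal{A})=\dim\widehat{H^0}+\dim\mathcal{A}-\dim(\widehat{H^0}\cap\mathcal{A})
\]
for commutative formal subgroups yields
\[
\dim\mathcal{B}\;\geqslant\;\dim(\widehat{H^0}+\mathcal{A})\;=\;(n+\dim B_0)+n-\dim B_0\;=\;2n,
\]
which, together with $\dim\mathcal{B}\leqslant n+\dim V-\dim\mathcal{W}$, delivers the desired inequality $\dim V\geqslant n+\dim\mathcal{W}$. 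The main hurdle I anticipate is technical rather than conceptual: justifying the formal-group dimension formula above (which should follow from the existence of kernels and quotients, \cite[Proposition 1.3]{manin}, via the short exact sequence $0\to\mathcal{H}_1\cap\mathcal{H}_2\to\mathcal{H}_1\oplus\mathcal{H}_2\to\mathcal{H}_1+\mathcal{H}_2\to 0$), and confirming that the $B$-limit property indeed transports from $\mathcal{E}$ to $\mathcal{F}$ as sketched above.
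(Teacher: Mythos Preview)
Your proposal is correct and follows essentially the same route as the paper: both set up $G=A\times B$, take $V$ and $\mathcal{W}$ as the algebraic and formal loci of $(x,\mathcal{E}_{\mathcal{R}}(x))$, consider the formal map $\mathcal{E}\circ\pi_1-\pi_2$ (your sign convention differs, which is immaterial), apply the main theorem to obtain a formal subgroup $\mathcal{B}\leqslant\widehat{G}$ containing $\widehat{V}$ and $\mathcal{A}$ with the right dimension bound, and then invoke Proposition~\ref{algform} plus essential differentness and subgroup independence to force $A_0=A$.

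The only noteworthy difference is the endgame. You compute $\dim(\widehat{H^0}+\mathcal{A})$ via the additivity formula and worry about justifying it. The paper avoids this entirely: once $H_A=A$, one has $\widehat{A}\times\{0\}\subseteq\widehat{H}\subseteq\mathcal{B}$, and since also $\mathcal{A}\subseteq\mathcal{B}$ and $\mathcal{A}$ is the graph of a formal \emph{isomorphism}, the sum $(\widehat{A}\times\{0\})+\mathcal{A}$ is all of $\widehat{G}$ (any $(a,b)$ decomposes as $(a-\mathcal{E}^{-1}(b),0)+(\mathcal{E}^{-1}(b),b)$). Hence $\mathcal{B}=\widehat{G}$ outright, and $\dim\mathcal{B}=2n$ follows without any dimension formula. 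This sidesteps the ``main hurdle'' you flag, so you can drop that technical worry.
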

\begin{proof}
The proof goes basically as in \cite[Theorem 3]{ax72}. Let $g:=(x,\mathcal{E}_{\mathcal{R}}(x))$, $G:=A\times B$, $V\subseteq G$ be the algebraic locus of $g$ over $C$ and $\mathcal{W}$ be the formal one. By our assumptions on $\mathcal{R}$, $V$ and $\mathcal{W}$ satisfy the assumption from Theorem \ref{mainthmintro} (see Remark \ref{starpoint}(1)).
Let us define
$$\mathcal{T}:\widehat{G}\to \widehat{B},\ \ \mathcal{T}=\mathcal{E}\circ\pi_1-\pi_2,$$
where $\pi_1,\pi_2$ are the appropriate coordinate projections. Let $\mathcal{A}=\ker(\mathcal{T})$ be the ``graph of $\mathcal{E}$''. By Theorem \ref{minetoax}, there is $\mathcal{B}$, a formal subgroup of $G$ such that $V,\mathcal{A}\subseteq \mathcal{B}$ and
$$\dim(\mathfrak{B})\leqslant \dim(\mathcal{A})+\dim(V)-\dim(\mathcal{W})=n+\trd_C(g)-\andeg_C(x).$$
By Proposition \ref{algform}, there is a connected algebraic subgroup $H\leqslant G$ containing $V$ such that $\widehat{H}\subseteq \mathfrak{B}$. By our assumptions, $H=H_A\times H_B$, where $H_A,H_B$ denote the appropriate projections of $H$. Since $x$ is subgroup independent and $x\in H_A(\mathcal{R})$, we get $H_A=A$. Therefore $\widehat{A},\mathcal{A}\subseteq \mathfrak{B}$, hence $\mathfrak{B}=\widehat{G}$. Thus $\dim(\mathfrak{B})=2n$ giving the desired inequality.
\end{proof}
\begin{remark}
Unfortunately, the positive characteristic restrictions we were forced to put in Theorem \ref{mainthm} eliminate all the positive characteristic cases here (the most important one being Example \ref{exao}(2)).
\end{remark}

\subsection{Ax-Schanuel type II}\label{sectionendo}
In this part we consider a case rather opposite to the Ax-Schanuel type I situation. Our algebraic groups are not ``very different'' (being identical!) but the formal endomorphism is ``very non-algebraic''. Such a case was first considered in \cite[Theorem 6.12]{K5} for a characteristic $0$ torus (differential version, i.e. corresponding to \cite[(SD)]{ax71}) and then in \cite{K6} for positive characteristic vector group (formal version, i.e. corresponding to \cite[(SP)]{ax71}). Defining the right notion of non-algebraicity is very easy and natural here. Let us fix:
\begin{itemize}
\item a positive integer $n$;

\item a one-dimensional algebraic group $H$ over $C$ and $A=B=H^n$.
\end{itemize}
\noindent
We introduce the following notation.
\begin{itemize}
\item Let $\mathbf{R}$ denote the ring of algebraic endomorphisms of $H$.

\item Let $\mathbf{S}$ denote the ring of formal endomorphisms of $\widehat{H}$. We restrict our attention to algebraic groups $H$ such that $\mathbf{S}$ is a commutative domain. We regard $\mathbf{R}$ as a subring of $\mathbf{S}$.

\item Let $\mathbf{K}$ denote the field of fractions of $\mathbf{R}$ and $\mathbf{L}$ be the field of fractions of $\mathbf{S}$. We regard $\mathbf{K}$ as a subfield of $\mathbf{L}$.
\end{itemize}
\begin{example}\label{ax2ex}
Note that in the case of the characteristic $0$, we always have $\mathbf{S}=C$, so we can consider any one-dimensional algebraic groups as $H$.  We give examples of possible $H,\mathbf{R},\mathbf{S},\mathbf{K},\mathbf{L}$ below.
\begin{enumerate}
\item If $H=\ga$ and characteristic is $0$, then $\mathbf{R}=\mathbf{S}=C$.

\item If $H=\ga$ and characteristic is $p>0$, then $\mathbf{R}=C[\fr]$ and $\mathbf{S}=C\llbracket \fr\rrbracket$. Thus we need to take $C=\Ff_p$ to guarantee that $\mathbf{S}$ is commutative. This case is analyzed in \cite{K6}.

\item If $H=\gm$, then $\mathbf{R}=\Zz$. The case of characteristic $0$ was analyzed in \cite[Theorem 6.12]{K5}. In the case of characteristic $p>0$, we have $\mathbf{S}=\Zz_p$ ($p$-adic integers, see Example \ref{alimitexample}) and new interesting non-algebraic maps.

\end{enumerate}
\end{example}
\noindent
Below is our transcendental statement about formal endomorphisms.
\begin{theorem}\label{endoax}
Take $\gamma\in \mathbf{S}$ such that $[\mathbf{K}[\gamma]:\mathbf{K}]>n$ and $\gamma:\widehat{H}\to \widehat{H}$ is an $H$-limit map. Let $\mathcal{E}:\widehat{A}\to \widehat{A}$ be the $n$-th cartesian power of $\gamma$.
Then for any subgroup independent $x\in A(\mathcal{R})_*$ we have
$$\trd_C(x,\mathcal{E}_K(x))\geqslant n+\andeg_C(x).$$
\end{theorem}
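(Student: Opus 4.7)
The plan is to adapt the strategy of Theorem \ref{schanuelax} to the present ``endomorphism'' setting, where $A = B$, by replacing the ``essentially different'' input there by the arithmetic constraint $[\mathbf{K}[\gamma]:\mathbf{K}] > n$. Set $g := (x, \mathcal{E}_{\mathcal{R}}(x))$, $G := A \times A$, let $V := \locus_C(g)$ and let $\mathcal{W}$ be the formal locus of $g$. Define $\mathcal{T} := \mathcal{E}\circ \pi_1 - \pi_2 : \widehat{G} \to \widehat{A}$; by Lemmas \ref{addalimit} and \ref{limitcomposing} together with the hypothesis that $\gamma$ is an $H$-limit, $\mathcal{T}$ is an $A$-limit map, and $\mathcal{T}(g) = 0$ implies that $\mathcal{T}|_{\widehat{V}}$ vanishes on $\mathcal{W}$. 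Theorem \ref{mainthmintro}(2), applicable since $A = H^n$ is integrable, then produces a formal subgroup $\mathcal{C} \leqslant \widehat{A}$ with $\mathcal{T}(\widehat{V}) \subseteq \mathcal{C}$ and $\dim \mathcal{C} \leqslant \trd_C(g) - \andeg_C(x)$. The construction inside the proof of Theorem \ref{mainthm} moreover exhibits $\mathcal{C}$ as the kernel of a formal epimorphism $\widehat{A} \to \widehat{H}^{n - \dim \mathcal{C}}$, so it will suffice to prove $\mathcal{C} = \widehat{A}$, which is exactly the bound $\trd_C(g) \geqslant n + \andeg_C(x)$.

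Suppose for contradiction that $\mathcal{C} \subsetneq \widehat{A}$, and choose a nonzero formal homomorphism $\phi : \widehat{A} \to \widehat{H}$ killing $\mathcal{C}$, written as $\phi = (\beta_1,\ldots,\beta_n) \in \mathbf{S}^n \setminus \{0\}$. The composition $\psi := \phi \circ \mathcal{T}$, explicitly $\psi(a,b) = \sum_i \beta_i(\gamma(a_i) - b_i)$, vanishes on $\widehat{V}$, and hence by Proposition \ref{algform} on $\widehat{H_V}$, where $H_V \leqslant G$ is the algebraic subgroup generated by $V$. Subgroup independence of $x$ forces $\pi_1(H_V) = A$, so $H_V$ fits into an algebraic extension $0 \to K \to H_V \to A \to 0$, and formally (by smoothness) $\widehat{H_V} = \{(a,\sigma(a)+k) : a \in \widehat{A},\, k \in \widehat{K}\}$ for some $\sigma \in M_n(\mathbf{S})$. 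Evaluating $\psi = 0$ at $(0,k)$ and at $(a,\sigma(a))$ gives the two relations $\phi|_{\widehat{K}} = 0$ and $\phi \cdot \sigma = \gamma \cdot \phi$ in $\mathbf{S}^n$, i.e.\ $\phi$ is a left $\gamma$-eigenvector of $\sigma$.

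Because $\phi$ is $\widehat{K}$-trivial, only $\sigma$ modulo $\widehat{K}$ matters in the eigenvector equation, and $\sigma$ modulo $\widehat{K}$ coincides with the algebraic map $\bar\sigma = q\circ \sigma : A \to A/K$. Lifting $\bar\sigma$ to an algebraic $\sigma_{\mathrm{alg}} \in M_n(\mathbf{R})$ with $q\,\sigma_{\mathrm{alg}} = \bar\sigma$, the same relation holds with $\sigma_{\mathrm{alg}}$ in place of $\sigma$, and Cayley--Hamilton gives $\chi_{\sigma_{\mathrm{alg}}}(\gamma) \cdot \phi = 0$ in $\mathbf{S}^n$. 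Since $\mathbf{S}$ is a domain and $\phi \neq 0$, $\chi_{\sigma_{\mathrm{alg}}}(\gamma) = 0$; as $\chi_{\sigma_{\mathrm{alg}}}(t) \in \mathbf{R}[t]$ is monic of degree $n$, this contradicts $[\mathbf{K}[\gamma]:\mathbf{K}] > n$, so $\mathcal{C} = \widehat{A}$ and the theorem follows.

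The main obstacle is the final descent step producing $\sigma_{\mathrm{alg}} \in M_n(\mathbf{R})$ from the formal section $\sigma \in M_n(\mathbf{S})$; such a lift exists automatically when the algebraic extension $0 \to K \to A \to A/K \to 0$ splits algebraically, which covers the main new positive-characteristic cases in Example \ref{ax2ex} (tori and vector groups), but fails in general (for instance when $H$ is an elliptic curve). In those cases the intended fix is to reformulate the relation purely in terms of the algebraic data $\bar\sigma, q : A \to A/K$, namely $\bar\phi \cdot (\bar\sigma - \gamma q) = 0$ for the induced factor $\bar\phi : \widehat{A/K} \to \widehat{H}$, and then argue that a nonzero $m \times m$ minor of $q$ (which exists since $q$ has rank $m$) furnishes a nonzero $m \times m$ minor of $\bar\sigma - \gamma q$ that is a polynomial in $\gamma$ of degree $m \leqslant n$ with nonzero leading coefficient in $\mathbf{R}$, hence nonzero at $\gamma$ by the hypothesis $[\mathbf{K}[\gamma]:\mathbf{K}] > n$, forcing $\bar\phi = 0$ and so $\phi = 0$.
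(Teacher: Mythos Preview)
Your overall plan matches the paper's: both set $g=(x,\mathcal{E}_{\mathcal{R}}(x))$, apply Theorem~\ref{mainthm} to the restriction of $\mathcal{E}\circ\pi_1-\pi_2$ to $\widehat{V}$, obtain a proper formal subgroup (your $\mathcal{C}$, the paper's $\mathcal{A}$), choose a nonzero $\mathbf{S}$-linear functional annihilating it, and use Proposition~\ref{algform} to pass to the algebraic subgroup $V_H\leqslant H^{2n}$ generated by $V$.

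The divergence, and the genuine gap you yourself flag, is in how $V_H$ is analysed. You describe $\widehat{V_H}$ via a \emph{formal} section $\sigma\in M_n(\mathbf{S})$ of $\pi_1$ and then try to recover algebraicity by lifting $\bar\sigma$ to some $\sigma_{\mathrm{alg}}\in M_n(\mathbf{R})$. Your proposed fix (work with the algebraic maps $\bar\sigma,q:A\to A/K$ and argue with minors) is in the right spirit but is not complete as written: it implicitly treats $A/K$ as $H^{m}$ so that $\bar\sigma,q$ become matrices over $\mathbf{R}$ and $\bar\phi$ a row over $\mathbf{S}$, which presupposes precisely the kind of splitting (or an isogeny argument) you were trying to avoid. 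The dimension bookkeeping in that paragraph is also off (the rank of $q$ is $\dim(A/K)$, not $\dim K$).

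The paper avoids all of this by never choosing a section. It uses only that $V_H$ is an \emph{algebraic} subgroup of $H^{2n}$, hence the kernel of some $M\in M_{2n,n-m}(\mathbf{R})$ (via $H^{2n}/V_H\cong H^{n-m}$), and then replaces the group $H$ by the field $\mathbf{L}$. The same $M$ cuts out a $\mathbf{K}$-defined subspace $\mathbf{W}\subseteq\mathbf{L}^{2n}$; because $\pi_n(V_H)=H^n$, one can parametrize $\mathbf{W}$ by $\mathbf{L}^{n+m}$ via a matrix $N$ over $\mathbf{K}$ projecting identically onto the first $n$ coordinates. The vanishing of $\alpha\circ\widetilde{\mathcal{F}}$ becomes a linear identity in $\mathbf{L}$, and after specializing appropriate coordinates to zero one reads off that the nonzero vector $(\alpha_{m+1},\dots,\alpha_n)$ is a left $\gamma$-eigenvector of an $(n-m)\times(n-m)$ block $N'\in M_{n-m}(\mathbf{K})$. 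Cayley--Hamilton for $N'$ then gives $[\mathbf{K}[\gamma]:\mathbf{K}]\leqslant n-m\leqslant n$, the desired contradiction. So the missing idea in your argument is this direct passage to linear algebra over $\mathbf{L}$ using the $\mathbf{R}$-matrix presentation of $V_H$; it eliminates any need to split $0\to K\to V_H\to A\to 0$ or to lift $\bar\sigma$.
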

\begin{proof}
Let us denote (as in the proof of Theorem \ref{schanuelax})
$$g:=(x,\mathcal{E}_{\mathcal{R}}(x))\in H^{2n}(\mathcal{R})_*.$$
Let $V$ be the algebraic locus of $g$ over $C$ and $\mathcal{W}$ the formal one.
\\
We have
$$\dim(V)=\td_C(g),\ \ \ \dim(\mathcal{W})=\andeg_C(x).$$
Let $\mathcal{F}:\widehat{V}\to \widehat{H}^n$ be the restriction to $\widehat{V}$ of the following formal map
$$\widetilde{\mathcal{F}}:=\mathcal{E}\circ\pi_1-\pi_2:\widehat{H}^n\times \widehat{H}^n\to \widehat{H}^n,$$
where $\pi_1,\pi_2$ are the appropriate coordinate projections. As in the proof of Theorem \ref{schanuelax},  $V$ and $\mathcal{W}$ satisfy the assumptions from Theorem \ref{mainthm}.
\\
By Theorem \ref{mainthm}, there is a formal subgroup $\mathcal{A}\subseteq \widehat{H}^n$ containing $\mathcal{F}(\widehat{V})$ such that
$$\dim(\mathcal{A})\leqslant \dim(V)-\dim(\mathcal{W})=\td_C(g)-\andeg_C(x).$$
Assume now that $\td_C(g)< n+\andeg_C(x)$, which implies that $\mathcal{A}$ is proper. We will reach a contradiction.
\\
Since $\mathcal{A}$ is proper, there are $\alpha_1,\ldots,\alpha_n\in \mathbf{S}$ not all zero, such that
$$h:=\alpha\circ \widetilde{\mathcal{F}}:\widehat{H}^{2n}\to \widehat{H}$$
vanishes on $V$, where $\alpha:\widehat{H}^{n}\to \widehat{H}$ is given by $(\alpha_1,\ldots,\alpha_n)$.
\\
Let $V_H$ be the algebraic subgroup of $H^{2n}$ generated by $V$. By Proposition \ref{algform}, $h$ vanishes on $V_H$.
For any $i\leqslant 2n$, let $\pi_i:H^{2n}\to H^i$ denote the projection map on the first $i$ coordinates. Since $x$ does not belong to any proper algebraic subgroup of $H^n$, we get $\pi_n(V_H)=H^n$. Let $0\leqslant m<n$ be such that $\dim(V_H)=n+m$.
\\
Since $H^{2n}/V_H\cong H^{n-m}$, there is a matrix $M\in M_{2n,n-m}(R)$ of rank $n-m$ such that
$$H=\ker(M:H^{2n}\to H^{n-m}).$$
We replace now the object $H$ acted on by $\mathbf{R}$ and $\mathbf{S}$ (in the appropriate categories), by $\mathbf{L}$ acted on by  $\mathbf{R}$ and $\mathbf{S}$ in the obvious way. Let $\mathbf{W}$ be $\ker(M:\mathbf{L}^{2n}\to \mathbf{L}^{n-m})$. We have the following sequence of maps whose composition is the $0$-map:
\begin{equation*}
  \xymatrix{
\mathbf{W} \ar[r]^{ \subset} &  \mathbf{L}^{2n} \ar[r]^{f} &  \mathbf{L}^n \ar[r]^{a} & \mathbf{L},}
\end{equation*}
where $f$ is given by the matrix over $\mathbf{S}$ coming from $\widetilde{F}$ and $a$ is given by the matrix over $\mathbf{S}$ coming from $\alpha$. Since $\pi_n(V_H)=H^n$ and $\mathbf{W}$ is a vector subspace defined over $\mathbf{K}$, there is a matrix $N\in M_{n+m,2n}(\mathbf{K})$ such that the diagram below is commutative and the composition of the upper row is the $0$-map
\begin{equation*}
  \xymatrix{
\mathbf{L}^n\times \mathbf{L}^m \ar[rr]^{N} \ar[rd]_{\pi_1} & & \mathbf{L}^n\times \mathbf{L}^n \ar[ld]^{\pi_1} \ar[r]^{\ \ \ f} &  \mathbf{L}^n \ar[r]^{a} & \mathbf{L}\\
 & \mathbf{L}^n & & & ,}
\end{equation*}
where both maps denoted by $\pi_1$ are projections on the first (multi)coordinate.
Thus for all $\bar{l}=(l_1,\ldots,l_{n+m})\in \mathbf{L}^{n+m}$ we have
$$\sum_{k=1}^m\alpha_{\mathbf{k}}(l_{n+k}-\gamma l_{\mathbf{k}})+\sum_{k=m+1}^n\alpha_{\mathbf{k}}(N(\bar{l})_{\mathbf{k}}-\gamma l_{\mathbf{k}})=0,$$
where $N(\bar{l})_{\mathbf{k}}$ denotes the $k$-th coordinate of $N(\bar{l})$. Since $\alpha\neq 0$, there is $m<k\leqslant n$ such that $\alpha_{\mathbf{k}}\neq 0$. Putting $l_1=l_{n+1}=\ldots=l_m=l_{n+m}=0$, we get
$$(\alpha_{m+1},\ldots,\alpha_n)N'=(\gamma \alpha_{m+1},\ldots,\gamma \alpha_n),$$
where $N'\in M_{n-m}(\mathbf{K})$ is an appropriate block (the ``middle square'') of $N$. Hence $\gamma$ is a characteristic value of $N'$. By the Cayley-Hamilton theorem, $\gamma$ is algebraic over $\mathbf{K}$ of degree at most $n$, which gives a contradiction.
\end{proof}
\begin{remark}
The $H$-limit assumption from Theorem \ref{endoax} is not very restrictive, since in all the cases considered in Example \ref{ax2ex}, a corresponding formal map is an $H$-limit map if and only if it is a special map (see Remark \ref{unipalimit}(2)).
\end{remark}

\section{Special formal maps and $A$-limit maps}\label{secspeciallimit}
\noindent%
In this section we aim to find a general criterion for a formal map $\widehat{V}\to \widehat{A}$ to be an $A$-limit map. Our aim is to show that any formal homomorphism $\widehat{B}\to \widehat{A}$ is an $A$-limit map, but we fall a little short of it, i.e. we need to put some restrictions on $A$.
Our set-up for this section is as follows:
\begin{itemize}
\item Let $C$ be a perfect field of characteristic $p>0$.

\item Let $A$ be a commutative algebraic group over $C$.


\item Let $V$ be an absolutely irreducible scheme over $C$ and $\dim(V)=t$.

\item Let $v\in V(C)$ be a smooth point and $R$ be the local ring of $V$ at $v$.

\item Let $K$ be the fraction field of $R$.

\item Let $L$ be the fraction field of $\widehat{R}$.
\end{itemize}
\noindent
Unlike in the previous sections, we assume now that $v$ is a smooth point. We can do it, since we want to apply the main result of this section (Theorem \ref{alimitthm}) in the case when $v$ is the neutral element of an algebraic group.
\\
In the following lemma, we notice a property of the rings defined above which will be crucial later. We consider $R$ as a subring of both $\widehat{R}$ and $K$. We also consider $\widehat{R}$ and $K$ as subrings of $L$. For the notion of a $p$-basis, see Section \ref{notation}.
\begin{lemma}\label{allringsok}
Any system of regular parameters of $R$
 is a $p$-basis of each of $R,K,\widehat{R}$ and $L$.
\end{lemma}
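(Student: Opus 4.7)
The plan is to first prove the claim for $\widehat{R}$ using Cohen's structure theorem, then descend to $R$ via faithful flatness, and finally pass to the fraction fields $K$ and $L$ by localization. Fix a system of regular parameters $x_1,\ldots,x_t$ of $R$.

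Since $v$ is a smooth $C$-rational point of $V$ and $C$ is perfect with the residue field of $R$ equal to $C$, Cohen's structure theorem yields a $C$-algebra isomorphism $\widehat{R}\cong C\llbracket x_1,\ldots,x_t\rrbracket$ identifying $x_i$ with the indeterminates. Because $C$ is perfect, $(\widehat{R})^p=C\llbracket x_1^p,\ldots,x_t^p\rrbracket$, and an elementary division argument in the power series ring shows that $\widehat{R}$ is a free module over $(\widehat{R})^p$ with basis $\{x_1^{n_1}\cdots x_t^{n_t}\mid 0\leqslant n_i<p\}$. Hence $\{x_1,\ldots,x_t\}$ is a $p$-basis of $\widehat{R}$.

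Next I would transfer this to $R$. Since $R$ satisfies the hypotheses of Proposition \ref{completefrob}, its proof provides a ring isomorphism
$$R\otimes_{R^p}\widehat{R^p}\;\xrightarrow{\sim}\;\widehat{R},$$
and moreover $\widehat{R^p}$ is faithfully flat over $R^p$. Consider the $R^p$-linear map $\varphi\colon (R^p)^{p^t}\to R$ sending the $\bar n$-th standard basis vector to $x^{\bar n}$. Tensoring with $\widehat{R^p}$ turns $\varphi$ into precisely the $\widehat{R^p}$-linear map $(\widehat{R^p})^{p^t}\to\widehat{R}$ witnessing the $p$-basis property of $\widehat{R}$, which is an isomorphism by the previous paragraph. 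Faithful flatness then forces $\varphi$ itself to be an isomorphism, so $\{x_1,\ldots,x_t\}$ is a $p$-basis of $R$. This descent is the crux of the argument; all the other cases follow rather formally.

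It remains to localize. Write $S:=R\setminus\{0\}$, so $K=S^{-1}R$. Since $s\cdot s^{p-1}=s^p\in S^p$ for every $s\in S$, inverting $S^p$ already inverts every element of $S$, giving $K=(S^p)^{-1}R$ and $K^p=(S^p)^{-1}R^p$. Localizing the $R^p$-basis of $R$ obtained above at the multiplicative set $S^p$ shows that $\{x^{\bar n}\}$ is a $K^p$-basis of $K$, i.e.\ $\{x_1,\ldots,x_t\}$ is a $p$-basis of $K$. The argument for $L$ is identical with $R$ and $S$ replaced by $\widehat{R}$ and $\widehat{R}\setminus\{0\}$ respectively.
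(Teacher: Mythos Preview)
Your proof is correct, but it follows a different logical order from the paper's. The paper also starts with the power-series description of $\widehat{R}$ (and hence $L$), but then proves the $K$ case \emph{before} the $R$ case, via K\"ahler differentials: since $\{\ddd r_i\}$ is a free basis of $\Omega_R$ (it generates, and is $\widehat{R}$-independent in $\Omega_{\widehat{R}}$), it is a $K$-basis of $\Omega_K$, and Matsumura's Theorem 26.5 converts a differential basis into a $p$-basis of $K$. Only then does the paper obtain the $R$ case, by intersecting the expansions in $K$ and in $\widehat{R}$ inside $L$ and using $R=K\cap\widehat{R}$ and $K^p\cap\widehat{R}^p=R^p$ (both consequences of faithful flatness of $R\subset\widehat{R}$).

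Your route bypasses the differential-forms step entirely: you descend the $p$-basis property directly from $\widehat{R}$ to $R$ along the faithfully flat map $R^p\to\widehat{R^p}$, using the isomorphism $R\otimes_{R^p}\widehat{R^p}\cong\widehat{R}$ established in the proof of Proposition~\ref{completefrob}(4), and then get $K$ and $L$ by localization. This is a cleaner and more self-contained argument---it does not need the $\Omega_K$ detour or the identity $R=K\cap\widehat{R}$---at the cost of invoking the descent isomorphism from Proposition~\ref{completefrob}. The paper's approach, on the other hand, makes explicit the link between $p$-bases and differentials that is thematically relevant elsewhere in the paper.
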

\begin{proof}
Let $\{r_1,\ldots,r_t\}$ be a system of regular parameters of $R$. Since $\widehat{R}$ is isomorphic to the power series $C$-algebra in $\{r_1,\ldots,r_t\}$, the result is clear for $\widehat{R}$ and $L$.
\\
It is well-known that the set $\{\ddd r_1,\ldots,\ddd r_t\}$ is a basis of $\Omega_R$ (as an $R$-module): this set clearly generates $\Omega_R$ over $R$ and it is $R$-independent, since it is $\widehat{R}$-independent in $\Omega_{\widehat{R}}$ (see also \cite[Theorem II.8.8]{ha}). Thus the set $\{\ddd r_1,\ldots,\ddd r_t\}$ is also a basis of $\Omega_K$ (as a $K$-vector space). By \cite[Theorem 26.5]{mat}, $\{r_1,\ldots,r_t\}$ is a $p$-basis of $K$.
\\
It remains to check that $\{r_1,\ldots,r_t\}$ is a $p$-basis of $R$. We have the following:
\begin{IEEEeqnarray*}{rCl}
R & = & K\cap \widehat{R}  \\
& = & (K^pC[r_1,\ldots,r_t])\cap \widehat{R} \\
& = & (K^p\cap \widehat{R}^p)C[r_1,\ldots,r_t] \\
& = & R^pC[r_1,\ldots,r_t].
\end{IEEEeqnarray*}
The first equality and the last equality follow from the fact that the extension $R\subset \widehat{R}$ is faithfully-flat  \cite[Theorem 56 page 172]{mat0}. The second equality follows from the fact that $\{r_1,\ldots,r_t\}$ is a $p$-basis of $K$ and the third one follows from the fact that $\{r_1,\ldots,r_t\}$ is a $p$-basis of $\widehat{R}$. Since the $p$-independence of $\{r_1,\ldots,r_t\}$ in $R$ is clear (by e.g. the $p$-independence in $K$), we get that $\{r_1,\ldots,r_t\}$ is a $p$-basis of $R$.
\end{proof}
\noindent

\subsection{Weil restriction}
In this subsection, we collect the properties of the Weil restriction, which will be needed in the sequel.

\subsubsection{General facts about the Weil restriction}\label{weilgeneralsec}
Everything in this section (except representability) is ``abstract nonsense'', i.e. works in any category with products (replacing the category of schemes). We will consider schemes as covariant functors from the category of algebras (over a fixed ring) to the category of sets.
\\
Let us fix a ring $\mathbf{r}$ and an $\mathbf{r}$-algebra $\mathbf{s}$. We define a functor
$$\Pi_{\mathbf{s}}:\mathrm{Func}(\mathbf{Alg}_{\mathbf{r}},\mathbf{Set})\to \mathrm{Func}(\mathbf{Alg}_{\mathbf{r}},\mathbf{Set})$$
as follows. For a functor $F:\mathbf{Alg}_{\mathbf{r}}\to \mathbf{Set}$ and an $\mathbf{r}$-algebra $\mathbf{r}'$ let
$$(\Pi_{\mathbf{s}}F)(\mathbf{r}'):=F(\mathbf{s}\otimes_{\mathbf{r}}\mathbf{r}').$$
The functor $\Pi_{\mathbf{s}}$ is the composition of the base change functor from $\mathbf{r}$ to $\mathbf{s}$ with the \emph{Weil restriction} functor
from $\mathbf{s}$ to $\mathbf{r}$ (see \cite[Section 7.6]{neron} for a discussion about the Weil restriction). It is clear that the assignment $\mathbf{s}\mapsto \Pi_{\mathbf{s}}$ is a functor itself (from the category $\mathbf{Alg}_{\mathbf{r}}$ to the category of endofunctors on the category $\mathrm{Func}(\mathbf{Alg}_{\mathbf{r}},\mathbf{Set})$).
\\
Let $\mathbf{Sch}^{\mathrm{f}}_{\mathbf{r}}$ denote the full subcategory of the category of schemes over $\mathbf{r}$ consisting of schemes $V$ such that each finite set of points of $V$ is contained in an affine open subscheme of $V$ (e.g. a quasi-projective scheme satisfies this condition). We collect below (from \cite{neron}) the properties of the functor $\Pi$ which will be needed.
\begin{theorem}\label{weilneeded}
Suppose $\mathbf{r}\subseteq \mathbf{s}$ is a finite ring extension and $V\in \mathbf{Sch}^{\mathrm{f}}_{\mathbf{r}}$.
\begin{enumerate}
\item If a functor $F:\mathrm{Alg}_{\mathbf{r}}\to \mathrm{Set}$ is representable by a scheme from $\mathbf{Sch}^{\mathrm{f}}_{\mathbf{r}}$, then the functor $\Pi_{\mathbf{s}}F$ is representable by a scheme from $\mathbf{Sch}^{\mathrm{f}}_{\mathbf{r}}$, so we get a functor:
    $$\Pi_{\mathbf{s}}:\mathbf{Sch}^{\mathrm{f}}_{\mathbf{r}}\to \mathbf{Sch}^{\mathrm{f}}_{\mathbf{r}}.$$

\item There is a natural transformation of functors
$$\iota_V:V\to \Pi_{\mathbf{s}}V,$$
which is a closed embedding.

\item There is a section of $\iota_V$ defined over $\mathbf{s}$ i.e. a natural transformation
$$s_V:\Pi_{\mathbf{s}}(V)\times_{\mathbf{r}}\mathbf{s}\to V\times_{\mathbf{r}}\mathbf{s}$$
such that $s_V\circ (\iota_V\times_{\mathbf{r}}\mathbf{s})$ is the identity map.

\item The items $(1)$, $(2)$ and $(3)$ above hold also in the category of groups in $\mathbf{Sch}^{\mathrm{f}}_{\mathbf{r}}$.
\end{enumerate}
\end{theorem}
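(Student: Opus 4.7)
The plan is to reduce the theorem to the representability of the classical Weil restriction $R_{\mathbf{s}/\mathbf{r}}$ and to the formalism of its adjunction with base change. Writing $\Pi_{\mathbf{s}}=R_{\mathbf{s}/\mathbf{r}}\circ(-\times_{\mathbf{r}}\mathbf{s})$, we recognize $\Pi_{\mathbf{s}}$ as the composition of a left adjoint (base change) followed by its right adjoint (Weil restriction); concretely, for any $\mathbf{s}$-scheme $X$ and $\mathbf{r}$-scheme $W$,
\[
\mathrm{Hom}_{\mathbf{s}}(W\times_{\mathbf{r}}\mathbf{s},X)\;\cong\;\mathrm{Hom}_{\mathbf{r}}(W,R_{\mathbf{s}/\mathbf{r}}X).
\]
I would then take $\iota_V$ to be the unit of this adjunction, i.e.\ the canonical map $V\to R_{\mathbf{s}/\mathbf{r}}(V\times_{\mathbf{r}}\mathbf{s})=\Pi_{\mathbf{s}}V$, and take $s_V$ to be the counit of the adjunction evaluated at $V\times_{\mathbf{r}}\mathbf{s}$.

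For item (1), the representability of $R_{\mathbf{s}/\mathbf{r}}$ under the finiteness hypothesis on $\mathbf{r}\subseteq\mathbf{s}$, together with preservation of the class $\mathbf{Sch}^{\mathrm{f}}$, is precisely \cite[\S7.6, Theorem 4]{neron}. Base change along $\mathbf{r}\to\mathbf{s}$ clearly preserves $\mathbf{Sch}^{\mathrm{f}}$, so the composition $\Pi_{\mathbf{s}}$ preserves $\mathbf{Sch}^{\mathrm{f}}_{\mathbf{r}}$ as well. For reader intuition I would record the affine model: picking an $\mathbf{r}$-basis $1=e_1,\ldots,e_n$ of $\mathbf{s}$ and writing $V=\spec(\mathbf{r}[x_1,\ldots,x_m]/I)$, one substitutes $x_i=\sum_{j}y_{ij}e_j$, expands each relation in $I$ in the basis $(e_j)$, and presents $\Pi_{\mathbf{s}}V$ as the closed subscheme of affine space $\spec(\mathbf{r}[y_{ij}])$ cut out by the coefficient equations; the passage from affine to general $V\in\mathbf{Sch}^{\mathrm{f}}_{\mathbf{r}}$ is exactly where the hypothesis that finite sets of points lie in an affine open enters, and is handled in the reference.

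For item (2), in the affine model $\iota_V$ corresponds to $y_{ij}\mapsto\delta_{j1}x_i$, which is manifestly a closed immersion. Globally, $\iota_V$ is a monomorphism of schemes by naturality, and one uses the $\mathbf{Sch}^{\mathrm{f}}_{\mathbf{r}}$-hypothesis to glue the local affine closed-immersion presentations. For item (3), $s_V$ is the base change to $\mathbf{s}$ of the counit of the adjunction applied to $V\times_{\mathbf{r}}\mathbf{s}$; the identity $s_V\circ(\iota_V\times_{\mathbf{r}}\mathbf{s})=\mathrm{id}$ is then the first triangle identity of the adjunction and requires no further computation.

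Finally, item (4) is formal: as a right adjoint, $R_{\mathbf{s}/\mathbf{r}}$ preserves finite limits (in particular finite products), and base change preserves finite products as well; hence $\Pi_{\mathbf{s}}$ preserves the Cartesian monoidal structure and sends group objects in $\mathbf{Sch}^{\mathrm{f}}_{\mathbf{r}}$ to group objects there, with multiplication and inverse induced functorially. Naturality of $\iota$ and of the counit in $V$ forces $\iota_V$ and $s_V$ to be group homomorphisms whenever $V$ is a group. The only substantive step is the representability in (1), which is classical; the main (very mild) obstacle is globalizing the closed-immersion assertion in (2), which is precisely why the class $\mathbf{Sch}^{\mathrm{f}}_{\mathbf{r}}$ is the correct ambient category.
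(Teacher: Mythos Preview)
Your proposal is correct and is essentially the same approach as the paper's: both defer to \cite[\S7.6]{neron} for the substantive representability statement (1), and your explicit unpacking of $\iota_V$ and $s_V$ as the unit and counit of the base-change/Weil-restriction adjunction (with the triangle identity giving the section property) is precisely what underlies the references the paper cites. The paper's own proof consists only of pointers to specific pages in \cite{neron}, so your version is strictly more informative while remaining faithful to the intended argument; the one cosmetic slip is that in (3) the counit $\epsilon_{V\times_{\mathbf{r}}\mathbf{s}}:(\Pi_{\mathbf{s}}V)\times_{\mathbf{r}}\mathbf{s}\to V\times_{\mathbf{r}}\mathbf{s}$ is already an $\mathbf{s}$-morphism, so no further ``base change to $\mathbf{s}$'' is needed.
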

\begin{proof}
For $(1)$, we quote \cite[page 194 (Theorem 4)]{neron}
\\
For $(2)$, we quote \cite[page 197 (bottom)]{neron}
\\
For $(3)$ and $(4)$, we quote \cite[page 192 (top)]{neron}
\end{proof}
\begin{remark}\label{iota}
Note that the natural morphism $\iota_V:V\to \Pi_{\mathbf{s}}V$ coincides with $\Pi_{\iota}V$, where the morphism $\iota:\mathbf{r}\to \mathbf{s}$ is the inclusion map (clearly, $\Pi_{\mathbf{r}}V=V$).
\end{remark}

\subsubsection{Specific facts about Weil restriction}\label{secweilsp}
Let us fix $\mathbf{k}:=C[X_1,\ldots,X_t]$. The role of the ring $\mathbf{r}$ from Section \ref{weilgeneralsec} will be played by the $C$-algebra $\mathbf{k}^p$. \\
First we note a crucial correspondence between $p$-th powers and tensor products.
\begin{lemma}\label{p1}
Let $W$ be a $C$-algebra having a $p$-basis of cardinality $t$ and let $V$ be a $\mathbf{k}^p$-scheme. Then we have:
\begin{enumerate}

\item A choice of a $p$-basis gives $W$ a $\mathbf{k}$-algebra structure and the natural map
$$W^p\otimes_{\mathbf{k}^p}\mathbf{k}\to W $$
is both a $\mathbf{k}$-algebra isomorphism and a $W^p$-algebra isomorphism. This isomorphism is natural with respect to $C$-algebra extensions which preserve $p$-bases.

\item There is a natural (in the above sense on $W$ and without restrictions on $V$) bijection $\Pi_{\mathbf{k}}V(W^p)\to V(W)$ such that the following diagram is commutative
\begin{equation*}
  \xymatrix{V(W^p) \ar[rr]^{\iota_V} \ar[rrd]_{\subseteq}& &\Pi_{\mathbf{k}}V(W^p) \ar[d]^{} \\
& &V(W) .}
\end{equation*}

\end{enumerate}
\end{lemma}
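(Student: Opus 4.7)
\medskip

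\noindent
\textbf{Plan of proof.} The proof amounts to unwinding the definitions and exploiting the $p$-basis hypothesis to recognize $W$ as an explicit $W^p$-module.

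For part (1), fix a $p$-basis $\{w_1,\ldots,w_t\}$ of $W$ and define the $\mathbf{k}$-algebra structure on $W$ by $X_i\mapsto w_i$. Because $C$ is perfect, the Frobenius sends $\mathbf{k}$ isomorphically onto $C[X_1^p,\ldots,X_t^p]=\mathbf{k}^p$, so $\{X_1,\ldots,X_t\}$ is a $p$-basis of $\mathbf{k}$; hence $\mathbf{k}$ is free over $\mathbf{k}^p$ with basis the monomials $X_1^{n_1}\cdots X_t^{n_t}$, $n_i\in\{0,\ldots,p-1\}$. Tensoring with $W^p$ shows $W^p\otimes_{\mathbf{k}^p}\mathbf{k}$ is free over $W^p$ with basis the images of these monomials. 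The natural map sends the $(n_1,\ldots,n_t)$-th basis element to $w_1^{n_1}\cdots w_t^{n_t}$, and these form a basis of $W$ over $W^p$ by the very definition of a $p$-basis. Thus the map is a bijection between $W^p$-bases, and being a $W^p$-algebra homomorphism, it is an isomorphism of $W^p$-algebras; it is also a $\mathbf{k}$-algebra homomorphism by construction. Naturality is immediate: if $W\subseteq W'$ preserves the $p$-basis, then the $\mathbf{k}$-algebra structure on $W'$ extends that on $W$, and the commutative square
\[
  \xymatrix{W^p\otimes_{\mathbf{k}^p}\mathbf{k}\ar[r]\ar[d] & W\ar[d] \\ (W')^p\otimes_{\mathbf{k}^p}\mathbf{k}\ar[r] & W'}
\]
is forced by the universal property of the tensor product.

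For part (2), by definition $(\Pi_{\mathbf{k}}V)(W^p)=V(\mathbf{k}\otimes_{\mathbf{k}^p}W^p)$, and applying $V$ to the isomorphism of part (1) yields the required bijection $\Pi_{\mathbf{k}}V(W^p)\xrightarrow{\sim} V(W)$. Naturality in $V$ and in $W$ (along $p$-basis-preserving extensions) is inherited directly from part (1), with no restrictions on $V$ since $V$ is just a functor on $\mathbf{k}^p$-algebras. For the commutativity of the diagram, recall from Remark \ref{iota} that $\iota_V=\Pi_{\iota}V$, where $\iota\colon\mathbf{k}^p\hookrightarrow\mathbf{k}$; on $W^p$-points this is the map $V(W^p)\to V(\mathbf{k}\otimes_{\mathbf{k}^p}W^p)$ induced by $a\mapsto 1\otimes a$. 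Composing with the isomorphism of part (1) sends $a\in W^p$ to $1\otimes a\mapsto a\in W$, i.e.\ to the inclusion $W^p\subseteq W$; so indeed the composition $V(W^p)\to\Pi_{\mathbf{k}}V(W^p)\to V(W)$ coincides with the map induced by the inclusion $W^p\subseteq W$.

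\medskip

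\noindent
\textbf{Main obstacle.} There is no real obstacle here; the lemma is essentially a formal consequence of the $p$-basis assumption together with perfectness of $C$. The only point requiring minor care is the bookkeeping around naturality and the commutative diagram, and in particular remembering (via Remark \ref{iota}) that the closed embedding $\iota_V$ is itself nothing but the Weil restriction of the inclusion $\mathbf{k}^p\hookrightarrow\mathbf{k}$, which is exactly what makes the composition collapse to the inclusion $W^p\subseteq W$ after applying $V$.
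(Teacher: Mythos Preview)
Your proof is correct and follows essentially the same route as the paper's: identify $W^p\otimes_{\mathbf{k}^p}\mathbf{k}$ and $W$ as free $W^p$-modules of the same rank via the $p$-basis, and deduce part~(2) from the definition of $\Pi_{\mathbf{k}}$. The paper is terser (it cites Matsumura to note algebraic independence of a $p$-basis and then ``compares ranks''), whereas you spell out the basis-to-basis correspondence and the verification of the commutative diagram via Remark~\ref{iota}; but there is no substantive difference in strategy.
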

\begin{proof}
By an argument as in \cite[Theorem 26.8]{mat}, any $p$-basis of $W$ is algebraically independent over $C$, hence a choice of a $p$-basis gives $W$ a $\mathbf{k}$-algebra structure. Since $\mathbf{k}$ contains a $p$-basis of $W$, the natural map $W^p\otimes_{\mathbf{k}^p}\mathbf{k}\to W $ is onto. By comparing the ranks over $W^p$, we see that this map is an isomorphism proving $(1)$. The item $(2)$ follows.
\end{proof}
\noindent
Let us introduce a new ring
$$\mathbf{k}[\varepsilon]:=\mathbf{k}[Y_1]/(Y_1^2)\times_{\mathbf{k}} \ldots \times_{\mathbf{k}} \mathbf{k}[Y_t]/(Y_t^2).$$
Clearly, a $\mathbf{k}$-algebra homomorphism $\mathbf{k}\to \mathbf{k}[\varepsilon]$ corresponds to a $t$-tuple of derivations $(\partial_1,\ldots,\partial_t)$ on $\mathbf{k}$. Since we are not interested in the interactions between $\partial_1,\ldots,\partial_t$, we prefer to use the ring $\mathbf{k}[\varepsilon]$ rather than the bigger (tensor product) ring $\mathbf{k}[Y_1,\ldots,Y_t]/(Y_1^2,\ldots,Y_t^2)$.
\\
Let us assume that $\mathbf{A}$ is a commutative algebraic group scheme over $\mathbf{k}^p$ (in the applications we take $\mathbf{A}=A\times_C\mathbf{k}^p$).
We will use the group schemes $\Pi_{\mathbf{k}}\mathbf{A}$ and $\Pi_{\mathbf{k}[\varepsilon]}\mathbf{A}$ to understand the cokernel of the map $\mathbf{A}(R^p)\to \mathbf{A}(R)$. See Section \ref{logdersec} for the interpretation of $\Pi_{\mathbf{k}[\varepsilon]}\mathbf{A}$ in terms of tangent spaces.
\\
Any $t$-tuple $\mathbf{\partial}$ of derivations on $\mathbf{k}$ gives a $\mathbf{k}^p$-algebra map (denoted by the same symbol)
$$\mathbf{\partial}:\mathbf{k}\to \mathbf{k}[\varepsilon].$$
By Theorem \ref{weilneeded}(1,4), we also get a group scheme homomorphism
$$\Pi_{\mathbf{\partial}}\mathbf{A}:\Pi_{\mathbf{k}}\mathbf{A}\to \Pi_{\mathbf{k}[\varepsilon]}\mathbf{A}.$$
We will consider the tuple of $0$-derivations $\mathbf{\partial}_0$ on $\mathbf{k}$ and the tuple of standard partial derivations $\mathbf{\partial}_{\mathbf{k}}$ on $\mathbf{k}$. For the notion of a \emph{principal homogenous space} (PHS) of a group scheme over a ring, the reader is advised to consult \cite[Section III.4]{milne1etale}.
\begin{prop}\label{weilneeded1}
Let $W$ be a $C$-algebra as in Lemma \ref{p1}. We define
$$f_\mathbf{A}:=\Pi_{\mathbf{\partial}_0}\mathbf{A}-\Pi_{\mathbf{\partial}_{\mathbf{k}}}\mathbf{A}:\Pi_{\mathbf{k}}\mathbf{A}\to \Pi_{\mathbf{k}[\varepsilon]}\mathbf{A}.$$
Then we have:
\begin{enumerate}

\item $f_\mathbf{A}\circ \iota_\mathbf{A}=0$;

\item for any $v\in f_\mathbf{A}(\Pi_{\mathbf{k}}(\mathbf{A})(W^p))$, the fiber $f^{-1}(v)$ is a PHS of $\mathbf{A}$ over $W^p$;

\item $(\Pi_{\mathbf{k}[\varepsilon]}\mathbf{A}\to \mathbf{A})\circ f_\mathbf{A}=0$.
\end{enumerate}
\end{prop}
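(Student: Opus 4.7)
The plan is to exploit the bifunctoriality of the Weil-restriction construction $\Pi$---functorial both in the scheme argument and in the target ring---together with Lemma \ref{p1} to identify fibers on $W^p$-points.

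For part $(1)$, recall from Remark \ref{iota} that $\iota_\mathbf{A} = \Pi_\iota\mathbf{A}$ where $\iota: \mathbf{k}^p \hookrightarrow \mathbf{k}$. Functoriality of $\Pi$ in the ring subscript then gives, for $* \in \{0, \mathbf{k}\}$, the equality $\Pi_{\partial_*}\mathbf{A} \circ \iota_\mathbf{A} = \Pi_{\partial_*\circ \iota}\mathbf{A}$. Since every derivation on $\mathbf{k}$ annihilates $\mathbf{k}^p$, both compositions $\partial_0 \circ \iota$ and $\partial_\mathbf{k}\circ \iota$ coincide with the natural inclusion $\mathbf{k}^p \hookrightarrow \mathbf{k}[\varepsilon]$, and the two contributions to $f_\mathbf{A}\circ \iota_\mathbf{A}$ cancel. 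For part $(3)$, the natural map $\Pi_{\mathbf{k}[\varepsilon]}\mathbf{A} \to \mathbf{A}$ is---up to identifying $\mathbf{A}$ with the image of $\iota_\mathbf{A}$ inside $\Pi_\mathbf{k}\mathbf{A}$---the morphism $\Pi_\pi\mathbf{A} : \Pi_{\mathbf{k}[\varepsilon]}\mathbf{A} \to \Pi_\mathbf{k}\mathbf{A}$ induced by the projection $\pi: \mathbf{k}[\varepsilon] \to \mathbf{k}$ killing every $Y_i$. A direct check on the generators $X_i$ shows $\pi \circ \partial_0 = \pi \circ \partial_\mathbf{k} = \mathrm{id}_\mathbf{k}$, and then the same functoriality argument as in part $(1)$ yields $\Pi_\pi\mathbf{A} \circ f_\mathbf{A} = \Pi_{\mathrm{id}}\mathbf{A} - \Pi_{\mathrm{id}}\mathbf{A} = 0$.

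For part $(2)$, since $\Pi_{\partial_0}\mathbf{A}$ and $\Pi_{\partial_\mathbf{k}}\mathbf{A}$ are group-scheme homomorphisms (Theorem \ref{weilneeded}$(4)$) and $\mathbf{A}$ is commutative, $f_\mathbf{A}$ is itself a group-scheme homomorphism. Standard torsor theory then shows that for any $v = f_\mathbf{A}(x_0)$ in the image of $\Pi_\mathbf{k}\mathbf{A}(W^p)$, the fiber $f_\mathbf{A}^{-1}(v)$ is a $\ker(f_\mathbf{A})$-torsor over $W^p$, trivialized by the $W^p$-point $x_0$. To upgrade this to a PHS under $\mathbf{A}$, one identifies $\ker(f_\mathbf{A}) = \iota_\mathbf{A}(\mathbf{A})$: the inclusion $\supseteq$ is part $(1)$, while the reverse uses Lemma \ref{p1}$(2)$. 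Indeed, for $W$ with a $p$-basis of cardinality $t$, the identifications $\Pi_\mathbf{k}\mathbf{A}(W^p) \cong \mathbf{A}(W)$ and $\Pi_{\mathbf{k}[\varepsilon]}\mathbf{A}(W^p) \cong \mathbf{A}(W[\varepsilon])$ realize $f_\mathbf{A}$ as the map recording, inside the kernel of $\mathbf{A}(W[\varepsilon]) \twoheadrightarrow \mathbf{A}(W)$, the tuple $(\partial_{X_1}(x),\dots,\partial_{X_t}(x))$. Hence $\ker(f_\mathbf{A})$ is the subgroup of $x \in \mathbf{A}(W)$ killed by every $\partial_{X_i}$, and this equals $\mathbf{A}(W^p)$ because $W^p$ is the common ring of constants of the $\partial_{X_i}$ on $W$ (by Lemma \ref{allringsok}).

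The main obstacle I expect is the final kernel identification, specifically the implication $\partial_{X_1}(x) = \cdots = \partial_{X_t}(x) = 0 \Rightarrow x \in \mathbf{A}(W^p)$. This is transparent for $\mathbf{A} = \ga$, passes to products and smooth group schemes, but requires more care for non-smooth $\mathbf{A}$; one may need to pass to an \'etale cover of $\mathbf{A}$, or invoke a structural exact sequence for $\ker(\Pi_\pi\mathbf{A})$ identifying it with a tangent-sheaf Weil restriction and $f_\mathbf{A}$ as a universal derivation valued in it.
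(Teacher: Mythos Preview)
Your arguments for parts (1) and (3) match the paper's exactly. For part (2), you and the paper pursue the same underlying idea---both rest on the fact that $\mathbf{k}^p$ is precisely the subring of $\mathbf{k}$ on which $\partial_0$ and $\partial_{\mathbf{k}}$ agree---but your argument has a real gap, and you have misdiagnosed where it lies.

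The gap is this: to conclude that $f_\mathbf{A}^{-1}(v)$ is a PHS of $\mathbf{A}$ over $W^p$, you need $\ker(f_\mathbf{A}) = \iota_\mathbf{A}(\mathbf{A})$ to hold \emph{scheme-theoretically}, i.e., on $S$-points for every $W^p$-algebra $S$. Your argument via Lemma~\ref{p1}(2) only tests $S$ of the form $(W')^p$ for $W'$ admitting a $p$-basis of cardinality $t$; most $W^p$-algebras (for instance non-reduced ones) are not of this form, so this does not suffice. The issue is uniform in $\mathbf{A}$ and has nothing to do with whether $\mathbf{A}$ is smooth---your ``main obstacle'' paragraph is looking in the wrong place.

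The paper closes this gap by recasting the constants statement as an equalizer: the inclusion $\mathbf{k}^p \hookrightarrow \mathbf{k}$ is the equalizer of $\partial_0, \partial_{\mathbf{k}}:\mathbf{k}\to \mathbf{k}[\varepsilon]$, and equalizers commute with flat base change. Since $\mathbf{k}$ is free over $\mathbf{k}^p$, for \emph{any} flat $W^p$-algebra $S$ the map $S \to S\otimes_{\mathbf{k}^p}\mathbf{k}$ is again the equalizer of the base-changed maps, and this delivers exactly the simply-transitive action required by Milne's criterion for a PHS. This is the one-line scheme-theoretic upgrade your pointwise argument lacks. (A minor point: your citation of Lemma~\ref{allringsok} is misplaced---that lemma concerns the specific rings $R,K,\widehat{R},L$ from Section~\ref{secspeciallimit}; the fact that $W^p$ is the common ring of constants of the $\partial_{X_i}$ is immediate from the $p$-basis decomposition in Lemma~\ref{p1}(1).)
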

\begin{proof}
For $(1)$ it is enough to see that
$$\Pi_{\mathbf{\partial}_0}\mathbf{A}\circ \iota_\mathbf{A}= \Pi_{\mathbf{\partial}_{\mathbf{k}}}\mathbf{A}\circ \iota_\mathbf{A}.$$
By Remark \ref{iota}, $\iota_\mathbf{A}=\Pi_{\iota}\mathbf{A}$, where $\iota:\mathbf{k}^p\to \mathbf{k}$ is the inclusion map. Hence we get
$$\Pi_{\mathbf{\partial}_0}\mathbf{A}\circ \iota_\mathbf{A}=\Pi_{\mathbf{\partial}_0\circ \iota}\mathbf{A},\ \ \Pi_{\mathbf{\partial}_{\mathbf{k}}}\mathbf{A}\circ \iota_\mathbf{A}=\Pi_{\mathbf{\partial}_{\mathbf{k}}\circ \iota}\mathbf{A}.$$
Since $\partial_{\mathbf{k}}$ is trivial on $\mathbf{k}^p$, we get $f_\mathbf{A}\circ \iota_\mathbf{A}=0$.
\\
For the proof of $(2)$, note that the item $(1)$ gives an action of $\mathbf{A}\times_{\mathbf{k}^p}W^p$ on $f^{-1}(v)$ (a group scheme action).
Since $\mathbf{k}^p$ is the field of constants of $\mathbf{\partial}_{\mathbf{k}}$, the inclusion $\mathbf{k}^p\to \mathbf{k}$ is the equalizer of the maps $\mathbf{\partial}_0,\mathbf{\partial}_{\mathbf{k}}:\mathbf{k}\to \mathbf{k}[\varepsilon]$. Since the equalizer functor commutes with the flat base change  (tensoring with a flat module commutes with equalizers), we get that for any flat affine covering of $\spec(W^p)$ the scheme action above satisfies the conditions from \cite[Prop. 4.1]{milne1etale} (note that $\mathbf{A}\times_{\mathbf{k}^p}W^p$ is flat over $W^p$). Hence the fiber $f^{-1}(v)$ is a PHS of $\mathbf{A}$ over $W^p$.
\\
For the proof of $(3)$ (similarly as in the proof of $(1)$), it is enough to notice that
$$\pi_{\mathbf{k}}\circ \mathbf{\partial}_0=\id_{\mathbf{k}}=\pi_{\mathbf{k}}\circ \mathbf{\partial}_{\mathbf{k}},$$
where $\pi_{\mathbf{k}}:\mathbf{k}[\varepsilon]\to \mathbf{k}$ is the projection map.
\end{proof}

\subsection{The $(*)$-property}
In this section we prove a general fact about rational points of commutative algebraic groups. It will be used in the proof of the main result of this section (Step 2 of the proof of Theorem \ref{alimitthm}). The proof of this result requires the Weil restriction techniques developed in Section \ref{secweilsp} and \'{e}tale cohomology.
\begin{prop}\label{star}
Assume that the maximal algebraic torus in $A$ is diagonalizable. Let $x\in A(\widehat{R})$ and $y\in A(K)$ be such that $x-y\in A(L^p)$. If $A$ is affine or $\dim(R)=1$, then there is $z\in A(R)$ such that $x-z\in A(\widehat{R}^p)$.
\end{prop}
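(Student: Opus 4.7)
My plan is to translate the condition on $(x,y)$ via the Weil restriction machinery of Section \ref{secweilsp} into the statement that a certain principal homogeneous space $P$ of $A$ over $R^p$ has trivializations over both $\widehat{R}^p$ and $K^p$, and then to produce an $R^p$-point of $P$ by a cohomological vanishing argument, from which $z$ will be extracted.

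First, I would use Lemma \ref{allringsok} to endow $R,K,\widehat{R},L$ with compatible $\mathbf{k}$-algebra structures coming from a common $p$-basis $\{r_1,\ldots,r_t\}$, obtaining by Lemma \ref{p1}(2) natural identifications $A(W)\cong \Pi_{\mathbf{k}}A(W^p)$ that carry the canonical inclusion $A(W^p)\hookrightarrow A(W)$ to $\iota_A$. Under these, $x$ and $y$ become $\tilde x\in \Pi_{\mathbf{k}}A(\widehat{R}^p)$ and $\tilde y\in \Pi_{\mathbf{k}}A(K^p)$, and the hypothesis $x-y\in A(L^p)$ translates, via Proposition \ref{weilneeded1}(1), into $f_A(\tilde x)=f_A(\tilde y)$ over $\spec L^p$. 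Since $\widehat{R}^p\cap K^p=R^p$ inside $L^p$---which follows from $\widehat{R}\cap K=R$ (faithful flatness) together with the uniqueness of $p$-th roots---the common value $v:=f_A(\tilde x)=f_A(\tilde y)$ descends to a morphism $\spec R^p\to \Pi_{\mathbf{k}[\varepsilon]}A$: this is immediate when $A$ is affine (so $\Pi_{\mathbf{k}[\varepsilon]}A$ is affine), and follows from Beauville--Laszlo patching plus the identity $\widehat{R}^p\otimes_{R^p}K^p=L^p$ over the DVR $R^p$ when $\dim R=1$. Applying Proposition \ref{weilneeded1}(2) over $\widehat{R}^p$ (where $v$ is in the image of $f_A$ via $\tilde x$) and descending along the fpqc cover $\widehat{R}^p/R^p$ shows that $P:=f_A^{-1}(v)$ is a PHS of $A$ over $R^p$, trivialized by $\tilde x$ over $\widehat{R}^p$ and by $\tilde y$ over $K^p$. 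An $R^p$-section $\tilde z$ of $P$ corresponds, via $\Pi_{\mathbf{k}}A(R^p)=A(R)$, to $z\in A(R)$, and $f_A(\tilde x-\tilde z)=0$ then forces $\tilde x-\tilde z$ into the fibre of $f_A$ over zero, which by Proposition \ref{weilneeded1} coincides with $\iota_A(A)$, yielding $x-z\in A(\widehat{R}^p)$.

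It then suffices to show $[P]=0$ in $H^1_{\mathrm{fppf}}(\spec R^p,A)$. When $A$ is affine I would establish $H^1(R^p,A)=0$ outright: the split maximal torus $T\cong \gm^m$ contributes $\pic(R^p)^m=0$ (as $R^p$ is local), and the smooth commutative unipotent quotient $U$, which over perfect $C$ admits a composition series with each quotient isomorphic to $\ga$ (forms of $\ga$ being trivial by Hilbert 90), contributes $H^1(R^p,\ga)=H^1_{\mathrm{Zar}}(R^p,\mathcal{O})=0$ at each step. When $\dim R=1$, Chevalley's theorem gives $0\to A_{\mathrm{aff}}\to A\to B\to 0$ with $A_{\mathrm{aff}}$ commutative affine (still with diagonalizable maximal torus) and $B$ an abelian variety; since $R^p$ is then a DVR and $B$ is smooth proper, the N\'eron mapping property yields injectivity $H^1(R^p,B)\hookrightarrow H^1(K^p,B)$, so the image of $[P]$ in $H^1(R^p,B)$ is trivial (being trivial over $K^p$ through $\tilde y$), and $[P]$ therefore lifts to $H^1(R^p,A_{\mathrm{aff}})=0$ by the affine case.

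The main technical obstacle will be the cohomological vanishing in this last step: carefully justifying the filtration of the unipotent quotient $U$ by copies of $\ga$ over the perfect but not algebraically closed base $C$, and establishing the N\'eron-type injection $H^1(R^p,B)\hookrightarrow H^1(K^p,B)$ for an abelian variety over the DVR $R^p$. These are precisely the steps where the dichotomy ``$A$ affine or $\dim R=1$'' from the hypothesis is used essentially.
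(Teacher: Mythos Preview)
Your proposal is correct and, for the affine case, follows essentially the same line as the paper: translate via $\Pi_{\mathbf{k}}$, observe that $f_A(\tilde x)=f_A(\tilde y)$ lands in $\Pi_{\mathbf{k}[\varepsilon]}A(R^p)$ by $\widehat{R}^p\cap K^p=R^p$, recognize the fibre $P$ as a PHS of $A$ over $R^p$, and kill its class by filtering $A$ through $\ga$ and $\gm$ over the local ring $R^p$.

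For the non-affine case (\,$\dim R=1$\,) the two arguments diverge. The paper does \emph{not} first build the torsor $P$ over $R^p$; instead it applies Chevalley at the level of points. Writing $0\to N\to A\xrightarrow{\pi} H\to 0$ with $N$ affine and $H$ an abelian variety, it uses the valuative criterion of properness (\cite[Thm.~II.7.1]{ha}) over the DVRs $R$ and $\widehat{R}^p$ to obtain $\pi(y)\in H(R)$ and $\pi(x-y)\in H(\widehat{R}^p)$ directly, lifts these back to $A$ using $H^1(\text{local ring},N)=0$, and thereby reduces the whole statement for $A$ to the already-proved affine statement for $N$. Your route is more uniformly cohomological: you first descend $v$ to $R^p$ (via Beauville--Laszlo or fpqc gluing), identify $P$ as a torsor over $R^p$, and then kill $[P]$ using the long exact sequence together with the injection $H^1(R^p,B)\hookrightarrow H^1(K^p,B)$ for the abelian-scheme quotient $B$. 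Both are valid; the paper's approach avoids the descent issues you flag (Beauville--Laszlo, fpqc descent of torsor structure) at the cost of a more ad hoc construction of $z$, while yours keeps the argument inside a single cohomological framework but leans on a bit more machinery. Your observation that the step ``\,$v\in \Pi_{\mathbf{k}[\varepsilon]}A(R^p)$\,'' is not automatic when the target is non-affine is well taken; the paper sidesteps this by never forming $P$ over $R^p$ in the non-affine case.
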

\begin{proof}
Assume that the maximal algebraic torus in $A$ is diagonalizable. Let us say that such an $A$ satisfies the $(*)$-property if the proposition above is true for $A$. We will see first that if $A$ is affine, then $A$ satisfies the $(*)$-property. For any $C$-algebra $W$ as in Lemma \ref{p1} and any $t\in A(W)$, let $t'\in \Pi A(W^p)$ denote the element obtained using the bijection from Lemma \ref{p1}(2). Let us take $x$ and $y$ as in the assumptions of the $(*)$-property. By Lemma \ref{p1}(2) and Proposition \ref{weilneeded1}(1), we get that $f(x')=f(y')$. Let us denote
$$v:=f(x')=f(y')\in \beta A(K^p\cap \widehat{R}^p).$$
Since the extension $R\subset \widehat{R}$ is faithfully-flat \cite[Theorem 56 page 172]{mat0}, we have $K^p\cap \widehat{R}^p=R^p$. Therefore $v\in A(R^p)$.
\\
Let $P=f^{-1}(v)$ (the scheme-theoretic fiber). By Proposition \ref{weilneeded1}(2), $P$ is a PHS of $A$ over $R^p$.  By \cite[Remark 4.8(a)]{milne1etale}, the isomorphism classes of such principal homogenous spaces are classified by the elements of the group $H^1(\spec(R^p)_{\et},\mathcal{A})$, where $\mathcal{A}$ is the sheaf of commutative groups on $\spec(R^p)_{\et}$ given by $A$. We will see below that $H^1(\spec(R^p)_{\et},\mathcal{A})$ is trivial (which should be folklore), hence there is $z'\in P(R^p)$ giving our $z\in A(R)$ as required.
\\
By our assumption about maximal torus and structure theorems about commutative affine algebraic groups (see e.g. \cite[Chap. III, 2.1]{serre2002galois}), the group $A$ has a composition series over $C$ with factors $\ga$ or $\gm$. By the long exact sequence of sheaf cohomology \cite[Prop. III.4.5]{milne1etale}, we can assume that $A=\ga$ or $A=\gm$. If $A=\ga$, then we have
$$H^1(\spec(R^p)_{\et},\mathcal{A})=H^1(\spec(R^p)_{\et},\mathcal{O}_{\spec(R^p)_{\et}}).$$
By \cite[Remark III.3.8]{milne1etale}, we have
$$H^1(\spec(R^p)_{\et},\mathcal{O}_{\spec(R^p)_{\et}})\cong H^1(\spec(R^p),\mathcal{O}_{\spec(R^p)}).$$
The latter group vanishes for any Noetherian ring (playing the role of $R^p$) and any quasi-coherent sheaf
\cite{sercoh}.
\\
If $A=\gm$, then by \cite[Prop. III.4.9]{milne1etale}
$$H^1(\spec(R^p)_{\et},\mathcal{A})\cong \pic(R^p).$$
But the Picard group of any local ring is trivial \cite[Lemma III.4.10]{milne1etale}.
\\
\\
Let us take now an arbitrary commutative algebraic group $A$ (with a split maximal torus) and assume that $\dim(R)=1$ (note that then $R$ and $\widehat{R}^p$ are PID). By Chevalley's theorem (see \cite{chevsplit} and \cite{conrad}), there is an exact sequence of algebraic groups over $C$
\begin{equation*}
  \xymatrix{0\ar[r]^{} & N \ar[r]^{\iota} &A \ar[r]^{\pi} & H\ar[r]^{} & 0,}
\end{equation*}
where $N$ is affine and $H$ is an Abelian variety. Let us take $x$ and $y$ as in the assumptions of the $(*)$-property. Since $H$ is projective and $R,\widehat{R}^p$ are PID, by \cite[Theorem II.7.1]{ha} we get $\pi(y)\in H(R)$ and $\pi(x-y)\in H(\widehat{R}^p)$. Since $\pi$ is smooth and $N$ is affine, we can use the fact that an appropriate cohomology group is trivial (as in the affine case above) to obtain $v\in A(R)$ and $w\in A(\widehat{R}^p)$ such that $\pi(y)=\pi(v)$ and $\pi(x-y)=\pi(w)$. By the exactness of our sequence again, there are $y_N\in N(K)$ and $w_N\in N(L^p)$ such that
$$\iota(y_N)=y-v,\ \ \iota(w_N)=x-y-w.$$
Let $x_N:=w_N+y_N$. Since $\iota(x_N)=x-v-w\in A(\widehat{R})$ and $\iota$ is a closed embedding, we have $x_N\in N(\widehat{R})$. Clearly, $x_N-y_N=w_N\in N(L^p)$. Since $N$ is affine, it satisfies the $(*)$-property, so there is $z_N\in N(R)$ such that $x_N-z_N\in N(\widehat{R}^p)$. We can define now $z:=v+\iota(z_N)\in A(R)$. Then we have
$$x-z=x-v-\iota(z_N)=\iota(x_N)+w-\iota(z_N)\in A(\widehat{R}^p),$$
since $\iota(x_N)-\iota(z_N)\in A(\widehat{R}^p)$ (by the choice of $z_N$) and $w\in A(\widehat{R}^p)$.
\end{proof}
\begin{conj}
We believe that the result above is true for an arbitrary $A$ and for $R$ of an arbitrary dimension, but we do not know how to show it.
\end{conj}

\subsection{Special formal maps and the $\Pi$-functor}\label{secspecial}
In this subsection we introduce the notion of a special formal map in arbitrary characteristic, generalizing Definition \ref{defspecial}. We also prove Proposition \ref{redtocomm}. The notion of a special map is supposed to capture the properties of formal homomorphisms which are necessary for Schanuel type results. We will need the notions of higher derivations, higher tangent spaces and logarithmic derivative.

\subsubsection{Higher tangent spaces}\label{logdersec}
In this part we will focus on the Weil restriction from $C[\varepsilon]$ into $C$ (recall that $C[\varepsilon]:=C[Y_1]/(Y_1^2)\times_C \ldots \times_C C[Y_t]/(Y_t^2)$). We define a similar functor as in Section \ref{weilgeneralsec}.
$$\Pi_{C[\varepsilon]}:\mathrm{Sch}^{\mathrm{f}}_C\to \mathrm{Sch}^{\mathrm{f}}_C$$
such that for any $V\in \mathrm{Sch}^{\mathrm{f}}_C$ and any $C$-algebra $W$, there is a natural bijection
$$\Pi_{C[\varepsilon]}V(W)\to V(W[\varepsilon]).$$
It is easy to see that
$$\Pi_{C[\varepsilon]}V\cong T^{\times t}V,$$
where $T^{\times t}V$ is the fibered (over $V$) $t$-th Cartesian power of the tangent bundle of $V$ (we will not need this observation).
\\
We need now to generalize derivations ($m=1$) to higher derivations (arbitrary $m>0$). This is rather straightforward, we just replace the ring $C[\varepsilon]$ with the ring $C[\varepsilon^{(m)}]$, where
$$C[\varepsilon^{(m)}]:=C[Y_1]/(Y_1^{p^m})\times_C \ldots \times_C C[Y_t]/(Y_t^{p^m}).$$
\begin{remark}
For notational reasons we prefer to consider derivations of order $p^m-1$ rather than $m$. In particular $C[\varepsilon^{(1)}]\neq C[\varepsilon]$.
However, since any derivation $\partial$ gives a derivation of order $p-1$ using the formula $(\partial_i=\partial^{(i)}/i!)_{i<p}$, by considering derivations of order $p-1$ we also cover the case of usual derivations and we may work with rings of the form $C[\varepsilon^{(m)}]$.
\end{remark}
\noindent
We again get a functor
$$\Pi_{C[\varepsilon^{(m)}]}:\mathrm{Sch}_C\to \mathrm{Sch}_C.$$
It is easy to see that
$$\Pi_{C[\varepsilon^{(m)}]}V\cong (\arc^{(m)})^{\times t}V,$$
where the $\arc^{(m)}$ is the $(p^m-1)$-th arc space functor (see \cite{DeLo2000}) and we consider the $t$-th fibered Cartesian power over $V$.
\\
If $V=\spec(W)$, then we have
$$\Pi_{C[\varepsilon^{(m)}]}V=\spec(\hs^{(m)}_W).$$
Here
$$\hs^{(m)}_W:=\hs^{p^m-1}_{W/C}\otimes_W\ldots \otimes_W \hs^{p^m-1}_{W/C},$$
where $\hs^{p^m-1}_{W/C}$ is the space of \emph{higher Vojta's forms} of order $p^m-1$ (see \cite{voj}) and the tensor product is taken $t$ times. We will use the fact (following from \cite[Corollary 1.8]{voj}) that the functor $W\mapsto \hs^{(m)}_W$ is left-adjoint to the functor $W\mapsto W[\varepsilon^{(m)}]$. In particular, any $t$-tuple of higher derivations $\partial$ of order $p^m-1$ on $W$ corresponds to a $W$-linear map
$$\partial^*:\hs^{(m)}_W\to W.$$
\begin{remark}\label{derexist}
By \cite[Prop. 5.1]{voj} and \cite[Lemma 5.7]{voj}, we have
$$\hs^{(m)}_{\mathbf{k}}\cong \mathbf{k}[X_1^{(m)},\ldots,X_t^{(m)}],$$
where each $X_i^{(m)}$ is a ($p^m-1$)-tuple of variables corresponding to $(\ddd_1t_i,\ldots,\ddd_{p^m-1}t_i)$. Thus there is a canonical tuple of ``partial'' higher derivations $\partial^{(m)}_{\mathbf{k}}$ of order $p^m-1$ on $\mathbf{k}$. Assume that $W$ is a $C$-algebra as in Lemma \ref{p1}. Then
we have
$$W\cong W^{p^m}\otimes_{\mathbf{k}^{p^m}}\mathbf{k}.$$
By \cite[Lemma 5.5]{voj} we get
$$\hs^{(m)}_{W}\cong W[X_1^{(m)},\ldots,X_t^{(m)}].$$
Hence there is also a canonical tuple of higher derivations $\partial^{(m)}_W$ of order $p^m-1$ on $W$. They can be explicitly defined as
$$\partial^{(m)}_W:=\id_{W^{p^m}}\otimes_{\mathbf{k}^{p^m}}\partial^{(m)}_{\mathbf{k}}.$$
\end{remark}
\noindent
For an algebraic group $G$, we focus on the following kernel
$$U^{(m)}_G:=\ker(\Pi_{C[\varepsilon^{(m)}]}G\to G).$$
From now on we denote $\mathcal{O}_{G,e}$ by $\mathcal{O}_{G}$. We define the following $G$-algebras
$$U^{(m)}_{\mathcal{O}_G}:=\hs^{(m)}_{\mathcal{O}_G}\otimes_{\mathcal{O}_G}C,\  \ \ \ \ U^{(m)}_{\widehat{\mathcal{O}}_G}:=\hs^{(m)}_{\widehat{\mathcal{O}}_G}\otimes_{\widehat{\mathcal{O}}_G}C.$$
\begin{remark}\label{uhopf}
Each of the $C$-algebras above has a natural structure of a Hopf algebra over $C$. We will see it in the case of $U^{(m)}_{\mathcal{O}_G}$ (see the lemma below for the other $C$-algebra). The multiplication morphism on $G$ induces a $C$-algebra map
$$\mu:\mathcal{O}_G\to \mathcal{O}_{G\times G}\cong (\mathcal{O}_G\otimes_C\mathcal{O}_G)_I,$$
where $I$ is the ideal generated by $\mathfrak{m}_{\mathcal{O}_G}\otimes 1+1\otimes \mathfrak{m}_{\mathcal{O}_G}$. Hence we get a map
$$\hs^{(m)}_{\mu}:\hs^{(m)}_{\mathcal{O}_G}\to \hs^{(m)}_{\mathcal{O}_{G\times G}}\cong (\hs^{(m)}_{\mathcal{O}_G\otimes_C\mathcal{O}_G})_I,$$
where the last isomorphism follows from \cite[Lemma 4.2]{voj}. Finally, we get a map
$$U^{(m)}_{\mu}:U^{(m)}_{\mathcal{O}_G}\to
(\hs^{(m)}_{\mathcal{O}_G\otimes_C\mathcal{O}_G})_I\otimes_{\mathcal{O}_{G\times G}}C
\cong U^{(m)}_{\mathcal{O}_G}\otimes_C U^{(m)}_{\mathcal{O}_G}.$$
(using \cite[Lemma 4.2]{voj} again and \cite[Lemma 5.7]{voj}) which is a Hopf algebra comultiplication.
\end{remark}
\noindent
We need the following.
\begin{lemma}\label{unip}
\begin{enumerate}
\item We have natural isomorphisms
$$\spec(U^{(m)}_{\widehat{\mathcal{O}}_G})\cong \spec(U^{(m)}_{\mathcal{O}_G})\cong U^{(m)}_G.$$

\item If $G=A$ is commutative, then there are natural ``Hopf algebra'' splitting maps
$$\Pi_{C[\varepsilon^{(m)}]}A\to U^{(m)}_A,\ \ U^{(m)}_{\mathcal{O}_A}\to \hs^{(m)}_{\mathcal{O}_A},\ \ U^{(m)}_{\widehat{\mathcal{O}}_A}\to \hs^{(m)}_{\widehat{\mathcal{O}}_A}.$$

\item If $G=A$ is commutative, then there is a commutative diagram
\begin{equation*}
  \xymatrix{U^{(m)}_{\mathcal{O}_A} \ar[d]_{}\ar[r]^{} & \hs^{(m)}_{\mathcal{O}_A} \ar[d]_{\partial_0^*}\\
C  \ar[r]^{}& \mathcal{O}_A
.}
\end{equation*}

\item For each formal subgroup $\mathcal{A}\leqslant \widehat{G}$, $U^{(m)}_{\mathcal{A}}$ is an algebraic subgroup of $U^{(m)}_G$.

\item For each formal subgroups $\mathcal{A},\mathcal{B}\leqslant \widehat{G}$, we have $\mathcal{A}=\mathcal{B}$ if and only if for each $m$, we have $U^{(m)}_{\mathcal{A}}=U^{(m)}_{\mathcal{B}}$.

\end{enumerate}
\end{lemma}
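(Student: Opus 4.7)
The plan is to handle the five parts in sequence, using throughout the adjunction between the Hasse--Schmidt algebra functor $\hs^{(m)}$ and the functor $\cdot[\varepsilon^{(m)}]$ of ``generalized dual numbers'', together with the explicit coordinate description of $\hs^{(m)}$ recalled in Remark \ref{derexist}.

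For (1), I will work with the functor of points. A $W$-valued point of $U^{(m)}_G$ is by definition a local $C$-algebra homomorphism $\mathcal{O}_G \to W[\varepsilon^{(m)}]$ whose projection modulo nilpotents equals the composition of the residue $\mathcal{O}_G \to C$ with the structural map $C \to W$. Because each $\varepsilon_i$ satisfies $\varepsilon_i^{p^m} = 0$ and $\varepsilon_i \varepsilon_j = 0$ for $i \neq j$ in $W[\varepsilon^{(m)}]$, such a map automatically factors through $\mathcal{O}_G/\mathfrak{m}^{p^m}$, hence uniquely through $\widehat{\mathcal{O}}_G$. By the universal property of $\hs^{(m)}$ underlying the adjunction, these maps are in natural bijection with the $C$-algebra homomorphisms $U^{(m)}_{\mathcal{O}_G} \to W$, and also with the $C$-algebra homomorphisms $U^{(m)}_{\widehat{\mathcal{O}}_G} \to W$. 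Yoneda then yields the identifications in (1).

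For (2) and (3), the idea is to split the short exact sequence of group schemes
$$1 \to U^{(m)}_A \to \Pi_{C[\varepsilon^{(m)}]}A \to A \to 1$$
using the commutativity of $A$. The projection $\pi: \Pi_{C[\varepsilon^{(m)}]}A \to A$ has a canonical section $s: A \to \Pi_{C[\varepsilon^{(m)}]}A$ coming from the inclusion $C \hookrightarrow C[\varepsilon^{(m)}]$, which on local rings at $e$ corresponds to $\partial_0^*$. Since $A$ is commutative, the assignment $r: \Pi_{C[\varepsilon^{(m)}]}A \to U^{(m)}_A$ defined by $r(x) = x - s(\pi(x))$ is a group scheme homomorphism and a retraction of the inclusion $U^{(m)}_A \hookrightarrow \Pi_{C[\varepsilon^{(m)}]}A$. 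Dualizing and restricting to the local rings at $e$ produces the Hopf algebra splittings $U^{(m)}_{\mathcal{O}_A} \to \hs^{(m)}_{\mathcal{O}_A}$ and $U^{(m)}_{\widehat{\mathcal{O}}_A} \to \hs^{(m)}_{\widehat{\mathcal{O}}_A}$ asserted in (2). The commutativity of the diagram in (3) is then immediate because $r \circ s$ is the trivial map from $A$ to $e \in U^{(m)}_A$ (as $\pi \circ s = \id_A$), so its dual factors as $U^{(m)}_{\mathcal{O}_A} \to C \to \mathcal{O}_A$, with the first arrow the residue map and the second the structural map.

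For (4), the surjection $\widehat{\mathcal{O}}_G \twoheadrightarrow \mathcal{O}_{\mathcal{A}}$ induces by functoriality a surjection $\hs^{(m)}_{\widehat{\mathcal{O}}_G} \twoheadrightarrow \hs^{(m)}_{\mathcal{O}_{\mathcal{A}}}$, which after tensoring with $C$ yields a closed immersion $U^{(m)}_{\mathcal{A}} \hookrightarrow U^{(m)}_G$. The formal group structure on $\mathcal{A}$ provides (as in Remark \ref{uhopf}) a compatible Hopf algebra structure on $U^{(m)}_{\mathcal{A}}$, and finite type of $U^{(m)}_{\mathcal{A}}$ over $C$ follows from the explicit formula $\hs^{(m)}_{\mathcal{O}_{\mathcal{A}}} \cong \mathcal{O}_{\mathcal{A}}[X_1^{(m)}, \ldots, X_d^{(m)}]$ (with $d = \dim \mathcal{A}$) from Remark \ref{derexist}. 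For (5), only the ``if'' direction is nontrivial; letting $I_{\mathcal{A}}, I_{\mathcal{B}} \subseteq \widehat{\mathcal{O}}_G$ denote the defining ideals, the image of $I_{\mathcal{A}}$ under the composition $\widehat{\mathcal{O}}_G \to \hs^{(m)}_{\widehat{\mathcal{O}}_G} \to U^{(m)}_{\widehat{\mathcal{O}}_G}$ records the Taylor coefficients at $e$ of elements of $I_{\mathcal{A}}$ up to order $p^m - 1$, so the hypothesis forces $I_{\mathcal{A}} + \mathfrak{m}^{p^m} = I_{\mathcal{B}} + \mathfrak{m}^{p^m}$ for every $m$, and Krull's intersection theorem in the Noetherian complete local ring $\widehat{\mathcal{O}}_G$ then yields $I_{\mathcal{A}} = I_{\mathcal{B}}$. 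The main obstacle I foresee lies in (5), namely in verifying cleanly that tensoring $\hs^{(m)}_{\mathcal{O}_{\mathcal{A}}}$ with $C$ over $\mathcal{O}_{\mathcal{A}}$ retains enough information to determine $I_{\mathcal{A}}$ modulo $\mathfrak{m}^{p^m}$; unpacking the coordinate presentation of Remark \ref{derexist} should make this transparent.
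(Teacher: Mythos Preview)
Your approach is correct and essentially matches the paper's. For (1) you argue via the functor of points and the adjunction $\hs^{(m)}\dashv(\cdot)[\varepsilon^{(m)}]$, whereas the paper instead writes down the coordinate isomorphisms $\hs^{(m)}_{\mathcal{O}_G}\cong\mathcal{O}_G[\bar X]$, $\hs^{(m)}_{\widehat{\mathcal{O}}_G}\cong\widehat{\mathcal{O}}_G[\bar X]$ from Remark~\ref{derexist} and passes to the limit over affine open neighbourhoods of $e$; both are short and equivalent. Your treatment of (2)--(4) coincides with the paper's (split the exact sequence via the zero-section using commutativity, then dualize; (4) from (1)). For (5) the paper says only ``by inspection'', so your sketch is already more explicit. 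One small caution there: the plain structural inclusion $\widehat{\mathcal{O}}_G\hookrightarrow\hs^{(m)}_{\widehat{\mathcal{O}}_G}$ followed by reduction modulo $\mathfrak m$ is just the residue map and records no Taylor data; what you want (and what your wording surely intends) is the family of universal higher derivations $d_0,\ldots,d_{p^m-1}:\widehat{\mathcal{O}}_G\to\hs^{(m)}_{\widehat{\mathcal{O}}_G}$, then reduce modulo $\mathfrak m$. With that reading, the kernel on $\widehat{\mathcal{O}}_G$ is exactly $\mathfrak m^{p^m}$ (check the leading $(x^{(1)}\varepsilon)^k$-coefficient in the one-variable model and bootstrap), so your Krull-intersection argument goes through. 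This is the ``unpacking'' you flagged, and it is indeed routine once phrased correctly.
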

\begin{proof}
By Remark \ref{uhopf}, we get
$$\hs^{(m)}_{\mathcal{O}_G}\cong \mathcal{O}_G[\bar{X}],\ \ \ \hs^{(m)}_{\widehat{\mathcal{O}}_G}\cong \widehat{\mathcal{O}}_G[\bar{X}].$$
Hence we get the first isomorphism in $(1)$. For the second one, we notice first that for any open neighbourhood $V$ of $0$ in $G$, we have $U^{(m)}_G\cong U^{(m)}_V$. Now it is enough to pass to the limit with respect to affine open neighbourhoods of $0$ in $G$.

For $(2)$, we just want to follow (and dualize) the elementary argument regarding commutative groups. If $A_1=\ker(f:A_2\to A_3)$, then any section $s:A_3\to A_2$ gives a section $A_2\to A_1$, which comes from a map $A_2\to A_2$ given by $s\circ f -\id$. There is no problem to write this argument in terms of diagrams which gives the first splitting map. In the Hopf algebra case, there is a little problem, since the $C$-algebras $\hs^{(m)}_{\mathcal{O}_A}, \hs^{(m)}_{\widehat{\mathcal{O}}_A}$ are not exactly Hopf algebras. Nevertheless, we will show below (just in the case of  $\hs^{(m)}_{\mathcal{O}_A}$) that the splitting argument still works. The corresponding map $\hs^{(m)}_{\mathcal{O}_A}\to \hs^{(m)}_{\mathcal{O}_A}$ is the composition of the sequence below:
\begin{equation*}
\xymatrix{\hs^{(m)}_{\mathcal{O}_A} \ar[rr]^{\hs_{\mu}} & &\hs^{(m)}_{\mathcal{O}_{A\times A}} \ar[rr]^{((\pi\circ s)\otimes \id)_I} & &
\hs^{(m)}_{\mathcal{O}_{A\times A}} \ar[rr]^{(\cdot)_I} & & \hs^{(m)}_{\mathcal{O}_A}
,}
\end{equation*}
where we use the fact that $\hs^{(m)}_{\mathcal{O}_{A\times A}}$ is isomorphic to the localization of $\hs^{(m)}_{\mathcal{O}_A}\otimes \hs^{(m)}_{\mathcal{O}_A}$ with respect to the ideal $I$ (as in Remark \ref{uhopf}). From the construction, the splitting map is natural with respect to group scheme morphisms $A\to B$ (and respectively formal group morphisms $\widehat{A}\to \widehat{B}$).

Item $(3)$ follows from the construction of the splitting map.

Item $(4)$ follows from item $(1)$, and item $(5)$ follows by inspection.
\end{proof}
We can prove now Proposition \ref{redtocomm}, which was needed for the reduction of Question \ref{mainque} to the commutative case.
\begin{proof}[Proof of Prop. \ref{redtocomm}]
By Proposition \ref{algform}, we can assume that $\mathcal{A}$ is Zariski dense in $G$. We will show that we can take $H=G$. Let $m>0$.

The algebraic group $G$ acts on $G[m]=\ker(\fr^m_G)$ by conjugation which is a group scheme action. Let $\mathcal{H}$ be the complete Hopf algebra corresponding to $\mathcal{A}$. By Proposition \ref{frker}, $\mathcal{H}[m]$ is a Hopf algebra and a complete Hopf algebra. Let $\mathcal{A}[m]$ be the finite group scheme (being also a formal group) corresponding to $\mathcal{H}[m]$ and $H_m$ be the ``setwise'' stabilizer of $\mathcal{A}[m]$ with respect to the above action. Then $H_m$ is a group scheme such that $\mathcal{A}\subseteq \widehat{H_m}$, so by the density of $\mathcal{A}$ in $G$, we get $H_m=G$ for all $m$. Therefore $\mathcal{A}$ is a normal formal subgroup of $\widehat{G}$.

Let $G'$ be the commutator subgroup of $G$. We need to show that $\widehat{G'}\leqslant \mathcal{A}$. We do it in three steps, following the idea of Ax's argument from \cite{ax72}.

We consider an algebraic group action $\mathrm{ad}^{(m)}$ of $G$ on $U^{(m)}_G$ by conjugation (``higher adjoint action''). By Lemma \ref{unip}(4), $U^{(m)}_{\mathcal{A}}$ is an algebraic subgroup of $U^{(m)}_{G}$, hence the following group
$$W:=\left\{x\in G\ |\ \mathrm{ad}^{(m)}_x=\id_{U^{(m)}_G} \ \left(\mathrm{mod}\ U^{(m)}_{\mathcal{A}}\right)\right\}$$
is an algebraic subgroup of $G$. We also have $\mathcal{A}\leqslant \widehat{W}$, since $\mathcal{A}$ is normal in $\widehat{G}$. Hence $W=G$.

It is easy to see that $\widehat{G'}\leqslant \mathcal{A}$ if and only if for each $x\in G$ we have
$$\alpha_x=\id_{\widehat{G}}\ (\mathrm{mod}\ \mathcal{A}),$$
where $\alpha_x$ is the conjugation automorphism.

It is enough to notice now that
$$\forall x\ \ \alpha_x=\id_{\widehat{G}}\ (\mathrm{mod}\ \mathcal{A})$$
if and only if
$$\forall  m\ \ \mathrm{ad}^{(m)}_x=\id_{U^{(m)}_G} \ \left(\mathrm{mod}\ U^{(m)}_{\mathcal{A}}\right),$$
and this equivalence follows from Lemma \ref{unip}(5).
\end{proof}
\begin{remark}
The real difference between the above proof and Ax's proof from \cite{ax72} is in the last step. In the case of characteristic $0$, one needs to check only the case of $m=1$, so it is enough to look at the Lie algebra of $G$ and the (standard) adjoint action. However in the case of positive characteristic, we need to consider all positive integers $m$, since Lemma \ref{unip}(5) does not hold just on the level of Lie algebras (i.e. on $U^{(1)}_G$).
\end{remark}


\subsubsection{Special maps}\label{pitansec}
We will extend (in an obvious way) the results and definitions of Section \ref{secweilsp} from the case of $\mathbf{k}^p$ to the case of $\mathbf{k}^{p^m}$ for an arbitrary $m>0$.
\begin{itemize}
\item By $\Pi^{(m)}$ we mean the functor $\Pi$ but for $\mathbf{k}^{p^m}$ playing the role of $\mathbf{k}^p$.

\item Similarly, we get a morphism
$$f^{(m)}_A:\Pi^{(m)}_{\mathbf{k}}(A\times_C\mathbf{k}^{p^m})\to \Pi^{(m)}_{\mathbf{k}[\varepsilon^{(m)}]}(A\times_C\mathbf{k}^{p^m})$$
such that $f^{(1)}_A$ coincides with $f_A$ from Proposition \ref{weilneeded1}.

\item We have obvious higher order analogues of Lemma \ref{p1} and Proposition \ref{weilneeded1}.
\end{itemize}
\begin{remark}\label{hoan}
Let us specify what do we mean by the ``higher order analogue'' of Proposition \ref{weilneeded1}(3) from the last item above. It means that
$$\left(\Pi^{(m)}_{\mathbf{k}[\varepsilon^{(m)}]}(A\times_C\mathbf{k}^{p^m})\to A\right)\circ f^{(m)}_A=0,$$
for the functor $\Pi^{(m)}$ and the map $f^{(m)}_A$ defined as above.
\end{remark}
\noindent
We will often just write ``$A$'' in place of ``$A\times_C\mathbf{k}^{p^m}$''. We need one more lemma.
\begin{lemma}\label{ukernel}
Let $W$ be as in Lemma \ref{p1}. There is a commutative diagram
\begin{equation*}
  \xymatrix{U^{(m)}_A(W)\ar[rr]^{\ker(\pi)}\ar[d]^{\cong}  & & A(W[\varepsilon^{(m)}])\ar[d]^{} \ar[d]^{\cong}\ar[rr]^{\pi}& &A(W) \ar[d]^{\cong}\\
  \Pi^{(m)}_{\mathbf{k}}U^{(m)}_A(W^{p^m})\ar[rr]^{\ker(\widetilde{\pi})}  & & \Pi^{(m)}_{k[\varepsilon^{(m)}]}A(W^{p^m}) \ar[rr]^{\widetilde{\pi}}& &\Pi^{(m)}_{\mathbf{k}}A(W^{p^m}),
}
\end{equation*}
where $\widetilde{\pi}=\Pi^{(m)}_{\mathbf{k}\to \mathbf{k}[\varepsilon^{(m)}]}$ and $\pi$ is induced by $W[\varepsilon^{(m)}]\to W$.
\end{lemma}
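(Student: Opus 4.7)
The plan is to exhibit the three vertical bijections as instances of the higher-order analogue of Lemma \ref{p1}(2), and then verify commutativity by the naturality of those bijections.

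The key preliminary observation is that since $C$ is perfect and each nilpotent generator $Y_i$ of $C[\varepsilon^{(m)}]$ satisfies $Y_i^{p^m}=0$, we have $\mathbf{k}[\varepsilon^{(m)}]^{p^m}=\mathbf{k}^{p^m}$. Combined with Lemma \ref{p1}(1), this yields the canonical ring isomorphisms
$$W^{p^m}\otimes_{\mathbf{k}^{p^m}}\mathbf{k}\cong W,\qquad W^{p^m}\otimes_{\mathbf{k}^{p^m}}\mathbf{k}[\varepsilon^{(m)}]\cong W\otimes_CC[\varepsilon^{(m)}]=W[\varepsilon^{(m)}],$$
both natural in $W$. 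Applying $A$ (viewed as a $\mathbf{k}^{p^m}$-scheme via base change) to these two isomorphisms and unpacking the defining identity $\Pi^{(m)}_{\mathbf{s}}F(\mathbf{r}')=F(\mathbf{s}\otimes_{\mathbf{k}^{p^m}}\mathbf{r}')$ produces the right and middle vertical bijections; applying $U^{(m)}_A$ (an affine $C$-scheme by Lemma \ref{unip}(1)) to the first of them produces the left vertical bijection.

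For commutativity of the right square, both horizontal maps $\pi$ and $\widetilde{\pi}$ are induced by the same underlying projection $C[\varepsilon^{(m)}]\twoheadrightarrow C$ sending each $Y_i$ to $0$: on top this projection enters via $W[\varepsilon^{(m)}]\twoheadrightarrow W$, while on the bottom it enters via tensoring $\mathbf{k}[\varepsilon^{(m)}]\twoheadrightarrow\mathbf{k}$ with $W^{p^m}$ over $\mathbf{k}^{p^m}$. These two procedures coincide under the identifications above, so the square commutes by the functoriality of $A(\cdot)$. For the left square, I would use that the Weil restriction $\Pi^{(m)}_\mathbf{k}$, being a right adjoint (Theorem \ref{weilneeded}(4)), commutes with kernels; hence under the middle and right vertical bijections, the inclusion $U^{(m)}_A(W)\hookrightarrow A(W[\varepsilon^{(m)}])$, defined as the kernel of $\pi$, corresponds to the inclusion $\Pi^{(m)}_\mathbf{k}U^{(m)}_A(W^{p^m})\hookrightarrow\Pi^{(m)}_{\mathbf{k}[\varepsilon^{(m)}]}A(W^{p^m})$ of the kernel of $\widetilde{\pi}$, as required.

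The only genuine subtlety is the tensor identification of $W^{p^m}\otimes_{\mathbf{k}^{p^m}}\mathbf{k}[\varepsilon^{(m)}]$ with $W[\varepsilon^{(m)}]$; once that is in place, the remainder is a formal diagram chase that reduces to the naturality of the isomorphisms in Lemma \ref{p1}(2) and of the Weil restriction.
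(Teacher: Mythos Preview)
Your proof is correct and follows essentially the same route as the paper. The paper's one-line argument records the natural isomorphism $\Pi^{(m)}_{\mathbf{k}[\varepsilon^{(m)}]}A\cong \Pi^{(m)}_{\mathbf{k}}(\Pi_{C[\varepsilon^{(m)}]}A)$ and then invokes the definition of $U^{(m)}_A$; on $W^{p^m}$-points this isomorphism is precisely your tensor identity $W^{p^m}\otimes_{\mathbf{k}^{p^m}}\mathbf{k}[\varepsilon^{(m)}]\cong W[\varepsilon^{(m)}]$, and your kernel-preservation argument is exactly what ``use the definition of $U^{(m)}_A$'' amounts to.
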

\begin{proof} It is enough to notice a natural isomorphism
$$\Pi^{(m)}_{k[\varepsilon^{(m)}]}A\cong \Pi^{(m)}_{k[\varepsilon^{(m)}]}(\Pi_{C[\varepsilon^{(m)}]}A)$$
and use the definition of $U^{(m)}_A$.
\end{proof}
\noindent
We define below our ``well behaved'' class of formal maps.
\begin{definition}\label{defspecialagain}
We say that a formal map $\mathcal{F}:\widehat{V}\to \widehat{A}$ is \emph{special} if for each $m>0$ we have
$f^{(m)}_A(v_m)\in \Pi^{(m)}_{\mathbf{k}[\varepsilon^{(m)}]}A(R^{p^m})$, where $v_m\in \Pi^{(m)}_{\mathbf{k}[\varepsilon^{(m)}]}A(\widehat{R}^{p^m})$ corresponds to $\mathcal{F}_A\in A(\widehat{R})$.
\end{definition}
\noindent
We will see later that the definition above generalizes Definition \ref{defspecial}. First we need to understand it in terms of the logarithmic derivative.
\begin{prop}\label{weiltan}
Suppose $W$ is as in Lemma \ref{p1}. Then for each $m>0$, there is a closed embedding
$$h_A:\Pi^{(m)}_{\mathbf{k}}U^{(m)}_A\to \Pi^{(m)}_{\mathbf{k}[\varepsilon^{(m)}]}A,$$
such that the morphism $f^{(m)}_A$ factors (by $\Psi$) as in the following commutative diagram:
\begin{equation*}
  \xymatrix{A(W^{p^m})\ar[d]^{=} \ar[rr]^{\subseteq}& &A(W) \ar[d]_{\cong}\ar[rr]^{}& & U^{(m)}_A(W)\ar[d]_{\cong}\\
A(W^{p^m}) \ar[rr]^{\iota_A} & &\Pi^{(m)}_{\mathbf{k}}A(W^{p^m}) \ar[rrd]_{f^{(m)}_A} \ar[rr]^{\Psi}& &\Pi^{(m)}_{\mathbf{k}}U^{(m)}_A(W^{p^m})\ar[d]_{h_A}\\
 & & & &\Pi^{(m)}_{\mathbf{k}[\varepsilon^{(m)}]}A(W^{p^m}),}
\end{equation*}
where the map $A(W)\to U^{(m)}_A(W)$ is induced by $\Psi$.
\\
Moreover, if $W$ is local and $v\in A(W)_*$ corresponds to the map $v:\mathcal{O}_A\to W$, then the image of $v$ under the map $A(W)\to U^{(m)}_A(W)$ corresponds to the map
which is given by the following composition
\begin{equation*}
  \xymatrix{U^{(m)}_{\mathcal{O}_{A}} \ar[rr]^{} \ar[rr]^{}& & \hs^{(m)}_{\mathcal{O}_{A}} \ar[rr]^{\hs^{(m)}_{v}} & &\hs^{(m)}_{W} \ar[rr]^{(\partial_W^{(m)})^*} & & W,
}
\end{equation*}
where $\partial^{(m)}_W$ comes from Remark \ref{derexist}.
\end{prop}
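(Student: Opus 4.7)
The plan is to construct the closed embedding $h_A$ by functoriality, exhibit the factorization $\Psi$ by locating $f^{(m)}_A$ in the kernel of the projection $\widetilde{\pi}$, and then translate the resulting map on points into the description via $(\partial^{(m)}_W)^*$ using the universal property of $\hs^{(m)}$ together with the splittings of Lemma \ref{unip}(2).

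First I would build $h_A$. Since $U^{(m)}_A = \ker\bigl(\Pi_{C[\varepsilon^{(m)}]}A \to A\bigr)$ is a closed subgroup scheme of $\Pi_{C[\varepsilon^{(m)}]}A$, applying the functor $\Pi^{(m)}_{\mathbf{k}}$ (which preserves closed embeddings of schemes in $\mathrm{Sch}^{\mathrm{f}}$ by the analogue of Theorem \ref{weilneeded}(1)) gives a closed embedding $\Pi^{(m)}_{\mathbf{k}}U^{(m)}_A \hookrightarrow \Pi^{(m)}_{\mathbf{k}}\Pi_{C[\varepsilon^{(m)}]}A$. The composition-of-restrictions identity $\mathbf{k}[\varepsilon^{(m)}] \cong \mathbf{k}\otimes_{\mathbf{k}^{p^m}}\mathbf{k}^{p^m}[\varepsilon^{(m)}]$ yields a natural isomorphism $\Pi^{(m)}_{\mathbf{k}}\Pi_{C[\varepsilon^{(m)}]}A \cong \Pi^{(m)}_{\mathbf{k}[\varepsilon^{(m)}]}A$, and combining these provides $h_A$.

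Next I would construct $\Psi$. Writing $\iota:\mathbf{k}\to \mathbf{k}[\varepsilon^{(m)}]$ for the canonical inclusion, one has $\widetilde{\pi}=\Pi^{(m)}_{\iota}A$, and the higher-order analogue of Proposition \ref{weilneeded1}(3) says $\widetilde{\pi}\circ f^{(m)}_A = 0$, because both $\partial_0$ and $\partial^{(m)}_{\mathbf{k}}$ are sections of the projection $\mathbf{k}[\varepsilon^{(m)}]\to \mathbf{k}$. Applying Lemma \ref{ukernel} to the universal $W$ (or equivalently invoking the representability from Theorem \ref{weilneeded}(1)), the kernel of $\widetilde{\pi}$ is precisely $h_A\bigl(\Pi^{(m)}_{\mathbf{k}}U^{(m)}_A\bigr)$, and the corestriction of $f^{(m)}_A$ is the desired $\Psi$. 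The commutativity of the upper half of the diagram (through $\iota_A$) then records the standard fact that $f^{(m)}_A\circ\iota_A=0$, which is the higher-order analogue of Proposition \ref{weilneeded1}(1).

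Finally, for the moreover clause, the point $v\in A(W)_*$ corresponds under Lemma \ref{p1}(2) to $v^{(m)}\in \Pi^{(m)}_{\mathbf{k}}A(W^{p^m})$. Its image under $\Pi^{(m)}_{\partial_0}A$ is the composition of $v$ with the zero section $W\hookrightarrow W[\varepsilon^{(m)}]$, while its image under $\Pi^{(m)}_{\partial^{(m)}_{\mathbf{k}}}A$ is the higher Taylor expansion of $v$ along the canonical derivations $\partial^{(m)}_W$ from Remark \ref{derexist}; by the universal property of $\hs^{(m)}$ (as left adjoint to $W\mapsto W[\varepsilon^{(m)}]$, cf.\ \cite[Corollary 1.8]{voj}), this expansion is precisely the map $\mathcal{O}_A\to W$ factoring as $\hs^{(m)}_v$ followed by $(\partial^{(m)}_W)^*$. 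Their difference lies in $U^{(m)}_A(W)$ by the first part of the proposition, and using the splittings of Lemma \ref{unip}(2) this difference is identified with the composite $U^{(m)}_{\mathcal{O}_A}\to \hs^{(m)}_{\mathcal{O}_A}\xrightarrow{\hs^{(m)}_v}\hs^{(m)}_W\xrightarrow{(\partial^{(m)}_W)^*}W$ in the statement. The main obstacle will be keeping the naturality of all these identifications straight, particularly verifying that the Hopf-algebra splitting of Lemma \ref{unip}(2) is exactly the one induced by the decomposition $\Pi^{(m)}_{\partial^{(m)}_{\mathbf{k}}}A - \Pi^{(m)}_{\partial_0}A$; once this is checked, both descriptions of the map $A(W)\to U^{(m)}_A(W)$ agree by construction.
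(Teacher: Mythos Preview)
Your construction of $h_A$ and $\Psi$ is essentially the paper's: both use that $\widetilde{\pi}\circ f^{(m)}_A=0$ (the higher analogue of Proposition \ref{weilneeded1}(3)) together with Lemma \ref{ukernel} to land in the kernel, which is identified with $\Pi^{(m)}_{\mathbf{k}}U^{(m)}_A$.

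For the moreover clause, however, the paper takes a cleaner route than the one you outline. You propose to invoke the Hopf-algebra splitting of Lemma \ref{unip}(2) and then check that this splitting matches the one coming from $\Pi^{(m)}_{\partial^{(m)}_{\mathbf{k}}}A - \Pi^{(m)}_{\partial_0}A$; you correctly flag this compatibility as the main obstacle. The paper sidesteps this entirely. After writing the image of $v$ as $(\partial_W^*-\partial_0^*)\circ \hs^{(m)}_v$ restricted to $U^{(m)}_{\mathcal{O}_A}$, it simply observes via Lemma \ref{unip}(3) (the commutative square relating $U^{(m)}_{\mathcal{O}_A}\to \hs^{(m)}_{\mathcal{O}_A}$ with $\partial_0^*$ and the augmentation $C\to \mathcal{O}_A$) that $\partial_0^*\circ \hs^{(m)}_v$ is the $0$-map on $U^{(m)}_{\mathcal{O}_A}$. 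This kills the $\partial_0$ term directly, leaving exactly the composite $(\partial^{(m)}_W)^*\circ \hs^{(m)}_v$ on $U^{(m)}_{\mathcal{O}_A}$, with no naturality-of-splittings check required. Your approach would also work once that compatibility is verified, but Lemma \ref{unip}(3) is the shortcut you want.
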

\begin{proof}
Using Lemma \ref{ukernel} and a higher order analogue of Proposition \ref{weilneeded1}(3) (see Remark \ref{hoan}), we observe that the morphism $f^{(m)}_A$ factors through a closed embedding
$$\Psi:\Pi^{(m)}_{\mathbf{k}}A\to \ker(\Pi^{(m)}_{k[\varepsilon^{(m)}]}A \to A)\cong \Pi^{(m)}_{\mathbf{k}}U^{(m)}_A.$$
Hence we can take $h_A$ as the kernel of the morphism $\Pi^{(m)}_{k[\varepsilon^{(m)}]}A \to A$.
\\
For the moreover part, by Remark \ref{derexist} we see first that the following diagram is commutative:
\begin{equation*}
  \xymatrix{A(W) \ar[d]_{\cong}\ar[rr]^{\partial_W-\partial_0}& & A(W[\varepsilon^{(m)}])\ar[d]_{\cong}\\
\Pi^{(m)}_{\mathbf{k}}A(W^{p^m})  \ar[rr]^{f^{(m)}_A}& &\Pi^{(m)}_{\mathbf{k}[\varepsilon^{(m)}]}A(W^{p^m})
.}
\end{equation*}
From Lemma \ref{ukernel} again, the upper arrow factors through
$$\ker(A(W[\varepsilon^{(m)}])\to A(W))=U^{(m)}_A(W).$$
For $v:\mathcal{O}_A\to W$ corresponding to an element of $A(W)_*$, we need to understand the image of $v$ under the map $A(W)\to U^{(m)}_A(W)$.
The image of $v$ by $\partial_W-\partial_0$ corresponds to the map
$$\partial_W^*\circ \hs^{(m)}_v-\partial_0^*\circ \hs^{(m)}_v:\hs^{(m)}_{\mathcal{O}_A}\to W$$
restricted to $U^{(m)}_{\mathcal{O}_A}$ (see Lemma \ref{unip}(1)). It is enough to see that the map above coincides with $\partial_W^*\circ \hs^{(m)}_v$ after restricting to $U^{(m)}_{\mathcal{O}_A}$. In other words, we need to see that the map $\partial_0^*\circ \hs^{(m)}_v$ is the $0$-map after restricting to $U^{(m)}_{\mathcal{O}_A}$. It is enough to consider the following diagram
\begin{equation*}
  \xymatrix{U^{(m)}_{\mathcal{O}_A}\ar[rr]^{}\ar[d]^{}  & &\hs^{(m)}_{\mathcal{O}_A}\ar[d]^{(\partial^{\mathcal{O}_A}_0)^*} \ar[rr]^{\hs^{(m)}_v} & &\hs^{(m)}_W \ar[d]^{(\partial_0^W)^*}\\
  C\ar[rr]^{}   &  & \mathcal{O}_A \ar[rr]^{v}& & W,
}
\end{equation*}
which is commutative by Lemma \ref{unip}(3) and the naturality of the $0$-derivation.
\end{proof}
\begin{definition}
We denote the map $A(W)\to U^{(m)}_A(W)$ (from the proposition above) by $l_A\partial^{(m)}_W$.
\end{definition}

\begin{remark}\label{specequiv}
\begin{enumerate}
\item The map $l_A\partial^{(m)}_W$ coincides with the \emph{logarithmic derivative} map with respect to $\partial^{(m)}_W$ (for a positive characteristic version of the logarithmic derivative map, see e.g. \cite{JaMo}).

\item Let $\mathcal{F}:\widehat{V}\to \widehat{A}$ be a formal map. The following are equivalent:
\begin{enumerate}
\item for each $m$, we have $l_A\partial^{(m)}_{\widehat{R}}(\mathcal{F}_A)\in U^{(m)}_A(R)$;

\item the formal map $\mathcal{F}$ is special.
\end{enumerate}

\item The condition (a) above is equivalent (even for any formal group $\mathcal{A}$ in place of $\widehat{A}$) to the following
$$\mathcal{F}^*(\hs^{\inv}_A)\subseteq \hs_R.$$
The reason is that the (properly defined) ring of higher invariant forms $\hs^{\inv}_A$ coincides with $U_{\mathcal{O}_A}$.

\item We also have $\Omega^{\inv}_A\subseteq \hs^{\inv}_A$ (such issues will be discussed in the forthcoming paper \cite{K7}). Hence the notion of a special formal map from this section generalizes Definition \ref{defspecial}.

\end{enumerate}
\end{remark}
\noindent
We show below that the formal maps we are interested in are special. In the most important case of formal homomorphisms, the main reason is (as I see it) that the ``higher Lie algebra'' of an algebraic group (that is $U^{(m)}_{\mathcal{O}_A}$) coincides with the ``higher Lie algebra'' of its formalization (that is $U^{(m)}_{\widehat{\mathcal{O}}_A}$) by Lemma \ref{unip}(1).
\begin{prop}\label{homspecial}
The following classes of formal maps consist of special maps:
\begin{enumerate}
\item Formalizations of algebraic maps.

\item Formal homomorphisms between formalizations of algebraic groups.
\end{enumerate}
\end{prop}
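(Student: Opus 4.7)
The plan is to verify specialness through the characterization in Remark \ref{specequiv}, which says $\mathcal{F}$ is special if and only if $l_A\partial^{(m)}_{\widehat{R}}(\mathcal{F}_A)\in U^{(m)}_A(R)$ for all $m\geqslant 1$, equivalently $\mathcal{F}^*(\hs^{\inv}_A)\subseteq \hs_R$. In both parts the fundamental input is Lemma \ref{unip}(1): the higher tangent Hopf algebra $U^{(m)}_G$ at the identity already equals its formal analogue $U^{(m)}_{\widehat{G}}$, which is what lets the invariant part of the formal picture descend to the algebraic one.

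For part (1), if $\mathcal{F}$ is the formalization of an algebraic morphism $F\colon V\to A$, then $\mathcal{F}_A\in A(\widehat{R})$ is the image of $F\in A(R)$. Since $v\in V(C)$ is smooth, Lemma \ref{allringsok} shows that a system of regular parameters of $R$ is a $p$-basis of $R$. The higher-order analogue of Lemma \ref{p1}(2) therefore applies to $R$ itself, not merely to $\widehat{R}$, so the element $v_m$ associated to $\mathcal{F}_A$ actually lies in $\Pi^{(m)}_{\mathbf{k}}A(R^{p^m})\subseteq \Pi^{(m)}_{\mathbf{k}}A(\widehat{R}^{p^m})$. Naturality of $f^{(m)}_A$ with respect to the inclusion $R^{p^m}\subseteq \widehat{R}^{p^m}$ then yields $f^{(m)}_A(v_m)\in \Pi^{(m)}_{\mathbf{k}[\varepsilon^{(m)}]}A(R^{p^m})$, which is exactly the defining condition of Definition \ref{defspecialagain}.

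For part (2), let $\mathcal{F}\colon \widehat{B}\to \widehat{A}$ be a formal homomorphism, so that $V=B$, $v=0$ and $R=\mathcal{O}_{B,0}$. The pullback $\mathcal{F}^*\colon \widehat{\mathcal{O}}_A\to \widehat{\mathcal{O}}_B=\widehat{R}$ is a morphism of complete Hopf algebras, so applying the functor $\hs^{(m)}$ and tensoring with $C$ at the identity gives a homomorphism of higher tangent Hopf algebras
$$U^{(m)}_{\mathcal{F}}\colon U^{(m)}_{\widehat{\mathcal{O}}_A}\longrightarrow U^{(m)}_{\widehat{\mathcal{O}}_B}.$$
By Lemma \ref{unip}(1) the natural comparison arrows $U^{(m)}_{\mathcal{O}_A}\to U^{(m)}_{\widehat{\mathcal{O}}_A}$ and the analogous one for $B$ are isomorphisms, so the image of $\mathcal{F}^*$ on the ``invariant subring'' lands in $U^{(m)}_{\mathcal{O}_B}$, which is contained in $\hs^{(m)}_{\mathcal{O}_B}=\hs_R$. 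Using Remark \ref{specequiv}(3) to identify $\hs^{\inv}_A$ with the image of $U^{(m)}_{\mathcal{O}_A}$ inside $\hs^{(m)}_{\mathcal{O}_A}$, we conclude $\mathcal{F}^*(\hs^{\inv}_A)\subseteq \hs_R$, as needed.

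The main obstacle will be making the identifications from Remark \ref{specequiv}(3) between higher invariant forms on an algebraic group and the coordinate ring of its higher tangent group fully precise, and verifying that pullback by a formal homomorphism intertwines these identifications with the induced map $U^{(m)}_{\mathcal{F}}$; this is essentially the assertion that formal homomorphisms preserve translation invariance of higher Vojta forms. Once this compatibility is set up, both parts reduce to short naturality arguments --- part (1) is naturality of the Weil restriction along $R\subseteq \widehat{R}$ (powered by the smoothness hypothesis via Lemma \ref{allringsok}), and part (2) is naturality of the comparison isomorphism $U^{(m)}_{\mathcal{O}_G}\cong U^{(m)}_{\widehat{\mathcal{O}}_G}$ of Lemma \ref{unip}(1) with respect to group homomorphisms.
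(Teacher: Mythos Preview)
Your proposal is correct and follows essentially the same route as the paper. For part (1) you both observe that when $\mathcal{F}$ is the formalization of an algebraic map the point $\mathcal{F}_A$ already lies in $A(R)$, so $v_m$ (and hence $f^{(m)}_A(v_m)$) lands in the $R^{p^m}$-level before passing to $\widehat{R}^{p^m}$. For part (2) you both reduce to the characterization in Remark \ref{specequiv} and use Lemma \ref{unip}(1) to identify $U^{(m)}_{\widehat{\mathcal{O}}_A}\cong U^{(m)}_{\mathcal{O}_A}$ (and likewise for $B$), so that the map induced by the formal homomorphism on the $U$-level factors through a map to $\mathcal{O}_B=R$; the paper writes this out as an explicit diagram chase and defines the factorization as $\Psi=\partial^*\circ\iota\circ f^{-1}\circ U^{(m)}_{\mathcal{F}}$, whereas you phrase it via Remark \ref{specequiv}(3). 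The compatibility you flag as the ``main obstacle'' --- that pullback by a formal homomorphism intertwines the splittings $U^{(m)}\hookrightarrow\hs^{(m)}$ --- is precisely the commutativity of the left-upper square in the paper's second diagram, and the paper points out that this is exactly where Lemma \ref{unip}(2) (naturality of the splitting) is used.
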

\begin{proof}
The first item is clear, since for each $m>0$, we have $v_m\in \Pi^{(m)}_{\mathbf{k}[\varepsilon^{(m)}]}A(R^{p^m})$ where $v_m$ comes from Definition \ref{defspecialagain}.
\\
Let $\mathcal{F}:\widehat{B}\to \widehat{A}$ be a formal homomorphism which we identify with a $C$-algebra map $\mathcal{F}:\widehat{\mathcal{O}}_{A}\to \widehat{\mathcal{O}}_{B}$. Let $\partial$ be the tuple of derivations on $\mathcal{O}_B$ from Remark \ref{derexist} and  $\widehat{\partial}$ its natural extension to $\widehat{\mathcal{O}}_B$.
To prove the second item, for any $m>0$ let us consider the following commutative diagram below
(given by Proposition \ref{weiltan} and Lemma \ref{unip}(1))
\begin{equation*}
  \xymatrix{
\hs^{(m)}_{\widehat{\mathcal{O}}_A} \ar[rr]^{\hs^{(m)}_{\mathcal{F}}} & &  \hs^{(m)}_{\widehat{\mathcal{O}}_B} \ar[d]^{\widehat{\partial}_B^*} & \\
U^{(m)}_{\widehat{\mathcal{O}}_A} \ar[u]_{} \ar[rr]^{l_A\partial^{(m)}_{\widehat{R}}(\mathcal{F}_A)}  & &   \widehat{\mathcal{O}}_B & & \hs^{(m)}_{\mathcal{O}_B}\ar[llu]_{} \ar[d]^{\partial^*_B}\\
 & &  & & \mathcal{O}_B  \ar[llu]_{} .}
\end{equation*}
\noindent
 By Remark \ref{specequiv}(2), it is enough to find a map $\Psi:U^{(m)}_{\widehat{\mathcal{O}}_A}\to \mathcal{O}_B$ which completes the commutative diagram above. We will find this map using the following commutative diagram of $C$-algebra maps
\begin{equation*}
  \xymatrix{
\hs^{(m)}_{\widehat{\mathcal{O}}_A} \ar[rr]^{\hs^{(m)}_{\mathcal{F}}} & & \hs^{(m)}_{\widehat{\mathcal{O}}_B} \ar[rr]^{\widehat{\partial}^*} \ar[d]^{\widehat{\partial}^*} & & \widehat{\mathcal{O}}_B & \\
U^{(m)}_{\widehat{\mathcal{O}}_A} \ar[u]^{} \ar[rr]^{U^{(m)}_{\mathcal{F}}} & & U^{(m)}_{\widehat{\mathcal{O}}_B} \ar[u]^{} & \hs^{(m)}_{\mathcal{O}_B} \ar[rr]^{\partial^*}\ar[lu]^{} & &   \mathcal{O}_B \ar[lu]^{}\\
 & & & U^{(m)}_{\mathcal{O}_B} \ar[u]^{\iota}\ar[lu]^{f}. & & }
\end{equation*}
\noindent
The fact that $\mathcal{F}$ is a formal homomorphism is used only for the commutativity of the left-upper square above, see Lemma \ref{unip}(2).
By Lemma \ref{unip}(1), $f$ is an isomorphism. We define
$$\Psi := \partial^*\circ \iota \circ f^{-1}\circ \mathcal{U}_{\mathcal{F}},$$
which completes the first diagram.
\end{proof}
\noindent%
For the inductive step of the proof of the main result (Theorem \ref{alimitthm}) of this section, we will need one more property of special formal maps which is analogous to Proposition \ref{frobfactorbetter}.
\begin{prop}\label{specialroot}
Let $\mathcal{F}:\widehat{V}\to \widehat{A}$ be a special formal map and assume that there is a formal map $\mathcal{F}':\widehat{V}^{\fr}\to \widehat{A}$ such that the following diagram commutes
\begin{equation*}
  \xymatrix{
\widehat{V}\ar[rr]^{\mathcal{F}}\ar[d]^{\fr} & & \widehat{A} \\
\widehat{V}^{\fr}\ar[rru]_{\mathcal{F}'}. & & }
\end{equation*}
Then $\mathcal{F}'$ is special.
\end{prop}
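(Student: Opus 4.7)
The plan is to invoke the characterization from Remark \ref{specequiv}(3): a formal map from the formalization of a smooth pointed variety into $\widehat{A}$ is special iff its pullback sends $\hs^{\inv}_A$ into the higher forms over the corresponding local ring. Thus the task is to show $(\mathcal{F}')^*(\hs^{\inv}_A) \subseteq \hs^{(m)}_{R^{\fr}}$ for every $m$, given the analogous containment for $\mathcal{F}$ into $\hs^{(m)}_R$. On ring maps $\widehat{T} \to \widehat{R}$, the factorization in the hypothesis reads $\mathcal{F} = \fr \circ \mathcal{F}'$, so functoriality of $\hs^{(m)}$ yields $\hs^{(m)}_{\mathcal{F}} = \hs^{(m)}_{\fr} \circ \hs^{(m)}_{\mathcal{F}'}$. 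Fixing $m > 0$ and $\omega \in \hs^{\inv}_A$, and setting $\alpha := (\mathcal{F}')^*(\omega) \in \hs^{(m)}_{\widehat{R}^{\fr}}$, one has $\hs^{(m)}_{\fr}(\alpha) = \mathcal{F}^*(\omega) \in \hs^{(m)}_R$ by specialness of $\mathcal{F}$. The proof therefore reduces to the descent claim: any $\alpha \in \hs^{(m)}_{\widehat{R}^{\fr}}$ whose image under $\hs^{(m)}_{\fr}$ lies in $\hs^{(m)}_R$ already lies in $\hs^{(m)}_{R^{\fr}}$.

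To prove this descent claim, I would use the explicit polynomial presentations of Remark \ref{derexist}. By Lemma \ref{allringsok}, a system of regular parameters $s_1, \ldots, s_t$ of $R$ is a $p$-basis of both $R$ and $\widehat{R}$; identifying $R^{\fr}$ with $R^p$ and $\widehat{R}^{\fr}$ with $\widehat{R}^p$ via Fact \ref{facfroring} and Proposition \ref{completefrob}(3), the tuple $(s_i^p)$ is a $p$-basis of both $R^p$ and $\widehat{R}^p$. Remark \ref{derexist} then presents the square
\begin{equation*}
\xymatrix{
\hs^{(m)}_{R^{\fr}} \ar[r] \ar[d] & \hs^{(m)}_R \ar[d] \\
\hs^{(m)}_{\widehat{R}^{\fr}} \ar[r] & \hs^{(m)}_{\widehat{R}}
}
\end{equation*}
as a square of polynomial algebras $R^p[Z^{(m)}] \to R[X^{(m)}]$ and $\widehat{R}^p[Z^{(m)}] \to \widehat{R}[X^{(m)}]$, whose vertical arrows are coefficient inclusions and whose horizontal arrows substitute $Z_i^{(m)} \mapsto P_i(X^{(m)})$, where $P_i \in \Zz[X^{(m)}]$ is the Leibniz-rule polynomial expressing $(\ddd_j s_i^p)_{j < p^m}$ in terms of $(\ddd_l s_i)$. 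The horizontal arrows are injective, since $R$ is $R^p$-free on the monomials $s_1^{e_1} \cdots s_t^{e_t}$ with $0 \le e_i < p$ (and likewise $\widehat{R}$ over $\widehat{R}^p$), which induces the corresponding freeness at the level of higher forms.

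A coefficient comparison then finishes the descent: writing $\alpha = \sum_I a_I Z^I \in \widehat{R}^p[Z^{(m)}]$ with image in $R[X^{(m)}]$, injectivity of the substitution lets one recover each $a_I \in \widehat{R}^p$ as a specific polynomial expression in the $X$-coefficients of the image, so each $a_I$ in fact lies in $R \cap \widehat{R}^p$, which equals $R^p$ by Proposition \ref{completefrob}(4). Hence $\alpha \in R^p[Z^{(m)}] = \hs^{(m)}_{R^{\fr}}$, as required. The main obstacle is the final bookkeeping around the injective substitutions and the explicit Leibniz expansions of $\ddd_j(s_i^p)$, but the genuine arithmetic content of the argument is the single intersection formula $R \cap \widehat{R}^p = R^p$.
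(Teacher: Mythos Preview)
Your approach has a genuine gap: the horizontal maps in your square are \emph{not} injective, so the descent step collapses. Identifying $R^{\fr}$ with $R^p$ and $\widehat{R}^{\fr}$ with $\widehat{R}^p$ as you do, the functorial map $\hs^{(m)}_{\widehat{R}^p}\to \hs^{(m)}_{\widehat{R}}$ sends the generator $Z_{i,j}$ (which represents $\ddd_j(s_i^p)$) to the corresponding element of $\hs^{(m)}_{\widehat{R}}$, and in characteristic $p$ the Leibniz expansion gives
\[
\ddd_j(s_i^p)=
\begin{cases}
X_{i,\,j/p}^{\,p} & \text{if }p\mid j,\\
0 & \text{if }p\nmid j
\end{cases}
\]
(read off from $D_t(s_i^p)=D_t(s_i)^p$ and the Frobenius identity $(\sum a_kt^k)^p=\sum a_k^pt^{pk}$). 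In particular every $Z_{i,j}$ with $p\nmid j$ is sent to $0$, so the map has an enormous kernel and you cannot ``recover each $a_I$ as a specific polynomial expression in the $X$-coefficients of the image''. Already for K\"{a}hler differentials the map $\Omega_{R^p}\to\Omega_R$ is the zero map on generators, which is the $m=1$ shadow of this phenomenon. Your justification (``$R$ is $R^p$-free on the $p$-basis monomials, which induces the corresponding freeness at the level of higher forms'') does not hold: freeness of the coefficient extension says nothing about what happens to the variables under the substitution.

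The paper's proof avoids this obstruction by working with Definition~\ref{defspecialagain} directly rather than with the characterization in Remark~\ref{specequiv}(3), and the key mechanism is a \emph{shift of level}: to verify that $\mathcal{F}'$ is special at level $m-1$ (i.e.\ that $f_{\mathbf{k}/\mathbf{k}^{p^{m-1}}}((\mathcal{F}'_A)^{(m-1)})$ lies in the rational points over $R^{p^m}$ with the right $\mathbf{k}$-structure), one uses that $\mathcal{F}$ is special at level $m$. The large commutative diagram in the paper's proof tracks precisely how the Weil restrictions along $\mathbf{k}/\mathbf{k}^{p^{m-1}}$, $\mathbf{k}^p/\mathbf{k}^{p^m}$, and $\mathbf{k}/\mathbf{k}^{p^m}$ fit together under the Frobenius isomorphisms of the coefficient rings, and it is this bookkeeping---not an injectivity statement---that carries the argument through. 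If you want to salvage your route via $\hs^{\inv}_A$, you would need to incorporate the same level shift, relating $(\mathcal{F}')^*$ at order $p^{m-1}-1$ to $\mathcal{F}^*$ at order $p^m-1$; at a fixed level $m$ the information you need has genuinely been destroyed by $\hs^{(m)}_{\fr}$.
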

\begin{proof}
We consider $R$ as a $\mathbf{k}$-algebra by a choice of a $p$-basis of $R$. Such a $\mathbf{k}$-algebra will be denoted by $R_{\mathbf{k}}$. Similarly for any $m>0$, we have the $\mathbf{k}^{p^m}$-algebra $R^{p^m}_{\mathbf{k}^{p^m}}$. We also consider $R^p$ as a $\mathbf{k}$-algebra twisting the $\mathbf{k}^p$-algebra $R^p_{\mathbf{k}^p}$ by $\fr_{\mathbf{k}}$. We denote the last $\mathbf{k}$-algebra by $R^{p}_{\mathbf{k}}$. Clearly, the map $\fr_R:R_{\mathbf{k}}\to R^p_{\mathbf{k}}$ is an isomorphism of $\mathbf{k}$-algebras. Similarly, we also have the $\mathbf{k}^{p^{m-1}}$-algebra  $R^{p^m}_{\mathbf{k}^{p^{m-1}}}$. The same notation will apply to $\widehat{R}$ in place of $R$.
\\
For each $m>0$, consider the map
$$f_{\mathbf{k}/\mathbf{k}^{p^m}}:=f^{(m)}_A:\Pi_{\mathbf{k}/\mathbf{k}^{p^m}}A\to \Pi_{\mathbf{k}[\varepsilon^{(m)}]/\mathbf{k}^{p^m}}A.$$
Since $\mathcal{F}$ is special, we know that
$$f_{\mathbf{k}/\mathbf{k}^{p^m}}(\mathcal{F}_A^{(m)})\in \Pi_{\mathbf{k}[\varepsilon^{(m)}]/\mathbf{k}^{p^m}}A(R^{p^m}_{\mathbf{k}^{p^m}}),$$
where $\mathcal{F}_A^{(m)}\in \Pi_{\mathbf{k}/\mathbf{k}^{p^m}}A(R^{p^m}_{\mathbf{k}^{p^m}})$ corresponds to $\mathcal{F}_A\in A(R_{\mathbf{k}})$
under the natural bijection.
\\
By our assumption, $\mathcal{F}_A\in A(R^p_{\mathbf{k}^p})$. We have a natural isomorphism coming from $\fr_R$
$$A(R_{\mathbf{k}})\cong A(R^p_{\mathbf{k}})$$
and $\mathcal{F}_A$ corresponds to $\mathcal{F}'_A$ under this isomorphism. We need to show that
\begin{equation}
f_{\mathbf{k}/\mathbf{k}^{p^{m-1}}}((\mathcal{F}'_A)^{(m-1)})\in \Pi_{\mathbf{k}[\varepsilon^{(m-1)}]/\mathbf{k}^{p^{m-1}}}A(R^{p^m}_{\mathbf{k}^{p^{m-1}}}).\tag{$\clubsuit$}
\end{equation}
Let us consider the following commutative diagram
\begin{equation*}
  \xymatrix{
A(\widehat{R}^p_{\mathbf{k}}) \ar[d]^{\cong} \ar[rr]^{\cong} & & A(\widehat{R}^p_{\mathbf{k}^p})\ar[d]^{\cong} \ar[rr]^{\subseteq}
& & A(\widehat{R}_{\mathbf{k}}) \ar[d]^{\cong} \\
\Pi_{\mathbf{k}/\mathbf{k}^{p^{m-1}}}A(\widehat{R}^{p^m}_{\mathbf{k}^{p^{m-1}}}) \ar[d]^{f_{\mathbf{k}/\mathbf{k}^{p^{m-1}}}} \ar[rr]^{\cong}
& & \Pi_{\mathbf{k}^p/\mathbf{k}^{p^{m}}}A(\widehat{R}^{p^m}_{\mathbf{k}^{p^{m}}})\ar[d]^{f_{\mathbf{k}^p/\mathbf{k}^{p^{m}}}} \ar[rr]^{\hookrightarrow}
& & \Pi_{\mathbf{k}/\mathbf{k}^{p^{m}}}A(\widehat{R}^{p^m}_{\mathbf{k}^{p^{m}}})\ar[d]^{f_{\mathbf{k}/\mathbf{k}^{p^{m}}}} \\
\Pi_{\mathbf{k}[\varepsilon^{(m-1)}]/\mathbf{k}^{p^{m-1}}}A(\widehat{R}^{p^m}_{\mathbf{k}^{p^{m-1}}})  \ar[rr]^{\cong}
& & \Pi_{\mathbf{k}^p[\varepsilon^{(m-1)}]/\mathbf{k}^{p^{m}}}A(\widehat{R}^{p^m}_{\mathbf{k}^{p^{m}}}) \ar[rr]^{\hookrightarrow}
& & \Pi_{\mathbf{k}[\varepsilon^{(m-1)}]/\mathbf{k}^{p^{m}}}A(\widehat{R}^{p^m}_{\mathbf{k}^{p^{m}}}) \\
\Pi_{\mathbf{k}[\varepsilon^{(m-1)}]/\mathbf{k}^{p^{m-1}}}A(R^{p^m}_{\mathbf{k}^{p^{m-1}}}) \ar[u]^{\subseteq} \ar[rr]^{\cong}
& & \Pi_{\mathbf{k}^p[\varepsilon^{(m-1)}]/\mathbf{k}^{p^{m}}}A(R^{p^m}_{\mathbf{k}^{p^{m}}}) \ar[u]^{\subseteq} \ar[rr]^{\hookrightarrow}
& & \Pi_{\mathbf{k}[\varepsilon^{(m-1)}]/\mathbf{k}^{p^{m}}}A(R^{p^m}_{\mathbf{k}^{p^{m}}}) \ar[u]^{\subseteq}, \\
}
\end{equation*}
where each $\hookrightarrow$ denotes a closed embedding. We obtain $(\clubsuit)$ chasing the diagram above.
\end{proof}

\subsection{Special maps and $A$-limits}
In this part we prove below the main result of Section \ref{secspeciallimit} which is Theorem \ref{alimitthm}. We notice first an obvious consequence of Proposition \ref{homspecial}(1).
\begin{prop}\label{alimisspe}
Any $A$-limit map is special.
\end{prop}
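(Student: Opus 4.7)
The plan is to reduce everything to Proposition~\ref{homspecial}(1), by approximating $\mathcal{F}$ by the formalizations of the algebraic morphisms provided by the witnessing $A$-compatible sequence. Write $\mathcal{F}=\li(f_k)$ for an $A$-compatible sequence $(f_k:T\to R)_k$. Each $f_k$ is a local $C$-algebra homomorphism, so the composition $\spec(R)\to \spec(T)\to A$ exhibits $\widehat{f_k}$ as the formalization of an algebraic morphism, hence as a special formal map by Proposition~\ref{homspecial}(1). The remaining task is to pass specialness from $\widehat{f_{m-1}}$ to $\mathcal{F}$ for each fixed $m>0$ by a controlled perturbation argument.

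Fix $m>0$. Iterating the $A$-compatibility inclusions $(\widehat{f_{k+1}}-\widehat{f_k})(\widehat{T})\subseteq\widehat{R}^{p^{k+1}}$ for $k\geq m-1$ and invoking the closedness of $\widehat{R}^{p^m}$ provided by Lemma~\ref{limitpower}, one obtains $(\mathcal{F}-\widehat{f_{m-1}})(\widehat{T})\subseteq\widehat{R}^{p^m}$; by Lemma~\ref{points}(2) combined with Lemma~\ref{newadd} this becomes
$$u:=\mathcal{F}_A-(\widehat{f_{m-1}})_A\in A(\widehat{R}^{p^m}).$$
Under the higher-order analogue of Lemma~\ref{p1}(2), the group bijection $A(\widehat{R})\cong\Pi^{(m)}_{\mathbf{k}}A(\widehat{R}^{p^m})$ identifies the subgroup $A(\widehat{R}^{p^m})$ with the image of $\iota_A$. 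Writing $v_m$ and $w_m$ for the elements corresponding to $\mathcal{F}_A$ and $(\widehat{f_{m-1}})_A$ respectively, this yields $v_m=w_m+\iota_A(u)$.

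The morphism $f^{(m)}_A$ is a homomorphism of commutative group schemes (being the difference of two Weil-restriction homomorphisms), and the higher-order version of Proposition~\ref{weilneeded1}(1) gives $f^{(m)}_A\circ\iota_A=0$. Therefore
$$f^{(m)}_A(v_m)=f^{(m)}_A(w_m),$$
and by the specialness of $\widehat{f_{m-1}}$ the right-hand side lies in $\Pi^{(m)}_{\mathbf{k}[\varepsilon^{(m)}]}A(R^{p^m})$. Since $m>0$ was arbitrary, $\mathcal{F}$ is special.

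The only book-keeping deserving a second look is the verification that the bijection of Lemma~\ref{p1}(2) respects both the group law on $A(\widehat{R})$ and the inclusion $A(\widehat{R}^{p^m})\hookrightarrow\Pi^{(m)}_{\mathbf{k}}A(\widehat{R}^{p^m})$ via $\iota_A$; the former is automatic because $\Pi^{(m)}_{\mathbf{k}}$ is a right adjoint and thus commutes with products (in particular with group operations), while the latter is the commutative triangle already displayed in Lemma~\ref{p1}(2). Everything else is routine, so the whole statement is indeed, as advertised, a direct consequence of Proposition~\ref{homspecial}(1).
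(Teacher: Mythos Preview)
Your proof is correct and follows essentially the same route as the paper's: fix $m$, choose an algebraic approximant $\varphi$ with $\mathcal{F}_A-\varphi_A\in A(\widehat{R}^{p^m})$, invoke Proposition~\ref{homspecial}(1) for $\varphi$, and use the higher-order version of Proposition~\ref{weilneeded1}(1) to conclude $f^{(m)}_A(v_m)=f^{(m)}_A(w_m)$. The paper simply cites Remark~\ref{defalimit2} for the existence of such a $\varphi$, whereas you explicitly take $\varphi=f_{m-1}$ and derive the inclusion $(\mathcal{F}-\widehat{f_{m-1}})(\widehat{T})\subseteq\widehat{R}^{p^m}$ by telescoping and Lemma~\ref{limitpower}; this is a cosmetic difference only.
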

\begin{proof}
Let us fix a positive integer $m$ and assume that $\mathcal{F}:\widehat{V}\to \widehat{A}$ is an $A$-limit formal map. Let $\varphi:\mathcal{O}_A\to R$ be such that $\mathcal{F}_A-\varphi_A\in A((\widehat{R})^{p^m})$ (see Remark \ref{defalimit2}). Let
$$v_m\in  \Pi^{(m)}_{\mathbf{k}[\varepsilon^{(m)}]}A(R^{p^m})$$
correspond to $\mathcal{F}_A$ and similarly for $w_m$ and $\varphi_A$. By Proposition \ref{homspecial}(1), we have
$$f_A^{(m)}(w_m)\in \Pi^{(m)}_{\mathbf{k}[\varepsilon^{(m)}]}A(R^{p^m}).$$
Since $\mathcal{F}_A-\varphi_A\in A((\widehat{R})^{p^m})$, by the ``degree $m$ version of Proposition \ref{weilneeded1}(1)'', we have $f_A^{(m)}(v_m-w_m)=0$. Hence we get
$$f_A^{(m)}(\mathcal{F}_A)\in \Pi^{(m)}_{\mathbf{k}[\varepsilon^{(m)}]}A(R^{p^m}),$$
so $\mathcal{F}$ is special.
\end{proof}

\begin{theorem}\label{alimitthm}
Assume that
\begin{itemize}
\item The maximal torus of $A$ splits;

\item $\dim V=1$ or $A$ is affine;

\item For any tower of fields $C\subseteq K_1\subseteq K_2$, if $K_1\subseteq K_2$ is purely inseparable, then the induced map $H^1(K_1,A)\to H^1(K_2,A)$ has a trivial kernel.
\end{itemize}
\noindent
Let $\mathcal{F}:\widehat{V}\to \widehat{A}$ be a special formal map. Then $\mathcal{F}$ is an $A$-limit map.
\end{theorem}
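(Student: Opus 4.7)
The plan is to construct, for each integer $m\geq 0$, an algebraic morphism $\phi_m:V\to A$, viewed as an element $(\phi_m)_A\in A(R)_*$, such that $\mathcal{F}_A-(\phi_m)_A\in A(\widehat{R}^{p^{m+1}})$. Given such a sequence, each difference $(\phi_{m+1})_A-(\phi_m)_A$ lies in both $A(R)$ and $A(\widehat{R}^{p^{m+1}})$, hence in $A(R^{p^{m+1}})$ by Proposition \ref{completefrob}(4); thus $(\phi_m)_m$ is $A$-compatible in the sense of Remark \ref{acomprem}(2) and converges in the standard topology to $\mathcal{F}$, exhibiting $\mathcal{F}$ as an $A$-limit.

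To construct $\phi_m$, I would invoke the level-$(m+1)$ special property of $\mathcal{F}$. By Definition \ref{defspecialagain},
$$u:=f^{(m+1)}_A(v_{m+1})\in \Pi^{(m+1)}_{\mathbf{k}[\varepsilon^{(m+1)}]}A(R^{p^{m+1}}),$$
where $v_{m+1}\in \Pi^{(m+1)}_{\mathbf{k}}A(\widehat{R}^{p^{m+1}})$ corresponds to $\mathcal{F}_A\in A(\widehat{R})$ via the level-$(m+1)$ analogue of Lemma \ref{p1}(2). The fiber $P:=(f^{(m+1)}_A)^{-1}(u)$ is, by the higher-order analogue of Proposition \ref{weilneeded1}(2), a principal homogeneous space (PHS) of $A$ over $R^{p^{m+1}}$, and $v_{m+1}$ provides a trivialising $\widehat{R}^{p^{m+1}}$-section. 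Any $R^{p^{m+1}}$-section $\tau$ of $P$ corresponds via Lemma \ref{p1}(2) to an element $\phi_m\in A(R)_*$, and since $f^{(m+1)}_A((\phi_m)_A)=u=f^{(m+1)}_A(\mathcal{F}_A)$, the difference $\mathcal{F}_A-(\phi_m)_A$ lies in $\ker(f^{(m+1)}_A)$, which by Proposition \ref{weilneeded1}(1) at level $m+1$ (together with the PHS description identifying the kernel with $\iota_A(A)$) equals $A(\widehat{R}^{p^{m+1}})$, as required. The problem therefore reduces to the key geometric assertion: \emph{every PHS of $A$ over $R^{p^{m+1}}$ that becomes trivial over $\widehat{R}^{p^{m+1}}$ is already trivial over $R^{p^{m+1}}$}.

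The proof of this PHS statement would follow the pattern of Proposition \ref{star}, exploiting that perfectness of $C$ ensures $R^{p^{m+1}}$ inherits all the structural properties of $R$ and that the three hypotheses on $A$ and $V$ are preserved under the Frobenius twist. If $A$ is affine, the split-torus hypothesis together with the structure theory of commutative affine algebraic groups (see \cite[Chap. III, 2.1]{serre2002galois}) yields a composition series with $\ga$ or $\gm$ factors; both $H^1(\spec R^{p^{m+1}}_{\et},\ga)$ and $H^1(\spec R^{p^{m+1}}_{\et},\gm)$ vanish (Serre vanishing on affines and triviality of the Picard group of a local ring), hence by the long exact sequence $H^1(\spec R^{p^{m+1}}_{\et},A)=0$ and $P$ is trivial. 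In the case $\dim V=1$, $R^{p^{m+1}}$ is a DVR, and Chevalley's theorem gives an exact sequence $0\to N\to A\to B\to 0$ with $N$ affine (with split torus) and $B$ an abelian variety; the $N$-component is handled by the affine argument, while the $B$-component is treated by passing to the generic point $\spec K^{p^{m+1}}$, applying the third hypothesis of the theorem to annihilate the resulting class in $H^1(K^{p^{m+1}},B)$, and then extending the generic section to an $R^{p^{m+1}}$-section via the valuative criterion of properness for $B$ over the DVR $R^{p^{m+1}}$.

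The main obstacle will be the $B$-component in the $\dim V=1$ case: the extension $K^{p^{m+1}}\subseteq L^{p^{m+1}}$ is not itself purely inseparable (it is isomorphic to $K\subseteq L$ via Frobenius), so applying the third hypothesis requires routing the argument through the purely inseparable chain $K^{p^{m+1}}\subseteq K$ and exploiting the faithful flatness of $R^{p^{m+1}}\subseteq \widehat{R}^{p^{m+1}}$ together with $R\cap \widehat{R}^{p^{m+1}}=R^{p^{m+1}}$ to transport triviality. This technical detour is what forces the integrability-type hypotheses of Theorem \ref{alimitthm}, and it explains why the full converse of Proposition \ref{alimisspe} remains open beyond the cases addressed here.
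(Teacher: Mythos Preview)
Your strategy—constructing $\phi_m$ directly from the level-$(m+1)$ special condition rather than iterating the level-$1$ step through Frobenius—is a genuine alternative to the paper's route, which runs a single level-$1$ argument (Steps~1--3) and then recurses via Proposition~\ref{specialroot}. But your execution has a real gap in how the three hypotheses are deployed.

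The paper separates the torsor problem into two stages. First, over the \emph{field} $K^p$ (Step~1), it uses the section $s$ of $\iota_A$ supplied by Theorem~\ref{weilneeded}(3): intersecting the $A$-torsor with $\ker(s)$ (a subscheme defined over $K$) cuts out a single geometric point, necessarily fixed by $\gal(\bar M/M)$ for $M=K^{p^{-\infty}}$, so the torsor is trivial over $M$. Only then is the third hypothesis invoked, for $A$ itself and for the purely inseparable extension $K^p\subseteq M$, to conclude triviality over $K^p$. Second, over the \emph{local ring} $R^p$ (Proposition~\ref{star}), the Chevalley decomposition and the affine/curve hypothesis upgrade the resulting $K$-point to an $R$-point.

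Your proposal collapses these two stages and misroutes the hypotheses. You apply the third hypothesis to the abelian-variety quotient $B$, but it is stated only for $A$, and no reduction from $A$ to $B$ is offered (nor is one obvious from the long exact sequence). More seriously, the section argument is entirely absent: you never explain how to produce a point of the torsor over any purely inseparable extension of $K^{p^{m+1}}$. Knowing triviality over $L^{p^{m+1}}$ is of no help here, as you yourself observe, and ``routing through $K^{p^{m+1}}\subseteq K$'' does not by itself yield a $K$-point—something has to single one out, and that is precisely what the $\ker(s)$ trick accomplishes. Without it your argument does not close. (A minor further issue: the section $\tau$ you obtain lands in $A(R)$, not $A(R)_*$; the paper's Step~3 corrects this by subtracting the constant $b_v\in A(C)$.)
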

\noindent
The proof is divided into four steps.
\\
{\bf Step 1} There is $a\in A(K)$ such that $\mathcal{F}_A-a\in A(L^p)$.

Let $\mathcal{F}_A'\in \Pi_{\mathbf{k}}A(\widehat{R}^p)$ be the image of $\mathcal{F}_A\in A(\widehat{R}^p)$ under the bijection from Lemma \ref{p1}(2). In this step we just consider $\mathcal{F}_A'$ as an $L^p$-rational point. Let $y:=f_A(\mathcal{F}_A')$. By the definition of a special map, $y\in \Pi_{\mathbf{k}[\varepsilon]}A(K^p)$. To complete the proof of Step 1, it is enough to find $x\in \Pi_{\mathbf{k}}(A)(K^p)$ such that $f_A(x)=y$.
\\
Let $P:=f_A^{-1}(y)$ be the schematic fiber (a scheme over $K^p$). As in the proof of Proposition \ref{star}, $P$ is an PHS of $A$ over $K^p$.
It is enough to show that $P$ has a $K^p$-rational point, i.e. that $P$ corresponds to the zero element of $H^1(K^p,A)$ (see again \cite[Remark 4.8(a)]{milne1etale} for the necessary identifications). By our trivial kernel assumption, it is enough to show that
$$H^1(K^p,A)\ni [P]\mapsto 0\in H^1(M,A),$$
where $M=K^{p^{-\infty}}$.
\\
By Theorem \ref{weilneeded}(3,4), the morphism $\iota:A\times_CK\to \Pi_{\mathbf{k}}(A)\times_{{\mathbf{k}}^p}K$ has a section $s$. Let us consider
$$P_s:=\ker(s)\cap (P\times_{K^p}K),$$
which is a scheme over $K$. Let $\bar{M}$ denote the algebraic closure of $M$. Since
$$P_s(\bar{M})=\ker(s)(\bar{M})\cap P(\bar{M}),$$
the set $P_s(\bar{M})$ is a singleton $\{x'\}$. On the other hand, $P_s(\bar{M})$ is invariant under the action of the absolute Galois group of $M$, so $x'\in P(M)$. Hence indeed the class of $P$ is mapped to $0$ in the group $H^1(M,A)$.
\\
{\bf Step 2} There is $b\in A(R)$ such that $\mathcal{F}_A-b\in A(\widehat{R}^p)$.

This is exactly Proposition \ref{star}.
\\
{\bf Step 3} There is a local $C$-algebra homomorphism $\phi:\mathcal{O}_{A}\to R$ such that
$$(\mathcal{F}-\widehat{\phi})(\widehat{\mathcal{O}}_{A})\subseteq \widehat{R}^p.$$
Let us consider $b\in A(R)$ obtained in Step 2. By composing $b:\spec(R)\to A$ with our fixed $C$-rational point $v:\spec(C)\to \spec(R)$ we get a point $b_v\in A(C)$. If we consider $b_v$ as an element of $A(R)$, then $b$ and $b_v$ map the closed point of $\spec(R)$ to the same point of $A$. Therefore $b-b_v\in A(R)_*$ (see the notation and comments before Remark \ref{pointexample}), so $b-b_v$ factors through a morphism $\phi:\spec(R)\to \spec(\mathcal{O}_{A})$. By Step 2 we have
$$\mathcal{F}_A-\phi_A=\mathcal{F}_A-b+b_v\in A(\widehat{R}^p),$$
since $b_v\in A(C)$. We also have
$$\mathcal{F}_A-\phi_A=\mathcal{F}_A-\widehat{\phi}_A=(\mathcal{F}-\widehat{\phi})_A,$$
where the first equality is obvious and the second follows from Lemma \ref{newadd}. By Lemma \ref{points}(ii), we get $(\mathcal{F}-\widehat{\phi})(\widehat{\mathcal{O}}_{A})\subseteq \widehat{R}^p$.

After completing this step we actually have made the very first step to find a compatible sequence witnessing that $\mathcal{F}$ is an $A$-limit. In the  final step below we show that we have an inductive procedure at hand.
\\
{\bf Step 4} $\mathcal{F}$ is an $A$-limit.

Let us take $\phi_0:=\phi$ from Step 3. By Corollary \ref{facfrosch}, $\mathcal{F}-\widehat{\phi_0}:\widehat{V}\to \widehat{A}$ factors through $\mathcal{F}_1:\widehat{V}^{\fr}\to \widehat{A}$ i.e. there is a commutative diagram
\begin{equation*}
  \xymatrix{\widehat{V}\ar[rr]^{\mathcal{F}-\widehat{\phi_0}}\ar[d]^{\fr} & & \widehat{A} \\
\widehat{V}^{\fr}\ar[rru]_{\mathcal{F}_1}. & & }
\end{equation*}
\noindent%
By Lemma \ref{specialroot}, $\mathcal{F}_1$ is a special formal map. Applying Step 3 to $\mathcal{F}_1$ and using Corollary \ref{facfrosch} again, we get a morphism $\phi_1:\widehat{V}^{\fr^{-1}}\to \widehat{A}$ such that $\mathcal{F}_1-\widehat{\phi_1}$ factors through a morphism $\mathcal{F}_2:\widehat{V}^{\fr^{-2}}\to \widehat{A}$ i.e. we have a bigger commutative diagram as below
\begin{equation*}
  \xymatrix{
\widehat{V}\ar[rrrd]^{\ \ \ \mathcal{F}_1\circ \fr-\widehat{\phi_1}\circ \fr}\ar[d]^{\fr} & & &\\
\widehat{V}^{\fr}\ar[rrr]^{\mathcal{F}_1-\widehat{\phi_1}}\ar[d]^{\fr} & & &\widehat{A} \\
\widehat{V}^{\fr^2}\ar[rrru]_{\mathcal{F}_2}. & & &}
\end{equation*}
\noindent%
Hence we get
$$\mathcal{F}_2\circ \fr^2=\mathcal{F}_1\circ \fr-\widehat{\phi_1}\circ \fr=\mathcal{F}-\widehat{\phi}-\widehat{\phi_1}\circ \fr.$$
If we continue like this, we get a sequence $\phi_m:\widehat{V}^{\fr^{-m}}\to \widehat{A}$. We set $$g_m:=\phi_0+\phi_1\circ \fr+\ldots+\phi_m\circ\fr^m.$$
Similarly as in the proof of Theorem \ref{mainthm}, we conclude that $g_m$ is a compatible sequence converging to $\mathcal{F}$, hence $\mathcal{F}$ is an $A$-limit.
\begin{remark}\label{unipalimit}
\begin{enumerate}
\item It is possible that the map
$$H^1(K_1,A)\to H^1(K_2,A)$$
is always injective (or even an isomorphism) for a purely inseparable extension $K_1\subseteq K_2$, but we were not able to show it.

\item Clearly, the injectivity condition above is satisfied if the first cohomology group is trivial. In particular, any special map into an affine commutative group $A$ with split torus (e.g. into a unipotent group) is necessarily an $A$-limit map.
\end{enumerate}
\end{remark}
\noindent

\bibliographystyle{plain}
\bibliography{harvard}

\end{document}